\theoremstyle{plain}
\newtheorem{theorem}{Theorem}[section]
\newtheorem{corollary}[theorem]{Corollary}
\newtheorem{prop}[theorem]{Proposition}
\newtheorem{proposition}[theorem]{Proposition}
\newtheorem{lemma}[theorem]{Lemma}
\theoremstyle{definition}
\newtheorem{remark}[theorem]{Remark}
\newtheorem{example}[theorem]{Example}
\newtheorem{definitions}[theorem]{Definitions}
\newtheorem{problem}[theorem]{Problem}
 \DeclareMathOperator{\Id}{\mathrm{Id}}
\DeclareMathOperator{\supp}{supp}
\newcommand{\C}{\mathbb{C}}
\newcommand{\R}{\mathbb{R}}
\newcommand{\N}{\mathbb{N}}
\newcommand{\eps}{\varepsilon}
\renewcommand{\le}{\leqslant}
\renewcommand{\geq}{\geqslant}
\renewcommand{\leq}{\leqslant}
\renewcommand{\ge}{\geqslant}
\newcommand{\tri}{\vert \hspace{-1.5pt} \vert  \hspace{-1.5pt} \vert }
\begin{document}
\title[On a second numerical index for Banach spaces]{On a second numerical index for Banach spaces}

\author[Kim]{Sun Kwang Kim}
\address[Kim]{Department of Mathematics, Kyonggi University, Suwon 443-760, Republic of Korea.\newline
\href{http://orcid.org/0000-0002-9402-2002}{ORCID: \texttt{0000-0002-9402-2002}  }}
\email{\texttt{sunkwang@kgu.ac.kr}}

\author[Lee]{Han Ju Lee}
\address[Lee]{Department of Mathematics Education,
Dongguk University - Seoul, 100-715 Seoul, Republic of Korea.
\href{http://orcid.org/0000-0001-9523-2987}{ORCID: \texttt{0000-0001-9523-2987}  }
}
\email{\texttt{hanjulee@dongguk.edu}}

\author[Mart\'{\i}n]{Miguel Mart\'{\i}n}
\address[Mart\'{\i}n]{Departamento de An\'{a}lisis Matem\'{a}tico, Facultad de
 Ciencias, Universidad de Granada, 18071 Granada, Spain.
\href{http://orcid.org/0000-0003-4502-798X}{ORCID: \texttt{0000-0003-4502-798X} }
 }
\email{mmartins@ugr.es}

\author[Mer\'{\i}]{Javier Mer\'{\i}}
\address[Mer\'{\i}]{Departamento de An\'{a}lisis Matem\'{a}tico, Facultad de
 Ciencias, Universidad de Granada, 18071 Granada, Spain.
\href{http://orcid.org/0000-0002-0625-5552}{ORCID: \texttt{0000-0002-0625-5552} }
 }
\email{jmeri@ugr.es}

\dedicatory{Dedicated to Rafael Pay\'{a} on the occasion of his 60th birthday}

\date{April 19th, 2016}

\thanks{First author partially supported by Basic Science Research Program through the National Research Foundation of Korea (NRF) funded by the Ministry of Education, Science and Technology (2014R1A1A2056084). Second author partially supported by Basic Science Research Program through the National Research Foundation of Korea (NRF) funded by the Ministry of Education, Science and Technology (NRF-2014R1A1A2053875). Third and fourth authors partially supported by Spanish MINECO and FEDER grant MTM2015-65020-P and by Junta de Andaluc\'{\i}a and FEDER grant FQM-185.}

\subjclass[2010]{Primary 46B04; Secondary 46B20, 47A12}
\keywords{Banach space; numerical range; numerical index; skew hermitian operator; Bishop-Phelps-Bollob\'{a}s property for numerical radius}

\begin{abstract}
We introduce a second numerical index for real Banach spaces with non-trivial Lie algebra, as the best constant of equivalence between the numerical radius and the quotient of the operator norm modulo the Lie algebra. We present a number of examples and results concerning absolute sums, duality, vector-valued function spaces\ldots which show that, in many cases, the behaviour of this second numerical index differs from the one of the classical numerical index. As main results, we prove that Hilbert spaces have second numerical index one and that they are the only spaces with this property among the class of Banach spaces with one-unconditional basis and non-trivial Lie algebra. Besides, an application to the Bishop-Phelps-Bollob\'{a}s property for numerical radius is given.
\end{abstract}

\maketitle

\thispagestyle{plain}

\begin{center}
\begin{minipage}[c]{0,7\textwidth}\small
\tableofcontents
\end{minipage}
\end{center}

\section{Introduction}
Given a \textbf{real} Banach space $X$, $S_X$ is its unit sphere, $X^*$ is the topological dual space of $X$ and $\mathcal{L}(X)$ is the Banach algebra of all bounded linear operators on $X$. The \emph{numerical range} of $T\in \mathcal{L}(X)$ is given by
$$
V(T):=\bigl\{x^*(Tx)\,:\, (x,x^*)\in \Pi(X)\bigr\},
$$
where
$$
\Pi(X):=\bigl\{(x,x^*)\in X\times X^*\,:\, x\in S_{X},\,x^*\in S_{X^*},\, x^*(x)=1\bigr\}.
$$
The \emph{numerical radius} of $T\in \mathcal{L}(X)$ is given by
$$
v(T):=\sup\bigl\{|\lambda|\,:\,\lambda\in V(T)\bigr\}.
$$
The concept of numerical range was introduced independently by G.~Lumer and F.~Bauer in the 1960's, and it is an extension of the field of values of a matrix introduced by O.~Toeplitz in 1918. We refer the reader to the classical monographs \cite{B-D1,B-D2} for a reference. More recent references are
\cite[\S 2.1 and \S 2.9]{Cabrera-Rodriguez} and  \cite{Mar-numrange-JMAA2016}.
It is clear that $v$ is a continuous seminorm which satisfies $v(T)\leq \|T\|$ for every $T\in \mathcal{L}(X)$. Sometimes, $v$ is actually an equivalent norm, and to measure this possibility, we use the \emph{numerical index} of $X$:
\begin{align*}
n(X)& :=\inf\bigl\{v(T)\,:\, T\in \mathcal{L}(X),\, \|T\|=1\bigr\}\\
& = \max\bigl\{k\geq 0\,:\, k\|T\|\leq v(T)\ \forall T\in \mathcal{L}(X)\bigr\}.
\end{align*}
This concept was introduced in the 1970's \cite{D-Mc-P-W} and we will give in section~\ref{sect:survey-classicalnumindex} an overview of those known results about numerical index which will be relevant in our discussion. Observe that $n(X)=1$ if and only if $v(T)=\|T\|$ for every $T\in \mathcal{L}(X)$ and $n(X)=0$ if $v$ is not equivalent to the usual norm in $\mathcal{L}(X)$. There are Banach spaces with numerical index $0$ in which the numerical radius is a norm (see Example~\ref{example-old-nx=0zx=0}), but the more general situation for a Banach space $X$ with $n(X)=0$ is that there exists a non-null $T\in \mathcal{L}(X)$ with $v(T)=0$. To handle this situation, which will fully occur along the paper, we recall some notation. An operator $T\in \mathcal{L}(X)$ is \emph{skew-hermitian} if $v(T)=0$. The \emph{Lie algebra} of $X$ is the set $\mathcal{Z}(X)$ of all skew-hermitian operators on $X$, that is,
$$
\mathcal{Z}(X):=\bigl\{S\in \mathcal{L}(X)\,:\, v(S)=0\bigr\},
$$
which is a closed subspace of $\mathcal{L}(X)$. Then, in $\mathcal{L}(X)/\mathcal{Z}(X)$ we may consider two norms:
\begin{align*}
\|T+\mathcal{Z}(X)\|&:= \inf\bigl\{\|T-S\|\,:\, S\in \mathcal{Z}(X)\bigr\} \\
v\bigl(T+\mathcal{Z}(X)\bigr)&:= \inf\bigl\{v(T-S)\,:\,S\in \mathcal{Z}(X)\bigr\} = v(T)
\end{align*}
for all $T+\mathcal{Z}(X)\in \mathcal{L}(X)/\mathcal{Z}(X)$ (the last equality follows from the straightforward fact that $V(T-S)=V(T)$ for every $T\in \mathcal{L}(X)$ and every $S\in \mathcal{Z}(X)$). Observe that $v$ is a norm on $\mathcal{L}(X)/\mathcal{Z}(X)$, not merely a seminorm. It is immediate that $v(T)\leq \|T+\mathcal{Z}(X)\|$ for every $T\in \mathcal{L}(X)$, and we may define the \emph{second numerical index} of the Banach space $X$ as follows
\begin{align*}
n'(X)& :=\inf\bigl\{v(T)\,:\, T\in \mathcal{L}(X)\,, \|T+\mathcal{Z}(X)\|=1\bigr\}\\
& = \inf \left\{ \frac{v(T)}{\|T+\mathcal{Z}(X)\|}\,:\, T\in \mathcal{L}(X),\, T\notin \mathcal{Z}(X)\right\}\\
& = \max\bigl\{k\geq 0\,:\, k\|T+\mathcal{Z}(X)\|\leq v(T)\ \forall T\in \mathcal{L}(X)\bigr\}.
\end{align*}
It is clear that $n'(X)\in [0,1]$. The value $n'(X)=1$ means that $v$ and the quotient norm coincide on $\mathcal{L}(X)/\mathcal{Z}(X)$, while $n'(X)=0$ if $v$ and the quotient norm are non equivalent norms on $\mathcal{L}(X)/\mathcal{Z}(X)$. Of course, if $\mathcal{Z}(X)=\{0\}$ (in particular, if $n(X)>0$), then $n'(X)=n(X)$. Therefore, this new index would only  be interesting for Banach spaces with non-trivial Lie algebra. On the other hand, as $v$ is a norm in $\mathcal{L}(X)/\mathcal{Z}(X)$, it does not make sense to define a ``third numerical index'' of Banach spaces in the same way, since its value would be the same as the second numerical index.

Our goal here is to present a number of results related to this second numerical index. We start in section~\ref{sect:survey-classicalnumindex} with a compilation of known results about the ``classical'' numerical index which we will use along the paper. The first examples are contained in section~\ref{sect:FirstExamples}: finite-dimensional Banach spaces have positive second numerical index and Hilbert spaces have second numerical index one. An application is given in section~\ref{sect-application}: Hilbert spaces have the Bishop-Phelps-Bollob\'{a}s property for numerical radius. This answers a question possed in \cite{KLM-BPBp-nu}. Next, we study in section~\ref{sect-sums-moreexamples} the behaviour of the second numerical index with respect to absolute sums of two spaces showing, for instance, that the second numerical index of a $c_0$- or $\ell_p$-sum ($1\leq p\leq \infty$, $p\neq 2$) of Banach spaces is smaller than or equal to the infimum of the second numerical index of the summands. Some examples in which the reverse inequality is not true are also given. With all of this, we are able to present some interesting examples. For instance, we show that the set of values of the second numerical index contains the interval $[0,1/2]$. Section~\ref{sect-duality} is devoted to study the {duality relation}: it is shown that, in many cases, the second numerical index of the dual of a Banach space is less than or equal to the one of the space, and that there are examples in which this inequality is strict. Vector valued function spaces are studied in section~\ref{sec:vector-valued}: it is shown that the second numerical index of $C(K,X)$, $L_1(\mu,X)$ and $L_\infty(\mu,X)$ is not greater than $n'(X)$ (where $K$ is a Hausdorff compact topological space and $\mu$ is a positive measure). On the other hand, if $K$ contains more than one point and $H$ is a two-dimensional Hilbert space, then $n'(C(K,H))$ is not one. Section~\ref{sec:absolute-sums} is devoted to the study of Banach spaces having an absolute structure and of general absolute sums. We show that, in many cases, the second numerical index of a general absolute sum of Banach spaces is less than or equal to the infimum of the second numerical indices of the summands. As an application, we show that the set of values of $n'(X)$ when $X$ runs over all $3$-dimensional spaces with numerical index zero is not an interval. Some other results are given which will be used as tools in the next section. The main result of the paper is presented in section~\ref{sect-numindex-one}: Hilbert spaces are the unique Banach spaces with normalized one-unconditional basis which have numerical index zero and second numerical index one. Finally, we include an open problem section (section~\ref{sect:open_problems}).

\vspace*{2ex}

We finish the introduction presenting some notation. Let $X$ be a Banach space. For $x\in X$ and $x^*\in X^*$, we will use the notation $\langle x^*,x\rangle = x^*(x)$ when needed. If $H$ is a Hilbert space, then $(z\mid w)$ denotes the inner product of $z,w\in H$.

\section{Some known results about numerical index}\label{sect:survey-classicalnumindex}

Let us review here some known results about the numerical index of a Banach space which will be relevant in the further discussion on the second numerical index. We are not trying to provide an exhaustive list of all known results, but only those that are frequently used along the paper. We refer the reader to the survey paper \cite{KaMaPa} for a compilation about known results on numerical index of Banach spaces.

The first result shows that, even though the exact computation of numerical indices is complicated, it is possible to prove the existence of Banach spaces with any possible value of the numerical index.

\begin{prop}[\mbox{\cite[Theorem~3.6]{D-Mc-P-W}}]\label{prop:old-values}
$\bigl\{n(X)\, : \, X \text{ Banach space of dimension two} \, \bigr\}=[0,1]$.
\end{prop}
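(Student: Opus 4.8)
The plan is to realize every two-dimensional real space as $\R^2$ equipped with a norm (the numerical index being invariant under isometric isomorphism, so it depends only on the shape of the unit ball), to exhibit a continuous one-parameter family of such norms whose numerical indices realize the extreme values $0$ and $1$, and then to conclude by an intermediate value argument once we know that the numerical index varies continuously along the family. Since $n(X)\in[0,1]$ always, the inclusion $\subseteq$ is trivial and only $\supseteq$ requires work.

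First I would record the two extreme values. The real Euclidean plane $\ell_2^2$ has $n(\ell_2^2)=0$: the rotation by $\pi/2$ is a norm-one operator whose numerical range is $\{0\}$ (for a real Hilbert space every skew-symmetric operator is skew-hermitian), so $v$ is not equivalent to the operator norm. At the other end, $\ell_\infty^2$ satisfies $n(\ell_\infty^2)=1$, since finite-dimensional real $\ell_\infty$-spaces have numerical index one. I would then interpolate between these two spaces by the family $X_p:=\ell_p^2$ with $p\in[2,\infty]$, whose unit ball deforms continuously from the Euclidean disc ($p=2$) to the square ($p=\infty$), with endpoint values $n(X_2)=0$ and $n(X_\infty)=1$. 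It then suffices to prove that $p\mapsto n(X_p)$ is continuous on $[2,\infty]$: its image is a subinterval of $[0,1]$ containing both $0$ and $1$, hence equals $[0,1]$.

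The heart of the matter is this continuity, which I would obtain from a transference estimate. Suppose $Y$ and $Z$ are two-dimensional and $\phi\colon Y\to Z$ is a linear isomorphism with $\|\phi\|\,\|\phi^{-1}\|\le 1+\eps$. Conjugation $T\mapsto \phi T\phi^{-1}$ is a bijection of $\mathcal{L}(Y)$ onto $\mathcal{L}(Z)$ distorting the operator norm by at most a factor $1+\eps$. For the numerical radius, given $(z,z^*)\in\Pi(Z)$ put $y=\phi^{-1}z$ and $y^*=\phi^*z^*$; then $z^*\bigl((\phi T\phi^{-1})z\bigr)=y^*(Ty)$ with $y^*(y)=1$ and $\|y\|\,\|y^*\|\le(1+\eps)$, so that $v\bigl(\phi T\phi^{-1}\bigr)\le (1+\eps)\,v(T)$, and symmetrically using $\phi^{-1}$. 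Combining these estimates, the ratio $v(T)/\|T\|$ is preserved up to a factor tending to $1$ as $\eps\to0$, whence $|n(Y)-n(Z)|\to0$ as the Banach--Mazur distance between $Y$ and $Z$ tends to $1$.

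Finally, since the Banach--Mazur distance from $X_q$ to $X_p$ tends to $1$ as $q\to p$ (including at $p=\infty$), the transference estimate yields the continuity of $p\mapsto n(X_p)$ and completes the argument. The step I expect to demand most care is the control of the numerical radius under conjugation, specifically the bookkeeping of the normalizing factors $\|y\|\,\|y^*\|$ coming from the fact that the transported pair $(y,y^*)$ satisfies $y^*(y)=1$ but is only approximately norming; once this is handled the intermediate value theorem does the rest.
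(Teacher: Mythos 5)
Your overall strategy (pin down the endpoint values $n(\ell_2^2)=0$ and $n(\ell_\infty^2)=1$, then run an intermediate value argument along the family $p\mapsto\ell_p^2$ using continuity of the numerical index with respect to the Banach--Mazur distance) is viable; in fact the continuity you need is exactly the known theorem \cite[Proposition~2]{F-M-P} that this paper quotes before Proposition~\ref{prop-old:values-up-to-renorming}. The gap is in the step where you claim to prove that continuity: the transference inequality $v(\phi T\phi^{-1})\le(1+\eps)\,v(T)$ is false. What your construction actually produces is a pair $(y,y^*)$ with $y^*(y)=1$ and $\|y\|\,\|y^*\|\le 1+\eps$; after normalizing, $(\hat y,\hat y^*)=\bigl(y/\|y\|,\,y^*/\|y^*\|\bigr)$ satisfies only $\hat y^*(\hat y)\ge 1-\eps$, so it need not lie in $\Pi(Y)$, and $|\hat y^*(T\hat y)|$ is not dominated by $v(T)$. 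Concretely, take $Y=\ell_2^2$, $T$ the rotation by $\pi/2$ (so $v(T)=0$, $\|T\|=1$), $y=(1,0)$ and $y^*=(1,t)$ with small $t>0$: then $y^*(y)=1$ and $\|y\|\,\|y^*\|=\sqrt{1+t^2}\le 1+\eps$, yet $|y^*(Ty)|=t>0=(1+\eps)v(T)$. Equivalently, conjugating this $T$ by $\phi=\mathrm{diag}(1,1+\eps)$ yields an operator on $\ell_2^2$ with strictly positive numerical radius. No multiplicative bound of the form $v(\phi T\phi^{-1})\le f(\eps)\,v(T)$ can hold, and the failure occurs precisely at the space $\ell_2^2$ through which your path must pass, since by \cite[Corollary~2.5]{MaMeRo} it is the only two-dimensional space with numerical index zero.

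What continuity really requires is an \emph{additive} estimate, uniform in $T$: for $x\in S_X$, $x^*\in S_{X^*}$ with $x^*(x)\ge 1-\delta$ and $\|T\|\le1$, one has $|x^*(Tx)|\le v(T)+C\sqrt{\delta}$ (the order $\sqrt{\delta}$ is sharp, as the rotation example shows). Such an estimate can be extracted from the Bonsall--Duncan formula $\sup\re V(T)=\lim_{t\to0^+}\bigl(\|\Id+tT\|-1\bigr)/t$, and an additive error does suffice to get $|n(Y)-n(Z)|\to0$ as the Banach--Mazur distance tends to $1$; this is essentially the content of the proof of \cite[Proposition~2]{F-M-P}. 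By contrast, the non-quantitative statement one is tempted to invoke (Lemma~\ref{lemma:old-Anorming-Ardalani} with $A=S_{X^*}$) gives $v(T)$ as an infimum over $\delta$ for each \emph{fixed} $T$, with no uniformity in $T$, so it cannot be inserted into your argument as written. Note finally that the paper states Proposition~\ref{prop:old-values} as a citation to \cite[Theorem~3.6]{D-Mc-P-W}, which predates the continuity theorem of \cite{F-M-P}; so even after repairing the estimate, your route is a genuinely different and more machinery-dependent one than the original.
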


For $1\leq p \leq \infty$ and $n\in \N$, we write $\ell_p^{n}$ to denote the space $\R^n$ endowed with the $\ell_p$-norm. Even though the exact value of $n(\ell_p^2)$ is unknown for $1<p<\infty$, $p\neq 2$, the result above can be improved as follows.

\begin{example}[\mbox{\cite[Theorem~3.6]{D-Mc-P-W} and \cite[Theorem~1]{MarMerLAMA2009}}] \label{example:old-values-lp2}
$\bigl\{n(\ell_p^2)\, : \, 1<p<\infty,\, p\neq 2\bigr\}=]0,1[$, $n(\ell_2^2)=0$, $n(\ell_1^2)=n(\ell_\infty^2)=1$, and $\lim\limits_{p\to 2} n(\ell_p^2)=0$.
\end{example}

It is known that the numerical index is continuous with respect to the Banach-Mazur distance between (isomorphic) Banach spaces \cite[Proposition~2]{F-M-P}, and this result shows that the set of values of the numerical index up to equivalent renorming is always an interval. Moreover, this interval is never trivial and it is maximal in many cases. We say that a Banach space $X$ \emph{admits a long biorthogonal system} if there exists $\{(x_\lambda,x_\lambda^*)\}_{\lambda\in \Lambda}\subset X\times X^*$ such that $x_\lambda^*(x_\mu)=\delta_{\lambda,\mu}$ for $\lambda,\mu\in \Lambda$ and the cardinality of $\Lambda$ coincides with the density character of $X$. Examples of spaces admitting a long biorthogonal system are reflexive spaces and separable spaces.

\begin{prop}[\mbox{\cite[Theorems 9 and 10]{F-M-P}}] \label{prop-old:values-up-to-renorming}
Let $X$ be a Banach space.
\begin{enumerate}
\item[(a)] $\bigl\{ n(Y)\,:\, \text{$Y$ is isomorphic to $X$}\bigr\}\supseteq [0,1/3[$.
\item[(b)] If $X$ admits a long biorthogonal system, then $\bigl\{ n(Y)\,:\, \text{$Y$ is isomorphic to $X$}\bigr\}\supseteq [0,1[$.
\end{enumerate}
\end{prop}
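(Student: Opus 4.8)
The plan is to reduce both statements to three facts about $N(X):=\{\,n(Y)\,:\,Y\text{ isomorphic to }X\,\}$, where we may assume $\dim X\ge 2$ since otherwise $N(X)=\{1\}$ and there is nothing of interest: (i) $N(X)$ is an interval; (ii) $0\in N(X)$; and (iii) $\sup N(X)\ge 1/3$ in general, while $\sup N(X)=1$ when $X$ admits a long biorthogonal system. Granting these, $N(X)$ is an interval containing $0$ whose supremum is at least $1/3$ (resp.\ equal to $1$), so $[0,1/3[\subseteq N(X)$ (resp.\ $[0,1[\subseteq N(X)$), which is exactly (a) (resp.\ (b)). Fact (i) is the interval remark already quoted above: the equivalent norms on $X$ form a convex family (a convex combination of two equivalent norms is again an equivalent norm), so any two of them are joined by the continuous path $t\mapsto(1-t)\|\cdot\|_0+t\|\cdot\|_1$; along such a path the Banach--Mazur distances vary continuously, hence so does $n$ by \cite[Proposition~2]{F-M-P}, and therefore $N(X)$ is connected.

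For (ii) I would exhibit an equivalent norm with a non-zero skew-hermitian operator. Write $X=E\oplus W$ with $E$ a two-dimensional (hence complemented) subspace and $W$ a closed complement, and set
\[
\|e+w\|_0:=\bigl(\|e\|_E^2+\|w\|^2\bigr)^{1/2}\qquad(e\in E,\ w\in W),
\]
where $\|\cdot\|_E$ is a Euclidean norm on $E$. This is equivalent to the original norm, and the operator $T$ acting as a $90^\circ$ rotation on $E$ (with respect to $\|\cdot\|_E$) and as $0$ on $W$ satisfies $\|e^{tT}(e+w)\|_0=\|e+w\|_0$ for every $t$; thus $e^{tT}$ is a $\|\cdot\|_0$-isometry, whence $v(T)=0$ with $T\ne 0$, and $n\bigl((X,\|\cdot\|_0)\bigr)=0$.

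The heart of the matter is (iii). Here I would transfer the two-dimensional examples of Proposition~\ref{prop:old-values} --- which realize every value of $[0,1]$, in particular values arbitrarily close to $1$ --- to $X$ by an explicit renorming built from a biorthogonal system: fixing a target $t$, one chooses a two-dimensional norm of numerical index close to $t$ and glues it to the remainder of $X$ through the coordinate functionals of the system, in such a way that the numerical radius of an arbitrary operator can be bounded below. When the system is long, there is enough room for every norm-one operator to nearly attain its norm at a ``coordinate'' where the numerical range is large, forcing $n$ as close to $1$ as one wishes and giving $\sup N(X)=1$.

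The main obstacle is precisely this lower estimate on $v(T)$ valid for all $T$ simultaneously: renorming a single two-dimensional piece controls only those operators that ``see'' that piece, whereas raising the numerical index of the whole space demands that the coordinate structure constrain every operator. Without a long biorthogonal system one has only a countable or finite supply of such functionals, and the resulting estimate is lossy, producing a fixed quantitative gap that caps the guaranteed value at $1/3$ rather than $1$. Thus the very same construction, analysed with the cruder estimate available in full generality, yields $\sup N(X)\ge 1/3$ and hence (a), while the long biorthogonal system is exactly what removes the loss and upgrades the bound to $1$, giving (b).
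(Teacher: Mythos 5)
Note first that the paper does not prove this proposition: it is quoted verbatim from \cite[Theorems~9 and~10]{F-M-P}, so your proposal has to stand as a self-contained proof of those two theorems. Your reduction is sound: writing $N(X)=\{n(Y):Y\ \text{isomorphic to}\ X\}$, it suffices to show that $N(X)$ is an interval, that $0\in N(X)$, and that $\sup N(X)\geq 1/3$ in general (resp.\ $\sup N(X)=1$ under a long biorthogonal system). Your proofs of the first two facts are essentially correct: the interval property follows from \cite[Proposition~2]{F-M-P} together with convexity of the set of equivalent norms, and $0\in N(X)$ follows from your renorming $X=E\oplus_2 W$ with $E$ two-dimensional Euclidean, whose rotation generator is skew-hermitian by the standard Bonsall--Duncan characterization (alternatively, quote Proposition~\ref{prop-suma-oldMarPay2000} to get $n(E\oplus_2 W)\leq n(\ell_2^2)=0$).

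The gap is fact (iii), which is exactly the content of the two cited theorems, and what you write there is a statement of intent rather than a proof: you never define the equivalent norm built from the biorthogonal system, never prove the uniform lower bound $v(T)\geq(1-\eps)\|T\|$ valid for \emph{every} operator $T$ on the renormed space (you correctly identify this as ``the main obstacle'' and then do not address it), and never derive the constant $1/3$. Worse, your claim that ``the very same construction, analysed with the cruder estimate available in full generality'' yields $\sup N(X)\geq 1/3$ is not just unproved but false. Take $X$ a nonseparable Hilbert space and, as in your scheme for part (a), a merely countable biorthogonal system $\{(x_n,x_n^*)\}_{n\in\N}$. Any norm $|\cdot|$ manufactured from the original norm together with these coordinates (that is, $|x|$ a function of $\|x\|$ and $(x_n^*(x))_{n\in\N}$, as in $\max\{\eps\|x\|,\sup_n|x_n^*(x)|\}$ or any similar recipe) is invariant under the operators $R_t$ that rotate by angle $t$ a fixed two-dimensional plane $P\subset\bigcap_n\ker x_n^*$ and act as the identity on its orthogonal complement: such $R_t$ preserve $\|\cdot\|$ and every coordinate $x_n^*$, hence every $|\cdot|$ of this form. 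By the same exponential argument you used in step (ii), the generator of $(R_t)_{t\in\R}$ is then skew-hermitian for $|\cdot|$, so $n(X,|\cdot|)=0$; a small system gives no positive bound at all, let alone $1/3$. So the general case cannot be a ``lossy'' version of the long-system argument; it requires a genuinely different renorming, which is precisely what \cite{F-M-P} supplies. Finally, your guiding picture of ``gluing'' a two-dimensional space of large index (from Proposition~\ref{prop:old-values}) onto the rest of $X$ points in the wrong direction: by Proposition~\ref{prop:old-absolutesumsinequality}, any absolute-sum gluing can only push the numerical index \emph{down} to the worst summand, never up, so raising the index of $X$ forces a renorming that constrains all operators on all of $X$ simultaneously --- the very step your proposal leaves blank.
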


The result above shows that, among a wide class of Banach spaces, every value of the numerical index but $1$ is isomorphically innocuous. On the other hand, it is known that not every Banach space can be renormed to have numerical index $1$.

\begin{prop}[\mbox{\cite[Corollary~4.10]{AvKadMarMerShe-SCD}}] \label{prop-old:dualcontains_l_1}
Let $X$ be an infinite-dimensional Banach space with $n(X)=1$. Then, $X^*\supseteq \ell_1$.
\end{prop}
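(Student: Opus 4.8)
The plan is to argue by contradiction: assume $n(X)=1$ and $\dim X=\infty$ but $X^*\not\supseteq\ell_1$, and derive a contradiction following the circle of ideas in \cite{AvKadMarMerShe-SCD}. The first step is to put the hypothesis $n(X)=1$ into a usable form. Since $t\mapsto \max_{\omega=\pm1}\|\Id+\omega\,tT\|$ is convex, equals $1$ at $t=0$, is bounded above by $1+t\|T\|$, and has right derivative $v(T)$ at $0$ (because $\max V(T)=\lim_{t\to0^+}\frac{\|\Id+tT\|-1}{t}$ and $-\min V(T)=\lim_{t\to0^+}\frac{\|\Id-tT\|-1}{t}$), one checks that $n(X)=1$ is equivalent to the alternative Daugavet equation for all operators,
$$
\max_{\omega=\pm1}\|\Id+\omega\,T\|=1+\|T\|\qquad(T\in\mathcal L(X)).
$$
Specializing to a rank-one operator $x^*\otimes y$ (the map $z\mapsto x^*(z)\,y$) with $x^*\in S_{X^*}$, $y\in S_X$, and unwinding $v(x^*\otimes y)=\sup\{|x^*(z)\,z^*(y)|:(z,z^*)\in\Pi(X)\}=1$, I obtain the working form I will exploit: for every $y\in S_X$, every $x^*\in S_{X^*}$ and every $\eps>0$ there is $(z,z^*)\in\Pi(X)$ with $|x^*(z)|>1-\eps$ and $|z^*(y)|>1-\eps$.

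Second, I would reduce to a separable setting. Note that $X^*\not\supseteq\ell_1$ forces $X\not\supseteq\ell_1$, since $X\supseteq\ell_1$ would give $X^*\supseteq\ell_\infty\supseteq\ell_1$. Using the separable determination of the alternative Daugavet property, I pass to a separable infinite-dimensional subspace $Y\subseteq X$ which itself satisfies the rank-one working form internally; as $Y\subseteq X$, automatically $Y\not\supseteq\ell_1$. The point of insisting on $Y\not\supseteq\ell_1$ is that a separable space not containing $\ell_1$ is slicely countably determined (SCD), so $B_Y$ admits a \emph{countable determining family of slices}: any subset of $B_Y$ meeting all of them has $B_Y$ in its closed convex hull.

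Third comes the engine. I would run the determining slices of $B_Y$ against the working form: in each slice $S_k=\{y\in B_Y: f_k(y)>1-\alpha_k\}$ one picks a point and applies the working form with $x^*=f_k$ to produce an almost-aligned pair $(z,z^*)\in\Pi(Y)$. Infinite-dimensionality keeps the inductive selection from terminating, while the determining property of the countably many slices prevents the construction from collapsing into a finite-dimensional piece, so the process yields a normalized sequence $(z_n^*)\subseteq Y^*$ together with test vectors giving a lower $\ell_1$-estimate $\|\sum_n a_n z_n^*\|\ge c\sum_n|a_n|$. Extending each $z_n^*$ by Hahn--Banach to $\tilde z_n^*\in X^*$ with the same norm, the restriction inequality $\|\sum_n a_n\tilde z_n^*\|_{X^*}\ge\|(\sum_n a_n\tilde z_n^*)|_Y\|_{Y^*}\ge c\sum_n|a_n|$ preserves the estimate, so $(\tilde z_n^*)$ spans a copy of $\ell_1$ in $X^*$, contradicting $X^*\not\supseteq\ell_1$.

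The main obstacle, and the reason the ordinary Daugavet-property argument does not transfer verbatim, is the sign ambiguity in $\max_{\omega=\pm1}$: the additivity guarantees extremal behaviour only for \emph{some} sign $\omega$ that may depend on the operator, so a naive induction risks cancellation instead of an $\ell_1$-pattern. Controlling the coherence of these signs across infinitely many steps is exactly what the SCD framework is built to do, and it is where the real work lies; the separable determination of the property is a second, more technical, point requiring care. Finally, infinite-dimensionality is essential rather than cosmetic, since in finite dimensions the selection halts after finitely many steps --- consistent with finite-dimensional spaces of numerical index $1$ having no $\ell_1$ in their duals, and with the extremal example $c_0$, where $n(c_0)=1$ and $c_0^*=\ell_1$.
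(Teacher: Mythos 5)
First, a point of reference: the paper you are working against does not prove this proposition at all --- it is quoted verbatim from \cite[Corollary~4.10]{AvKadMarMerShe-SCD} --- so the comparison is with the proof in that reference, whose skeleton you have in fact reproduced correctly: the rank-one ``working form'' of $n(X)=1$ (which, by the way, needs no detour through the alternative Daugavet equation, since $n(X)=1$ directly gives $v(x^*\otimes y)=\|x^*\|\,\|y\|$ and $v(x^*\otimes y)=\sup\{|x^*(z)||z^*(y)|:(z,z^*)\in\Pi(X)\}$), the reduction to a separable infinite-dimensional $Y$ not containing $\ell_1$, the fact that such $Y$ is SCD, and the Hahn--Banach extension step at the end, which is correct and clean. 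The genuine gap is your third step, the ``engine'': it is a promissory note, not an argument. Producing a sequence $(z_n^*)$ with $\bigl\|\sum_n a_n z_n^*\bigr\|\geq c\sum_n|a_n|$ means producing, for each choice of signs, a vector in $B_Y$ on which the correspondingly signed functionals are \emph{simultaneously} large; picking one almost-aligned pair $(z,z^*)$ per determining slice gives countably many functionals but nothing whatsoever about such sign-coherent test vectors, and you concede yourself that controlling the signs ``is where the real work lies.'' In \cite{AvKadMarMerShe-SCD} this gap is exactly the content of two substantial theorems: (i) a space with the alternative Daugavet property whose unit ball is SCD is \emph{lush} (an inductive convex-hull construction running the determining slices against the rank-one property), and (ii) the dual of an infinite-dimensional separable lush space contains $\ell_1$ (a separate, nontrivial result of Boyko, Kadets, Mart\'{\i}n, and Mer\'{\i}). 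Neither is outlined in your proposal, so what you have is the correct strategy with its core missing.

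Two further, smaller points. Your justification of the reduction ``$X^*\nsupseteq\ell_1\Rightarrow X\nsupseteq\ell_1$'' is wrong as stated: $X\supseteq\ell_1$ does \emph{not} imply $X^*\supseteq\ell_\infty$ (take $X=C[0,1]$, which contains $\ell_1$; its dual $M[0,1]$ is an $L$-space, hence weakly sequentially complete, hence contains no copy of $c_0$, a fortiori none of $\ell_\infty$). The implication you need is nevertheless true: the restriction map $X^*\to\ell_1^*=\ell_\infty$ is onto, $\ell_\infty$ contains $\ell_1$, and bounded lifts of an $\ell_1$-basis through a quotient map are again an $\ell_1$-basis (the same norm estimate as in your final step); alternatively, quote Pe{\l}czy\'{n}ski's theorem that $X\supseteq\ell_1$ forces $X^*\supseteq L_1[0,1]$. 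Finally, the separable determination of the alternative Daugavet property, which your reduction leans on, is itself a theorem requiring proof or citation; flagging it as ``technical'' does not discharge it.
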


There is a relation between the numerical index of a Banach space and the one of its dual.

\begin{prop}[\mbox{\cite[Proposition~1.2]{D-Mc-P-W}}] \label{prop:old-inequalityduality}
Let $X$ be a Banach space. Then $v(T)=v(T^*)$ for every $T\in \mathcal{L}(X)$. Therefore, $n(X^*)\leq n(X)$.
\end{prop}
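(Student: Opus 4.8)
The heart of the matter is the identity $v(T)=v(T^*)$; once this is available, the inequality $n(X^*)\le n(X)$ is essentially formal. I would begin by recording the elementary inclusion $V(T)\subseteq V(T^*)$. Indeed, let $J\colon X\to X^{**}$ denote the canonical isometric embedding and take $(x,x^*)\in\Pi(X)$. Then $Jx\in S_{X^{**}}$, $x^*\in S_{X^*}$ and $(Jx)(x^*)=x^*(x)=1$, so that $(x^*,Jx)\in\Pi(X^*)$; moreover $(Jx)(T^*x^*)=(T^*x^*)(x)=x^*(Tx)$. Hence every element of $V(T)$ lies in $V(T^*)$, and in particular $v(T)\le v(T^*)$. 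The real content is therefore the reverse inequality $v(T^*)\le v(T)$.

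For the reverse inequality I would avoid a direct weak-$*$ approximation of the functionals in $\Pi(X^*)$ (which would force a Bishop--Phelps--Bollob\'as type perturbation) and instead pass through the classical analytic description of the numerical range. Since $\mathcal{L}(X)$ is a unital Banach algebra with unit $\Id$, and the spatial numerical range $V(T)$ shares its maximal real part with the associated algebra numerical range, one has (see \cite{B-D1})
\[
\max \re V(T)=\inf_{h>0}\frac{\|\Id+hT\|-1}{h}.
\]
The key observation is that the right-hand side is expressed purely through operator norms, while taking adjoints is an isometry of $\mathcal{L}(X)$ into $\mathcal{L}(X^*)$ with $(\Id+hT)^*=\Id+hT^*$. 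Consequently $\max \re V(T)=\max \re V(T^*)$, and applying the same identity to $-T$ (using $\|\Id-hT\|=\|\Id-hT^*\|$) gives $\max \re V(-T)=\max \re V(-T^*)$. As everything is real, $V(-T)=-V(T)$ yields $v(T)=\max\{\max \re V(T),\ \max \re V(-T)\}$, so these two equalities give $v(T)=v(T^*)$ at once, and in particular the sought $v(T^*)\le v(T)$.

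Finally I would deduce $n(X^*)\le n(X)$ from the maximality formulation of the numerical index. For an arbitrary $T\in\mathcal{L}(X)$ its adjoint $T^*$ is a particular operator on $X^*$, so the defining inequality for $n(X^*)$ applied to $T^*$ gives $n(X^*)\,\|T^*\|\le v(T^*)$. Since $\|T^*\|=\|T\|$ and, by the previous step, $v(T^*)=v(T)$, this reads $n(X^*)\,\|T\|\le v(T)$. As $T\in\mathcal{L}(X)$ was arbitrary and $n(X)$ is the largest constant with this property, we conclude $n(X^*)\le n(X)$.

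The only genuinely non-routine ingredient is the norm-derivative formula for $\max \re V(T)$ together with the fact that the spatial and algebra numerical ranges have the same maximal real part; I regard this as the main obstacle, although both facts are standard and can be quoted from \cite{B-D1,B-D2}. Everything else --- the embedding argument, the isometric behaviour of the adjoint, and the final deduction --- is straightforward.
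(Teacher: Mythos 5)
Your proof is correct. The paper does not actually prove this statement---it quotes it from \cite[Proposition~1.2]{D-Mc-P-W}---and your argument (the inclusion $V(T)\subseteq V(T^*)$ via the canonical embedding, then the norm-derivative formula $\sup\re V(T)=\inf_{h>0}\bigl(\|\Id+hT\|-1\bigr)/h$ combined with the facts that $\|S^*\|=\|S\|$ and $(\Id+hT)^*=\Id+hT^*$, applied to both $T$ and $-T$) is essentially the classical proof from that reference.
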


It was shown in \cite{BKMW} that the inequality above can be strict, solving a long standing open problem. Actually, the inequality $n(X^*)\leq n(X)$ is the unique possible restriction on the values of $n(X^*)$ and $n(X)$, as the following results shows. It does not appear explicitly anywhere, but it is easily deductible from the given references.

\begin{example}[\mbox{\cite[Examples 3.1 and 3.3]{BKMW} and \cite[Proposition~4.2]{MarJFA2008}}] \label{example:old-duality}
There are Banach spaces $X$ such that $n(X^*)<n(X)$. Moreover, for every $\alpha,\beta\in [0,1]$ with $\alpha\geq \beta$, there exists a Banach space $X_{\alpha,\beta}$ such that $n(X_{\alpha,\beta})=\alpha$ and $n(X_{\alpha,\beta}^*)=\beta$.
\end{example}

Our next results deal with the computation of the numerical index. First, a result about direct sums. Given an arbitrary family
$\{X_\lambda\, : \, \lambda\in\Lambda\}$ of Banach spaces and $1\leq p\leq \infty$, we
denote by $\left[\oplus_{\lambda\in\Lambda}
X_\lambda\right]_{c_0}$ (respectively $\left[\oplus_{\lambda\in\Lambda}
X_\lambda\right]_{\ell_p}$) the $c_0$-sum (respectively $\ell_p$-sum) of the family. In case $\Lambda$ has just two
elements, we use the simpler notation $X\oplus_\infty Y$ or
$X\oplus_p Y$.

\begin{prop}[\mbox{\cite[Proposition~1 and Remarks~2]{MarPay}}] \label{prop-suma-oldMarPay2000} Let $\{X_\lambda\,:\, \lambda \in \Lambda\}$ be a
family of Banach spaces. Then
\begin{enumerate}
\item[(a)] $n\Bigl(\left[\oplus_{\lambda\in\Lambda}
X_\lambda\right]_{\ell_p} \Bigr)\leq \inf_{\lambda} \,n(X_\lambda)$ for every $1< p < \infty$.
\item[(b)] $n\Bigl(\left[\oplus_{\lambda\in\Lambda} X_\lambda\right]_{c_0}
\Bigr)= n\Bigl(\left[\oplus_{\lambda\in\Lambda}
X_\lambda\right]_{\ell_1} \Bigr)=
n\Bigl(\left[\oplus_{\lambda\in\Lambda}
X_\lambda\right]_{\ell_\infty} \Bigr)= \inf_{\lambda} \,n(X_\lambda)$.
\item[(c)] If $n(X_\lambda)>0$ for every $\lambda\in \Lambda$, then
    $$
    \mathcal{Z}\Bigl(\left[\oplus_{\lambda\in\Lambda} X_\lambda\right]_{c_0}\Bigr) = \mathcal{Z} \Bigl(\left[\oplus_{\lambda\in\Lambda} X_\lambda\right]_{\ell_1}\Bigr) = \mathcal{Z} \Bigl(\left[\oplus_{\lambda\in\Lambda} X_\lambda\right]_{\ell_\infty}\Bigr)=\{0\}.
    $$
\end{enumerate}
\end{prop}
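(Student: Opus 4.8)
The plan is to prove the three assertions by separate arguments, deriving the upper estimates in (a) and in (b) from one ``amplification'' construction, the lower estimate in (b) from a rank-one perturbation of a diagonal block, and (c) from a direct inspection of the operator matrix of a skew-hermitian operator. Throughout I write $P_\lambda$ and $I_\lambda$ for the canonical projection and inclusion associated with the summand $X_\lambda$, and $T_{\lambda\nu}:=P_\lambda T I_\nu$ for the blocks of an operator $T$ on the sum $X$.

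For the inequalities $n(X)\le\inf_\lambda n(X_\lambda)$, valid for $\ell_p$ with $1<p<\infty$ as well as for the $c_0$-, $\ell_1$- and $\ell_\infty$-sums, I would fix $\mu$ and $T_\mu\in\mathcal L(X_\mu)$ and consider the amplification $\widetilde{T_\mu}:=I_\mu T_\mu P_\mu\in\mathcal L(X)$. That $\|\widetilde{T_\mu}\|=\|T_\mu\|$ is immediate. The point is $v(\widetilde{T_\mu})\le v(T_\mu)$: if $(x,x^*)\in\Pi(X)$, then $\langle x^*,x\rangle=1$ together with $\|x\|=\|x^*\|=1$ forces equality in the Hölder/duality inequality defining the absolute norm, so $x_\mu^*$ is a nonnegative multiple of a functional norming $x_\mu$; writing $y=x_\mu/\|x_\mu\|$ and $y^*=x_\mu^*/\|x_\mu^*\|$ one gets $(y,y^*)\in\Pi(X_\mu)$ and $\langle x^*,\widetilde{T_\mu}x\rangle=\|x_\mu^*\|\,\|x_\mu\|\,\langle y^*,T_\mu y\rangle$, which is a factor in $[0,1]$ times a point of $V(T_\mu)$. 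Hence $V(\widetilde{T_\mu})\subseteq[0,1]\cdot V(T_\mu)$, so $v(\widetilde{T_\mu})\le v(T_\mu)$, and since $n(X)\le v(\widetilde{T_\mu})/\|\widetilde{T_\mu}\|\le v(T_\mu)/\|T_\mu\|$ for all $T_\mu$, taking infima yields $n(X)\le n(X_\mu)$ and then $n(X)\le\inf_\lambda n(X_\lambda)$.

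For the reverse inequality in (b), consider first the $c_0$- and $\ell_\infty$-sums, where $\|T\|=\sup_\mu\rho_\mu$ with $\rho_\mu:=\sup\{\|T_{\mu\mu}u+w\|\}$ the $\mu$-th ``row norm'' and $w$ ranging over the balanced convex set of backgrounds $\{\sum_{\nu\ne\mu}T_{\mu\nu}x_\nu:\|x_\nu\|\le1\}$. Choosing in $\Pi(X)$ the pairs $\hat x^*=I_\mu u^*$ and $\hat x=I_\mu u+\sum_{\nu\ne\mu}x_\nu$ (with $(u,u^*)\in\Pi(X_\mu)$, $\|x_\nu\|\le1$) gives $\langle\hat x^*,T\hat x\rangle=\langle u^*,T_{\mu\mu}u+w\rangle$, so $v(T)\ge\sup|\langle u^*,T_{\mu\mu}u+w\rangle|$. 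Now fix $\mu$ with $\rho_\mu$ close to $\|T\|$; by convexity the supremum defining $\rho_\mu$ is nearly attained with $\|u\|=1$, say at $(x_\mu^0,w_0)$, and I would fix a functional $\xi^*$ norming $x_\mu^0$ and set $\Psi:=T_{\mu\mu}+\xi^*\otimes w_0\in\mathcal L(X_\mu)$. Then $\|\Psi\|\ge\|\Psi x_\mu^0\|=\|T_{\mu\mu}x_\mu^0+w_0\|\approx\rho_\mu$, while for every $(u,u^*)\in\Pi(X_\mu)$ the admissible background $w=\xi^*(u)w_0$ (legitimate because $|\xi^*(u)|\le1$ and the background set is balanced) realizes $\langle u^*,\Psi u\rangle$ as a point of $V(T)$, whence $v(\Psi)\le v(T)$. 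Applying $v(\Psi)\ge n(X_\mu)\|\Psi\|$ gives $v(T)\ge n(X_\mu)\rho_\mu\ge(\inf_\lambda n(X_\lambda))\rho_\mu$, and letting $\rho_\mu\to\|T\|$ finishes the case. The $\ell_1$-sum is dual: here $\|T\|=\sup_\mu\sigma_\mu$ with $\sigma_\mu:=\sup_{\|u\|=1}\sum_\lambda\|T_{\lambda\mu}u\|$, concentrating $\hat x=I_\mu u$ and leaving the off-diagonal dual coordinates free yields $v(T)\ge\sup\big(|\langle u^*,T_{\mu\mu}u\rangle|+\sum_{\lambda\ne\mu}\|T_{\lambda\mu}u\|\big)$, and one uses $\Psi:=T_{\mu\mu}+\theta^*\otimes\eta$ with $\theta^*=\sum_{\lambda\ne\mu}T_{\lambda\mu}^*g_\lambda^*$ (for fixed $g_\lambda^*$ norming $T_{\lambda\mu}u_0$) and $\eta$ a nonnegative-multiple direction of $T_{\mu\mu}u_0$; the free functionals $\langle u^*,\eta\rangle g_\lambda^*$ realize $\langle u^*,\Psi u\rangle$ inside $V(T)$. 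The main obstacle is exactly this construction: one must manufacture an operator on a single summand whose norm recovers the full row (resp.\ column) norm of $T$ yet every one of whose numerical values is genuinely attained by $T$, and the rank-one perturbation of the diagonal block, tuned so that its constant part can be rescaled by $\xi^*(u)$ (resp.\ $\langle u^*,\eta\rangle$) within an admissible background, is what reconciles these two demands.

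Finally, for (c) let $S\in\mathcal Z(X)$, so $V(S)=\{0\}$. Embedding $\Pi(X_\lambda)$ into $\Pi(X)$ via $I_\lambda$ shows $V(S_{\lambda\lambda})\subseteq V(S)=\{0\}$, hence $v(S_{\lambda\lambda})=0$ and, since $n(X_\lambda)>0$, every diagonal block vanishes. To kill an off-diagonal block $S_{\lambda\mu}$ with $\lambda\ne\mu$ I would test $S$ on two-coordinate pairs: in the $c_0$- and $\ell_\infty$-sums take $\hat x=I_\lambda a+I_\mu b$ and $\hat x^*=I_\lambda a^*$ with $(a,a^*)\in\Pi(X_\lambda)$ and $b\in S_{X_\mu}$, so that $0=\langle\hat x^*,S\hat x\rangle=\langle a^*,S_{\lambda\mu}b\rangle$; as $a^*$ runs over the set of norming functionals, which is dense in $S_{X_\lambda^*}$ by Bishop--Phelps, this forces $S_{\lambda\mu}b=0$ for every $b$, i.e.\ $S_{\lambda\mu}=0$. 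The $\ell_1$-sum is symmetric, with $\hat x=I_\mu b$ and $\hat x^*=I_\lambda a^*+I_\mu b^*$ where $a^*$ now ranges over the whole ball $B_{X_\lambda^*}$, so that $\langle a^*,S_{\lambda\mu}b\rangle=0$ directly gives $S_{\lambda\mu}=0$. Therefore $S=0$ and $\mathcal Z(X)=\{0\}$.
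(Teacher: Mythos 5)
The paper does not prove this proposition; it cites it from \cite{MarPay}, and your strategy (the amplification $I_\mu T_\mu P_\mu$ for the upper bounds, a rank-one perturbation of a diagonal block for the lower bound in (b), block-testing plus Bishop--Phelps for (c)) is exactly the strategy of that source. Part (a) is correct (for the $\ell_\infty$-sum one must read $x^*_\mu$ as $x^*\circ I_\mu$ and use the splitting $X=X_\mu\oplus_\infty Y$, $X^*=X_\mu^*\oplus_1 Y^*$, but then your equality argument goes through), and your treatment of the $c_0$- and $\ell_1$-sums in (b) and (c) is essentially complete. The genuine gap is the $\ell_\infty$-sum, in both (b) and (c): an operator on an $\ell_\infty$-sum is \emph{not} determined by its matrix blocks $T_{\lambda\nu}=P_\lambda TI_\nu$, since an element of the sum is not the limit of its finite subsums. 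Concretely, take $X_\lambda=\R$ for $\lambda\in\N$, so $X=\ell_\infty$, let $\phi\in\ell_\infty^*$ be a Banach limit (so $\phi$ vanishes on $c_0$) and $T(x)=\phi(x)e_1$. Every block $T_{\lambda\nu}$ is zero because $I_\nu u\in c_0$, yet $\|T\|=1$. Hence your identity $\|T\|=\sup_\mu\rho_\mu$, with backgrounds defined as series $\sum_{\nu\ne\mu}T_{\mu\nu}x_\nu$ of blocks, is false for $\ell_\infty$-sums (for this $T$ all $\rho_\mu$ vanish), so your lower estimate in (b) proves nothing there; and in (c) you only establish that all blocks of $S$ vanish, which for an $\ell_\infty$-sum does not imply $S=0$. (For the $c_0$-sum the block series is also ill-defined for arbitrary families with $\|x_\nu\|\le1$; there one should restrict to finitely supported families, after which density saves the argument.)

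Both gaps are repaired by one device, which is what \cite{MarPay} does: never decompose $P_\mu Tx$ into blocks, but apply $T$ to actual elements. Define the background set as $W_\mu=\{P_\mu T\hat x\,:\,\hat x\in B_X,\ P_\mu\hat x=0\}$; it is balanced and convex, and for the $c_0$- and $\ell_\infty$-sums one has $\|T\|=\sup_\mu\sup\{\|T_{\mu\mu}u+w\|\,:\,u\in B_{X_\mu},\,w\in W_\mu\}$, since every $x\in B_X$ splits as $I_\mu x_\mu+\hat x$ with $P_\mu\hat x=0$. Your rank-one operator $\Psi=T_{\mu\mu}+\xi^*\otimes w_0$, now with $w_0=P_\mu T\hat x^0$ for a single element $\hat x^0$, still satisfies $V(\Psi)\subseteq V(T)$: indeed $\xi^*(u)w_0=P_\mu T\bigl(\xi^*(u)\hat x^0\bigr)$ and the pair $\bigl(I_\mu u+\xi^*(u)\hat x^0,\ I_\mu u^*\bigr)$ lies in $\Pi(X)$, so no infinite series ever appears and the rest of your computation is unchanged. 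Likewise in (c), test $S$ against $\hat x=I_\lambda a+z$ where $z\in B_X$ is an \emph{arbitrary} element with $P_\lambda z=0$, not just $z=I_\mu b$: Bishop--Phelps then gives $P_\lambda Sz=0$ for every such $z$, which together with $S_{\lambda\lambda}=0$ yields $P_\lambda Sx=P_\lambda S(I_\lambda x_\lambda)+P_\lambda S(x-I_\lambda x_\lambda)=0$ for all $x$, hence $S=0$, also in the $\ell_\infty$ case. With these corrections your proof is complete and coincides with the cited one.
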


As a consequence of this result and Proposition~\ref{prop:old-values}, the following interesting example follows.

\begin{example}[\mbox{\cite[Examples~3.b]{MarPay}}]\label{example-old-nx=0zx=0}
{\slshape There exists a Banach space $X$ with $n(X)=0$ such that $\mathcal{Z}(X)=\{0\}$.}
\end{example}

Proposition~\ref{prop-suma-oldMarPay2000}.a was extended to a wide class of arbitrary absolute sums of Banach spaces in \cite{MarMerPopRan}. We will introduce the necessary notation about arbitrary absolute sums in section~\ref{sec:absolute-sums}, but we would like to mention here a couple of results concerning absolute sums of pairs of Banach spaces. We recall that an \emph{absolute norm} is a norm $|\cdot|_a$ on $\R^2$ such that $|(1,0)|_a=|(0,1)|_a=1$ and $|(s,t)|_a=|(|s|,|t|)|_a$ for every $s,t\in \R$. Given two Banach spaces $Y$ and $W$ and an absolute norm $|\cdot|_a$, the \emph{absolute sum} of $Y$ and $W$ with respect to $|\cdot|_a$, denoted by $Y\oplus_a W$, is the Banach space $Y\times W$ endowed with the norm
$$
\|(y,w)\|=\bigl|(\|y\|,\|w\|)\bigr|_a \qquad \bigl(y\in Y,\ w\in W\bigr).
$$
For background on absolute sums the reader is referred to \cite{MPRY} and references therein. Natural examples of absolute sums are the $\ell_p$-sum $\oplus_p$ for $1\leq p \leq \infty$, associated to the $\ell_p$-norm in $\R^2$.

The first result is an extension of Proposition~\ref{prop-suma-oldMarPay2000}.a to absolute sums of two Banach spaces.

\begin{proposition}[\mbox{\cite[Corollary~2.4]{MarMerPopRan}}] \label{prop:old-absolutesumsinequality}
Let $Y$, $W$ Banach spaces and let $\oplus_a$ be an absolute sum. Then
$$
n(Y\oplus_a W)\leq \min\{n(Y),\,n(W)\}.
$$
\end{proposition}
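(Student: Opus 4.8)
The plan is to prove the two inequalities $n(Y\oplus_a W)\le n(Y)$ and $n(Y\oplus_a W)\le n(W)$ separately, the second being obtained from the first by running the identical argument with the two coordinates interchanged; so I concentrate on $n(X)\le n(Y)$, where $X:=Y\oplus_a W$. Since $n(X)\|S\|\le v(S)$ for every $S\in\mathcal L(X)$, it suffices to attach to each $T\in\mathcal L(Y)$ an operator $\widetilde T\in\mathcal L(X)$ with $\|\widetilde T\|\ge \|T\|$ and $v(\widetilde T)\le v(T)$: then $n(X)\|T\|\le n(X)\|\widetilde T\|\le v(\widetilde T)\le v(T)$ for every $T\ne 0$, and taking the infimum over $T$ gives $n(X)\le n(Y)$.

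The natural candidate is the \emph{amplification} $\widetilde T(y,w):=(Ty,0)$. First I would settle the norm identity. From $\|\widetilde T(y,w)\|=|(\|Ty\|,0)|_a=\|Ty\|\le \|T\|\,\|y\|\le \|T\|\,\|(y,w)\|$, where the final step uses the monotonicity of the absolute norm, one gets $\|\widetilde T\|\le \|T\|$; testing on vectors of the form $(y,0)$, for which $\|(y,0)\|=\|y\|$, yields the reverse inequality, so in fact $\|\widetilde T\|=\|T\|$.

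The core of the argument, and the step I expect to be the main obstacle, is the bound $v(\widetilde T)\le v(T)$, for which I need to understand $\Pi(X)$. Using the isometric identification $X^*=Y^*\oplus_{a'}W^*$, where $|\cdot|_{a'}$ denotes the dual absolute norm, every state may be written as $x=(y,w)$, $x^*=(y^*,w^*)$, and then $x^*(\widetilde T x)=y^*(Ty)$. The crucial point is the chain
$$1=y^*(y)+w^*(w)\le \|y^*\|\,\|y\|+\|w^*\|\,\|w\|\le \bigl|(\|y\|,\|w\|)\bigr|_a\,\bigl|(\|y^*\|,\|w^*\|)\bigr|_{a'}=1,$$
whose second inequality is the Hölder-type estimate between an absolute norm and its dual. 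Equality throughout forces $y^*(y)=\|y^*\|\,\|y\|$ (and likewise in the $W$-coordinate), together with $\|y^*\|\,\|y\|\le 1$. Hence, whenever $y\ne 0$ and $y^*\ne 0$, the normalized pair $(y/\|y\|,\,y^*/\|y^*\|)$ lies in $\Pi(Y)$, so $y^*(Ty)/(\|y\|\,\|y^*\|)\in V(T)$ and therefore
$$|x^*(\widetilde T x)|=|y^*(Ty)|\le \|y\|\,\|y^*\|\,v(T)\le v(T),$$
while the degenerate cases $y=0$ or $y^*=0$ make $x^*(\widetilde T x)=0$. Taking the supremum over $\Pi(X)$ gives $v(\widetilde T)\le v(T)$, which closes the argument.

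The only external facts I would import are standard properties of absolute sums, all available in the references cited above: that $|\cdot|_{a'}$ is again an absolute norm, that $(Y\oplus_a W)^*=Y^*\oplus_{a'}W^*$ isometrically, the monotonicity of absolute norms, and the Hölder inequality $cs+dt\le |(s,t)|_a\,|(c,d)|_{a'}$ for $s,t,c,d\ge 0$. The delicate point to record carefully is the rigidity in the equality case of the displayed chain, since it is precisely this rigidity that allows an honest state of $Y$ to be extracted from an arbitrary state of $X$.
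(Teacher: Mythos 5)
Your proof is correct, but a remark on context first: the paper does not prove this proposition at all --- it is quoted from \cite[Corollary~2.4]{MarMerPopRan} as a known result. The closest in-paper analogues are Lemma~\ref{lemma-abs-sum} and Proposition~\ref{prop:absolute-summand} (the same statement for $n'$) and Proposition~\ref{prop:derived-index-summands} (general absolute sums), and your core construction coincides with theirs and with the cited reference's: the amplification $\widetilde T(y,w)=(Ty,0)$, the identity $\|\widetilde T\|=\|T\|$, the bound $v(\widetilde T)\le v(T)$, and then the infimum over $T$. Where you genuinely diverge is in how the numerical-radius bound is established. The paper and \cite{MarMerPopRan} never describe $\Pi(Y\oplus_a W)$ in full; they compute $v$ on a convenient dense or norming subset of states via Lemma~\ref{lemma:old-projection-dense} (Bonsall--Duncan) or Lemma~\ref{lemma:old-Anorming-Ardalani}. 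You instead analyze \emph{every} state of the sum directly, using the isometric identification $(Y\oplus_a W)^*=Y^*\oplus_{a'}W^*$ and the equality case of the H\"{o}lder inequality between $|\cdot|_a$ and $|\cdot|_{a'}$: equality in your displayed chain correctly forces $y^*(y)=\|y^*\|\,\|y\|$ and $\|y^*\|\,\|y\|\le 1$, which is exactly the rigidity needed to extract an honest element of $\Pi(Y)$, and you handle the degenerate cases $y=0$, $y^*=0$. Your route is more self-contained (no density or norming lemma, only the standard duality of absolute sums of \emph{two} spaces, where $E'=E^*$ holds automatically); what the density/norming technique buys in exchange is generality, since it makes the very same amplification argument work for sums of arbitrary families over a space $E$ whose K\"{o}the dual is merely norming, where no clean description of the whole dual sphere, hence of $\Pi(X)$, is available.
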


The second result allows to calculate the Lie algebra of many absolute sums of two Banach spaces and will be very useful in our discussion.

\begin{lemma}[\mbox{\cite[Lemma~5]{Paya82}}] \label{lemma:lemma5Paya82}
Let $Y$, $W$ be Banach spaces and $\oplus_a$ be an absolute sum different from the $\ell_2$-sum. Then every element in $\mathcal{Z}(Y\oplus_a W)$ is diagonal (i.e.\ it commutes with the projections). Moreover, given $T\in \mathcal{Z}(Y\oplus_a W)$, there exist $S\in \mathcal{Z}(Y)$ and $R\in \mathcal{Z}(W)$ such that
$$
T(y,w)=(Sy,Rw) \qquad \bigl(y\in Y,\ w\in W\bigr).
$$
\end{lemma}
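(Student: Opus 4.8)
The plan is to work directly with the numerical range, using that $T\in\mathcal{Z}(X)$ for $X=Y\oplus_a W$ means exactly $x^*(Tx)=0$ for every $(x,x^*)\in\Pi(X)$. I write $T$ in block form $T(y,w)=(Ay+Bw,\,Cy+Dw)$ with $A\in\mathcal{L}(Y)$, $B\in\mathcal{L}(W,Y)$, $C\in\mathcal{L}(Y,W)$, $D\in\mathcal{L}(W)$. Here ``$T$ is diagonal'' is precisely $B=C=0$, and the ``moreover'' part is then $A\in\mathcal{Z}(Y)$, $D\in\mathcal{Z}(W)$. The whole point is to feed well-chosen elements of $\Pi(X)$ into $x^*(Tx)=0$.

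First I would record the shape of $\Pi(X)$ coming from the absolute sum: if $y_0\in S_Y$, $w_0\in S_W$, if $y_0^*,w_0^*$ are support functionals of $y_0,w_0$, and if $((\alpha,\beta),(\gamma,\eta))$ is a \emph{dual pair} for the $2$-dimensional norm $a$ (that is, $a(\alpha,\beta)=a'(\gamma,\eta)=1$ and $\gamma\alpha+\eta\beta=1$), then $x=(\alpha y_0,\beta w_0)$ and $x^*=(\gamma y_0^*,\eta w_0^*)$ lie in $\Pi(X)$; this is routine from $\|x\|=a(\alpha,\beta)$, $\|x^*\|=a'(\gamma,\eta)$ and the dual description $X^*=Y^*\oplus_{a'}W^*$. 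The boundary pairs $(\alpha,\beta)=(\gamma,\eta)=(1,0)$ and $(0,1)$ give $y_0^*(Ay_0)=0$ and $w_0^*(Dw_0)=0$, i.e.\ $A\in\mathcal{Z}(Y)$ and $D\in\mathcal{Z}(W)$. Feeding a general interior pair ($\alpha,\beta>0$) into $x^*(Tx)=0$ and cancelling the two diagonal terms that just vanished leaves
\[
\gamma\beta\,y_0^*(Bw_0)+\eta\alpha\,w_0^*(Cy_0)=0,
\]
valid for all the data. Writing $P:=y_0^*(Bw_0)$, $Q:=w_0^*(Cy_0)$ (fixed once $y_0,w_0,y_0^*,w_0^*$ are fixed), this says $(P,Q)\in\R^2$ is orthogonal to $(\gamma\beta,\eta\alpha)$ for every interior dual pair of $a$.

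The crux is then a purely $2$-dimensional statement: \emph{if $a\neq\ell_2$, the vectors $(\gamma\beta,\eta\alpha)$, as $((\alpha,\beta),(\gamma,\eta))$ runs over the interior dual pairs of $a$, are not all collinear.} Granting this, two pairs give linearly independent vectors, hence $(P,Q)=0$; choosing $y_0=Bw_0/\|Bw_0\|$ with a supporting $y_0^*$ (and symmetrically for $C$) forces $B=0$ and $C=0$, finishing the proof. To prove the $2$-dimensional statement I would argue by dichotomy. If $a$ fails to be smooth at some interior point $(\alpha,\beta)$ with $\alpha,\beta>0$, it has two non-proportional support functionals there, and since $\alpha,\beta>0$ these already yield two linearly independent vectors $(\gamma\beta,\eta\alpha)$. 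If instead $a$ is smooth throughout the open first quadrant, then $(\gamma,\eta)=\nabla a(\alpha,\beta)$ is unique and homogeneous of degree $0$; collinearity of all the vectors reads $t\,a_s=\kappa\,s\,a_t$ for a constant $\kappa$, a first-order linear PDE whose characteristics are the curves $\kappa s^2+t^2=\text{const}$. Thus $a$ is a function of $\kappa s^2+t^2$, and degree-$1$ homogeneity with $a(1,0)=a(0,1)=1$ forces $\kappa=1$ and $a(s,t)=\sqrt{s^2+t^2}$, i.e.\ $a=\ell_2$.

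I expect the main obstacle to be this last dichotomy, especially the smooth case: one must turn ``all vectors collinear'' into an identity forcing $a=\ell_2$, and handle that the norm is only differentiable almost everywhere (so the PDE holds a.e.\ and $a$ is recovered by continuity and convexity) and that a vanishing partial derivative, as for $\ell_\infty$, creates no spurious collinear family (it does not, since $a_t\equiv0$ on the interior would contradict $a(0,1)=1$). Everything else---the description of $\Pi(X)$, the extraction of the diagonal blocks, and the final Hahn--Banach step---is routine. This computation also explains the exclusion of $\ell_2$ transparently: for $a=\ell_2$ one has $(\gamma,\eta)=(\alpha,\beta)$, so every $(\gamma\beta,\eta\alpha)$ is a multiple of $(1,1)$ and one recovers only $P+Q=0$, precisely the skew-symmetry permitting the off-diagonal rotations of Hilbert space.
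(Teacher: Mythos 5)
Your argument cannot be compared line-by-line with ``the paper's own proof'' because the paper contains none: Lemma~\ref{lemma:lemma5Paya82} is imported as a known result, cited from \cite[Lemma~5]{Paya82}, and no argument is reproduced. Judged on its own, your proof is correct. The special pairs $\bigl((\alpha y_0,\beta w_0),(\gamma y_0^*,\eta w_0^*)\bigr)$ do lie in $\Pi(Y\oplus_a W)$ (this uses the standard duality $(Y\oplus_a W)^*=Y^*\oplus_{a'}W^*$, and a norming functional of a point in the positive quadrant automatically has non-negative coordinates); the boundary pairs give $A\in\mathcal{Z}(Y)$ and $D\in\mathcal{Z}(W)$; and the surviving identity $\gamma\beta\,y_0^*(Bw_0)+\eta\alpha\,w_0^*(Cy_0)=0$ reduces everything to your two-dimensional claim, which holds up under scrutiny. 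In the non-smooth case, two distinct norming functionals at the same point $(\alpha,\beta)$ are automatically non-proportional (a proportionality factor $\lambda$ would satisfy $\lambda=\lambda(\gamma\alpha+\eta\beta)=1$), and the invertible scaling $(\gamma,\eta)\mapsto(\gamma\beta,\eta\alpha)$ preserves independence since $\alpha,\beta>0$. In the smooth case, the gradient of an absolute norm is non-negative on the open quadrant and never vanishes (Euler's identity), the degenerate common directions $(1,0)$ and $(0,1)$ are excluded by $a(1,0)=a(0,1)=1$ exactly as you say, so the collinearity constant $\kappa$ is strictly positive, the characteristics are genuine ellipse arcs inside the quadrant, and only pointwise differentiability is needed (chain rule along a smooth curve plus the mean value theorem), after which homogeneity and normalization force $a=\ell_2$; your worry about ``a.e.\ differentiability'' is in fact moot, since your own dichotomy places you either at an honest non-smoothness point or in the everywhere-differentiable case. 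The final Hahn--Banach step killing $B$ and $C$ is routine, as you say. As for what each route buys: the classical toolkit for diagonality statements of this kind (hermitian projections, or the one-parameter groups of isometries $\exp(tT)$ generated by skew-hermitian $T$ combined with structure results for isometries of absolute sums) trades your calculus for operator-theoretic machinery; your computation is entirely elementary, self-contained, and has the added virtue of making the exclusion of the $\ell_2$-sum transparent --- for $a=\ell_2$ the vectors $(\gamma\beta,\eta\alpha)$ all point along $(1,1)$, so one only recovers the skew-symmetry relation $y_0^*(Bw_0)+w_0^*(Cy_0)=0$, which is exactly what the rotations of a Hilbert space satisfy.
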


Next we deal with vector valued function spaces. We recall some common notation. Let $X$ be a Banach space. Given a compact Hausdorff topological space, $C(K,X)$ (resp. $C_w(K,X)$) is the Banach space of all continuous (resp. weakly continuous) functions from $K$ into $X$ endowed with the supremum norm. If $L$ is a locally compact Hausdorff topological space, $C_0(L,X)$ is the Banach space of all continuous functions from $L$ to $X$ vanishing at infinity. For a completely regular Hausdorff topological space $\Omega$, we write $C_b(\Omega,X)$ to denote the Banach space of all bounded continuous functions from $\Omega$ into $X$ with the supremum norm. If  $(A,\Sigma,\mu)$ is a measure space, a strongly measurable function from $A$ to $X$ is a pointwise limit of a sequence of countably valued Borel measurable functions. The space $L_1(\mu, X)$ is the Banach space of equivalence classes of those strongly measurable functions $f$ from $\Omega$ to $X$ for which
\[
 \|f\|_1= \int_\Omega \|f(t)\| \, d\mu(t)<\infty.
\]
We write $L_\infty(\mu, X)$ for the Banach space of all equivalence classes of strongly measurable functions $f$ from $\Omega$ to $X$ for which
\[
\|f\|_\infty= \inf \bigl\{\lambda \ge 0\,:\,  \|f(t)\| \leq \lambda \ \  a.e.\bigr\}< \infty.
\]

\begin{prop}[\rm \mbox{\cite[Theorems 5 and 8]{MarPay}},  \mbox{\cite[Theorem 2.3]{MarVil}}, \mbox{\cite[Theorem 2 and Proposition 5]{LopezMartinMeri}}, and \mbox{\cite[Corollary 3.3]{ChoiGarciaMaestreMartin-QJ}}]\label{prop:old-ck-l1-linfty}
Let $K$ be a compact Hausdorff topological space, let $L$ be a locally compact Hausdorff topological space, let $\Omega$ be a completely regular Hausdorff topological space,
and let $\mu$ be a positive $\sigma$-finite measure. Then, for every Banach space $X$,
$$
n(C(K,X))=n(C_w(K,X))=n(C_0(L,X))=n(C_b(\Omega,X))=n(L_1(\mu,X))= n(L_\infty(\mu,X))=n(X).
$$
\end{prop}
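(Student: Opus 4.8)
The statement collects several known computations, so the plan is to give one strategy that delivers $n(E)=n(X)$ for each space $E$ in the list, using $C(K,X)$ as the model case and then indicating how the remaining spaces reduce to it or yield to the same scheme. For each $E$ there are two inequalities of very different character: $n(E)\le n(X)$ is proved by exhibiting operators on $E$ whose numerical radius is small relative to their norm, whereas $n(E)\ge n(X)$ is the universal assertion that \emph{every} operator on $E$ has numerical radius at least $n(X)$ times its norm. The second is the substantial one.

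For $n(E)\le n(X)$ I would use pointwise (composition) operators. Given $S\in\mathcal L(X)$, define $T_S\in\mathcal L(E)$ by $(T_Sf)(t)=S(f(t))$; then $\|T_S\|=\|S\|$ is immediate. For the numerical radius, recall that by Bauer's maximum principle $v(T_S)$ is computed over extreme supporting functionals, and that the extreme points of $B_{C(K,X)^*}$ are the evaluations $\delta_t\otimes x^*$ with $x^*\in\mathrm{ext}(B_{X^*})$ and $t\in K$ (with the appropriate modifications for the $L_1$ and $L_\infty$ cases). Such a functional lies in $\Pi(E)$ together with $f$ precisely when $(f(t),x^*)\in\Pi(X)$, and then $\langle\delta_t\otimes x^*,T_Sf\rangle=x^*\bigl(Sf(t)\bigr)\in V(S)$. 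Hence $v(T_S)\le v(S)$, so $n(E)\le v(T_S)/\|T_S\|\le v(S)/\|S\|$, and taking the infimum over $S\ne0$ gives $n(E)\le n(X)$.

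The reverse inequality $n(E)\ge n(X)$ is the heart of the matter. Fix $T\in\mathcal L(C(K,X))$ with $\|T\|=1$ and $\varepsilon>0$, and choose $g\in S_{C(K,X)}$, $t_0\in K$ and $x^*\in S_{X^*}$ with $x^*\bigl((Tg)(t_0)\bigr)>1-\varepsilon$. I would attach to $T$ an operator $S\in\mathcal L(X)$ by localizing at $t_0$: picking a bump function $u\in C(K)$ with $0\le u\le1$, $u(t_0)=1$ and support in a small neighbourhood of $t_0$, set (roughly) $S(x)=\bigl(T(u\cdot x)\bigr)(t_0)$, where $u\cdot x$ is the function $t\mapsto u(t)x$. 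The clean half of the estimate is $v(S)\le v(T)$: if $(x,x^*)\in\Pi(X)$ then $u\cdot x\in S_{C(K,X)}$, the Dirac state $\delta_{t_0}\otimes x^*$ lies in $\Pi(C(K,X))$ at $u\cdot x$, and $x^*(Sx)=\langle\delta_{t_0}\otimes x^*,T(u\cdot x)\rangle\in V(T)$. Granting in addition the norm bound $\|S\|\ge\|T\|-\varepsilon$, we obtain $v(T)\ge v(S)\ge n(X)\|S\|\ge n(X)(1-\varepsilon)$, and $\varepsilon\to0$ settles the case $E=C(K,X)$.

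The main obstacle is exactly the norm-recovery step $\|S\|\ge\|T\|-\varepsilon$: the localized $S$ only tests inputs of the special form $u\cdot x$, while $\|T\|$ is witnessed by the arbitrary function $g$, so a crude localization need not recover the norm. Overcoming this requires a careful choice of the localizing functions together with a limiting argument over the shrinking neighbourhoods of $t_0$ (or a net/ultrafilter), exploiting the continuity of $g$ and $Tg$ near $t_0$ to force $S$ to nearly attain $\|T\|$; this is where the individual structure of each space enters, which is why the cases were originally handled separately. Finally, $L_\infty(\mu,X)$ reduces to $C(K,X)$ through the representation of $L_\infty(\mu)$ as a $C(K)$-space over a hyperstonean $K$; $L_1(\mu,X)$ is treated by the dual, integral version of the same localization, where states concentrate on sets of small measure rather than at a single point; and $C_w(K,X)$, $C_0(L,X)$ and $C_b(\Omega,X)$ follow from the same scheme adapted to weak continuity and to the non-compact setting.
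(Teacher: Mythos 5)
Your first half is fine: composition operators $T_S$ together with the reduction of the numerical radius to Dirac-type pairs (Lemma~\ref{lemma:old-projection-dense}, or the extreme-point description of $B_{C(K,X)^*}$) is exactly how $n(E)\le n(X)$ is obtained in the cited references and in Section~\ref{sec:vector-valued} of this paper. Keep in mind that the paper itself offers no proof of this proposition --- it is quoted from \cite{MarPay}, \cite{MarVil}, \cite{LopezMartinMeri} and \cite{ChoiGarciaMaestreMartin-QJ} --- so the yardstick is the hard inequality $n(E)\ge n(X)$, and there your proposal has a genuine gap.

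The step $\|S\|\ge\|T\|-\eps$ for $S(x)=\bigl(T(u\cdot x)\bigr)(t_0)$ is not a technical obstacle to be finessed by shrinking neighbourhoods, nets or ultrafilters: it is false, because $(Tf)(t_0)$ need not depend at all on the values of $f$ near $t_0$. Take $K=[0,1]$, let $\nu$ be Lebesgue measure, and define $T\in\mathcal{L}(C(K,X))$ by $(Tf)(t)=\int_K f\,d\nu$ for every $t$. Then $T$ fixes the constant functions, so $\|T\|=1$ (and $v(T)=1$, consistent with the statement), yet for any bump $u$ with $0\le u\le1$, $u(t_0)=1$ and support in a neighbourhood $V$ of any $t_0$, your operator is $S(x)=\bigl(\int_K u\,d\nu\bigr)x$, whence $\|S\|\le\nu(V)$; shrinking $V$ drives $S$ to the zero operator, so the proposed limiting argument converges to nothing. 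Even the two-point version $(Tf)(t)=\tfrac12\bigl(f(t_1)+f(t_2)\bigr)$ caps your bound at $v(T)\ge n(X)/2$ instead of $n(X)$. Note that in Section~\ref{sec:vector-valued} of this paper localizations of the form $x\mapsto\bigl[U(\varphi\otimes x)\bigr](t_0)$ are used only against \emph{composition} operators $T$, for which $\bigl[T(\varphi\otimes x)\bigr](t_0)=R(x)$ holds by construction; for an arbitrary $T$ that identity is precisely what fails. Any correct argument must couple the admissible inputs to a norming function $f_0$ of $T$ itself --- for instance through linear embeddings of the shape $x\mapsto\varphi\otimes x+x_0^*(x)(1-\varphi)f_0$, with $x_0^*$ a suitable support functional, so that some norm-one input nearly reproduces $f_0$ while still generating pairs of $\Pi(C(K,X))$ at $t_0$ --- and carrying such a device through for each of the six spaces is exactly the content of the quoted theorems; it is the part your write-up replaces by a placeholder.

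The announced reductions also do not stand. $L_\infty(\mu,X)$ does not reduce to $C(K,X)$ via the hyperstonean representation: under $L_\infty(\mu)\cong C(K)$, the canonical copy of $C(K,X)\cong C(K)\widetilde{\otimes}_\eps X$ inside $L_\infty(\mu,X)$ is the closure of the finitely-valued functions, which is in general a proper subspace (for a partition $\{A_n\}$ of $[0,1]$ into sets of positive measure, the function $\sum_n\chi_{A_n}\otimes e_n\in L_\infty([0,1],c_0)$ is at distance $1$ from it); this is why \cite{MarVil} argues measure-theoretically rather than topologically. Since the $L_1$, $C_w$, $C_0$ and $C_b$ cases are likewise only gestured at and all hinge on the missing hard inequality, what you have actually proved is the inequality $n(E)\le n(X)$ and nothing more.
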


We would like to finish this section with two useful results to calculate numerical radius. The first one was actually used in the proofs of the result above.

\begin{lemma}[\mbox{\cite[Theorem~9.3]{B-D1}}]\label{lemma:old-projection-dense}
Let $X$ be a Banach space. Write $\pi_X:\Pi(X)\longrightarrow X$ for the natural projection, and let $\Gamma$ be a subset of $\Pi(X)$ such that $\pi_X(\Gamma)$ is dense in $S_X$. Then
$$
v(T)=\sup \bigl\{|x^*(Tx)|\,:\,(x,x^*)\in \Gamma\bigr\}
$$
for every $T\in \mathcal{L}(X)$.
\end{lemma}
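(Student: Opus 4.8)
The plan is to prove the non-trivial inequality $v(T)\le \sup\{|x^*(Tx)|:(x,x^*)\in\Gamma\}=:v_\Gamma(T)$, since the reverse $v_\Gamma(T)\le v(T)$ is immediate from $\Gamma\subseteq\Pi(X)$. First I would reduce the numerical radius to the supremum of the (real) numerical range: writing $s(T):=\sup\{x^*(Tx):(x,x^*)\in\Pi(X)\}$, one has $v(T)=\max\{s(T),\,s(-T)\}$ (because $V(-T)=-V(T)$), and similarly $v_\Gamma(T)=\max\{\sup_\Gamma x^*(Tx),\,\sup_\Gamma x^*(-Tx)\}$. Hence it suffices to prove $s(T)\le \sup_{(x,x^*)\in\Gamma}x^*(Tx)$ for every $T\in\mathcal{L}(X)$ and then apply it to both $T$ and $-T$.

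The main tool I would bring in is the classical exponential formula for the numerical range. Since $t\mapsto\|\Id+tT\|$ is convex with value $1$ at $t=0$, its difference quotients are nondecreasing, so
\[
s(T)=\lim_{\alpha\to 0^+}\frac{\|\Id+\alpha T\|-1}{\alpha}=\inf_{\alpha>0}\frac{\|\Id+\alpha T\|-1}{\alpha};
\]
likewise, for fixed $x\in S_X$ one has $\sup_{x^*\in D(x)}x^*(Tx)=\lim_{\alpha\to0^+}\alpha^{-1}(\|x+\alpha Tx\|-1)$, where $D(x):=\{x^*\in S_{X^*}:x^*(x)=1\}$. The reason for invoking these formulas is that $\|\Id+\alpha T\|=\sup_{x\in S_X}\|x+\alpha Tx\|$ is the supremum of the continuous function $x\mapsto\|(\Id+\alpha T)x\|$; since $\pi_X(\Gamma)$ is dense in $S_X$, for each fixed $\alpha$ this supremum is unchanged if taken only over $\pi_X(\Gamma)$. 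Thus $s(T)$ is entirely encoded by the norm along the (dense) set of first coordinates of $\Gamma$, and concretely, for every $\varepsilon>0$ and every small $\alpha$ I can find $x_\alpha\in\pi_X(\Gamma)$ with $\|x_\alpha+\alpha Tx_\alpha\|>1+\alpha\bigl(s(T)-\varepsilon\bigr)$.

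The hard part, which I expect to be the main obstacle, is to convert this norm information into the values $x^*(Tx)$ of the functionals actually prescribed by $\Gamma$. The functional that witnesses the growth of $\|x_\alpha+\alpha Tx_\alpha\|$ is a norming functional $g_\alpha$ of the \emph{perturbed} vector $x_\alpha+\alpha Tx_\alpha$, and it satisfies $g_\alpha(Tx_\alpha)\ge s(T)-\varepsilon$ together with $g_\alpha(x_\alpha)\to1$ as $\alpha\to0^+$; but $g_\alpha$ need not coincide with, nor be close to, the functional paired with $x_\alpha$ inside $\Gamma$. The route I would pursue to close this gap is a weak$^*$-compactness argument: extracting a weak$^*$-convergent subnet of $(g_\alpha)$ (and, after the usual separable or finite-dimensional reduction, a convergent subnet of $(x_\alpha)$) produces a pair $(x,g)\in\Pi(X)$ with $g(Tx)\ge s(T)-\varepsilon$ and $x\in\overline{\pi_X(\Gamma)}$. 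The genuinely delicate step — the real content of the lemma — is to show that such a near-extremal pair can be recovered \emph{within the weak$^*$-closure of $\Gamma$ itself}, so that $\sup_{(x,x^*)\in\Gamma}x^*(Tx)\ge s(T)-\varepsilon$; this is where the density of $\pi_X(\Gamma)$ has to be played against the upper semicontinuity of $x\mapsto\sup_{x^*\in D(x)}x^*(Tx)$ (an infimum of continuous functions, hence upper semicontinuous) to prevent the prescribed functionals from systematically undershooting the directional derivative. Once this inequality is secured, letting $\varepsilon\to0$ and combining the estimate for $T$ with that for $-T$ gives $v(T)\le v_\Gamma(T)$, completing the proof.
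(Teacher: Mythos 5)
Your reduction to showing $s(T)\le\sup_{(x,x^*)\in\Gamma}x^*(Tx)=:\beta$, the formula $s(T)=\inf_{\alpha>0}\alpha^{-1}(\|\Id+\alpha T\|-1)$, and the intermediate estimates for $x_\alpha$ and $g_\alpha$ are all correct, but the final step of your plan is a genuine gap, and the two tools you propose for it point in the wrong direction. First, the weak$^*$-compactness argument produces a pair $(x,g)\in\Pi(X)$ with $g(Tx)\ge s(T)-\eps$, but nothing places this pair in the closure of $\Gamma$: the set $\Gamma$ may pair each point of the dense set with a \emph{single}, adversarially chosen support functional (this is the whole point of the lemma), and weak$^*$ limits of such pairs $(x_\alpha,f_{x_\alpha})$ need not recover $g$; the assertion that the prescribed functionals cannot systematically undershoot is exactly what remains unproved. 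Second, the semicontinuity you invoke is backwards: $\varphi(x):=\sup_{x^*\in D(x)}x^*(Tx)$ is indeed upper semicontinuous (an infimum of continuous functions), but for an \emph{upper} semicontinuous function the supremum over a dense set can be strictly smaller than the global supremum (take the indicator function of a single point), so upper semicontinuity cannot transfer near-extremality from $S_X$ to $\pi_X(\Gamma)$. The semicontinuity fact that does help concerns the lower envelope $\psi(x):=\inf_{x^*\in D(x)}x^*(Tx)=\sup_{\alpha>0}\alpha^{-1}\bigl(1-\|x-\alpha Tx\|\bigr)$, which is lower semicontinuous and satisfies $\psi(x)\le f(Tx)$ for whichever $f$ is paired with $x$ in $\Gamma$; but then one still needs the non-obvious minimax-type equality $\sup_{x\in S_X}\psi(x)=\sup_{x\in S_X}\varphi(x)$, which your outline does not supply and which is the real content of the cited theorem.

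For comparison, the paper does not reprove this lemma (it quotes \cite[Theorem~9.3]{B-D1}), and the classical proof closes the gap with an elementary invertibility trick that uses only the one functional attached to each point, thereby dissolving the selection problem blocking your route. With $\beta$ as above, for every $(x,f)\in\Gamma$ and every $\alpha>0$,
$$
\|(\Id-\alpha T)x\|\ \ge\ f\bigl(x-\alpha Tx\bigr)\ =\ 1-\alpha f(Tx)\ \ge\ 1-\alpha\beta ,
$$
and by density of $\pi_X(\Gamma)$ and continuity of $x\longmapsto\|(\Id-\alpha T)x\|$ this lower bound holds for every $x\in S_X$. For $\alpha\|T\|<1$ the operator $\Id-\alpha T$ is invertible by the Neumann series, so the bound from below yields $\|(\Id-\alpha T)^{-1}\|\le(1-\alpha\beta)^{-1}$, whence
$$
\|\Id+\alpha T\|=\bigl\|(\Id-\alpha T)^{-1}(\Id-\alpha^{2}T^{2})\bigr\|\le\frac{1+\alpha^{2}\|T\|^{2}}{1-\alpha\beta}.
$$
Since for every $(y,g)\in\Pi(X)$ one has $1+\alpha g(Ty)=g(y+\alpha Ty)\le\|\Id+\alpha T\|$ (only this trivial half of the exponential formula is needed), it follows that $g(Ty)\le\frac{\alpha\|T\|^{2}+\beta}{1-\alpha\beta}$, and letting $\alpha\to 0^{+}$ gives $s(T)\le\beta$. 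Applying this to $T$ and $-T$, as in your own reduction, yields $v(T)\le\sup_{(x,x^*)\in\Gamma}|x^*(Tx)|$; replacing your compactness/semicontinuity step by this argument completes the proof.
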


The second result in this line is very recent and deals with a new numerical range of operators between different Banach spaces introduced by M.~Ardalani \cite{Ardalani}. Recall that given a Banach space $X$, a subset $A$ of $S_{X^*}$ is said to be \emph{$1$-norming} (for $X$) if $\|x\|=\sup\{|x^*(x)|\,:\, x^*\in A\}$ for every $x\in X$ or, equivalently, if the weak-star closed convex hull of $A$ is the whole $B_{X^*}$.

\begin{lemma}[\mbox{\cite[Remark~2.6 and Proposition~3.1]{Mar-numrange-JMAA2016}}]\label{lemma:old-Anorming-Ardalani}
Let $X$ be a Banach space and let $A\subset S_{X^*}$ be $1$-norming. Then
$$
v(T)=\inf_{\delta>0}\sup\left\{|x^*(Tx)|\,:\ x\in S_X,\, x^*\in A,\, x^*(x)>1-\delta\right\}
$$
for every $T\in \mathcal{L}(X)$.
\end{lemma}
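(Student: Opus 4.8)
The plan is to show two inequalities between $v(T)$ and the quantity
$$
w_A(T):=\inf_{\delta>0}\sup\left\{|x^*(Tx)|\,:\ x\in S_X,\, x^*\in A,\, x^*(x)>1-\delta\right\}.
$$
The inequality $w_A(T)\le v(T)$ is the easy direction, so I would dispatch it first. The hard direction $v(T)\le w_A(T)$ is where the $1$-norming hypothesis must be exploited, and it is the main obstacle.

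\emph{Easy direction $w_A(T)\le v(T)$.}
Fix $\delta>0$ and a pair $(x,x^*)$ with $x\in S_X$, $x^*\in A\subset S_{X^*}$ and $x^*(x)>1-\delta$. The point is that $|x^*(Tx)|$ is controlled by the numerical range along a \emph{nearby} normalized pair. I would argue that, since $x^*(x)$ is close to $1$, one can replace $x^*$ by a functional $y^*$ attaining $1$ at $x$ (or directly estimate) to get $|x^*(Tx)|\le v(T)+\eta(\delta)$ where $\eta(\delta)\to 0$ as $\delta\to 0$; a clean way is to use that $\bigl(x,x^*\bigr)$ is within $O(\delta)$ of an element of $\Pi(X)$ after a small perturbation, together with $\|T\|<\infty$. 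Taking the supremum over admissible pairs and then the infimum over $\delta$ yields $w_A(T)\le v(T)$.

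\emph{Hard direction $v(T)\le w_A(T)$.}
Here I would start from the classical description of the numerical range via the duality mapping and the fact, recorded in Lemma~\ref{lemma:old-projection-dense}, that it suffices to compute $v(T)$ over any subset of $\Pi(X)$ whose first coordinates are dense in $S_X$. Given $(x,x^*)\in\Pi(X)$ and $\eps>0$, I want to produce some $a^*\in A$ with $a^*(x)>1-\delta$ and $|a^*(Tx)|$ close to $|x^*(Tx)|$. Because $A$ is $1$-norming, its weak-star closed convex hull is all of $B_{X^*}$, so $x^*$ lies in the weak-star closure of $\mathrm{co}(A)$; the difficulty is that a convex combination of elements of $A$ need not keep each individual functional close to attaining its norm at $x$, and weak-star approximation only controls finitely many test vectors at a time. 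The key step is therefore a \emph{slicing/extreme-point} argument: I would consider the slice of $B_{X^*}$ determined by evaluation at $x$, namely functionals $z^*$ with $z^*(x)>1-\delta$, observe this slice is nonempty and weak-star open, and use that $A$ is norming to find elements of $A$ inside (a perturbation of) this slice. Applying this simultaneously to the two test vectors $x$ and $Tx$ — evaluating near $x^*(x)=1$ while keeping $z^*(Tx)\approx x^*(Tx)$ — should furnish the desired $a^*\in A$. Quantifying this so that the error is absorbed into the $\inf_{\delta>0}$ is the crux.

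I expect the main obstacle to be exactly this last point: passing from weak-star density of $\mathrm{co}(A)$ to the existence of a \emph{single} $a^*\in A$ that is both nearly norming at $x$ (i.e.\ $a^*(x)>1-\delta$) and nearly optimal for $T$ (i.e.\ $|a^*(Tx)|\gtrsim |x^*(Tx)|$). Since the cited source attributes this to Ardalani's numerical range and to \cite{Mar-numrange-JMAA2016}, I anticipate the clean resolution goes through the observation that the relevant supremum over the norming set $A$ of $x^*(x)$-close functionals recovers $\sup\{|z^*(Tx)|: z^*\in B_{X^*},\,z^*(x)>1-\delta\}$ in the limit, and that this latter quantity converges to $v(T)$ as $\delta\to 0$ by the standard local formula for the numerical range. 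I would close by combining both inequalities.
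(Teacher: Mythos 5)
First, a point of comparison: the paper does not prove this lemma at all --- it is quoted verbatim from \cite[Remark~2.6 and Proposition~3.1]{Mar-numrange-JMAA2016} --- so your attempt can only be judged on its own merits and against that source. Your splitting into the two inequalities is the right skeleton, and the direction $w_A(T)\leq v(T)$ is fine as sketched: since $A\subset S_{X^*}$ one may as well take $A=S_{X^*}$, and the Bishop--Phelps--Bollob\'{a}s theorem replaces a pair with $x^*(x)>1-\delta$ by a pair in $\Pi(X)$ at distance $O(\sqrt{\delta})$ (not $O(\delta)$, but immaterial), giving $|x^*(Tx)|\leq v(T)+C\sqrt{\delta}\,\|T\|$.

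The genuine gap is in the direction $v(T)\leq w_A(T)$, which you yourself leave as ``the crux''. Moreover, the mechanism you propose --- slicing simultaneously at $x$ and at $Tx$ so as to produce a single $a^*\in A$ with $a^*(x)>1-\delta$ and $a^*(Tx)\approx x^*(Tx)$ --- cannot work if ``$\approx$'' means genuine approximation. Take $X=\ell_\infty^2$, $A=\{\pm e_1^*,\pm e_2^*\}$ (which is $1$-norming), $x=(1,1)$, $x^*=\left(\tfrac12,\tfrac12\right)$, so $(x,x^*)\in\Pi(X)$, and $T(z_1,z_2)=(z_1,-z_2)$. Then $x^*(Tx)=0$, while the only elements of $A$ in the slice $\{z^*:z^*(x)>1-\delta\}$ are $e_1^*,e_2^*$, and both satisfy $|a^*(Tx)|=1$. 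So no element of $A$ near-norming $x$ evaluates near $x^*(Tx)$; only the one-sided bound $\sup|a^*(Tx)|\geq|x^*(Tx)|$ survives, and it does not come from any approximation of $x^*$ on the two test vectors $x$ and $Tx$ --- it comes from convexity.

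Here is the missing idea that closes the gap in a few lines. Replacing $A$ by $A\cup(-A)$ changes neither the $1$-norming hypothesis nor $w_A(T)$ (a pair $(x,a^*)$ with $a^*(x)<-(1-\delta)$ is exchanged for $(-x,a^*)$), so one may assume $\overline{\mathrm{co}}^{\,w^*}(A)=B_{X^*}$. Fix $(x,x^*)\in\Pi(X)$ and $\delta>0$, and set $A_1=\{a^*\in A:\,a^*(x)>1-\delta\}$. Write a net in $\mathrm{co}(A)$ converging weak-star to $x^*$ as $t_\alpha u_\alpha+(1-t_\alpha)v_\alpha$ with $u_\alpha\in\mathrm{co}(A_1)$, $v_\alpha\in\mathrm{co}(A\setminus A_1)$; evaluating at $x$ and using $u_\alpha(x)\leq1$, $v_\alpha(x)\leq1-\delta$, $x^*(x)=1$ forces $t_\alpha\to1$, whence $x^*\in\overline{\mathrm{co}}^{\,w^*}(A_1)$. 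Testing the convergence against the single vector $Tx$ gives
$$
|x^*(Tx)|\leq\sup\bigl\{|a^*(Tx)|\,:\,a^*\in A,\ a^*(x)>1-\delta\bigr\},
$$
and taking suprema over $\Pi(X)$ and then the infimum over $\delta$ yields $v(T)\leq w_A(T)$. This convexity argument (or the equivalent route through $\lim_{\alpha\to0^+}\bigl(\|\Id+\alpha T\|-1\bigr)/\alpha$, where pairs from $A$ nearly attaining $\|\Id+\alpha T\|$ automatically satisfy $a^*(x)>1-O(\alpha)$) is exactly the content of the cited results; your final paragraph anticipates the statement but supplies no argument for it.
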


\section{First examples}\label{sect:FirstExamples}

\begin{proposition}\label{prop:finite-dimensional-n'>0}
If $X$ is a finite-dimensional space, then $n'(X)>0$.
\end{proposition}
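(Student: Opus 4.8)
The plan is to reduce the statement to the elementary fact that any two norms on a finite-dimensional vector space are equivalent. Since $X$ is finite-dimensional, $\mathcal{L}(X)$ is a finite-dimensional vector space, and hence so is the quotient $\mathcal{L}(X)/\mathcal{Z}(X)$. On this quotient we have two norms at our disposal: the quotient norm $\|\,\cdot\,+\mathcal{Z}(X)\|$ and the numerical radius $v$. As recorded in the introduction, $v$ is a genuine norm on the quotient and not merely a seminorm; indeed $v\bigl(T+\mathcal{Z}(X)\bigr)=v(T)=0$ forces $T\in\mathcal{Z}(X)$ by the very definition of the Lie algebra, so $v$ separates the nonzero cosets.

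First I would fix the quotient norm and consider its unit sphere
$$
S:=\bigl\{T+\mathcal{Z}(X)\,:\, T\in\mathcal{L}(X),\ \|T+\mathcal{Z}(X)\|=1\bigr\}.
$$
Being the unit sphere of a norm on a finite-dimensional space, $S$ is compact. Next I would observe that $v$ induces a continuous function on $\mathcal{L}(X)/\mathcal{Z}(X)$: it is a seminorm on $\mathcal{L}(X)$ with $v(T)\le\|T\|$, hence continuous, and since it is constant on the cosets of $\mathcal{Z}(X)$ it descends to a continuous map on the quotient. Restricted to the compact set $S$, this continuous function attains its infimum at some coset $T_0+\mathcal{Z}(X)\in S$. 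Because $v$ is a norm on the quotient and $T_0+\mathcal{Z}(X)\neq 0$, the attained value $v(T_0)$ is strictly positive.

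Finally, by the definition of the second numerical index,
$$
n'(X)=\inf\bigl\{v(T)\,:\, \|T+\mathcal{Z}(X)\|=1\bigr\}=v(T_0)>0,
$$
which is the desired conclusion. There is no serious obstacle in this argument; the only points demanding (routine) care are checking that $v$ descends to a \emph{continuous} function on the quotient and that it is a genuine norm there, which together guarantee that its minimum over the compact unit sphere cannot vanish. Equivalently, one may simply invoke that the two norms $v$ and $\|\,\cdot\,+\mathcal{Z}(X)\|$ on the finite-dimensional space $\mathcal{L}(X)/\mathcal{Z}(X)$ are equivalent, the positive lower equivalence constant being precisely $n'(X)$.
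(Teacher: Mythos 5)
Your proof is correct and follows essentially the same route as the paper: the paper simply invokes the equivalence of any two norms on the finite-dimensional space $\mathcal{L}(X)/\mathcal{Z}(X)$, and your compactness argument is just the standard proof of that fact, together with the (necessary and correctly checked) observations that $v$ is a genuine norm on the quotient and that it is continuous there.
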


\begin{proof}
This is an obvious consequence of the finite-dimensionality of $\mathcal{L}(X)/\mathcal{Z}(X)$, from which follows that the two norms $\|\cdot+\mathcal{Z}(X)\|$ and $v(\cdot)$ have to be equivalent.
\end{proof}

In the infinite-dimensional case, we may find examples of spaces with $n'(X)=0$.

\begin{example}\label{example-n'=0}
{\slshape There are infinite-dimensional Banach spaces $X$ with $n'(X)=0$.}
\end{example}

\begin{proof}
It is shown in Example~\ref{example-old-nx=0zx=0} that there exists a Banach space $X$ with $n(X)=0$ but such that $\mathcal{Z}(X)=\{0\}$. Then, we obviously have that $n'(X)=n(X)=0$.
\end{proof}

The main example here is the somehow surprising fact that Hilbert spaces have second numerical index one. We will see in section~\ref{sect-numindex-one} that, among Banach spaces with normalized one-unconditional basis, Hilbert spaces are the only ones having classical numerical index zero and second numerical index one.

\begin{theorem}\label{thrm-Hilbert}
Let $H$ be a Hilbert space. Then $n'(H)=1$.
\end{theorem}

We will use the following known results, which we prove here for the sake of completeness. Recall that in a real Hilbert space $H$ endowed with an inner product $(\cdot\mid\cdot)$, $H^*$ identifies with $H$ by the isometric isomorphism $x\longmapsto (\cdot\mid x)$. Therefore, $\Pi(H)=\{(x, x)\in H\times H\,;\, x\in S_H\}$ and so for every $T\in \mathcal{L}(H)$, one has $v(T)=\sup\{|(Tx\mid x)|\,:\,x\in S_H\}$.

\begin{lemma}\label{lemma-Hilbert}
Let $H$ be a Hilbert space.
\begin{enumerate}
\item[(a)] $\mathcal{Z}(H)=\bigl\{T\in \mathcal{L}(H)\,:\,T=-T^*\bigr\}$.
\item[(b)] If $T\in \mathcal{L}(H)$ is selfadjoint (i.e.\ $T=T^*$), then $\|T\|=v(T)$.
\end{enumerate}
\end{lemma}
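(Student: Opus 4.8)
Both assertions rest on the two identifications recalled just before the statement: since $H$ is real, the inner product is symmetric, the adjoint satisfies $(Tx\mid y)=(x\mid T^*y)$, and $v(T)=\sup\{|(Tx\mid x)|\,:\,x\in S_H\}$. By homogeneity this last quantity equals the smallest constant $M\geq 0$ with $|(Tx\mid x)|\leq M\|x\|^2$ for all $x\in H$, a reformulation I will use throughout.

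For part (a) the plan is to characterize $v(T)=0$ by a polarization argument. The condition $v(T)=0$ means $(Tx\mid x)=0$ for every $x\in H$. Expanding $(T(x+y)\mid x+y)=0$ and cancelling the diagonal terms $(Tx\mid x)$ and $(Ty\mid y)$ yields $(Tx\mid y)+(Ty\mid x)=0$ for all $x,y\in H$. Using symmetry of the inner product together with the definition of the adjoint, one has $(Ty\mid x)=(T^*x\mid y)$, so the previous identity reads $\bigl((T+T^*)x\mid y\bigr)=0$ for all $x,y$, which forces $T=-T^*$. The converse is immediate: if $T=-T^*$ then $(Tx\mid x)=(x\mid T^*x)=-(Tx\mid x)$, hence $(Tx\mid x)=0$ for every $x$ and $v(T)=0$. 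This gives the stated equality $\mathcal{Z}(H)=\{T\in\mathcal{L}(H)\,:\,T=-T^*\}$.

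For part (b) the inequality $v(T)\leq\|T\|$ is automatic, so the real content is $\|T\|\leq v(T)$ for selfadjoint $T$, and this is the step I expect to require the most care. Writing $M=v(T)$, the idea is to bound the full bilinear form $(Tx\mid y)$ by $M$ on unit vectors. Subtracting the expansions of $(T(x+y)\mid x+y)$ and $(T(x-y)\mid x-y)$ and using $T=T^*$ to identify $(Ty\mid x)$ with $(Tx\mid y)$ produces $4(Tx\mid y)=(T(x+y)\mid x+y)-(T(x-y)\mid x-y)$. Estimating each term by $M\|\cdot\|^2$ and invoking the parallelogram law $\|x+y\|^2+\|x-y\|^2=2\|x\|^2+2\|y\|^2$ gives $4|(Tx\mid y)|\leq M\bigl(\|x+y\|^2+\|x-y\|^2\bigr)=4M$ whenever $\|x\|=\|y\|=1$. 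Taking the supremum over $y\in S_H$ recovers $\|Tx\|=\sup_{\|y\|=1}|(Tx\mid y)|\leq M$, and then the supremum over $x\in S_H$ yields $\|T\|\leq M=v(T)$. Combined with the trivial inequality this proves $\|T\|=v(T)$.
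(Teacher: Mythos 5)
Your proposal is correct and follows essentially the same route as the paper: part (a) by polarization of the vanishing quadratic form $(Tx\mid x)$ to obtain $T=-T^*$, and part (b) by the identity $4(Tx\mid y)=(T(x+y)\mid x+y)-(T(x-y)\mid x-y)$ for selfadjoint $T$ together with the parallelogram law. The only cosmetic difference is that in (a) you expand $(T(x+y)\mid x+y)=0$ directly and cancel diagonal terms, whereas the paper packages the same cancellation into a single two-term polarization identity.
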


\begin{proof}
(a). For every $T\in \mathcal{L}(H)$ we have
$$
2\bigl((Tx\mid y)+(x\mid Ty)\bigr) = (T(x+y)\mid x+y) - (x-y\mid T(x-y)).
$$
Now, if $v(T)=0$, then $(Tz\mid z)=0$ for every $z\in H$, so it follows that $(Tx\mid y)=-(x\mid Ty)$ for every $x,y\in H$, that is, $T^*=-T$. Conversely, if $T^*=-T$, one has $(Tx\mid x)=-(x\mid Tx)=-(Tx\mid x)$, so $(Tx\mid x)=0$ for every $x\in H$, that is, $v(T)=0$.

(b). For $x,y\in S_H$, as $T$ is selfadjoint, we have by polarization,
\begin{align*}
4|(Tx\mid y)| &= \bigl|\bigl(T(x+y)\mid x+y\bigr) - \bigl(T(x-y)\mid x-y\bigr)\bigr| \\
& \leq v(T)\bigl(\|x+y\|^2 + \|x-y\|^2 \bigr)\\ &=v(T)\bigl(2\|x\|^2 + 2\|y\|^2 \bigr)=4v(T).
\end{align*}
Taking supremum on $x,y\in S_H$, we get $\|T\|\leq v(T)$.
\end{proof}

\begin{proof}[Proof of Theorem~\ref{thrm-Hilbert}]
Fix $T\in \mathcal{L}(H)$. The result will follow from the following two claims.

\noindent\emph{Claim 1.\ } $\|T+\mathcal{Z}(H)\|=\left\|\dfrac{T+T^*}{2}\right\|$.

We have
$$
T=\dfrac{T+T^*}{2} + \dfrac{T-T^*}{2} \ \text{~and~} \ \dfrac{T-T^*}{2} \in \mathcal{Z}(H)
$$
 by Lemma~\ref{lemma-Hilbert}.a. This shows that $\|T+\mathcal{Z}(H)\|\leq \left\|\dfrac{T+T^*}{2}\right\|$. On the other hand, for every $S\in \mathcal{Z}(H)$, we have that $S+S^*=0$, so
\begin{align*}
\left\|\dfrac{T+T^*}{2}\right\| &\leq \left\|\dfrac{T+T^*}{2}+ \dfrac{S+S^*}{2}\right\|=\left\|\dfrac{(T+S) + (T+S)^*}{2}\right\|\leq \|T+S\|.
\end{align*}
Taking infimum on $S\in \mathcal{Z}(H)$, we get $\left\|\dfrac{T+T^*}{2}\right\|\leq \|T+\mathcal{Z}(H)\|$, as desired.

\noindent\emph{Claim 2.\ } $v(T)=\left\|\dfrac{T+T^*}{2}\right\|$.

We use again that
$$
T=\dfrac{T+T^*}{2} + \dfrac{T-T^*}{2} \ \text{~and~} \ \dfrac{T-T^*}{2} \in \mathcal{Z}(H)
$$ by Lemma~\ref{lemma-Hilbert}.a, to get that
$$
V(T)=V\left(\dfrac{T+T^*}{2}\right), \qquad \text{so} \quad v(T)=v\left(\dfrac{T+T^*}{2}\right).
$$
Since $\dfrac{T+T^*}{2}$ is selfadjoint, Lemma~\ref{lemma-Hilbert}.b gives us that
\begin{equation*}
v(T)=v\left(\dfrac{T+T^*}{2}\right)=\left\|\dfrac{T+T^*}{2}\right\|.\qedhere
\end{equation*}
\end{proof}

More examples will be given in section~\ref{sect-sums-moreexamples}.

\section{An application to the Bishop-Phelps-Bollob\'{a}s property}\label{sect-application}

Our goal here is to give an application of the second numerical index to the theory of numerical radius attaining operators. We first need some definitions.

\begin{definitions} Let $X$ be a Banach space.
\begin{enumerate}
\item[(a)] \textrm{\cite{GuiOle}}\ $X$ is said to have the \emph{Bishop-Phelps-Bollob\'as property for numerical radius} if for every $0<\eps<1$, there exists $\eta(\eps)>0$ such that whenever $T\in \mathcal{L}(X)$ and $(x, x^*)\in \Pi(X)$ satisfy  $v(T)=1$ and $|x^*Tx|>1-\eta(\eps)$, there exit $S\in \mathcal{L}(X)$ and $(y, y^*)\in \Pi(X)$ such that
\[
v(S) = |y^*Sy|=1, \ \ \ \|T-S\|<\eps, \ \ \ \|x-y\|<\eps,\ \ \text{and} \ \ \  \|x^*- y^*\|<\eps.
\]
\item[(b)] \textrm{\cite{KLM-BPBp-nu}}\ $X$ is said to have the \emph{weak-Bishop-Phelps-Bollob\'as property for numerical radius} if for every $0<\eps<1$, there exists $\eta(\eps)>0$ such that whenever $T\in \mathcal{L}(X)$ and $(x, x^*)\in \Pi(X)$ satisfy $v(T)=1$ and $|x^*Tx|>1-\eta(\eps)$, there exit $S\in \mathcal{L}(X)$ and $(y, y^*)\in \Pi(X)$ such that
\[
v(S) = |y^*Sy|, \ \ \ \|T-S\|<\eps, \ \ \ \|x-y\|<\eps,\ \ \text{and} \ \ \  \|x^*- y^*\|<\eps.
\]
\end{enumerate}
\end{definitions}

Notice that the only difference between these two concepts is the normalization of the numerical radius of the operator $S$.

The following result is an extension of \cite[Proposition~6]{KLM-BPBp-nu}, where it is proved under the more restrictive assumption $n(X)>0$.

\begin{theorem}\label{theorem-weak-BPBnu=>BPBnu}
Let $X$ be a Banach space with $n'(X)>0$. Then, the weak-Bishop-Phelps-Bollob\'{a}s property for numerical radius and the Bishop-Phelps-Bollob\'{a}s property for numerical radius are equivalent in $X$.
\end{theorem}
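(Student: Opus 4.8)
The plan is to show that, under the hypothesis $n'(X)>0$, the seemingly weaker conclusion of the weak-BPB property for numerical radius can always be upgraded to the full BPB property. The key observation is that the only difference between the two properties is that in the weak version the approximating operator $S$ need not satisfy $v(S)=1$; it only attains its numerical radius at $(y,y^*)$. So the core task is: given an operator $S$ with $v(S)=|y^*Sy|$ close to $1$ in numerical radius and close to the original data, produce a genuinely normalized operator $\widetilde{S}$ with $v(\widetilde{S})=|y^*\widetilde{S}y|=1$ while controlling $\|S-\widetilde{S}\|$. The natural candidate is a scaling $\widetilde{S}=\lambda S$ with $\lambda=1/v(S)$, which immediately gives $v(\widetilde{S})=1$ and preserves numerical radius attainment at $(y,y^*)$, since $|y^*\widetilde{S}y|=\lambda v(S)=1$. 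The pairs $(y,y^*)\in\Pi(X)$ are left untouched by this rescaling, so only the norm estimate $\|T-\widetilde{S}\|<\eps$ needs care.

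First I would fix $0<\eps<1$ and, assuming $X$ has the weak-BPB property for numerical radius, apply it with a carefully chosen tolerance $\eps'$ (to be specified) in place of $\eps$, obtaining $\eta_w(\eps')>0$. Starting from $T$ with $v(T)=1$ and $(x,x^*)\in\Pi(X)$ with $|x^*Tx|>1-\eta_w(\eps')$, the weak property yields $S$ and $(y,y^*)$ with $v(S)=|y^*Sy|$, $\|T-S\|<\eps'$, $\|x-y\|<\eps'$, and $\|x^*-y^*\|<\eps'$. Then I would set $\widetilde{S}=S/v(S)$. The point where $n'(X)>0$ enters is precisely in controlling how far $v(S)$ can be from $1$: since $\|T-S\|<\eps'$ and $v$ is a seminorm with $v(T)=1$, one has $|v(S)-1|\le v(T-S)\le\|T-S\|<\eps'$, so $v(S)>1-\eps'$ and $\lambda=1/v(S)$ is close to $1$. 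Estimating
\[
\|T-\widetilde{S}\|\le\|T-S\|+\|S-\widetilde{S}\|=\|T-S\|+|1-\lambda|\,\|S\|,
\]
and bounding $\|S\|\le\|T\|+\eps'$ while $|1-\lambda|=|v(S)-1|/v(S)<\eps'/(1-\eps')$, shows that the right-hand side is small; choosing $\eps'$ small enough in terms of $\eps$ and $\|T\|$ forces $\|T-\widetilde{S}\|<\eps$. The perturbations of $x,x^*$ are already below $\eps'\le\eps$, so $(y,y^*)$ works for $\widetilde{S}$.

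The main obstacle is that the factor $\|S\|$ in the estimate involves the operator norm of $S$, which a priori need only be controlled through $\|T\|$; but here $n'(X)>0$ is what guarantees the whole scheme is meaningful, because it ensures $v$ is an honest norm modulo the Lie algebra and that numerical radius and operator norm are comparable on the quotient, so that $v(S)$ staying near $1$ genuinely prevents $\widetilde{S}$ from drifting away from $S$ in operator norm. Concretely, I would use $n'(X)\|\,\cdot+\mathcal{Z}(X)\|\le v(\cdot)$ to keep $\|S+\mathcal{Z}(X)\|$ bounded, which combined with an appropriate choice of representative controls the relevant norms; the delicate bookkeeping is to track the dependence of $\eta(\eps)$ on $\eps$, $\eps'$, $n'(X)$, and $\|T\|$ and to verify that a valid $\eta(\eps)>0$ for the full BPB property can be extracted uniformly. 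Since all operators here have $v(T)=1$ and the data are normalized, the norm of $T$ is itself controlled (indeed $\|T+\mathcal{Z}(X)\|\le v(T)/n'(X)=1/n'(X)$), so the constants can be made to depend only on $\eps$ and $n'(X)$, yielding the desired $\eta(\eps)$.
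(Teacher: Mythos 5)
Your scheme starts along the same lines as the paper's proof (apply the weak property, observe $v(S)\ge 1-\eps'$, rescale to $S/v(S)$), but there is a genuine gap at the decisive step. You estimate $\|T-\widetilde{S}\|\le\|T-S\|+|1-\lambda|\,\|S\|$ and then claim the constants can be made to depend only on $\eps$ and $n'(X)$ because ``the norm of $T$ is itself controlled (indeed $\|T+\mathcal{Z}(X)\|\le v(T)/n'(X)$)''. This conflates the operator norm with the quotient norm modulo $\mathcal{Z}(X)$. The hypothesis $n'(X)>0$ bounds only $\|T+\mathcal{Z}(X)\|$; in the only interesting case, namely $\mathcal{Z}(X)\neq\{0\}$ (otherwise $n'(X)=n(X)$ and the statement is the already known one from \cite{KLM-BPBp-nu}), the operator norm of an operator with $v(T)=1$ is unbounded: take $T=T_0+kR$ with $v(T_0)=1$, $R\in\mathcal{Z}(X)\setminus\{0\}$ and $k$ large, which still has $v(T)=1$. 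Then $\|S\|\ge\|T\|-\eps'$ is equally unbounded, the error term $|1-\lambda|\,\|S\|$ can be of order $\eps'\,\|T\|$, and no choice of $\eps'$ depending only on $\eps$ and $n'(X)$ makes it small; choosing $\eps'$ depending on $\|T\|$ is not allowed, since $\eta(\eps)$ in the definition must be uniform over all $T$ with $v(T)=1$. So the rescaled operator $S/v(S)$ alone simply cannot serve as the final operator.

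The missing idea --- which is exactly what the paper's proof supplies --- is to estimate distances in the quotient $\mathcal{L}(X)/\mathcal{Z}(X)$ and then correct the rescaled operator by an element of the Lie algebra. One has
$\bigl\|\bigl(S_1+\mathcal{Z}(X)\bigr)-\bigl(T+\mathcal{Z}(X)\bigr)\bigr\|\le \frac{\|S+\mathcal{Z}(X)\|}{v(S)}\,|v(S)-1|+\|S-T\|\le \bigl(\tfrac{1}{n'(X)}+1\bigr)\|S-T\|$, where $n'(X)$ now legitimately bounds $\|S+\mathcal{Z}(X)\|/v(S)$; hence there is $S_2\in\mathcal{Z}(X)$ with $\|(S_1+S_2)-T\|<\frac{n'(X)+1}{n'(X)}\,\eps$, and one takes $S_1+S_2$, not $S_1$, as the final operator. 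This correction costs nothing: adding an element of $\mathcal{Z}(X)$ changes neither the numerical range nor the value at points of $\Pi(X)$ (since $y^*S_2y=0$ whenever $(y,y^*)\in\Pi(X)$ and $v(S_2)=0$), so $v(S_1+S_2)=|y^*(S_1+S_2)y|=1$ persists. Your phrase ``appropriate choice of representative'' gestures at this, but you never carry it out, and the uniformity claim you substitute for it rests on the false bound for $\|T\|$; as written, the argument does not close.
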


\begin{proof}
One implication is clear. For the converse, assume that for each $0<\eps<1$ we have $\eta(\eps)>0$ satisfying the conditions of the weak-Bishop-Phelps-Bollob\'{a}s property for numerical radius. Fix  $0<\eps<1$. If $T\in \mathcal{L}(X)$ with $v(T)=1$ and $(x, x^*)\in \Pi(X)$ satisfy that
$|x^*Tx|>1-\eta(\eps)$, there exist $S\in \mathcal{L}(X)$ and $(y, y^*)\in \Pi(X)$ such that
\[ v(S) = |y^*Sy|, \ \ \  \|S-T\|<\eps, \ \ \ \|x-y\|<\eps~~~ \text{and}~~~\|x^*-y^*\|<\eps.\]
Observe that
$$
\eps\geq\|S-T\|\geq v(S-T)\geq v(T)-v(S)=1-v(S)
$$
so $v(S)\geq 1-\eps>0$ and we may and do define $S_1 = \frac{1}{v(S)}S$. Then we have
\[  1=v(S_1)= |y^*S_1 y|,~~~~\|x-y\|<\eps~~~ \text{and}~~~\|x^*-y^*\|<\eps.\]
Finally, we can write
\begin{align*}
 \bigl\|\bigl(S_1+\mathcal{Z}(X)\bigr) -\bigl(T+\mathcal{Z}(X)\bigr)\bigr\| &\leq \left\|\frac{1}{v(S)} \bigl(S+\mathcal{Z}(X)\bigr) - \bigl(S+\mathcal{Z}(X)\bigr)\right\| + \bigl\|\bigl(S+\mathcal{Z}(X)\bigr)- \bigl(T+\mathcal{Z}(X)\bigr)\bigr\| \\ &\leq \frac{\|S+\mathcal{Z}(X)\|}{v(S)} |v(S) - 1| + \bigl\|\bigl(S+\mathcal{Z}(X)\bigr)- \bigl(T+\mathcal{Z}(X)\bigr)\bigr\|\\
&\leq \frac{1}{n'(X)} |v(S)- v(T)| + \|S-T\|\\
&\leq \left( \frac{1}{n'(X)}+1\right) \|S-T\| < \frac{n'(X)+1}{n'(X)} \eps.
\end{align*}
So there exists $S_2\in \mathcal{Z}(X)$ such that $\|(S_1+S_2) - T\|<\frac{n'(X)+1}{n'(X)} \eps.$
Then we have
\[ 1=v(S_1) = v(S_1+S_2)= |y^*(S_1+S_2) y|.\]
An obvious change of parameters finishes the proof.
\end{proof}

As a consequence, we obtain that these two properties are equivalent for finite-dimensional spaces but, actually, both are always true in this case. This was proved in \cite[Proposition~2]{KLM-BPBp-nu} and the argument given there is very similar to the above one. On the other hand, it is proved in \cite[Proposition~4]{KLM-BPBp-nu} that a Banach space $X$ which is both uniformly convex and uniformly smooth has the weak-Bishop-Phelps-Bollob\'{a}s property for numerical radius. When $n'(X)>0$, Theorem~\ref{theorem-weak-BPBnu=>BPBnu} gives that $X$ actually has the Bishop-Phelps-Bollob\'{a}s property for numerical radius. An interesting case, which is not covered by \cite[Corollary~7]{KLM-BPBp-nu}, is the following. It is just a consequence of the above discussion and Theorem~\ref{thrm-Hilbert}.

\begin{corollary}
Let $X$ be a Hilbert space. Then $X$ has the Bishop-Phelps-Bollob\'{a}s property for the numerical radius.
\end{corollary}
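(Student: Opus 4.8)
The plan is to obtain the full Bishop-Phelps-Bollob\'{a}s property for numerical radius from the weaker version by exploiting that Hilbert spaces have positive second numerical index. Concretely, I would first recall the classical facts that every Hilbert space $H$ is both uniformly convex and uniformly smooth. By \cite[Proposition~4]{KLM-BPBp-nu}, this already guarantees that $H$ enjoys the weak-Bishop-Phelps-Bollob\'{a}s property for numerical radius, so the only thing left is to upgrade ``weak'' to the genuine property in which the approximating operator $S$ satisfies $v(S)=1$.

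The bridge between the two properties is Theorem~\ref{theorem-weak-BPBnu=>BPBnu}, which requires $n'(X)>0$. Here is where the work of the previous section pays off: by Theorem~\ref{thrm-Hilbert} we have $n'(H)=1>0$, so Theorem~\ref{theorem-weak-BPBnu=>BPBnu} applies and the two properties coincide for $H$. Combining this with the weak property established in the first step yields that $H$ has the Bishop-Phelps-Bollob\'{a}s property for numerical radius, and no further estimates are needed.

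The conceptual point worth emphasizing --- and the reason a short argument suffices --- is that the naive route through \cite[Proposition~6]{KLM-BPBp-nu}, which also deduces the full property from the weak one, is \emph{unavailable} here: that result needs $n(X)>0$, whereas every Hilbert space of dimension at least two has \emph{classical} numerical index zero (for instance $n(\ell_2^2)=0$ in Example~\ref{example:old-values-lp2}). Thus the genuine obstacle is not any delicate approximation estimate but rather the vanishing of the classical numerical index; the remedy is precisely to replace $n(X)$ by the second numerical index $n'(X)$, which equals one for Hilbert spaces. Once Theorem~\ref{thrm-Hilbert} and Theorem~\ref{theorem-weak-BPBnu=>BPBnu} are in hand, the corollary follows by merely chaining the cited weak-property result with this substitution.
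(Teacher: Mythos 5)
Your proof is correct and follows exactly the paper's own route: uniform convexity and smoothness of Hilbert spaces give the weak property via \cite[Proposition~4]{KLM-BPBp-nu}, and Theorem~\ref{thrm-Hilbert} ($n'(H)=1$) combined with Theorem~\ref{theorem-weak-BPBnu=>BPBnu} upgrades it to the full Bishop-Phelps-Bollob\'{a}s property for numerical radius. Your remark about why $n(X)>0$ (and hence \cite[Proposition~6]{KLM-BPBp-nu}) cannot be used is also precisely the point the paper makes implicitly by introducing the second numerical index.
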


\section{Direct sums of Banach spaces and more examples}\label{sect-sums-moreexamples}

We present now an inequality for the second numerical index of absolute sums of Banach spaces.

\begin{prop}\label{prop:absolute-summand}
Let $X$ be a Banach space and $Y$, $W$ closed subspaces of $X$ such that $X=Y \oplus_a W$, where $\oplus_a$ is an absolute sum different from the $\ell_2$-sum. Then,
$$
n'(X)\leq \min\{n'(Y),n'(W)\}.
$$
\end{prop}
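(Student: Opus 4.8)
The plan is to prove the inequality for one summand, say $n'(X)\le n'(Y)$, since the argument for $W$ is symmetric. The key idea is to transfer an operator $T_0\in\mathcal{L}(Y)$ to an operator on $X=Y\oplus_a W$ by extending it to be zero on $W$, and then compare both the numerical radius and the quotient norm of the extended operator with those of $T_0$. Concretely, given $T_0\in\mathcal{L}(Y)$ with $T_0\notin\mathcal{Z}(Y)$, I would define $T\in\mathcal{L}(X)$ by $T(y,w)=(T_0 y,0)$ for $y\in Y$, $w\in W$; equivalently $T=i_Y T_0 P_Y$ where $P_Y$ is the canonical projection onto $Y$ and $i_Y$ the inclusion. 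The goal is then to establish the two estimates
\[
v_X(T)=v_Y(T_0)\qquad\text{and}\qquad \|T+\mathcal{Z}(X)\|\ge \|T_0+\mathcal{Z}(Y)\|,
\]
from which $n'(X)\le v_X(T)/\|T+\mathcal{Z}(X)\|\le v_Y(T_0)/\|T_0+\mathcal{Z}(Y)\|$ follows, and taking the infimum over admissible $T_0$ gives $n'(X)\le n'(Y)$.

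First I would prove the numerical radius identity. For the inequality $v_X(T)\ge v_Y(T_0)$, I note that since $\oplus_a$ is absolute, every pair $(y,y^*)\in\Pi(Y)$ lifts to a pair $((y,0),(y^*,0))\in\Pi(X)$ (here one uses that $|(1,0)|_a=1$ and that $(s,0)\mapsto s$ is the corresponding functional on $\R^2$, so $\|(y,0)\|_X=\|y\|_Y$ and $\|(y^*,0)\|_{X^*}=\|y^*\|_{Y^*}$), and on such pairs the value $(y^*,0)(T(y,0))=y^*(T_0 y)$ recovers the numerical range of $T_0$. For the reverse inequality $v_X(T)\le v_Y(T_0)$, I would use that the set of norming pairs of the form $((y,0),(y^*,0))$ with $(y,y^*)\in\Pi(Y)$, together with pairs supported in $W$, has dense first-coordinate projection, and invoke Lemma~\ref{lemma:old-projection-dense}; for a general norming pair $((y,w),(y^*,w^*))$ the value of $T$ is $y^*(T_0 y)$, which is controlled by $\|y\|\,\|y^*\|\,v_Y(T_0/\dots)$ after the appropriate normalization—this is where one must be slightly careful, but the dominant contribution lives entirely in the $Y$-coordinate, so the supremum cannot exceed $v_Y(T_0)$.

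Next, for the quotient-norm inequality, I would invoke Lemma~\ref{lemma:lemma5Paya82}, which is exactly tailored to this situation: since $\oplus_a$ is not the $\ell_2$-sum, every $S\in\mathcal{Z}(X)$ is diagonal and decomposes as $S(y,w)=(S_1 y,S_2 w)$ with $S_1\in\mathcal{Z}(Y)$ and $S_2\in\mathcal{Z}(W)$. Hence for any $S\in\mathcal{Z}(X)$ we have $\|T-S\|_{\mathcal{L}(X)}\ge \|(T-S)|_Y\|=\|T_0-S_1\|_{\mathcal{L}(Y)}\ge \|T_0+\mathcal{Z}(Y)\|$, where the first inequality restricts to the subspace $Y$ (using $\|(y,0)\|_X=\|y\|_Y$) and the second uses $S_1\in\mathcal{Z}(Y)$. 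Taking the infimum over $S\in\mathcal{Z}(X)$ yields $\|T+\mathcal{Z}(X)\|\ge\|T_0+\mathcal{Z}(Y)\|$, as needed.

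The main obstacle I anticipate is the reverse numerical-radius bound $v_X(T)\le v_Y(T_0)$: one must rule out that mixed norming pairs $((y,w),(y^*,w^*))$ with both coordinates nonzero produce a larger value. Since $T$ annihilates the $W$-component, the value is always $y^*(T_0 y)$, but $(y,y^*)$ need not itself be a normalized compatible pair in $\Pi(Y)$—one has $\|y\|<1$ in general. The resolution is that $y^*$ restricted and renormalized gives a norming functional for $y/\|y\|$ in $Y$, and the absolute-norm structure forces $\|y^*\|_{Y^*}\le 1$ with the product $\|y\|\,\|y^*\|$ bounded so that $|y^*(T_0 y)|\le v_Y(T_0)$; making this precise via Lemma~\ref{lemma:old-Anorming-Ardalani} applied to a suitable $1$-norming set of product form, or via the density/projection Lemma~\ref{lemma:old-projection-dense}, is the technical heart of the argument.
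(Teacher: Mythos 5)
Your strategy is exactly the paper's: extend $T_0\in\mathcal{L}(Y)$ by zero to $\widetilde{T}=i_YT_0P_Y$, prove $v_X(\widetilde{T})\le v_Y(T_0)$ and $\|\widetilde{T}+\mathcal{Z}(X)\|\ge\|T_0+\mathcal{Z}(Y)\|$, and take infima. Your quotient-norm half coincides with the paper's Lemma~\ref{lemma-abs-sum}: Lemma~\ref{lemma:lemma5Paya82} makes every $S\in\mathcal{Z}(X)$ diagonal, and restricting $T-S$ to the summand $Y$ gives the bound; that half is complete and correct.

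The gap is the half you yourself flag as ``the technical heart'': $v_X(\widetilde{T})\le v_Y(T_0)$. (The paper does not prove this step either; it quotes \cite[Lemma~3.3]{ChicaMartinMeri-QJM2014}.) The first route you propose cannot work: the set of norming pairs supported in $Y$ or in $W$ does \emph{not} have dense first-coordinate projection in $S_X$ --- for instance, in $Y\oplus_1 W$ the unit vector $\bigl(\tfrac12 y,\tfrac12 w\bigr)$ with $y\in S_Y$, $w\in S_W$ is at distance at least $\tfrac12$ from every unit vector supported in a single summand --- so Lemma~\ref{lemma:old-projection-dense} does not apply to that set. Fortunately, no density or approximation argument is needed; the idea in your last paragraph closes the gap exactly, and in an elementary way. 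For any $\bigl((y,w),(y^*,w^*)\bigr)\in\Pi(X)$ one has, writing $|\cdot|_{a^*}$ for the dual absolute norm (so that $X^*=Y^*\oplus_{a^*}W^*$),
$$
1 = y^*(y)+w^*(w) \le \|y^*\|\|y\|+\|w^*\|\|w\| \le \bigl|(\|y^*\|,\|w^*\|)\bigr|_{a^*}\,\bigl|(\|y\|,\|w\|)\bigr|_{a} = 1,
$$
so all inequalities are equalities; in particular $y^*(y)=\|y^*\|\|y\|$ and $\|y\|\|y^*\|+\|w\|\|w^*\|=1$. If $y\neq 0$ and $y^*\neq 0$, the normalized pair $\bigl(y/\|y\|,\,y^*/\|y^*\|\bigr)$ lies in $\Pi(Y)$, hence
$$
\bigl|(y^*,w^*)\widetilde{T}(y,w)\bigr| = |y^*(T_0y)| = \|y\|\|y^*\|\,\Bigl|\tfrac{y^*}{\|y^*\|}\Bigl(T_0\tfrac{y}{\|y\|}\Bigr)\Bigr| \le \|y\|\|y^*\|\,v(T_0)\le v(T_0),
$$
while the degenerate cases give $0$. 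Taking the supremum over $\Pi(X)$ yields $v_X(\widetilde{T})\le v_Y(T_0)$; with this inserted, your proof is complete and, modulo this step being done by hand rather than by citation, identical to the paper's.
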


We need the following lemma, which is based on Lemma~3.3 of \cite{ChicaMartinMeri-QJM2014} and uses Lemma~\ref{lemma:lemma5Paya82}.

\begin{lemma}\label{lemma-abs-sum}
Let $X$ be a Banach space and $Y$, $W$ closed subspaces of $X$ such that $X=Y \oplus_a W$, where $\oplus_a$ is an absolute sum different from the $\ell_2$-sum. Then, given an operator $T\in \mathcal{L}(Y)$, the operator $\widetilde{T}\in \mathcal{L}(X)$ defined by
$$
\widetilde{T}(y+w)=Ty \qquad (y\in Y, w\in W),
$$
satisfies $\|\widetilde{T}\|=\|T\|$, $v(\widetilde{T})=v(T)$, and $\|\widetilde{T}+\mathcal{Z}(X)\|=\|T+\mathcal{Z}(Y)\|$.
\end{lemma}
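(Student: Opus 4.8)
The plan is to verify each of the three claimed identities for the operator $\widetilde{T}$, exploiting the diagonal structure forced by Lemma~\ref{lemma:lemma5Paya82}. The equality $\|\widetilde{T}\|=\|T\|$ is the easiest: since $\widetilde{T}(y+w)=Ty$ kills the $W$-component and the absolute norm satisfies $\bigl|(\|Ty\|,0)\bigr|_a=\|Ty\|$, I would note that $\|\widetilde{T}(y+w)\|=\|Ty\|\le \|T\|\,\|y\|\le\|T\|\,\|y+w\|$ (using that an absolute norm is monotone, so $\|y\|\le\|(y,w)\|$), giving $\|\widetilde{T}\|\le\|T\|$; the reverse inequality follows by restricting to $w=0$. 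For $v(\widetilde{T})=v(T)$ I expect to use a norming-set description of the numerical radius. The natural approach is to identify a suitable $1$-norming subset of $S_{X^*}$ built from functionals supported on $Y$ (together with the dual absolute structure) and apply Lemma~\ref{lemma:old-Anorming-Ardalani}, or more directly Lemma~\ref{lemma:old-projection-dense} with a set $\Gamma\subset\Pi(X)$ whose $X$-projection is dense and concentrated near $Y$; because $\widetilde{T}$ acts only through the $Y$-coordinate, the numerical range computed over such pairs should reduce exactly to $V(T)$.

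The substantive identity is $\|\widetilde{T}+\mathcal{Z}(X)\|=\|T+\mathcal{Z}(Y)\|$, and this is where Lemma~\ref{lemma:lemma5Paya82} is indispensable. First I would prove $\le$: given any $S\in\mathcal{Z}(Y)$, form its ``lift'' $\widetilde{S}\in\mathcal{L}(X)$ by $\widetilde{S}(y+w)=Sy$. I would want $\widetilde{S}\in\mathcal{Z}(X)$, which should follow because $\widetilde{S}$ vanishes on $W$ and agrees with a skew-hermitian operator on $Y$; this needs a small argument that $v(\widetilde{S})=v(S)=0$, again via the norming-set reduction used for the second identity (indeed $v(\widetilde{S})=v(S)$ is just the $v(\widetilde{T})=v(T)$ statement applied to $S$). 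Then $\widetilde{T}-\widetilde{S}=\widetilde{T-S}$, so $\|\widetilde{T}+\mathcal{Z}(X)\|\le\|\widetilde{T-S}\|=\|T-S\|$ by the first identity, and taking the infimum over $S\in\mathcal{Z}(Y)$ yields $\|\widetilde{T}+\mathcal{Z}(X)\|\le\|T+\mathcal{Z}(Y)\|$.

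The reverse inequality $\ge$ is the main obstacle, because an arbitrary $R\in\mathcal{Z}(X)$ competing in $\|\widetilde{T}+\mathcal{Z}(X)\|=\inf_R\|\widetilde{T}-R\|$ need not be a lift from $Y$. Here Lemma~\ref{lemma:lemma5Paya82} saves the day: since $\oplus_a$ is not the $\ell_2$-sum, every $R\in\mathcal{Z}(X)$ is diagonal, $R(y,w)=(S_0y,R_0w)$ with $S_0\in\mathcal{Z}(Y)$ and $R_0\in\mathcal{Z}(W)$. Writing $\widetilde{T}$ in block form as $(y,w)\mapsto(Ty,0)$, I compute
\begin{equation*}
(\widetilde{T}-R)(y,w)=\bigl((T-S_0)y,\,-R_0w\bigr).
\end{equation*}
By monotonicity of the absolute norm, applying this operator to vectors with $w=0$ shows $\|\widetilde{T}-R\|\ge\|(T-S_0)y\|$ for all $y\in S_Y$, hence $\|\widetilde{T}-R\|\ge\|T-S_0\|\ge\|T+\mathcal{Z}(Y)\|$ since $S_0\in\mathcal{Z}(Y)$. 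Taking the infimum over $R\in\mathcal{Z}(X)$ gives the desired inequality, and combined with the first part this proves the claimed equality. I expect the only delicate points to be the precise justification of the norming-set reduction for $v(\widetilde{T})=v(T)$ and the monotonicity estimate for the absolute norm; both are routine once the diagonal decomposition is in hand, so the real content is precisely the invocation of Lemma~\ref{lemma:lemma5Paya82} to control all skew-hermitian competitors.
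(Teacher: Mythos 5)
Your proof of the quotient-norm identity --- the actual content of this lemma --- is exactly the paper's: the inequality $\|\widetilde{T}+\mathcal{Z}(X)\|\le\|T+\mathcal{Z}(Y)\|$ by lifting each $S\in\mathcal{Z}(Y)$ to $\widetilde{S}$ and using $\widetilde{T}-\widetilde{S}=\widetilde{T-S}$, and the reverse inequality by invoking Lemma~\ref{lemma:lemma5Paya82} to force every $R\in\mathcal{Z}(X)$ into diagonal form and then evaluating at vectors $(y,0)$ with $y\in S_Y$. Your elementary argument for $\|\widetilde{T}\|=\|T\|$ is also correct; for this and for $v(\widetilde{T})=v(T)$ the paper simply cites \cite[Lemma~3.3]{ChicaMartinMeri-QJM2014} instead of giving a proof.

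The genuine gap is $v(\widetilde{T})=v(T)$, which you never prove and which your argument cannot do without: the lifting step needs $v(\widetilde{S})=v(S)=0$ to conclude $\widetilde{S}\in\mathcal{Z}(X)$, so this identity is an ingredient of, not an afterthought to, the third equality. Worse, the mechanism you sketch would fail: functionals of the form $(y^*,0)$ vanishing on $W$ are not $1$-norming for $X$ (against them the supremum computes $\|y\|$, not $\|(y,w)\|$), so Lemma~\ref{lemma:old-Anorming-Ardalani} does not apply to that set; and any $\Gamma\subset\Pi(X)$ with $\pi_X(\Gamma)$ dense in $S_X$, as Lemma~\ref{lemma:old-projection-dense} demands, must contain pairs whose point has a large $W$-component and whose functional therefore cannot be supported on $Y$. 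The correct argument is elementary and needs no norming sets, only the duality $X^*=Y^*\oplus_{a^*}W^*$: if $\bigl((y,w),(y^*,w^*)\bigr)\in\Pi(X)$, then
$$
1=y^*(y)+w^*(w)\le \|y^*\|\|y\|+\|w^*\|\|w\|\le \bigl|(\|y\|,\|w\|)\bigr|_a\,\bigl|(\|y^*\|,\|w^*\|)\bigr|_{a^*}=1,
$$
so $y^*(y)=\|y^*\|\|y\|$; hence either $y=0$ or $y^*=0$ (and then $y^*(Ty)=0$), or $\bigl(y/\|y\|,\,y^*/\|y^*\|\bigr)\in\Pi(Y)$ and $|\langle (y^*,w^*),\widetilde{T}(y,w)\rangle|=|y^*(Ty)|\le\|y\|\,\|y^*\|\,v(T)\le v(T)$. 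This gives $v(\widetilde{T})\le v(T)$, and the reverse inequality follows since $\bigl((y,0),(y^*,0)\bigr)\in\Pi(X)$ for every $(y,y^*)\in\Pi(Y)$. With this step supplied, your proof is complete and coincides with the paper's.
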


\begin{proof} Fix $T\in \mathcal{L}(Y)$. That $\|\widetilde{T}\|=\|T\|$ and $v(\widetilde{T})=v(T)$ is proved (under weaker assumptions) in \cite[Lemma~3.3]{ChicaMartinMeri-QJM2014}. Let us prove now that
$\|\widetilde{T}+\mathcal{Z}(X)\|\leq \|T+\mathcal{Z}(Y)\|$. Indeed, for $S\in \mathcal{Z}(Y)$, we have $v(\widetilde{S})=v(S)=0$, so
$$
\|\widetilde{T}+\mathcal{Z}(X)\| \leq \|\widetilde{T} - \widetilde{S}\|=\|\widetilde{T-S}\|=\|T-S\|.
$$
Taking infimum on $S\in \mathcal{Z}(Y)$, we get the claim.

For the reversed inequality, let us fix $\widehat{S}\in \mathcal{Z}(X)$ and use that $\oplus_a$ is not the $\ell_2$-sum and Lemma~\ref{lemma:lemma5Paya82} to get that
$$
\widehat{S}=\begin{pmatrix} S_1 & 0 \\ 0 & S_2 \end{pmatrix}
$$
where $S_1 \in \mathcal{Z}(Y)$ and $S_2\in \mathcal{Z}(W)$. Now,  one has
\begin{align*}
\|\widetilde{T}-\widehat{S}\|= \left\|\begin{pmatrix} T-S_1 & 0 \\ 0 & S_2 \end{pmatrix}\right\| \geq \|T-S_1\|\geq \|T+\mathcal{Z}(Y)\|,
\end{align*}
where the first inequality is true since we are dealing with an absolute sum. Finally, we just have to take infimum on $\widehat{S}\in \mathcal{Z}(X)$ to get the desired inequality, finishing the proof.
\end{proof}

\begin{proof}[Proof of Proposition~\ref{prop:absolute-summand}]
Fix $T\in \mathcal{L}(Y)$ with $\|T+\mathcal{Z}(Y)\|\neq 0$. By Lemma~\ref{lemma-abs-sum}, there is an operator $\widetilde{T}\in \mathcal{L}(X)$ with $\|\widetilde{T}+\mathcal{Z}(X)\|=\|T+\mathcal{Z}(Y)\|$ and such that $v(\widetilde{T})=v(T)$. Then,
$$
n'(X)\leq \frac{v(\widetilde{T})}{\|\widetilde{T}+\mathcal{Z}(X)\|}= \frac{v(T)}{\|T+\mathcal{Z}(Y)\|}.
$$
Taking infimum with $T\in \mathcal{L}(Y)$ with $\|T+\mathcal{Z}(Y)\|\neq 0$, we get $n'(X)\leq n'(Y)$.
\end{proof}

As a consequence of Proposition~\ref{prop:absolute-summand}, we obtain the following result for $c_0$ and $\ell_p$-sums.

\begin{corollary}
\label{suma} Let $\{X_\lambda\,:\, \lambda \in \Lambda\}$ be a
family of Banach spaces. Then
\begin{equation*}
n'\Bigl(\left[\oplus_{\lambda\in\Lambda} X_\lambda\right]_{c_0}
\Bigr)\leq \inf_{\lambda} \,n'(X_\lambda), \qquad \text{and} \qquad n'\Bigl(\left[\oplus_{\lambda\in\Lambda}
X_\lambda\right]_{\ell_p} \Bigr)\leq \inf_{\lambda} \,n'(X_\lambda) \ \text{ for $1\leq p \leq \infty$, $p\neq 2$}.
\end{equation*}
\end{corollary}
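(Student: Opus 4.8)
The plan is to reduce the statement to the two-summand case already settled in Proposition~\ref{prop:absolute-summand} by peeling off a single summand. Fix an index $\mu\in\Lambda$ and set $W_\mu:=\left[\oplus_{\lambda\neq\mu} X_\lambda\right]_{c_0}$ in the $c_0$-case (respectively $W_\mu:=\left[\oplus_{\lambda\neq\mu} X_\lambda\right]_{\ell_p}$ in the $\ell_p$-case). The key observation is that the full sum decomposes as a two-space absolute sum: the $c_0$-sum equals $X_\mu \oplus_\infty W_\mu$, since the norm of $(x_\lambda)_\lambda$ is $\max\{\|x_\mu\|,\,\sup_{\lambda\neq\mu}\|x_\lambda\|\}$, i.e.\ the $\ell_\infty$-combination on $\R^2$ of the two component norms; likewise the $\ell_p$-sum equals $X_\mu \oplus_p W_\mu$, where $\oplus_p$ is the absolute sum associated to the $\ell_p$-norm on $\R^2$. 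In both cases $X_\mu$ and $W_\mu$ embed naturally as closed subspaces of the whole sum.

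Next I would verify the hypothesis of Proposition~\ref{prop:absolute-summand}, namely that the absolute sum at hand is different from the $\ell_2$-sum. In the $c_0$-case it is the $\ell_\infty$-sum, which is plainly not the $\ell_2$-sum; in the $\ell_p$-case this is exactly what the assumption $p\neq 2$ guarantees. With this confirmed, Proposition~\ref{prop:absolute-summand} applies with $Y=X_\mu$ and $W=W_\mu$ and gives
$$
n'\Bigl(\left[\oplus_{\lambda\in\Lambda} X_\lambda\right]_{c_0}\Bigr) \leq \min\{n'(X_\mu),\, n'(W_\mu)\} \leq n'(X_\mu),
$$
and the entirely analogous inequality for the $\ell_p$-sum.

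Finally, since the displayed bound holds for every choice of $\mu\in\Lambda$, taking the infimum over $\mu$ yields $n'\bigl(\left[\oplus_{\lambda} X_\lambda\right]_{c_0}\bigr)\leq \inf_\lambda n'(X_\lambda)$, and the same for the $\ell_p$-sum, which is precisely the asserted conclusion.

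I do not anticipate a genuine obstacle here, as the entire content is carried by Proposition~\ref{prop:absolute-summand}. The only point demanding a moment's care is the correct identification of the $c_0$- and $\ell_p$-sums as two-summand absolute sums that differ from the $\ell_2$-sum, together with the (routine but worth stating) fact that splitting off one coordinate respects the absolute structure, so that the remaining summands again form a $c_0$- or $\ell_p$-sum.
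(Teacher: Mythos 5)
Your proof is correct and is essentially the paper's own argument: the authors likewise fix a summand $X_\lambda$, write the whole sum as $X_\lambda \oplus_p Y$ (or $X_\lambda \oplus_\infty Y$ in the $c_0$ case) for the complementary sum $Y$, and invoke Proposition~\ref{prop:absolute-summand}, whose hypothesis is met precisely because $p\neq 2$. Your write-up just makes explicit the routine identification of the peeled-off decomposition that the paper leaves implicit.
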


\begin{proof}
If we write $X=\left[\oplus_{\lambda\in\Lambda} X_\lambda\right]_{\ell_p}$, then for every $\lambda\in \Lambda$, $X=X_\lambda \oplus_p Y$ for suitable space $Y$. Then, the result follows from Proposition~\ref{prop:absolute-summand}. The case of $c_0$ is absolutely analogous.
\end{proof}

We will extend this corollary to more general absolute sums of Banach spaces in section~\ref{sec:absolute-sums}.

We use now Proposition~\ref{prop:absolute-summand} to give some interesting examples.

\begin{example}
{\slshape The second numerical index is not continuous with respect to the Banach-Mazur distance, even in the set of Banach spaces with numerical index zero.}

Indeed, for $1<p<\infty$, let $X_p=\ell_p^2 \oplus_p \ell_2^2$. First observe that the mapping $p\longmapsto X_p$ is continuous with respect to the Banach-Mazur distance. Next, we have that $n(X_p)=0$ for every $p$ as $n(X_p)\leq n(\ell_2^2)=0$ by Proposition~\ref{prop-suma-oldMarPay2000}. Also, $n'(X_p)\leq n'(\ell_p^2)=n(\ell_p^2)$ by Proposition~\ref{prop:absolute-summand}, so $\lim\limits_{p\to 2} n'(X_p)=0$ (see Example~\ref{example:old-values-lp2}). On the other hand, $n'(X_2)=n'(\ell_2^4)=1$ by Theorem~\ref{thrm-Hilbert}.
\end{example}

Observe that in the previous example the Lie algebra of the limit space differs from that of the limiting spaces. In the next result we show that the second numerical index is continuous with respect to the Banach-Mazur distance when we restrict to spaces with the same Lie algebra. Its proof follows the lines of Proposition~2 in \cite{F-M-P} from where we also borrow some notation. Given a Banach space $X$ with Lie algebra $\mathcal{Z}(X)$ we denote by $\mathcal{E}'(X)$ the set of
all equivalent norms on the Banach space X whose Lie algebra coincides with $\mathcal{Z}(X)$. This is a metric
space when provided with the distance given by
$$
d(p, q) = \log\left( \min \{k\geq 1\, : \, p\leq kq,\, q\leq kp\}\right) \qquad  (p, q \in \mathcal{E}'(X)).
$$
If $p\in \mathcal{E}'(X)$ and $T \in \mathcal{L}(X)$, we write $v_p(T)$ for the numerical radius of $T$ in the
space $(X, p)$, we also use the symbol $p$ to denote the associated operator norm and so $p(T+\mathcal{Z}(X))=\inf_{S\in \mathcal{Z}(X)}p(T-S)$. Finally, $n'(X, p)$ is the second numerical index of the Banach space $(X, p)$.

\begin{prop}\label{prop:continuity-B-M-distance-same-Lie-algebra}
Given a Banach space X, the mapping $p\longmapsto n'(X, p)$ from $\mathcal{E}'(X)$
to $\R$ is continuous.
\end{prop}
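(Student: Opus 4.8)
The plan is to fix two norms $p,q\in\mathcal{E}'(X)$ with $d(p,q)=\log k$ and to estimate $n'(X,p)$ from below in terms of $n'(X,q)$ (and, by symmetry, conversely), showing that the discrepancy tends to $0$ as $k\to 1$. The whole argument rests on the observation that, since $p$ and $q$ have the \emph{same} Lie algebra $\mathcal{Z}:=\mathcal{Z}(X)$, both $n'(X,p)$ and $n'(X,q)$ are infima of quotients over the \emph{same} quotient space $\mathcal{L}(X)/\mathcal{Z}$; thus it suffices to compare, uniformly in $T\notin\mathcal{Z}$, the quotient norms $p(T+\mathcal{Z})$, $q(T+\mathcal{Z})$ and the numerical radii $v_p(T)$, $v_q(T)$. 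I expect to conclude with an inequality of the form $n'(X,p)\ge k^{-4}n'(X,q)-C(k)\,h(\delta)$ (with $\delta=1-k^{-2}$, $C(k)\to1$, $h(\delta)\to0$) and its symmetric counterpart.

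First I would dispatch the norm comparison, which is routine but is precisely where the restriction to $\mathcal{E}'(X)$ enters. From $\tfrac1k q\le p\le k q$ one gets $\tfrac1{k^2}q(R)\le p(R)\le k^2 q(R)$ for the operator norms (and $\tfrac1k q^*\le p^*\le k q^*$ for the duals). Because $\mathcal{Z}$ is common to $p$ and $q$, taking infima over $S\in\mathcal{Z}$ in $p(T-S)\le k^2 q(T-S)$ yields $\tfrac1{k^2}q(T+\mathcal{Z})\le p(T+\mathcal{Z})\le k^2 q(T+\mathcal{Z})$; in particular $T\notin\mathcal{Z}$ for one norm iff for the other. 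This step would fail for arbitrary equivalent renormings, and is the reason we work inside $\mathcal{E}'(X)$.

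The heart of the matter is the comparison of numerical radii, and here I would argue in two moves. For the first, take an exact pair $(x,x^*)\in\Pi_p(X)$ and renormalize it to $y:=x/q(x)$, $y^*:=x^*/q^*(x^*)$, which is a \emph{relaxed} $q$-pair: $q(y)=q^*(y^*)=1$ and $y^*(y)=\bigl(q(x)\,q^*(x^*)\bigr)^{-1}\ge k^{-2}$. Since $|x^*(Tx)|=q(x)\,q^*(x^*)\,|y^*(Ty)|\le k^2\,|y^*(Ty)|$, taking the supremum over $\Pi_p(X)$ gives $v_p(T)\le k^2\,\Phi_q\bigl(T,1-k^{-2}\bigr)$, where $\Phi_q(T,\delta):=\sup\{|z^*(Tz)|:q(z)=q^*(z^*)=1,\ z^*(z)>1-\delta\}$ is the relaxed numerical radius associated with $q$ (cf.\ Lemma~\ref{lemma:old-Anorming-Ardalani}). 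The second, and genuinely delicate, move is to bound $\Phi_q$ back in terms of the true radius \emph{uniformly in} $T$: I would show $\Phi_q(T,\delta)\le v_q(T)+h(\delta)\,q(T)$ with $h(\delta)\to 0$. This is the main obstacle, since the qualitative statement $\Phi_q(T,\delta)\to v_q(T)$ (Lemma~\ref{lemma:old-Anorming-Ardalani}) carries no uniform rate; the way around it, following the lines of Proposition~2 in \cite{F-M-P}, is to feed each relaxed pair into the Bishop--Phelps--Bollob\'as theorem, obtaining an exact pair within $O(\sqrt{\delta})$ of it, whence $|z^*(Tz)|\le v_q(T)+O(\sqrt\delta)\,q(T)$ with an absolute, space-independent constant. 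Combining the two moves, and their symmetric versions, produces $v_q(T)\le k^2 v_p(T)+k^4 h(\delta)\,q(T)$ with $\delta=1-k^{-2}$.

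Finally I would assemble the estimate. Fixing $T\notin\mathcal{Z}$ and replacing it by a coset representative with $q(T)\le(1+\eps)q(T+\mathcal{Z})$ — legitimate because the numerical radius is constant on cosets of $\mathcal{Z}$ — I combine $v_p(T)\ge k^{-2}v_q(T)-k^2 h(\delta)\,q(T)$ with $v_q(T)\ge n'(X,q)\,q(T+\mathcal{Z})$ and the norm comparisons of the second paragraph to obtain $v_p(T)/p(T+\mathcal{Z})\ge k^{-4}n'(X,q)-C(k)\,h(\delta)$, with $C(k)\to 1$. Taking the infimum over $T\notin\mathcal{Z}$ gives $n'(X,p)\ge k^{-4}n'(X,q)-C(k)h(\delta)$, and exchanging the roles of $p$ and $q$ gives the reverse bound. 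Since $d(p,q)\to 0$ forces $k\to 1$ and $\delta=1-k^{-2}\to 0$ (so $h(\delta)\to 0$), both bounds yield $n'(X,p)\to n'(X,q)$, which is the desired continuity.
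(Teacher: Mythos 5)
Your proof is correct and follows essentially the same route as the paper's: both exploit that $p$ and $q$ share the Lie algebra $\mathcal{Z}(X)$ to compare the quotient norms, both restrict to coset representatives whose operator norm is controlled by the quotient norm, and both rest on a uniform-in-$T$ comparison of $v_p$ and $v_q$. The only difference is that you prove this last ingredient explicitly (relaxed pairs plus the Bishop--Phelps--Bollob\'as theorem, giving the rate $O(\sqrt{\delta})\,q(T)$ with a space-independent constant), whereas the paper simply cites it as the uniform continuity of $(p,T)\mapsto v_p(T)$ established in the proof of \cite[Proposition~2]{F-M-P}.
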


\begin{proof}
Fix $p_0\in \mathcal{E}'(X)$ and $r>0$, let $B=\{p\in \mathcal{E}'(X) \, : \, d(p,p_0)< \log(1+r)\} $, and
$$
\mathcal{S}=\{T\in \mathcal{L}(X) \, : \, p_0(T+\mathcal{Z}(X))=1,\, p_0(T)\leq 2\}.
$$
Our first aim is to show that the mapping
$$
(p,T)\longmapsto p(T+\mathcal{Z}(X))
$$
is uniformly continuous on $B\times \mathcal{S}$. To do so, take $T_1,T_2\in \mathcal{S}$ and, fixed $0<\delta<1$, take $p,q\in B$ satisfying $d(p,q)\leq \log(1+\delta)$ which gives $p\leq (1+\delta)q$ and $q\leq (1+\delta)p$. Observe that
$$
p(T_1+\mathcal{Z}(X))-p(T_2+\mathcal{Z}(X))\leq p(T_1-T_2)\leq (1+r)p_0(T_1-T_2).
$$
Therefore, we can write
\begin{align*}
p(T_1+\mathcal{Z}(X))-q(T_2+\mathcal{Z}(X)) & = p(T_1+\mathcal{Z}(X))-p(T_2+\mathcal{Z}(X))+p(T_2+\mathcal{Z}(X))-q(T_2+\mathcal{Z}(X))\\
& \leq  (1+r)p_0(T_1-T_2)+p(T_2+\mathcal{Z}(X))-q(T_2+\mathcal{Z}(X)).
\end{align*}
Now, given $\eps>0$, take $S\in \mathcal{Z}(X)$ satisfying $q(T_2-S)-q(T_2+\mathcal{Z}(X))\leq \eps$ and
observe that
\begin{align*}
p(T_2+\mathcal{Z}(X))-q(T_2+\mathcal{Z}(X))&\leq p(T_2-S)-q(T_2+\mathcal{Z}(X)) \leq p(T_2-S)-q(T_2-S)+\eps \\
&\leq (1+\delta)q(T_2-S)-q(T_2-S)+\eps\\
& =\delta q(T_2-S)+\eps \leq \delta q(T_2+\mathcal{Z}(X))+(1+\delta)\eps \\
& \leq \delta (1+r) p_0(T_2+\mathcal{Z}(X))+(1+\delta)\eps=\delta (1+r) +(1+\delta)\eps.
\end{align*}
So we can continue the above estimation as follows:
\begin{align*}
p(T_1+\mathcal{Z}(X))-q(T_2+\mathcal{Z}(X))&\leq  (1+r)p_0(T_1-T_2)+p(T_2+\mathcal{Z}(X))-q(T_2+\mathcal{Z}(X))\\
&\leq (1+r)p_0(T_1-T_2)+\delta (1+r)+(1+\delta)\eps
\end{align*}
and the arbitrariness of $\eps$ gives
$$
p(T_1+\mathcal{Z}(X))-q(T_2+\mathcal{Z}(X))\leq(1+r)p_0(T_1-T_2)+\delta (1+r).
$$
Finally, exchanging the roles of $p$ and $q$ we obtain
$$
\Big|p(T_1+\mathcal{Z}(X))-q(T_2+\mathcal{Z}(X))\Big|\leq (1+r)p_0(T_1-T_2)+\delta (1+r)
$$
which gives the uniform continuity of the mapping $(p,T)\longmapsto p(T+\mathcal{Z}(X))$ on $B\times \mathcal{S}$.

On the other hand, it is observed in the proof of \cite[Proposition~2]{F-M-P} that the mapping $(p,T)\longmapsto v_p(T)$ is uniformly continuous on bounded sets, so it is uniformly continuous on $B\times \mathcal{S}$. Therefore, using the fact that
$$
\inf\{p(T+\mathcal{Z}(X)) \, : \, p\in B,\, T\in \mathcal{S}\}\geq \frac{1}{1+r}>0
$$
we deduce that the mapping $\Psi: B\times \mathcal{S} \longrightarrow \R$ given by
$$
\Psi(p,T)=\frac{v_p(T)}{p(T+\mathcal{Z}(X))} \qquad (p\in B, T\in\mathcal{S})
$$
is uniformly continuous. This implies that the mapping
$$
p\longmapsto \inf\{\Psi(p,T) \ : \ T\in \mathcal{S}\}= n'(X,p)
$$
is continuous on $B$.
\end{proof}

For the classical numerical index, the inequality in Proposition~\ref{prop:absolute-summand} is an equality when the absolute sum is the $\ell_1$-sum or the $\ell_\infty$-sum. The next example shows that this is not the case for the second numerical index.

\begin{example}\label{example:Hilbert-sum-infty-Hilbert}
{\slshape Let $H$ be a Hilbert space with $\dim(H)\geq 2$ and $W$ be a nontrivial Banach space. Suppose that  $X=H\oplus_\infty W$ or $X=H\oplus_1 W$. Then $n'(X)\leq \frac{\sqrt{3}}{2}<1$.}
\end{example}

\begin{proof} Fix two orthogonal elements $y_1, y_2\in S_H$ and $w_1\in S_W$. Choose $w_1^*\in S_{W}$ such that $w_1^*(w_1)=1$. We suppose first that $X=H\oplus_\infty W$ and we consider the operator $T_1\in \mathcal{L}(X)$ given by
$$
T_1(y,w)= \big((y_1\mid y ) y_1 +  \sqrt{2}w_1^*(w) y_2,0\big) \qquad \big( (y,w) \in X\big).
$$
We start showing that $v(T_1)\leq \frac{3}{2}$. Indeed, given $\big((y,w),(y^*,w^*)\big)\in \Pi(X)$, we have that
$$
\max\{\|y\|,\|w\|\}=1,\quad \|y^*\|+\|w^*\|=1,\quad \text{and}\quad
1=y^*(y)+w^*(w)=\|y^*\|\|y\|+\|w^*\|\|w\|.
$$
Therefore, $y^*(y)=\|y^*\|\|y\|$ and $w^*(w)=\|w^*\|\|w\|$.
Since $H$ is a Hilbert space and $y^*(y)=\|y^*\|\|y\|$, we get that $\|y\|y^*=y\|y^*\|$. If $\|y\|<1$, then necessarily $\|w\|=1$, so $\|w^*\|=1$ and $\|y^*\|=0$ (otherwise we would get $\|y^*\|\|y\|+\|w^*\|\|w\|<1$). Therefore, in this case, we have that
$$
\left|(y^*,w^*)T_1(y,w)\right|=0.
$$
If, otherwise, $\|y\|=1$, we get that $y^*=y\|y^*\|$ and we can write
\begin{align*}
\left|(y^*,w^*)T_1(y,w)\right|&=\left|( y_1 \mid y) y^*(y_1)+ \sqrt{2}w_1^*(w) y^*(y_2)\right|\\
&= \big|\|y^*\|( y_1\mid y)\,(y \mid y_1) + \sqrt{2}\|y^*\|w_1^*(w) (y \mid y_2)\big|\\
&\leq \|y^*\|\big(|(y_1\mid y)|^2 + \sqrt{2}|(y \mid y_2)|\big)\leq |(y_1\mid y)|^2+\sqrt{2}|( y\mid y_2)|.
\end{align*}
This, together with the inequality $1=\|y\|^2\geq |(y_1\mid y)|^2+ |(y\mid y_2)|^2$, tells us that
$$
\left|(y^*,w^*)T_1(y,w)\right|\leq 1-|(y\mid y_2))|^2 + \sqrt{2}|(y\mid y_2) |\leq \underset{t\in[0,1]}{\max}1-t^2+\sqrt{2}t= \frac{3}{2}.
$$

We prove next that $\|T_1 + S\|\geq \sqrt{3}$ for every $S\in \mathcal{Z}(X)$. Indeed, fixed $S\in \mathcal{Z}(X)$, by Lemma~\ref{lemma:lemma5Paya82} there are $S_1\in \mathcal{Z}(H)$ and $S_2\in \mathcal{Z}(W)$ such that $S(y,w)=(S_1(y), S_2(w))$ for every $(y,w)\in X$. Besides, we fix $\theta\in \{-1,1\}$ satisfying $\theta( y_2\mid  S_1(y_1)) \geq 0$. Finally, observe that $(y_1\mid S_1(y_1))=0$ since $v(S_1)=0$ and, therefore, we can write
\begin{align*}
\|T_1+S\|&\geq\|[T_1+S](y_1,\theta w_1)\|=\|(y_1+\theta\sqrt{2} y_2,0)+(S_1(y_1),\theta S_2(w_1))\|\\
&\geq \|y_1+\sqrt{2}\theta y_2+S_1(y_1)\|=\sqrt{\bigl(y_1+\sqrt{2}\theta y_2+S_1(y_1) \mid y_1+\sqrt{2}\theta y_2+S_1(y_1)\bigr)}\\
&=\sqrt{3+ \|S_1(y_1)\|^2 + 2\sqrt{2}\theta \bigl( y_2\mid S_1(y_1)\bigr) }\geq \sqrt{3}.
\end{align*}
Taking infimum in $S\in \mathcal{Z}(X)$, we get $\|T_1+\mathcal{Z}(X)\|\geq\sqrt{3}$ which finishes the proof for $X=H\oplus_\infty W$.

The proof for $X=H\oplus_1 W$ is somehow dual to the above one. In fact, let $T_2\in \mathcal{L}(X)$ be given by
$$
T_2(y,w)= \big(( y_1\mid y ) y_1, \sqrt{2} (y_2 \mid y ) w_1\big) \qquad \big( (y,w) \in X\big).
$$
We start showing that $v(T_2)\leq \frac{3}{2}$. Indeed, given $\big((y,w),(y^*,w^*)\big)\in \Pi(X)$, we have that
$$
\|y\|+\|w\|=1, \quad \max\{\|y^*\|,\|w^*\|\}=1,\quad \text{and}\quad
1=y^*(y)+w^*(w)=\|y^*\|\|y\|+\|w^*\|\|w\|.
$$
Therefore, $y^*(y)=\|y^*\|\|y\|$ and $w^*(w)=\|w^*\|\|w\|$.
Since $H$ is a Hilbert space and $y^*(y)=\|y^*\|\|y\|$, we get that $\|y\|y^*=y\|y^*\|$. If $\|y\|=0$, then
$$
\bigl|(y^*,w^*)T_2(y,w)\bigr|=0.
$$
If, otherwise, $\|y\|\neq 0$, we can write
\begin{align*}
\bigl|(y^*,w^*)T_2(y,w)\bigr|&=\left|( y_1\mid y) y^*(y_1)+ \sqrt{2}w^*(w_1) ( y_2\mid y)\right|\\
&= \left|( y_1\mid y) \Big( \|y^*\|\frac{y}{\|y\|}\mid y_1\Big)+\sqrt{2}w^*(w_1) ( y_2\mid y)\right|\\
&\leq |( y_1\mid y)|\left|\Big(\frac{y}{\|y\|}\mid y_1\Big)\right|+\sqrt{2}|( y\mid y_2)|\leq \left|\Big( y_1\mid \frac{y}{\|y\|}\Big)\right|^2+\sqrt{2}\left|\Big( \frac{y}{\|y\|}\mid y_2\Big)\right|.
\end{align*}
This, together with the inequality $1=\big\|\frac{y}{\|y\|}\big\|^2\geq \left|\Big( y_1\mid \frac{y}{\|y\|}\Big)\right|^2+\left|\Big( \frac{y}{\|y\|}\mid y_2\Big)\right|^2$, tells us that
$$
\left|(y^*,w^*)T_2(y,w)\right|\leq 1-\left|\Big( y_2\mid \frac{y}{\|y\|}\Big)\right|^2+\sqrt{2}\left|\Big( \frac{y}{\|y\|}\mid y_2\Big)\right|\leq \frac{3}{2}.
$$
Let us check that $\|T_2 +\mathcal{Z}(X)\|\geq \sqrt{3}$. Fixed $S\in \mathcal{Z}(X)$, by Lemma~\ref{lemma:lemma5Paya82} there are $S_1\in \mathcal{Z}(H)$ and $S_2\in \mathcal{Z}(W)$ such that $S(y,w)=(S_1(y), S_2(w))$ for every $(y,w)\in X$. We fix $\theta\in \{-1,1\}$ satisfying $\theta( y_1\mid  S_1(y_2)) \geq 0$. Besides, observe that $( y_1\mid  S_1(y_1))=0$ since $v(S_1)=0$ and, therefore, we can write
\begin{align*}
\|T_2+S\|&\geq\left\|[T_2+S]\left(\frac{1}{\sqrt{3}}(y_1+\sqrt{2}\theta y_2),0\right)\right\| =\frac{1}{\sqrt{3}}\bigl\|(y_1,2\theta w_1)
+(S_1(y_1+\sqrt{2}\theta y_2),0)\bigr\|\\
&=\frac{1}{\sqrt{3}}\bigl\|y_1+S_1(y_1+\sqrt{2}\theta y_2)\bigr\|+\frac{2}{\sqrt{3}}\\
&=\frac{1}{\sqrt{3}}\sqrt{1+ \|S_1(y_1+\sqrt{2}\theta y_2)\|^2 + 2\sqrt{2}\theta \bigl( y_1\mid  S_1(y_2)\bigr) }+\frac{2}{\sqrt{3}}\geq \frac{1}{\sqrt{3}}+\frac{2}{\sqrt{3}}=\sqrt{3}.
\end{align*}
Taking infimum in $S\in \mathcal{Z}(X)$, we get $\|T_2+\mathcal{Z}(X)\|\geq\sqrt{3}$ which finishes the proof.
\end{proof}

Even though the above example shows that equality in Proposition~\ref{prop:absolute-summand} for the $\ell_\infty$-sum and the $\ell_1$-sum is not always possible, the next one provides us with a lower bound.

\begin{proposition}\label{prop:lower}
Let $X_1$, $X_2$ be Banach spaces and write $X=X_1\oplus_\infty X_2$ or $X=X_1\oplus_1 X_2$.
\begin{enumerate}
\item[(a)] If $n(X_1)>0$ and $n(X_2)>0$, then $n'(X)=\min \left\{n(X_1),n(X_2)\right\}$.
\item[(b)] If $n(X_1)>0$ and $n(X_2)=0$, then $n'(X)\geq \min \left\{n(X_1), \frac{n'(X_2)}{n'(X_2)+1}\right\}$.
\item[(c)] If $n(X_1)=0$ and $n(X_2)= 0$, then
\[
n'(X)\ge \min\left\{ \frac{n'(X_1)}{n'(X_1)+1}, \frac{n'(X_2)}{n'(X_2)+1}\right\}\,.
\]
\end{enumerate}
\end{proposition}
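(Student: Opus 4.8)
The plan is to handle the sums $X_1\oplus_\infty X_2$ and $X_1\oplus_1 X_2$ in parallel. Since each is an absolute sum different from the $\ell_2$-sum, Lemma~\ref{lemma:lemma5Paya82} applies: every $S\in\mathcal{Z}(X)$ is diagonal, $S=\mathrm{diag}(S_1,S_2)$ with $S_1\in\mathcal{Z}(X_1)$ and $S_2\in\mathcal{Z}(X_2)$. Writing $T\in\mathcal{L}(X)$ in block form $(T_{ij})$, I will use the elementary norm formulas for block operators on these sums --- for $X_1\oplus_\infty X_2$ the operator norm is the maximum of the two \emph{row} norms $\sup_{\|x_1\|,\|x_2\|\le1}\|T_{i1}x_1+T_{i2}x_2\|$, and for $X_1\oplus_1 X_2$ it is the maximum of the two \emph{column} norms --- together with the observation that, because the admissible perturbations are diagonal, $\|T+\mathcal{Z}(X)\|=\inf_{S_1,S_2}\|T-\mathrm{diag}(S_1,S_2)\|$ and each row (resp.\ column) norm depends on only one of $S_1,S_2$, so the infimum passes inside the maximum. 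Part~(a) is then immediate: if $n(X_1),n(X_2)>0$ then $\mathcal{Z}(X_1)=\mathcal{Z}(X_2)=\{0\}$, hence $\mathcal{Z}(X)=\{0\}$ and $n'(X)=n(X)=\min\{n(X_1),n(X_2)\}$ by Proposition~\ref{prop-suma-oldMarPay2000}.

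For the remaining parts I will, for each fixed $T$, lower bound $v(T)$ and upper bound $\|T+\mathcal{Z}(X)\|$, then compare. Two \emph{diagonal} estimates are cheap: restricting to a summand (take $x=(x_i,0)$, $x^*=(x_i^*,0)$ in $\Pi(X)$) gives $v(T)\ge v(T_{ii})\ge n'(X_i)\,\|T_{ii}+\mathcal{Z}(X_i)\|$ straight from the definition of $n'(X_i)$. The \emph{off-diagonal} estimate I need is $\max\{\|T_{12}\|,\|T_{21}\|\}\le v(T)$; I will prove it via Lemma~\ref{lemma:old-Anorming-Ardalani} applied to the natural $1$-norming set of $X^*$ (for $X_1\oplus_\infty X_2$, the set $\{(x_1^*,0)\}\cup\{(0,x_2^*)\}$; for $X_1\oplus_1 X_2$, the set $\{(x_1^*,x_2^*):\|x_1^*\|=\|x_2^*\|=1\}$): for $x$ nearly norming a chosen functional, a sign choice $\tau\in\{-1,1\}$ makes $|x^*(Tx)|$ dominate $|u^*(T_{12}x_2^0)|$, and the norming-set form of $v$ absorbs the fact that $x^*(x)$ is only close to $1$. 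Granting these, Part~(c) follows by combining, in each row/column, $\|T_{ii}+\mathcal{Z}(X_i)\|\le v(T)/n'(X_i)$ with $\|T_{\mathrm{off}}\|\le v(T)$, giving $\|T+\mathcal{Z}(X)\|\le v(T)\,\max_i\frac{1+n'(X_i)}{n'(X_i)}$, i.e.\ $n'(X)\ge\min_i\frac{n'(X_i)}{n'(X_i)+1}$ (the cases $n'(X_i)=0$ being trivial); the $X_2$-contribution in Part~(b) is handled identically, producing the factor $\frac{n'(X_2)}{n'(X_2)+1}$.

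The crux is the $X_1$-contribution in Part~(b), where the claimed factor is the full $n(X_1)$, not $\frac{n(X_1)}{n(X_1)+1}$. Here the off-diagonal block sits in the same row (resp.\ column) as $T_{11}$ and, since $\mathcal{Z}(X_1)=\{0\}$, cannot be removed; the naive split used for (c) loses a $+1$ and is too weak. Instead I will build an auxiliary operator on $X_1$ and invoke $n(X_1)$ to pass from numerical radius to operator norm. For $X_1\oplus_\infty X_2$, I fix near-optimizers $x_1^0\in S_{X_1}$, $x_2^0\in B_{X_2}$ of the first row norm, a Hahn--Banach functional $e^*\in S_{X_1^*}$ with $e^*(x_1^0)=1$, and set $B:=T_{11}+(T_{12}x_2^0)\otimes e^*\in\mathcal{L}(X_1)$ (the rank-one term being $x_1\mapsto e^*(x_1)\,T_{12}x_2^0$), so that $\|B\|\ge\|Bx_1^0\|=\|T_{11}x_1^0+T_{12}x_2^0\|$ is within $\varepsilon$ of the row norm. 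The key point is that for every $(u,u^*)\in\Pi(X_1)$ the pair $\bigl((u,\,e^*(u)x_2^0),\,(u^*,0)\bigr)$ lies in $\Pi(X)$ and realizes $u^*(Bu)=x^*(Tx)$, whence $v_{X_1}(B)\le v(T)$; combined with $n(X_1)\|B\|\le v_{X_1}(B)$ this yields $v(T)\ge n(X_1)\cdot(\text{first row norm})$.

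For $X_1\oplus_1 X_2$ I will carry out the dual construction on columns: choose $z_2^*\in S_{X_2^*}$ norming $T_{21}x_1^0$ and $g\in S_{X_1}$ aligned with $T_{11}x_1^0$, set $B:=T_{11}+g\otimes(z_2^*\circ T_{21})$ so that $\|Bx_1^0\|=\|T_{11}x_1^0\|+\|T_{21}x_1^0\|$ approximates the first column norm, and use the pairs $\bigl((u,0),\,(u^*,u^*(g)z_2^*)\bigr)\in\Pi(X)$ to get $v_{X_1}(B)\le v(T)$. I expect this operator construction to be the main obstacle; note in particular that one cannot simply dualize the $\oplus_\infty$ case to reach the $\oplus_1$ case, because $v(T)=v(T^*)$ (Proposition~\ref{prop:old-inequalityduality}) only provides the weaker constant $n(X_1^*)$. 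The rest is the bookkeeping of which of the two lower bounds is binding in each of the three regimes.
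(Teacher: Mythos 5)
Your proposal is correct and follows essentially the same route as the paper's proof: the block decomposition, Lemma~\ref{lemma:lemma5Paya82} to force skew-hermitian perturbations to be diagonal, the row/column upper bounds on $\|T+\mathcal{Z}(X)\|$, the lower bounds $v(T)\geq\max\{v(T_{11}),v(T_{22}),\|T_{12}\|,\|T_{21}\|\}$, the same final bookkeeping, and, for part~(b), the rank-one constructions $B=T_{11}+(T_{12}x_2^0)\otimes e^*$ (resp.\ its column analogue) together with the embedding of $\Pi(X_1)$-pairs into $\Pi(X)$ are precisely the content of the proof of \cite[Proposition~1]{MarPay}, which the paper cites at that point rather than reproducing. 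The only cosmetic deviations are your use of Lemma~\ref{lemma:old-Anorming-Ardalani} with approximately norming pairs for the off-diagonal estimates, where the paper instead builds exactly norming pairs in $\Pi(X)$, and your slightly more streamlined case analysis; neither changes the substance of the argument.
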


\begin{proof} (a) is given by Proposition~\ref{prop-suma-oldMarPay2000}.

We suppose that $X=X_1\oplus_\infty X_2$ and prove (c).
We may assume that $n'(X_1)>0$ and $n'(X_2)>0$, otherwise there is nothing to prove.
Every $T\in \mathcal{L}(X)$ can be written as follows:
\[
T(x,y) = (T_{11}(x)+T_{12}(y), T_{21}(x)+T_{22}(y))\qquad \bigl((x,y)\in X\bigr),
\]
where $T_{ij}\in \mathcal{L}(X_j, X_i)$ for all $i,j=1,2$. By Lemma~\ref{lemma:lemma5Paya82}, every operator with numerical radius zero commutes with the projections and, therefore, we get
\begin{align*}
\|T+\mathcal{Z}(X) \| &= \underset{i=1,2}{\inf_{S_i\in \mathcal{Z}(X_i)}} \max\left\{ \sup_{\|x_1\|=\|x_2\|=1} \|T_{11}(x_1)+S_1(x_1)+T_{12}(x_2)\|,\right. \\ & \phantom{\inf_{S_i\in \mathcal{Z}(X_i), i=1,2}\qquad\qquad\quad} \left. \sup_{\|x_1\|=\|x_2\|=1} \|T_{21}(x_1)+S_2(x_2)+T_{22}(x_2)\|  \right\}.
\end{align*}
Hence
\begin{align*}
 \|T+\mathcal{Z}(X) \| &\leq \underset{i=1,2}{\inf_{S_i\in \mathcal{Z}(X_i)}} \max\{  \|T_{11}+S_1\|+\|T_{12}\|, \|T_{21}\|+\|S_2+T_{22}\|  \}\\
 &=\max\{  \|T_{11} +\mathcal{Z}(X_1)\|+\|T_{12}\|, \|T_{21}\|+\|T_{22} +\mathcal{Z}(X_2)\|  \}.
\end{align*}
Besides, we claim that
\[
v(T)\ge \max\{ v(T_{11}), \|T_{12}\|, v(T_{22}), \|T_{21}\|\}.
\]
Indeed, we note that for $((x_1, x_2),(x_1^*, x_2^*))\in \Pi(X)$,
\[
v(T) \ge |x_1^*T_{11}(x_1) + x_1^*T_{12}(x_2)+ x_2^*T_{21}(x_1)+x_2^*T_{22}(x_2)|.
\]
Given $\eps >0$, we take $x_2^*=0$ and $(x_1, x_1^*)\in \Pi(X_1)$ satisfying that $|x^*_1T_{11}(x_1)|>v(T_{11})-\eps$, we take $y_2\in S_{X_2}$ and $\theta\in\{-1,1\}$ such that $x^*_1(T_{11}(x_1))x_1^*(T_{12}(y_2))\theta\geq 0$, and we write $x_2=\theta y_2$. Then $( (x_1, x_2), (x_1^*, x_2^*) )\in \Pi(X)$ and, therefore,
\begin{align*}
v(T) &\ge|x^*_1T_{11}(x_1) + x_1^*T_{12}(x_2)|= |x^*_1T_{11}(x_1) + x_1^*T_{12}(y_2)\theta|\ge |x^*_1T_{11}(x_1)|>v(T_{11})-\eps
\end{align*}
and the arbitrariness of $\eps$ gives $v(T)\geq v(T_{11})$.

We turn now to prove $v(T)\geq \|T_{12}\|$. Observe that we can assume that $\|T_{12}\|>0$. Then, given $0<\eps<\|T_{12}\|$ we take this time $x_2^*=0$, $y_2\in S_{X_2}$ satisfying that $\|T_{12}(y_2)\|>\|T_{12}\|-\eps$, we take $x_1^*\in S_{X_1^*}$ such that $x_1^*(T_{12}(y_2))=\|T_{12}(y_2)\|$, we write $x_1=\frac{T_{12}(y_2)}{\|T_{12}(y_2)\|}$ and we take $\theta\in\{-1,1\}$ such that $x^*_1(T_{11}(x_1))x_1^*(T_{12}(y_2))\theta\geq 0$, and we write $x_2=\theta y_2$. Then, we have
\begin{align*}
v(T) &\ge|x^*_1T_{11}(x_1) + x_1^*T_{12}(x_2)|= |x^*_1T_{11}(x_1) + x_1^*T_{12}(y_2)\theta|\ge |x^*_1T_{12}(y_2)|>\|T_{12}\|-\eps
\end{align*}
and the arbitrariness of $\eps$ gives $v(T)\geq \|T_{12}\|$. Symmetric arguments show that $v(T)\ge v(T_{22})$ and $v(T)\geq\|T_{21}\|$.

Now we are ready to finish the proof. Suppose that $\|T+\mathcal{Z}(X)\|=1$ and we first assume that $\|T_{11}+\mathcal{Z}(X_1)\|+\|T_{12}\|\geq1$. If $v(T_{11})\ge \frac{n'(X_1)}{n'(X_1)+1}$, then $v(T)\ge \frac{n'(X_1)}{n'(X_1)+1}$. Otherwise, we have $v(T_{11})\leq \frac{n'(X_1)}{n'(X_1)+1}$ and we can estimate
\[
1\leq\|T_{11}+\mathcal{Z}(X_1)\|+\|T_{12}\| \leq \frac{v(T_{11})}{n'(X_1)} + \|T_{12}\|\leq \frac{1}{n'(X_1)+1}+\|T_{12}\|.
\]
So $\|T_{12}\|\ge \frac{n'(X_1)}{n'(X_1)+1}$ and, therefore
\[
v(T) \ge \|T_{12}\| \ge \frac{n'(X_1)}{n'(X_1)+1}.
\]
On the other hand, if we assume $\|T_{22}+\mathcal{Z}(X_2)\|+\|T_{21}\|\geq1$, then an analogous argument gives
\[
v(T) \ge \frac{n'(X_2)}{n'(X_2)+1}
\]
which completes the proof of (c).

To prove (b), one can follow the above proof with some changes. Indeed, consider the operator $T_1\in \mathcal{L}(X,X_1)$ given by $T_1(x_1,x_2)=T_{11}(x_1)+T_{12}(x_2)$ for $x_1\in X_1$ and $x_2\in X_2$, and observe that
$$
\|T+\mathcal{Z}(X)\|\leq \max\{\|T_1\|, \|T_{21}\|+\|T_{22}+\mathcal{Z}(X_2)\|\}
$$
since $n(X_1)>0$. If $\|T_{21}\|+\|T_{22}+\mathcal{Z}(X_2)\|\geq \|T+\mathcal{Z}(X)\|$, the above proof gives
$$
\frac{v(T)}{\|T+\mathcal{Z}(X)\|}\geq \frac{n'(X_2)}{n'(X_2)+1}\,.
$$
If  $\|T_1\|\geq \|T+\mathcal{Z}(X)\|$ the proof of \cite[Proposition~1]{MarPay} tells us that
$$
\frac{v(T)}{\|T+\mathcal{Z}(X)\|}\geq \frac{v(T)}{\|T_1\|}\geq n(X_1).
$$

The proof for $X=X_1\oplus_1 X_2$ is, in some sense, the dual of the above one. We just point out the main differences. To check  $(c)$, observe that this time, for $T\in \mathcal{L}(X)$, we have
\begin{align*}
\|T+\mathcal{Z}(X) \| &=\underset{i=1,2}{\inf_{S_i\in \mathcal{Z}(X_i)}}   \sup_{\|x_1\|+\|x_2\|=1} \|T_{11}(x_1)+S_1(x_1)+T_{12}(x_2)\|+\|T_{21}(x_1)+S_2(x_2)+T_{22}(x_2)\|\\
&\leq \underset{i=1,2}{\inf_{S_i\in \mathcal{Z}(X_i)}}   \sup_{\|x_1\|+\|x_2\|=1} \|T_{11}(x_1)+S_1(x_1)\|+\|T_{21}(x_1)\|+\|T_{12}(x_2)\|+\|S_2(x_2)+T_{22}(x_2)\|\\
&\leq \underset{i=1,2}{\inf_{S_i\in \mathcal{Z}(X_i)}}   \sup_{\|x_1\|+\|x_2\|=1} \|x_1\|\Big(\|T_{11}+S_1\|+\|T_{21}\|\Big)+\|x_2\|\Big(\|T_{12}\|+\|S_2+T_{22}\|\Big)\\
&\leq \underset{i=1,2}{\inf_{S_i\in \mathcal{Z}(X_i)}}  \max\left\{\|T_{11}+S_1\|+\|T_{21}\|,  \|T_{12}\|+\|S_2+T_{22}\|\right\}\\
&= \max\left\{\|T_{11}+\mathcal{Z}(X_1)\|+\|T_{21}\|,  \|T_{12}\|+\|T_{22}+\mathcal{Z}(X_2)\|\right\}.
\end{align*}
To prove the inequality
\[
v(T)\ge \max\{ v(T_{11}), \|T_{12}\|, v(T_{22}), \|T_{21}\|\}
\]
one can proceed as in the previous case, using an appropriate pair $\big((x_1, x_2),(x_1^*, x_2^*)\big)\in \Pi(X)$ with $x_2=0$ (for the inequalities $v(T)\geq v(T_{11})$ and $v(T)\geq \|T_{21}\|$) or $x_1=0$ (for the inequalities $v(T)\geq v(T_{22})$ and $v(T)\geq \|T_{12}\|$). From that point the proof follows exactly the same lines of the above one.

Finally, the proof of (b) is again a combination of the preceding proof and the one of \cite[Proposition~1]{MarPay} for the $\ell_1$-sum.
\end{proof}

A first consequence of the proposition above and Example~\ref{example:Hilbert-sum-infty-Hilbert} is the following example.

\begin{example}\label{example:HoplusinftyH-Hoplus1H}
{\slshape If $H$ is a Hilbert space of dimension greater than or equal to two, then}
$$
\frac{1}{2}\leq n'(H\oplus_\infty H)\leq\frac{\sqrt{3}}{2} \quad \text{and} \quad \frac{1}{2}\leq n'(H\oplus_1 H)\leq\frac{\sqrt{3}}{2}.
$$
\end{example}

More examples can also be deduced from the results of this section. For instance, we may show that the set of values of the second numerical index among Banach spaces with numerical index $0$ contains the interval $[0,1/2]$.

\begin{example}\label{example:four-dim-valuen'}
\slshape For every $\theta \in [0,1/2]$, there is a Banach space $X_\theta$ such that $n(X_\theta)=0$ and $n'(X_\theta)=\theta$. Moreover, for $0<\theta\leq 1/2$, the space $X_\theta$ can be taken to be four dimensional.
\end{example}

\begin{proof}
For $\theta=0$, the result was given in Example~\ref{example-n'=0}, and all the possible examples have to be infinite-dimensional by Proposition~\ref{prop:finite-dimensional-n'>0}. Consider now $0<\theta\leq 1/2$ and pick a two-dimensional Banach space $Y_\theta$ with $n(Y_\theta)=\theta$ (use Proposition~\ref{prop:old-values}). Let $X_\theta$ be the four-dimensional space $Y_\theta \oplus_\infty \ell_2^2$, which satisfies that $n(X_\theta)\leq n(\ell_2^2)=0$ by Proposition~\ref{prop-suma-oldMarPay2000}. By Proposition~\ref{prop:absolute-summand},
$$
n'(X_\theta)\leq n'(Y_\theta)=n(Y_\theta)=\theta.
$$
On the other hand, Proposition~\ref{prop:lower} gives us that $n'(X_\theta)\geq \min\{\theta,1/2\}=\theta$.
\end{proof}

A similar result can be obtained, up to renorming, in every separable or reflexive Banach space of dimension greater than $4$.

\begin{prop}
Let $X$ be a Banach space of dimension greater than or equal to four which admits a long biorthogonal system (for instance, being $X$ separable or being $X$ reflexive). Then, for every $0<\theta\leq 1/2$, there is a Banach space $X_\theta$ isomorphic to $X$ such that $n(X_\theta)=0$ and $n'(X_\theta)=\theta$.
\end{prop}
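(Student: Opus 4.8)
The plan is to mimic the four–dimensional construction of Example~\ref{example:four-dim-valuen'}, but to \emph{absorb all the extra dimensions of $X$ into the summand carrying the numerical index $\theta$}, while keeping a clean two–dimensional Hilbertian summand $\ell_2^2$ to force numerical index zero and to supply the lower bound for $n'$. In other words, I would realize $X_\theta$ isometrically as $Y\oplus_\infty\ell_2^2$, where $Y$ is a renorming of a large complemented piece of $X$ with $n(Y)=\theta$, and then read off both indices exactly as in the four–dimensional case.

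First I would split off a complemented two–dimensional subspace. Starting from a long biorthogonal system $\{(x_\lambda,x_\lambda^*)\}_{\lambda\in\Lambda}$ with $|\Lambda|=\operatorname{dens}(X)$, I fix two indices $\lambda_1,\lambda_2$ and set $M=\operatorname{span}\{x_{\lambda_1},x_{\lambda_2}\}$ and $N=\ker x_{\lambda_1}^*\cap\ker x_{\lambda_2}^*$; the map $P=x_{\lambda_1}^*\otimes x_{\lambda_1}+x_{\lambda_2}^*\otimes x_{\lambda_2}$ is a bounded projection onto $M$ with kernel $N$, so $X=M\oplus N$ as a topological direct sum. Biorthogonality gives $x_\lambda\in N$ whenever $\lambda\notin\{\lambda_1,\lambda_2\}$, hence $\{(x_\lambda,x_\lambda^*|_N)\}_{\lambda\notin\{\lambda_1,\lambda_2\}}$ is a long biorthogonal system for $N$, and $\dim X\ge 4$ forces $\dim N\ge 2$. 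I would then apply Proposition~\ref{prop-old:values-up-to-renorming}(b) to $N$ (legitimate, since $\theta\in\,]0,1[$ and $N$ admits a long biorthogonal system) to obtain an equivalent norm $q$ with $Y:=(N,q)$ satisfying $n(Y)=\theta$, put a Hilbert norm $h$ on the two–dimensional $M$ so that $(M,h)=\ell_2^2$, and finally define $X_\theta$ to be $X$ with the equivalent norm $\|x\|=\max\{h(Px),\,q(x-Px)\}$. Thus, isometrically, $X_\theta=Y\oplus_\infty\ell_2^2$, while isomorphically $X_\theta\cong X$.

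It then remains to compute the two indices. By Proposition~\ref{prop-suma-oldMarPay2000}(b), $n(X_\theta)=\min\{n(Y),n(\ell_2^2)\}=\min\{\theta,0\}=0$. For the upper bound, $n(Y)=\theta>0$ forces $\mathcal{Z}(Y)=\{0\}$ and hence $n'(Y)=n(Y)=\theta$, so Proposition~\ref{prop:absolute-summand} gives $n'(X_\theta)\le n'(Y)=\theta$. For the lower bound I would apply Proposition~\ref{prop:lower}(b) with $X_1=Y$ (so $n(Y)=\theta>0$) and $X_2=\ell_2^2$ (so $n(\ell_2^2)=0$), using $n'(\ell_2^2)=1$ from Theorem~\ref{thrm-Hilbert}:
$$
n'(X_\theta)\ge \min\Bigl\{\theta,\ \tfrac{n'(\ell_2^2)}{n'(\ell_2^2)+1}\Bigr\}=\min\{\theta,\tfrac12\}=\theta,
$$
where the last equality uses $\theta\le 1/2$. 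Hence $n'(X_\theta)=\theta$ and $n(X_\theta)=0$, as required.

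The main point to be careful about is the choice of the complement $N$: I must guarantee both that $N$ admits a long biorthogonal system (so that the F-M-P renorming of Proposition~\ref{prop-old:values-up-to-renorming}(b) applies) and that $\dim N\ge 2$ (so that the target value $\theta\in\,]0,1[$ is attainable as a numerical index), and both are secured by the construction above together with $\dim X\ge 4$; in the separable or reflexive cases one may instead simply observe that $N$ is again separable or reflexive. The only other conceptual subtlety is why the extra dimensions must be placed \emph{inside} $Y$ rather than alongside $\ell_2^2$: by Example~\ref{example:Hilbert-sum-infty-Hilbert} any sum $\ell_2^2\oplus_\infty W$ with $W$ nontrivial has second numerical index at most $\tfrac{\sqrt3}{2}$, and feeding such a block into Proposition~\ref{prop:lower}(b) would cap the resulting lower bound strictly below $1/2$, ruling out values of $\theta$ near $1/2$. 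Keeping the auxiliary summand equal to $\ell_2^2$, whose exact value $n'(\ell_2^2)=1$ produces $\min\{\theta,1/2\}=\theta$, is precisely what makes the argument work up to and including $\theta=1/2$.
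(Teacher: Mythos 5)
Your proposal is correct and follows essentially the same route as the paper: split off a complemented two-dimensional subspace, renorm the complement via Proposition~\ref{prop-old:values-up-to-renorming} to have numerical index $\theta$, equip the two-dimensional piece with a Hilbert norm, form the $\ell_\infty$-sum, and compute the indices exactly as you do using Propositions~\ref{prop-suma-oldMarPay2000}, \ref{prop:absolute-summand} and \ref{prop:lower}. The only difference is one of care: you explicitly build the complement from the biorthogonal functionals so that it visibly inherits a long biorthogonal system (needed to invoke Proposition~\ref{prop-old:values-up-to-renorming}(b) for $\theta\in[1/3,1/2]$), a point the paper's proof leaves implicit.
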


\begin{proof}
Take a two-dimensional subspace $Y$ of $X$ and write $X=Y\oplus W$ for convenient subspace $W$ of dimension greater than or equal to two. We may use Proposition~\ref{prop-old:values-up-to-renorming} to get a Banach space $W_\theta$ isomorphic to $W$ with $n(W_\theta)=\theta$. Now, consider the space $X_\theta=\ell_2^2 \oplus_\infty W_\theta$, which is isomorphic to $X$. Then, $n(X_\theta)\leq n(\ell_2^2)=0$ by Proposition~\ref{prop-suma-oldMarPay2000}. On the other hand,
$$
n'(X_\theta)\leq n'(W_\theta)=n(W_\theta)=\theta
$$
by Proposition~\ref{prop:absolute-summand}, and Proposition~\ref{prop:lower} gives us that $n'(X_\theta)\geq \theta$.
\end{proof}

The proof above cannot be done in dimension two or three. In the first case, we actually have the following result.

\begin{example}{\slshape Let $X$ be a two dimensional space with $n(X)=0$. Then $n'(X)=1$.}\newline Indeed, it is proved in \cite[Corollary~2.5]{MaMeRo} that $X$ is isometrically isomorphic to a Hilbert space and so Theorem~\ref{thrm-Hilbert} gives the result.
\end{example}

We will give an obstructive result for the set of values of the second numerical index of three-dimensional spaces with numerical index zero in Proposition~\ref{prop:Obstructive-dim-three}.

\section{Duality}\label{sect-duality}
We are interested here in the relationship between the second numerical index of a Banach space and the one of its dual. Recall that for the classical numerical index, it is known that $n(X^*)\leq n(X)$ for every Banach space $X$ and that this inequality may be strict (see Proposition~\ref{prop:old-inequalityduality} and Example~\ref{example:old-duality}).

With respect to the first result, we do not know whether it is always true for the second numerical index, but we have the following sufficient conditions.

\begin{proposition}\label{prop-duality-uniquenesspredual}
Let $X$ be a Banach space. Suppose that $X$ satisfies any of the following conditions:
\begin{enumerate}
\item[(a)] the norm of $X^*$ is Fr\'{e}chet-smooth on a dense set (e.g.\ $X=\ell_\infty$);
\item[(b)] $B_X$ is the closed convex hull of the continuity points of $\,\Id:(B_X,w)\longrightarrow (B_X,\|\cdot\|)$ (in particular, if $X$ has the RNP, $X$ has the CPCP, $X$ is LUR, $X$ has a Kadec norm, $X=X_1\widetilde{\otimes}_\pi X_2$ where $X_1$ and $X_2$ have the RNP, or $X=\mathcal{L}(R)$ where $R$ is reflexive);
\item[(c)] $Y\subset X^*\subset Y^{**}$ for a Banach space $Y$ which does not contain $\ell_1$ (in particular, if $X$ is a dual space with the weak-RNP, i.e.\ $X=Y^*$ for a space $Y$ which does not contain $\ell_1$);
\item[(d)] $X^*\nsupseteq \ell_1$;
\item[(e)] $X$ is isomorphic to a subspace of a separable $L$-embedded space;
\item[(f)] $X$ is the (unique) predual of a von Neumann algebra or, more generally, $X$ is the (unique) predual of a $JBW^*$-triple;
\item[(g)] there is a separable reflexive space $R$ such that $X^*$ is isometrically isomorphic to a weak-star closed linear subspace $L$ of $\mathcal{L}(R)$ whose intersection with the space of compact operators on $R$ is weak-star dense in $L$.
\end{enumerate}
Then $n'(X^*)\leq n'(X)$.
\end{proposition}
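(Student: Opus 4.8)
The plan is to use adjoints of operators on $X$ as test operators in the infimum defining $n'(X^*)$. The two facts I rely on are that the adjoint map $T\mapsto T^*$ is a linear isometry from $\mathcal{L}(X)$ into $\mathcal{L}(X^*)$ and that it preserves the numerical radius, $v(T^*)=v(T)$ (Proposition~\ref{prop:old-inequalityduality}). In particular, if $T\notin\mathcal{Z}(X)$ then $v(T^*)=v(T)>0$, so $T^*\notin\mathcal{Z}(X^*)$, and the adjoints of non-skew-hermitian operators form a subfamily of those competing in the infimum for $n'(X^*)$. Hence, if I can establish the norm identity
\[
\|T^*+\mathcal{Z}(X^*)\|=\|T+\mathcal{Z}(X)\|\qquad(T\in\mathcal{L}(X)),
\]
the ratios $v(T^*)/\|T^*+\mathcal{Z}(X^*)\|$ and $v(T)/\|T+\mathcal{Z}(X)\|$ coincide, and taking the infimum over the subfamily of adjoints yields $n'(X^*)\le n'(X)$ at once.

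I would then reduce this norm identity to a single structural statement. Since $v(S^*)=v(S)$, every $S\in\mathcal{Z}(X)$ has $S^*\in\mathcal{Z}(X^*)$, so $\{S^*:S\in\mathcal{Z}(X)\}\subseteq\mathcal{Z}(X^*)$; testing the quotient norm against these adjoints alone, together with the adjoint-isometry of $\|\cdot\|$, already gives $\|T^*+\mathcal{Z}(X^*)\|\le\|T+\mathcal{Z}(X)\|$. The reverse inequality, hence equality, follows as soon as the opposite inclusion holds, that is, as soon as every $R\in\mathcal{Z}(X^*)$ is weak-star continuous: for then $R=S^*$ for some $S\in\mathcal{L}(X)$, and $v(S)=v(R)=0$ forces $S\in\mathcal{Z}(X)$, so that $\mathcal{Z}(X^*)=\{S^*:S\in\mathcal{Z}(X)\}$. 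Thus the whole proposition collapses to a single \emph{Claim}: under any of the hypotheses (a)--(g), every skew-hermitian operator on $X^*$ is weak-star continuous.

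To prove the Claim I would pass to one-parameter isometry groups. By the standard numerical-range theory (\cite{B-D1,B-D2}), $R\in\mathcal{Z}(X^*)$ precisely when $\{e^{tR}:t\in\R\}$ is a one-parameter group of surjective isometries of $X^*$; moreover the adjoints (equivalently, the weak-star continuous operators) form a norm-closed subspace of $\mathcal{L}(X^*)$, and $R=\lim_{t\to0}(e^{tR}-I)/t$ in operator norm, so $R$ is weak-star continuous as soon as each $e^{tR}$ is. It therefore suffices to verify, under each hypothesis, that the surjective isometries of $X^*$ involved are weak-star continuous --- and this is exactly what each of (a)--(g) is designed to force: Fr\'echet-smoothness of the dual norm on a dense set (a) and the denting/continuity-point structure of $B_X$ (b) determine an isometry on a norm-dense set of well-behaved points and propagate weak-star continuity; the hypotheses (c), (d) ruling out $\ell_1$ run through Rosenthal-type sequential arguments combined with the Krein--\v{S}mulian theorem; and the $L$-embedded hypothesis (e), the unique-predual hypotheses (f), and the weak-star-dense-compacts hypothesis (g) each invoke the corresponding known description of the isometries of such duals. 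I expect the main obstacle to lie precisely in this last step: there is no uniform argument covering all seven cases, so each must be matched to the appropriate known weak-star-continuity result for isometries (or for hermitian operators), with the structural conditions (b), (e) and (g) requiring the most care.
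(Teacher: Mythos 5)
Your reduction is sound and coincides exactly with the paper's: the inequality $\|T^*+\mathcal{Z}(X^*)\|\leq\|T+\mathcal{Z}(X)\|$ is automatic from $\{S^*\,:\,S\in\mathcal{Z}(X)\}\subseteq\mathcal{Z}(X^*)$, and the whole proposition rests on the converse inclusion, namely that every $R\in\mathcal{Z}(X^*)$ is weak-star continuous, hence of the form $S^*$ with $S\in\mathcal{Z}(X)$; this is precisely the content of the paper's Lemma~\ref{lemma:duality-2}. Your passage through one-parameter groups $\{\e^{tR}\,:\,t\in\R\}$ of surjective isometries, the norm-closedness of the space of adjoint operators, and $R=\lim_{t\to 0}(\e^{tR}-\Id)/t$ is also the correct mechanism by which skew-hermitian operators inherit weak-star continuity from isometries.

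The genuine gap is the Claim itself, which is where all the substance of the proposition lies, and your plan for it is both incomplete and guided by a mistaken assertion. You state that ``there is no uniform argument covering all seven cases'' and propose to match each of (a)--(g) to a separate weak-star-continuity result for isometries, but you carry out none of these matchings; for several cases (for instance (d), where you would have to deduce from $X^*\nsupseteq\ell_1$ alone that every surjective isometry of $X^*$ is weak-star continuous) this amounts to reproving nontrivial theorems of Godefroy. The paper's point is that there \emph{is} a uniform argument: each of the seven hypotheses implies one single structural property, namely that there exists a unique norm-one projection $\pi:X^{***}\longrightarrow X^*$ with weak-star closed kernel, and a result of Godefroy (the paper's Lemma~\ref{lemma:duality-1}, i.e.\ \cite[Proposition~VII.1 and proof of Corollary~VII.3]{Godefroy}) converts this uniqueness into exactly your Claim. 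The case-by-case work then consists only of citations: conditions (a)--(d) give the unique projection by Theorem~II.1, Examples~II.2 and Theorem~II.3 of \cite{Godefroy}; (e) goes through Pfitzner's theorem that separable $L$-embedded spaces have the Godefroy--Talagrand property (X), which passes to subspaces and implies the projection uniqueness; (f) is Sakai's theorem for von Neumann algebras and Horn's theorem for $JBW^*$-triples; and (g) reduces to (b) because such a space $X$ has the RNP. Without identifying this pivot --- or else supplying seven independent isometry arguments in full --- your proposal remains a strategy rather than a proof.
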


Before giving a proof of the proposition, let us recall that the classical inequality $n(X^*)\leq n(X)$ for every Banach space $X$ is an obvious consequence of the fact that $v(T^*)=v(T)$ and $\|T^*\|=\|T\|$ for every $T\in \mathcal{L}(X)$ (see Proposition~\ref{prop:old-inequalityduality}). We do not know whether $\|T^*+\mathcal{Z}(X^*)\|=\|T+\mathcal{Z}(X)\|$ for every $T\in \mathcal{L}(X)$ in general, but this would be true when every element in $\mathcal{Z}(X^*)$ is the transpose of an element in $\mathcal{Z}(X)$. So the idea behind the proof of the above proposition is to find sufficient conditions to ensure that. We need a couple of lemmas which are of independent interest. The first one is just an adaptation of a result of G.~Godefroy \cite{Godefroy}.

\begin{lemma}[\textrm{\cite[Proposition~VII.1 and proof of Corollary~VII.3]{Godefroy}}]\label{lemma:duality-1}
Let $X$ be a Banach space such that there exists a unique norm-one projection $\pi:X^{***}\longrightarrow X^*$ with weak-star closed kernel. Then every $T\in \mathcal{Z}(X^*)$ is the transpose of an element of $\mathcal{Z}(X)$.
\end{lemma}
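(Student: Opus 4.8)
The plan is to reduce the statement about the Lie algebra of the dual to a statement about projections and then invoke Godefroy's results, which are cited in the very reference from which the lemma is borrowed. The key observation is the following general fact about skew-hermitian operators and preduals: if $\pi:X^{***}\to X^*$ is a norm-one projection with weak-star closed kernel, then $\pi$ is exactly the canonical projection associated to a predual structure, and for such a projection one can transfer skew-hermitian operators from $X^*$ to $X$. So first I would make precise how a unique such $\pi$ yields, for each $T\in\mathcal{Z}(X^*)$, a natural candidate preadjoint on $X$.

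The main structural tool is that an operator $T\in\mathcal{L}(X^*)$ is the transpose of some $R\in\mathcal{L}(X)$ precisely when $T$ is weak-star to weak-star continuous, equivalently when $T^*(J_X(X))\subseteq J_X(X)$ where $J_X:X\to X^{**}$ is the canonical embedding. Thus the heart of the argument is to show that every $T\in\mathcal{Z}(X^*)$ is automatically weak-star continuous under the uniqueness hypothesis on $\pi$. Here I would use Godefroy's Proposition~VII.1 (cited in the lemma): skew-hermitian operators on $X^*$ commute in an appropriate sense with the bidual projection, and the uniqueness of the norm-one projection $\pi$ with weak-star closed kernel forces $T$ to respect the predual $X$. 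Concretely, given $T\in\mathcal{Z}(X^*)$, one considers $\exp(tT)$ for $t\in\mathbb{R}$, which forms a one-parameter group of surjective isometries of $X^*$ (since $v(T)=0$ characterizes generators of isometry groups, by the Lumer--Phillips / numerical range theory in \cite{B-D1}). Each isometry $\exp(tT)$ permutes the norm-one projections onto a weak-star closed copy of $X^*$ inside $X^{***}$; by uniqueness of $\pi$, it must commute with $\pi$, and this commutation is exactly what guarantees that $\exp(tT)$, and hence its generator $T$, is weak-star continuous.

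Once weak-star continuity is established, $T=R^*$ for a (unique) $R\in\mathcal{L}(X)$, and it remains to check $R\in\mathcal{Z}(X)$. This is immediate from Proposition~\ref{prop:old-inequalityduality}, since $v(R)=v(R^*)=v(T)=0$, so $R$ is skew-hermitian on $X$ and $T$ is its transpose, as claimed.

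The step I expect to be the main obstacle is the passage from the uniqueness of the norm-one projection $\pi$ to the weak-star continuity of a single skew-hermitian $T$: one must argue carefully that the isometry group $\exp(tT)$ induces isometries on $X^{***}$ (via transposition) that preserve the class of norm-one projections with weak-star closed kernel, and then that uniqueness promotes invariance of the \emph{set} of such projections to commutation with the specific projection $\pi$. Since this is precisely the content packaged in Godefroy's \cite[Proposition~VII.1 and proof of Corollary~VII.3]{Godefroy}, the cleanest route is to quote that machinery verbatim rather than reprove it, which is consistent with the way the lemma is attributed in the excerpt.
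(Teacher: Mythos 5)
Your proposal is correct and is essentially the argument the paper itself relies on: the paper does not reprove this lemma but simply cites Godefroy, and Godefroy's proof is exactly your route --- exponentiate $T\in\mathcal{Z}(X^*)$ into the one-parameter group of surjective isometries $\exp(tT)$, observe that conjugation by $(\exp(tT))^{**}$ produces another norm-one projection of $X^{***}$ onto the canonical copy of $X^*$ with weak-star closed kernel so that uniqueness forces commutation with $\pi$, deduce that each $\exp(tT)$ (and hence, by norm-closedness of the space of adjoint operators, the generator $T$) is weak-star continuous, and finish with $v(R)=v(R^*)=v(T)=0$. The only slip to correct is terminological: the canonical copy of $X^*$ inside $X^{***}$ is weak-star \emph{dense} (Goldstine), not weak-star closed, so the phrase ``projections onto a weak-star closed copy of $X^*$'' should read ``projections onto the canonical copy of $X^*$ with weak-star closed kernel,'' which is how you correctly use the hypothesis in the rest of the argument.
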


The second result is immediate.

\begin{lemma}\label{lemma:duality-2}
Let $X$ be a Banach space. If every $T\in \mathcal{Z}(X^*)$ is the transpose of an element of $\mathcal{Z}(X)$, then $\|T^*+\mathcal{Z}(X^*)\|=\|T+\mathcal{Z}(X)\|$ for every $T\in \mathcal{L}(X)$ and, therefore, $n'(X^*)\leq n'(X)$.
\end{lemma}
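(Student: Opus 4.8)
*Let $X$ be a Banach space. If every $T\in \mathcal{Z}(X^*)$ is the transpose of an element of $\mathcal{Z}(X)$, then $\|T^*+\mathcal{Z}(X^*)\|=\|T+\mathcal{Z}(X)\|$ for every $T\in \mathcal{L}(X)$ and, therefore, $n'(X^*)\leq n'(X)$.*

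The plan is to exploit the three facts the excerpt has already put at my disposal: that $\|T^*\|=\|T\|$ and $v(T^*)=v(T)$ for every $T\in\mathcal{L}(X)$ (Proposition~\ref{prop:old-inequalityduality}), and the new hypothesis that every skew-hermitian operator on $X^*$ comes from one on $X$ by transposition. The whole statement reduces to the single norm identity $\|T^*+\mathcal{Z}(X^*)\|=\|T+\mathcal{Z}(X)\|$; once that is in hand, the index inequality follows by dividing by $v(T)=v(T^*)$ and taking the appropriate infimum, exactly as in the passage from $n(X^*)\le n(X)$ in Proposition~\ref{prop:old-inequalityduality}.

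First I would prove the inequality $\|T^*+\mathcal{Z}(X^*)\|\le \|T+\mathcal{Z}(X)\|$, which needs no hypothesis at all. Given any $S\in\mathcal{Z}(X)$, transposition sends $S$ into $\mathcal{Z}(X^*)$ (since $v(S^*)=v(S)=0$), and $(T-S)^*=T^*-S^*$, so
\begin{equation*}
\|T^*+\mathcal{Z}(X^*)\|\le \|T^*-S^*\|=\|(T-S)^*\|=\|T-S\|.
\end{equation*}
Taking the infimum over $S\in\mathcal{Z}(X)$ yields this direction. Next I would prove the reverse inequality $\|T+\mathcal{Z}(X)\|\le \|T^*+\mathcal{Z}(X^*)\|$, and this is precisely where the hypothesis enters. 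Let $R\in\mathcal{Z}(X^*)$ be arbitrary; by assumption $R=S^*$ for some $S\in\mathcal{Z}(X)$. Then, using $\|\cdot^*\|=\|\cdot\|$ again,
\begin{equation*}
\|T+\mathcal{Z}(X)\|\le \|T-S\|=\|(T-S)^*\|=\|T^*-R\|,
\end{equation*}
and taking the infimum over $R\in\mathcal{Z}(X^*)$ gives the claim. Combining the two inequalities establishes the norm identity.

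I do not anticipate a genuine obstacle here: the content of the lemma is entirely carried by the hypothesis, and the two-line surjectivity-versus-injectivity argument above is the reason the excerpt flags it as ``immediate''. The only point deserving a moment's care is the reverse inequality, since it is the one that genuinely uses that transposition maps $\mathcal{Z}(X)$ \emph{onto} all of $\mathcal{Z}(X^*)$ (not merely into it); without that surjectivity one could only conclude $\|T+\mathcal{Z}(X)\|\le \|T^*-R\|$ for $R$ ranging over the transposes $\{S^*:S\in\mathcal{Z}(X)\}$, which might be a proper subset of $\mathcal{Z}(X^*)$ and would leave the infimum too large. Finally, for the index inequality I would fix $T\in\mathcal{L}(X)$ with $T\notin\mathcal{Z}(X)$, note that then $T^*\notin\mathcal{Z}(X^*)$ (as $v(T^*)=v(T)\neq 0$), and write
\begin{equation*}
n'(X^*)\le \frac{v(T^*)}{\|T^*+\mathcal{Z}(X^*)\|}=\frac{v(T)}{\|T+\mathcal{Z}(X)\|};
\end{equation*}
taking the infimum over all such $T$ produces $n'(X^*)\le n'(X)$.
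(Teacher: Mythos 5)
Your proof is correct and is precisely the routine argument the paper has in mind: the paper states this lemma without proof (calling it ``immediate''), and your two-sided estimate via transposition of representatives, followed by the infimum over $T\notin\mathcal{Z}(X)$, is exactly the intended fleshing-out. You also rightly isolate the one substantive point, namely that the reverse inequality needs transposition to map $\mathcal{Z}(X)$ \emph{onto} $\mathcal{Z}(X^*)$, which is the whole content of the hypothesis.
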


\begin{proof}[Proof of Proposition~\ref{prop-duality-uniquenesspredual}]
We only need to provide references giving that any of those conditions implies that there is a unique projection from $X^{***}$ onto $X^*$ with weak-star closed kernel, and then apply the above two lemmas.

Indeed, Theorem~II.1, Examples~II.2 and Theorem~II.3 of \cite{Godefroy} give directly the result for (a), (b), (c), and (d) (actually, in those cases there exists a unique norm-one projection from $X^{***}$ onto $X^*$).

(e) \cite[Theorem~3]{Pfitzner} gives that every separable $L$-embedded space satisfies Godefroy-Talagrand property (X) and  \cite[Theorem~V.3]{Godefroy} shows that this latter property implies what we need. Since property (X) is of isomorphic nature and passes to subspaces, we get the result.

(f) For von Neumann algebras, this a classical result of Sakai; for $JBW^*$-triples, it is a result of G.~Horn \cite{Horn}.

(g) It is shown in \cite{Godefroy2014} that, in this case, $X$ has the RNP, and so the result follows from (b).
\end{proof}

\begin{corollary}
Let $X$ be a reflexive space. Then, $n'(X^*)=n'(X)$.
\end{corollary}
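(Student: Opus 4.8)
The plan is to derive the corollary directly from Proposition~\ref{prop-duality-uniquenesspredual} together with reflexivity, establishing the two inequalities $n'(X^*)\leq n'(X)$ and $n'(X)\leq n'(X^*)$ separately. The first inequality is immediate: a reflexive space is in particular a space for which $X^*\nsupseteq \ell_1$ (indeed $X^*$ is reflexive, hence cannot contain a copy of the non-reflexive space $\ell_1$), so condition~(d) of Proposition~\ref{prop-duality-uniquenesspredual} applies and gives $n'(X^*)\leq n'(X)$. Alternatively, reflexivity also guarantees the RNP, which is listed under condition~(b).

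For the reverse inequality, the key observation is that reflexivity makes duality symmetric: since $X$ is reflexive, so is $X^*$, and under the canonical identification $X^{**}=X$ we have that $X$ is itself the dual of the reflexive space $X^*$. Applying Proposition~\ref{prop-duality-uniquenesspredual} to the space $X^*$ in place of $X$ — which is legitimate because $X^*$ is reflexive and therefore also satisfies condition~(d) (its dual $X^{**}=X$ does not contain $\ell_1$) — yields $n'\bigl((X^*)^*\bigr)\leq n'(X^*)$, that is, $n'(X^{**})\leq n'(X^*)$. Finally I would invoke the isometric identification $X^{**}=X$, under which $\mathcal{L}(X^{**})=\mathcal{L}(X)$, the numerical radius is preserved, and the Lie algebras correspond, so that $n'(X^{**})=n'(X)$. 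This gives $n'(X)\leq n'(X^*)$.

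Combining the two inequalities produces $n'(X^*)=n'(X)$, as desired. I do not expect any genuine obstacle here, since the whole content is packaged into Proposition~\ref{prop-duality-uniquenesspredual}; the only point requiring a moment's care is the verification that applying the proposition to $X^*$ is valid and that the second application really produces $n'(X^{**})$, which one then identifies with $n'(X)$ via reflexivity. Writing the argument cleanly amounts to:
\begin{equation*}
n'(X^*)\leq n'(X)=n'(X^{**})\leq n'(X^*),
\end{equation*}
where the outer inequalities are two instances of Proposition~\ref{prop-duality-uniquenesspredual} (using condition~(d), valid since the relevant biduals $X^*$ and $X^{**}=X$ are reflexive and hence do not contain $\ell_1$) and the middle equality is the canonical reflexive identification $X=X^{**}$. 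All four quantities therefore coincide.
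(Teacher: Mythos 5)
Your proof is correct and is essentially the argument the paper intends: the corollary is stated as an immediate consequence of Proposition~\ref{prop-duality-uniquenesspredual}, applied once to $X$ and once to $X^*$ (both legitimate since reflexivity guarantees condition~(d), or equally well condition~(b)), with the canonical isometric identification $X^{**}=X$ closing the loop. No gap; the chain $n'(X^*)\leq n'(X)=n'(X^{**})\leq n'(X^*)$ is exactly right.
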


We may give another result in this line for $M$-embedded spaces, for which we actually have a little bit more. Recall that a Banach space $X$ is \emph{$M$}-embedded (or it is an \emph{$M$-ideal} in its bidual) if $X^{***}=X^\perp\oplus_1 W$ for some closed subspace $W$.

\begin{proposition}
Let $X$ be an $M$-embedded space. Then $n'(X^{**})\leq n'(X^*)\leq n'(X)$.
\end{proposition}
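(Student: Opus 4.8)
The plan is to prove the two inequalities $n'(X^{**}) \le n'(X^*)$ and $n'(X^*) \le n'(X)$ separately, exploiting the defining property of $M$-embedded spaces that $X^{***} = X^\perp \oplus_1 W$ (where $W$ is naturally identified with $X^*$). The key structural fact I would invoke is Lemma~\ref{lemma:duality-2}: to obtain $n'(Z^*) \le n'(Z)$ for a given space $Z$, it suffices to show that every element of $\mathcal{Z}(Z^*)$ is the transpose of an element of $\mathcal{Z}(Z)$. So the whole argument reduces to verifying this transpose-lifting property in the two relevant situations.

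For the inequality $n'(X^*) \le n'(X)$, I would first recall the standard theory of $M$-embedded spaces (as developed by Harmand--Werner--Werner): such spaces have property (X) in the sense of Godefroy, or more directly, the decomposition $X^{***} = X^\perp \oplus_1 X^*$ yields a unique norm-one projection $\pi : X^{***} \longrightarrow X^*$ with weak-star closed kernel $X^\perp$. Once uniqueness of this projection is established, Lemma~\ref{lemma:duality-1} applies verbatim and gives that every $T \in \mathcal{Z}(X^*)$ is the transpose of an element of $\mathcal{Z}(X)$; then Lemma~\ref{lemma:duality-2} delivers $n'(X^*) \le n'(X)$. In fact this is precisely how case~(e) of Proposition~\ref{prop-duality-uniquenesspredual} is obtained, since an $M$-embedded space satisfies the hypotheses feeding into Godefroy's machinery, so I would cite that framework rather than re-derive it.

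For the remaining inequality $n'(X^{**}) \le n'(X^*)$, I would use the fact that the class of $M$-embedded spaces is stable in the right direction: if $X$ is $M$-embedded then $X^*$ is \emph{$L$-embedded}, and duals of $L$-embedded spaces carry the relevant uniqueness structure. More precisely, the $L$-decomposition $X^{***} = X^* \oplus_1 X^\perp$ exhibits $X^*$ as the range of a norm-one projection, and the uniqueness of the $L$-projection (a classical rigidity phenomenon for $L$-embedded spaces) again produces a unique norm-one projection from $X^{****}$ onto $X^{**}$ with weak-star closed kernel. Applying Lemma~\ref{lemma:duality-1} and Lemma~\ref{lemma:duality-2} to the space $X^*$ in place of $X$ then yields $n'((X^*)^*) = n'(X^{**}) \le n'(X^*)$.

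The main obstacle I anticipate is the second inequality: I must ensure that the uniqueness-of-projection hypothesis of Lemma~\ref{lemma:duality-1} genuinely transfers to $X^*$. For $M$-embedded $X$, the bidual $X^{**}$ is again $M$-embedded (this is a known permanence property), so one could alternatively apply the first inequality directly to $X^{**}$; but the cleanest route is to observe that $X^*$ is $L$-embedded with a \emph{unique} $L$-projection, and that this uniqueness is exactly what is needed. I would therefore spend the bulk of the write-up pinning down the correct citation for the uniqueness of the $L$-projection on $X^*$ (or equivalently the uniqueness of the norm-one projection onto $X^{**}$ with weak-star closed kernel) and checking that $M$-embeddedness of $X$ propagates to give this, after which both inequalities follow from the two lemmas as above.
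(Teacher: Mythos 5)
There is a genuine gap, and it sits mainly in your first inequality. You conflate two different uniqueness statements. The decomposition $X^{***}=X^\perp\oplus_1 X^*$ does make the canonical projection the unique \emph{$L$-projection} onto $X^*$ (complements of $L$-summands are unique), but Lemma~\ref{lemma:duality-1} requires much more: uniqueness among \emph{all} norm-one projections of $X^{***}$ onto $X^*$ with weak-star closed kernel, i.e.\ that $X$ is a \emph{strongly unique} predual of $X^*$. $M$-embedded spaces fail this in general. Take $X=c_0$, the standard $M$-embedded space: $\ell_1$ has the two non-isometric isometric preduals $c_0$ and $c$, and correspondingly there are two distinct norm-one projections of $(\ell_\infty)^*$ onto the canonical copy of $\ell_1$ with weak-star closed kernel --- the canonical one, with kernel $c_0^\perp$, and the one induced by $c\subset \ell_\infty$, namely $Q(f)=\bigl(a_0(f)+f(e_1),\,f(e_2),\,f(e_3),\dots\bigr)$ with $a_0(f)=f(\mathbf{1})-\sum_{n\geq 1}f(e_n)$, whose kernel $\bigl\{f\,:\,f(\mathbf{1})=0,\ f(e_n)=0\ \forall n\geq 2\bigr\}$ is weak-star closed. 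For the same reason your assertion that $M$-embedded spaces have Godefroy's property (X), or fall under case (e) of Proposition~\ref{prop-duality-uniquenesspredual}, is false: property (X) would force $c_0$ to be a strongly unique predual of $\ell_1$, and case (e) concerns subspaces of separable $L$-embedded spaces, which are weakly sequentially complete while $c_0$ is not. The same conflation infects your second inequality: uniqueness of the $L$-projection is classical but does not by itself give uniqueness of norm-one projections of $X^{****}$ onto $X^{**}$ with weak-star closed kernel, and Pfitzner's theorem only covers separable $X^*$. That half, at least, can be repaired: duals of $M$-embedded spaces have the RNP (see \cite{HWW}, Chapter~III), so case (b) of Proposition~\ref{prop-duality-uniquenesspredual} applied to the space $X^*$ yields $n'(X^{**})\leq n'(X^*)$. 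No such repair is available for $n'(X^*)\leq n'(X)$ along your route.

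The paper's proof avoids projections altogether, and this is not an accident. It invokes Proposition~III.2.2 of \cite{HWW}: every surjective isometry of $X^{**}$ is the bitranspose of a surjective isometry of $X$. Since skew-hermitian operators are exactly the generators of one-parameter groups of surjective isometries ($S\in\mathcal{Z}(Y)$ if and only if $\exp(tS)$ is an isometry for every $t\in\R$), the argument of Corollary~VII.3 of \cite{Godefroy} then shows that every element of $\mathcal{Z}(X^{**})$ is the transpose of an element of $\mathcal{Z}(X^*)$ and every element of $\mathcal{Z}(X^*)$ is the transpose of an element of $\mathcal{Z}(X)$; both inequalities then follow from Lemma~\ref{lemma:duality-2}. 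So your overall frame --- lift skew-hermitian operators to transposes and apply Lemma~\ref{lemma:duality-2} --- matches the paper, but the lifting must come from the isometry-group rigidity of $M$-embedded spaces, not from a uniqueness-of-projection hypothesis that these spaces do not satisfy.
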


\begin{proof}
It is shown in \cite[Proposition~III.2.2]{HWW} that every surjective isometry of $X^{**}$ is the bitranspose of a surjective isometry of $X$. As a consequence, we get using the same ideas as in the proof of \cite[Corollary~VII.3]{Godefroy} that every element in $\mathcal{Z}(X^{**})$ is the transpose of an element of $\mathcal{Z}(X^*)$ and that every element of $\mathcal{Z}(X^{*})$ is the transpose of an element of $\mathcal{Z}(X)$. The result now follows from Lemma~\ref{lemma:duality-2}.
\end{proof}

Next, we would like to present an example showing that the inequality $n'(X^*)\leq n'(X)$ can be strict, even when $n(X)=0$.

\begin{example} {\slshape There exists a Banach space $X$ with }
$$
n(X)=n(X^*)=0 \qquad \text{and}\qquad n'(X^*)<n'(X).
$$
\end{example}

\begin{proof}
Let $W$ be a Banach space with $n(W)=1$ and $n(W^*)=1/3$ (see Example~\ref{example:old-duality}) and let $X=\ell_2^2\oplus_\infty W$ which satisfies $n(X)\leq n(\ell_2^2)=0$ by Proposition~\ref{prop:old-absolutesumsinequality}. Now, $n'(X)\geq 1/2$ by Proposition~\ref{prop:lower}, and $n'(X^*)\leq n'(W^*) = n(W^*)=1/3$ by Proposition~\ref{prop:absolute-summand}.
\end{proof}

The next example shows another strong way in which the second numerical index of a space and the one of its dual can be different.

\begin{example}{\slshape
There exists a Banach space $X$ with the following properties:}
\[
n(X)=n'(X)=1, \ \ n(X^*)=n'(X^*)=0, \ \ \mathcal{Z}(X)=\mathcal{Z}(X^*)=\{0\}.
\]
\end{example}

\begin{proof}
Consider a sequence of Banach spaces $X_k$ with $n(X_k)=1$, $n(X_k^*)>0$ and $\lim_k n(X_k^*)=0$ (see Example~\ref{example:old-duality}). Let
$X= \left[\bigoplus_{k\in \N} X_k \right]_{c_0}$. Then $n(X)=1$ (and so $\mathcal{Z}(X)=\{0\}$) by Proposition~\ref{prop-suma-oldMarPay2000}. This proposition also shows that $n(X^*)=0$ and that $\mathcal{Z}(X^*)=\{0\}$. Then, $n'(X^*)=n(X^*)=0$.
\end{proof}

One more example is the following extension of Example~\ref{example:old-duality}.

\begin{example}
{\slshape Given $0\leq \alpha\leq \beta\leq 1/2$, there is a Banach space $X_{\alpha,\beta}$ with $n(X_{\alpha,\beta})=0$ such that}
$$
n'(X_{\alpha,\beta})=\beta \quad \text{ and } \quad n'(X_{\alpha,\beta}^*)=\alpha.
$$
Indeed, Let $Y_{\alpha,\beta}$ be a Banach space such that $n(Y_{\alpha,\beta})=\beta$ and $n(Y_{\alpha,\beta}^*)=\alpha$, and consider $X_{\alpha,\beta}=\ell_2\oplus_\infty Y_{\alpha,\beta}$.
Then, Propositions \ref{prop:absolute-summand} and \ref{prop:lower} show easily the result.
\end{example}

\section{Vector valued function spaces}\label{sec:vector-valued}

We deal now with vector valued function spaces. As $n'(H\oplus_\infty H)<1$ by Example~\ref{example:Hilbert-sum-infty-Hilbert}, one cannot expect to have the same result that the one for the classical numerical index (see Proposition~\ref{prop:old-ck-l1-linfty}), but we will see that at least one inequality can be proved. We start with spaces of continuous functions.

\begin{prop}\label{newindexC}
Let $L$ be a locally compact Hausdorff topological space and $X$ be a Banach space. Then $n'(C_0(L, X))\leq n'(X)$.
\end{prop}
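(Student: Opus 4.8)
\textbf{Proof proposal for Proposition~\ref{newindexC}.}

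The plan is to mimic the strategy used for absolute sums: given an operator $T\in\mathcal{L}(X)$ with $\|T+\mathcal{Z}(X)\|\neq 0$, I would produce an operator $\widetilde{T}\in\mathcal{L}(C_0(L,X))$ whose numerical radius equals $v(T)$ and whose quotient norm modulo $\mathcal{Z}(C_0(L,X))$ is at most $\|T+\mathcal{Z}(X)\|$, and then take the infimum over such $T$. The natural candidate is the ``pointwise'' operator $\widetilde{T}(f)(s)=T(f(s))$ for $f\in C_0(L,X)$ and $s\in L$. Since $C_0(L,X)$ is, morally, an $\ell_\infty$-type (sup-norm) object over $L$, I expect $\|\widetilde{T}\|=\|T\|$ immediately, and $v(\widetilde{T})=v(T)$ by a direct computation using Lemma~\ref{lemma:old-projection-dense}: one evaluates the numerical range of $\widetilde{T}$ on pairs $(f,\mu)$ where $f$ is peaked near a point $s_0\in L$ with $f(s_0)=x$ and $\mu$ is a scalar multiple of a Dirac-type functional carrying an $x^*$, recovering exactly the numerical range of $T$ on $X$.

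The crux is the quotient-norm inequality $\|\widetilde{T}+\mathcal{Z}(C_0(L,X))\|\leq \|T+\mathcal{Z}(X)\|$. First I would establish the easy half of this: for any $S\in\mathcal{Z}(X)$, the lifted operator $\widetilde{S}$ satisfies $v(\widetilde{S})=v(S)=0$, so $\widetilde{S}\in\mathcal{Z}(C_0(L,X))$, and hence
$$
\|\widetilde{T}+\mathcal{Z}(C_0(L,X))\|\leq\|\widetilde{T}-\widetilde{S}\|=\|\widetilde{T-S}\|=\|T-S\|.
$$
Taking the infimum over $S\in\mathcal{Z}(X)$ yields $\|\widetilde{T}+\mathcal{Z}(C_0(L,X))\|\leq\|T+\mathcal{Z}(X)\|$. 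With this in hand, the conclusion is formal:
$$
n'(C_0(L,X))\leq \frac{v(\widetilde{T})}{\|\widetilde{T}+\mathcal{Z}(C_0(L,X))\|}= \frac{v(T)}{\|\widetilde{T}+\mathcal{Z}(C_0(L,X))\|}\leq \frac{v(T)}{\|T+\mathcal{Z}(X)\|},
$$
and taking the infimum over all $T\in\mathcal{L}(X)$ with $\|T+\mathcal{Z}(X)\|\neq 0$ gives $n'(C_0(L,X))\leq n'(X)$.

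The main obstacle I anticipate is the verification $v(\widetilde{T})=v(T)$, and in particular making the pairs in $\Pi(C_0(L,X))$ precise enough to transfer numerical-range data between the two spaces. The inequality $v(\widetilde{T})\geq v(T)$ should follow by choosing, for a given $(x,x^*)\in\Pi(X)$ and a fixed $s_0\in L$, a bump function $f$ with $f(s_0)=x$ and small norm away from $s_0$, paired with the functional $g\mapsto x^*(g(s_0))$; one then checks this pair lies in (or approximates a pair in) $\Pi(C_0(L,X))$ and that its value under $\widetilde{T}$ is $x^*(Tx)$. For the reverse inequality $v(\widetilde{T})\leq v(T)$, rather than describing $\Pi(C_0(L,X))$ explicitly I would invoke Lemma~\ref{lemma:old-Anorming-Ardalani} with the $1$-norming set $A=\{x^*\circ\delta_s:\ s\in L,\ x^*\in S_{X^*}\}$, reducing the numerical radius of $\widetilde{T}$ to a supremum over evaluations that manifestly does not exceed $v(T)$. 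A careful but routine verification that these extremal pairs genuinely achieve the required normalization, handling the vanishing-at-infinity constraint when $L$ is merely locally compact, completes the argument; this bookkeeping, not any deep idea, is where the real work lies.
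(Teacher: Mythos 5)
There is a genuine gap, and it is in the step you yourself identify as the crux: your quotient-norm inequality goes in the wrong direction. To conclude $n'(C_0(L,X))\leq v(T)/\|T+\mathcal{Z}(X)\|$ from the bound $n'(C_0(L,X))\leq v(\widetilde{T})/\|\widetilde{T}+\mathcal{Z}(C_0(L,X))\|$, you need the denominator of the lifted operator to be at least as large as $\|T+\mathcal{Z}(X)\|$, i.e.\ you need $\|\widetilde{T}+\mathcal{Z}(C_0(L,X))\|\geq \|T+\mathcal{Z}(X)\|$. What you prove (correctly, and it is indeed the easy half) is the opposite inequality $\|\widetilde{T}+\mathcal{Z}(C_0(L,X))\|\leq \|T+\mathcal{Z}(X)\|$, obtained by lifting skew-hermitian operators from $X$. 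A \emph{smaller} denominator makes the ratio \emph{larger}, so the last step of your displayed chain,
\[
\frac{v(T)}{\|\widetilde{T}+\mathcal{Z}(C_0(L,X))\|}\leq \frac{v(T)}{\|T+\mathcal{Z}(X)\|},
\]
is exactly backwards given what you established. This is not a notational slip that can be waved away: the lifted operators $\{\widetilde{S}\,:\,S\in\mathcal{Z}(X)\}$ form in general a \emph{proper} subset of $\mathcal{Z}(C_0(L,X))$ — for instance, by Proposition~\ref{Lie2}, $\mathcal{Z}(C(K,\ell_2^2))$ consists of all pointwise rotations $(f,g)\mapsto(\lambda g,-\lambda f)$ with $\lambda\in C(K)$ arbitrary, not just constant $\lambda$ — so quotienting by the full Lie algebra of $C_0(L,X)$ could a priori push $\|\widetilde{T}+\mathcal{Z}(C_0(L,X))\|$ strictly below $\|T+\mathcal{Z}(X)\|$, and ruling this out is precisely the content of the proposition.

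The missing half is: for \emph{every} $U\in\mathcal{Z}(C_0(L,X))$, one has $\|\widetilde{T}+U\|\geq\|T+\mathcal{Z}(X)\|$. The paper obtains this by compressing $U$ back down to $X$ rather than lifting from $X$: fix $\varphi\in C_0(L)$ and $t_0\in L$ with $\varphi(t_0)=1=\|\varphi\|$, and define $Vx=[U(\varphi\otimes x)](t_0)$ for $x\in X$. Then $V\in\mathcal{Z}(X)$, because for $(x,x^*)\in\Pi(X)$ the pair $(\varphi\otimes x,\delta_{t_0}\otimes x^*)$ lies in $\Pi(C_0(L,X))$, so $|x^*(Vx)|\leq v(U)=0$; and
\[
\|T+V\|=\sup_{x\in S_X}\bigl\|\bigl[(\widetilde{T}+U)(\varphi\otimes x)\bigr](t_0)\bigr\|\leq \|\widetilde{T}+U\|,
\]
whence $\|T+\mathcal{Z}(X)\|\leq\|\widetilde{T}+U\|$, and taking the infimum over $U$ gives the needed inequality. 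Note that this is the exact analogue of the hard half of Lemma~\ref{lemma-abs-sum} for absolute sums, which there relies on Lemma~\ref{lemma:lemma5Paya82} (skew-hermitians on absolute sums are diagonal); no such structural description is available for $C_0(L,X)$, which is why the compression argument is what replaces it. Your treatment of the numerical radius is fine: only $v(\widetilde{T})\leq v(T)$ is actually needed, and your route via the $1$-norming set of evaluations (or the paper's route via Lemma~\ref{lemma:old-projection-dense}) delivers it.
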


\begin{proof}
Choose a function $\varphi\in C_0(L)$ such that $\varphi(t_0)=1=\|\varphi\|$ for some $t_0\in L$.
Fix $R$ in $\mathcal{L}(X)\setminus \mathcal{Z}(X)$ and define $T\in \mathcal{L}(C_0(L, X))$ by
\[
[T(f)](t) = R(f(t)) \qquad \bigl(t\in K, \ f\in C_0(L, X)\bigr).
\]
We claim that $v(T) \leq v(R)$ and $\|T+\mathcal{Z}(C_0(L,X))\|\geq \|R+\mathcal{Z}(X)\|$. This shows that
\[ n'(C_0(L, X)) \leq \frac{v(T)}{\|T+\mathcal{Z}(C_0(L,X))\|} \leq \frac{v(R)}{\|R+\mathcal{Z}(X)\|}.
\]
Since $R$ is arbitrary, taking infimum on $R$ we get $n'(C(L, X))\leq n'(X)$.

So let us prove the claim. First, consider
\begin{equation}\label{eq:Gamma_en_cdek}
\Gamma = \bigl\{ (f, \delta_t \otimes x^*) \,:\, f\in S_{C_0(L, X)},\, t\in L,\, x^*\in S_{X^*},\, x^*(f(t))=1\bigr\},
\end{equation}
where $[\delta_t\otimes x^*](f) = x^*(f(t))$. Then the numerical radius $v(T)$ can be computed with only those elements in $\Pi(X)$ which are in $\Gamma$ (see Lemma~\ref{lemma:old-projection-dense}) and so
\begin{align*}
v(T) &= \sup\{ |\langle x^*, [Tf](t)\rangle|\,:\, f\in S_{C_0(L, X)},\, t\in L,\, x^*\in S_{X^*},\, x^*(f(t))=1\}\\
&=\sup\{ |\langle x^*,R(f(t))\rangle|\,:\, f\in S_{C_0(L, X)},\, t\in L,\, x^*\in S_{X^*},\, x^*(f(t))=1\}\\
&\leq \sup\{ |x^*(R(x))|\,:\, x\in S_X,\, x^*\in S_{X^*},\, x^*(x)=1\}=v(R).
\end{align*}
To show that $\|R+\mathcal{Z}(X)\|\leq \|T+\mathcal{Z}(C_0(L,X))\|$, given $U\in \mathcal{Z}(C_0(L, X))$, define $Vx = [U(\varphi \otimes x)](t_0)$ for every $x\in X$. Then, $V\in \mathcal{L}(X)$. In fact, $V$ is in $\mathcal{Z}(X)$. Indeed, for each $(x, x^*)\in \Pi(X)$,  $[\delta_{t_0}\otimes x^*](\varphi \otimes x)=1$ and
\[ |x^*V(x)| = \bigl|\langle x^*, U (\varphi\otimes x)(t_0)\rangle\bigr| = |(\delta_{t_0}\otimes x^*) U(\varphi\otimes x)|\leq v(U)=0.\] For each $x\in S_X$, we have
\begin{align*}
\|R+\mathcal{Z}(X)\|\leq \|R+V\| &= \sup_{x\in S_X}\|Rx+Vx\| \\
&= \sup_{x\in S_X}\|Rx + [U(\varphi\otimes x)](t_0)\| \\
&= \sup_{x\in S_X}\|[T(\varphi\otimes x)](t_0) + [U(\varphi\otimes x)](t_0)\|\leq \|T+U\|.
\end{align*}
Hence $\|R+\mathcal{Z}(X)\|\leq \|T+U\|$. Since $U\in \mathcal{Z}(C_0(L, X))$ is arbitrary, we have that $\|R+\mathcal{Z}(X)\|\leq \|T+\mathcal{Z}(C_0(L,X))\|$ and this completes the proof.
\end{proof}

Easy modifications of the above proof can be applied to get results for $C_w(K,X)$ and $C_b(\Omega,X)$.

\begin{remark}
{\slshape Let $K$ be a compact Hausdorff topological space, let $\Omega$ be a completely regular Hausdorff topological space and let $X$ be a Banach space. Then}
$$
n'(C_w(K,X))\leq n'(X) \qquad \text{and} \qquad n'(C_b(\Omega,X))\leq n'(X).
$$
\end{remark}

\begin{proof}
In both cases, the arguments are the same that the one in the proof of Proposition~\ref{newindexC}. Given $R\in \mathcal{L}(X)\setminus \mathcal{Z}(X)$, define $T$ analogously. In the case of $C_w(K,X)$, we just have to replace the set $\Gamma$ in \eqref{eq:Gamma_en_cdek} by
\begin{equation*}
\Gamma_1 = \bigl\{ (f, \delta_t \otimes x^*) \,:\, f\in S_{C_w(K, X)},\, \text{$f$ attains its norm},\, t\in K,\, x^*\in S_{X^*},\, x^*(f(t))=1\bigr\},
\end{equation*}
and observe that $\pi_X(\Gamma_1)$ is dense in $S_{C_w(K,X)}$ by \cite[Lemma~1]{LopezMartinMeri}, so we may use $\Gamma_1$ to compute the numerical radius of $T$. In the case of $C_b(\Omega,X)$, it is straightforward to show that the set of functions in $C_b(\Omega,X)$ which attain their norm is dense in $C_b(\Omega,X)$, and so the set \begin{equation*}
\Gamma_2 = \bigl\{ (f, \delta_t \otimes x^*) \,:\, f\in S_{C_b(\Omega, X)},\, \text{$f$ attains its norm},\, t\in K,\, x^*\in S_{X^*},\, x^*(f(t))=1\bigr\}
\end{equation*}
can be used to compute the numerical radius of $T$. The rest of the proof is the same that the one of Proposition~\ref{newindexC}.
\end{proof}

Another vector-valued function space for which we may give an inequality of the second numerical index is $L_\infty(\mu,X)$.

\begin{prop}\label{prop:n'-Linfty}
Let $(\Omega,\Sigma,\mu)$ a measurable space and let $X$ be Banach space. Then
$$
n'(L_\infty(\mu,X))\leq n'(X).
$$
\end{prop}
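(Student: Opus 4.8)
The plan is to mimic the structure of the proof of Proposition~\ref{newindexC}, adapting the $C_0(L,X)$ argument to the $L_\infty(\mu,X)$ setting. Fix an arbitrary $R\in \mathcal{L}(X)\setminus\mathcal{Z}(X)$ and define $T\in \mathcal{L}(L_\infty(\mu,X))$ by $[T(f)](t)=R(f(t))$ almost everywhere. As in the continuous case, the goal is to prove the two inequalities $v(T)\le v(R)$ and $\|T+\mathcal{Z}(L_\infty(\mu,X))\|\ge \|R+\mathcal{Z}(X)\|$, since together they yield
\[
n'(L_\infty(\mu,X))\le \frac{v(T)}{\|T+\mathcal{Z}(L_\infty(\mu,X))\|}\le \frac{v(R)}{\|R+\mathcal{Z}(X)\|},
\]
and taking the infimum over $R$ gives the claim.

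For the numerical radius inequality $v(T)\le v(R)$, I would exhibit a norming family of pairs in $\Pi(L_\infty(\mu,X))$ built from a single point evaluation. The natural candidates are functionals of the form $\frac{1}{\mu(A)}\int_A \langle x^*, \cdot\rangle\,d\mu$ (or a point-mass-type functional supported on a small positive-measure set $A$), paired with functions $f$ that are essentially constant and norm-one on $A$. Applying Lemma~\ref{lemma:old-projection-dense} (or the $1$-norming variant Lemma~\ref{lemma:old-Anorming-Ardalani}) to such a family, the computation $\langle x^*,[Tf](t)\rangle=\langle x^*,R(f(t))\rangle$ reduces, in the limit as $A$ shrinks and $f$ concentrates, to values of the form $x^*(Rx)$ with $(x,x^*)\in \Pi(X)$, bounded by $v(R)$.

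For the quotient-norm inequality, given $U\in \mathcal{Z}(L_\infty(\mu,X))$ I would produce an operator $V\in \mathcal{Z}(X)$ satisfying $\|R+V\|\le \|T+U\|$, which then gives $\|R+\mathcal{Z}(X)\|\le \|R+V\|\le \|T+U\|$ and, taking infimum over $U$, the desired estimate. The construction of $V$ is the crux: in the $C_0$ case one used a fixed bump function $\varphi$ concentrated at $t_0$, but here there are no point evaluations, so $V$ must be recovered from $U$ by an averaging/localization procedure — for a fixed positive-measure set $A$, set $V_A x=\frac{1}{\mu(A)}\int_A [U(\mathbf{1}_A\otimes x)](t)\,d\mu(t)$, and argue that $v(V_A)=0$ (using $v(U)=0$ against the pairs from the first step) while $\|R+V_A\|\le \|T+U\|$ by comparing the action on functions of the form $\mathbf{1}_A\otimes x$.

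The main obstacle I expect is precisely the substitute for point evaluation in $L_\infty$. Unlike $C_0(L,X)$, where a continuous bump $\varphi$ peaking at $t_0$ lets one extract $V$ cleanly, in $L_\infty(\mu,X)$ one must work with indicator functions $\mathbf{1}_A$ of positive-measure sets and control the errors as $\mu(A)\to 0$, ensuring both that the recovered $V$ genuinely lies in $\mathcal{Z}(X)$ (i.e.\ $v(V)=0$) and that the norm comparison survives the averaging. Technical care will be needed to handle the possibility that no single atom exists (the non-atomic case) versus the purely atomic case, and to make the localization argument uniform; a limiting or weak-star compactness argument to pass from the $V_A$ to a well-defined $V\in\mathcal{Z}(X)$ may be required. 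Once this localization is set up correctly, the remaining estimates parallel those of Proposition~\ref{newindexC} closely.
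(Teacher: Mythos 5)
Your proposal is correct and follows the paper's proof essentially step by step: the same composition operator $T$, the same norming family of pairs $\bigl(f,\frac{\chi_E}{\mu(E)}\otimes x^*\bigr)$ with $f$ constant and norm-attaining on $E$ (this is exactly the set the paper imports from \cite[Lemma~2.2]{MarVil} and feeds into Lemma~\ref{lemma:old-projection-dense}), and the same averaging construction recovering $V\in\mathcal{Z}(X)$ from $U\in\mathcal{Z}(L_\infty(\mu,X))$. The only point worth adding is that the obstacle you anticipate at the end is not actually there: no limit as $\mu(A)\to 0$, no atomic versus non-atomic case distinction, and no weak-star compactness argument is needed, since for one fixed set $E_0$ with $0<\mu(E_0)<\infty$ the operator $Vx=\frac{1}{\mu(E_0)}\int_{E_0}[U(\chi_\Omega\otimes x)](t)\,d\mu(t)$ (the paper plugs the constant function $\chi_\Omega\otimes x$ into $U$; your variant with $\chi_A\otimes x$ works equally well) lies exactly in $\mathcal{Z}(X)$, because $\bigl(\chi_\Omega\otimes x,\frac{\chi_{E_0}}{\mu(E_0)}\otimes x^*\bigr)\in\Pi(L_\infty(\mu,X))$ whenever $(x,x^*)\in\Pi(X)$, so $|x^*Vx|\leq v(U)=0$, while $\|R+V\|\leq\|T+U\|$ follows from $[T(\chi_\Omega\otimes x)](t)=R(x)$ and the triangle inequality for the Bochner integral.
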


For a measurable subset $E$, $x\in X$ and $x^*\in X^*$, we will use the following notations:
\begin{align*}
[\chi_E\otimes x](t)&=x\cdot \chi_E(t), \ \ \ t\in \Omega\\
[\chi_E\otimes x^*](f)& =\langle \chi_E\otimes x^*, f\rangle = \int_E x^*(f(t))\, d\mu(t),  \ \ \  f\in L_1(\mu, X).
\end{align*}

\begin{proof}[Proof of Proposition~\ref{prop:n'-Linfty}]
As in the proof of Proposition~\ref{newindexC}, given $R$ in $\mathcal{L}(X)\setminus \mathcal{Z}(X)$, define $T$ by
$$
[T(f)](t)=R(f(t)) \qquad \bigl(t\in\Omega,\,f\in L_\infty(\mu,X)\bigr).
$$
Now, as shown in \cite[Lemma~2.2]{MarVil}, the numerical radius of $T$ can be computed using the set
\begin{align*}
\Gamma = \Bigl\{ \left(f, \frac{\chi_E }{\mu(E)}\otimes x^*\right) \,:\ &  f\in S_{L_\infty(\mu, X)}, \ E\in \Sigma,\ 0< \mu(E)<\infty, \\
 & f \text{ is constant on } E,\  x^*(f(t)) =\|f\|_\infty=1 \text{ on } E\Bigr\}
\end{align*}
(since $\Gamma$ is a subset of $\Pi(L_\infty(\mu, X))$ and $\pi_X(\Gamma)$ is dense in $S_{L_\infty(\mu, X)}$, so we may use  Lemma~\ref{lemma:old-projection-dense}). Then, repeating the argument given in Proposition~\ref{newindexC}, we get $v(T) \leq v(R)$. Finally, for  $U\in \mathcal{Z}(L_\infty(\mu, X))$,  define $V\in \mathcal{L}(X)$ by
\[
V(x) = \frac{1}{\mu(E_0)}\int_{E_0} \bigl[U( \chi_{\Omega}\otimes x)\bigr](t) \, d\mu(t) \qquad (x\in X),
\]
where $E_0\subset \Omega$ is measurable and $0<\mu(E_0)<\infty$. In fact, $V$ is in $\mathcal{Z}(X)$. Indeed, if $(x, x^*)\in \Pi(X)$, we have
\[ |x^*V(x)| =  \left|\frac{1}{\mu(E_0)}\int_{E_0} x^*U( \chi_{\Omega}\otimes x)(t) \, d\mu(t)\right| = \left|\bigl\langle \frac{\chi_{E_0}}{\mu(E_0)}\otimes x^*, U(\chi_{\Omega}\otimes x)\bigr\rangle \right|\leq v(U)=0.
\]
Since $\bigl[T(\chi_\Omega\otimes x)\bigr](t)=R(x)$ on $\Omega$, we have
\begin{align*}
\|R+\mathcal{Z}(X)\|\leq \|R+V\|&=\sup_{x\in S_X}\|Rx+Vx\| \\
&= \sup_{x\in S_X}\left\|Rx +  \frac{1}{\mu(E_0)}\int_{E_0} U( \chi_{\Omega}\otimes x)(t) \, d\mu(t)\right\| \\
&= \sup_{x\in S_X}\left\|  \frac{1}{\mu(E_0)}\int_{E_0} \bigl([T( \chi_{\Omega}\otimes x)](t)+[U( \chi_{\Omega}\otimes x)](t)\bigr) \, d\mu(t)\right\|\\
&\leq \|T( \chi_{\Omega}\otimes x)+U( \chi_{\Omega}\otimes x)\|_\infty\leq \|T+U\|.
\end{align*}
Therefore, we get $\|R+\mathcal{Z}(X)\|\leq \|T+\mathcal{Z}(L_\infty(\mu,X))\|$ and so $n'(L_\infty(\mu, X))\leq n'(X)$.
\end{proof}

Our next result is for spaces of vector-valued integrable functions.

\begin{prop}
Let $\mu$ be a positive measure. Then $n'(L_1(\mu, X))\leq n'(X)$.
\end{prop}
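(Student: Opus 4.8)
The plan is to mimic the proofs of Propositions~\ref{newindexC} and~\ref{prop:n'-Linfty}, dualizing the argument given there for $L_\infty(\mu,X)$. Fix $R\in\mathcal{L}(X)\setminus\mathcal{Z}(X)$ and define $T\in\mathcal{L}(L_1(\mu,X))$ by $[T(f)](t)=R(f(t))$. As in those propositions, the whole statement reduces to the two claims $v(T)\le v(R)$ and $\|R+\mathcal{Z}(X)\|\le\|T+\mathcal{Z}(L_1(\mu,X))\|$, since together they give $n'(L_1(\mu,X))\le v(T)/\|T+\mathcal{Z}(L_1(\mu,X))\|\le v(R)/\|R+\mathcal{Z}(X)\|$, and one finishes by taking the infimum over $R$.

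For $v(T)\le v(R)$ I would exhibit a norming set to which Lemma~\ref{lemma:old-projection-dense} applies. Since the normalized simple functions are dense in $S_{L_1(\mu,X)}$, I take
\[
\Gamma=\Bigl\{\Bigl(\textstyle\sum_i\chi_{E_i}\otimes x_i,\ \sum_i\chi_{E_i}\otimes x_i^*\Bigr)\Bigr\},
\]
where the $E_i$ are pairwise disjoint of finite positive measure, $\sum_i\mu(E_i)\|x_i\|=1$, and each $x_i^*\in S_{X^*}$ is a Hahn--Banach functional with $x_i^*(x_i)=\|x_i\|$. Each such functional $\sum_i\chi_{E_i}\otimes x_i^*$ sits isometrically in $(L_1(\mu,X))^*$ with norm one (no Radon--Nikod\'ym hypothesis on $X^*$ is needed for this), each pair lies in $\Pi(L_1(\mu,X))$, and $\pi_X(\Gamma)$ is dense in $S_{L_1(\mu,X)}$. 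Writing out the pairing as $\sum_i\mu(E_i)\|x_i\|\,\langle x_i^*,R(x_i/\|x_i\|)\rangle$ and noting $(x_i/\|x_i\|,x_i^*)\in\Pi(X)$, each scalar $\langle x_i^*,R(x_i/\|x_i\|)\rangle$ lies in $V(R)$; the triangle inequality together with $\sum_i\mu(E_i)\|x_i\|=1$ then gives $v(T)\le v(R)$.

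For the quotient-norm inequality I would dualize the $L_\infty$ construction: instead of \emph{averaging} $U(\chi_\Omega\otimes x)$ over $E_0$, I \emph{integrate} $U$ applied to the normalized spike $\frac{\chi_{E_0}}{\mu(E_0)}\otimes x$. Fix $E_0$ with $0<\mu(E_0)<\infty$, and given $U\in\mathcal{Z}(L_1(\mu,X))$, define $V\in\mathcal{L}(X)$ by the Bochner integral $V(x)=\int_\Omega\bigl[U(\frac{\chi_{E_0}}{\mu(E_0)}\otimes x)\bigr](t)\,d\mu(t)$, so that $\|V\|\le\|U\|$. To see $V\in\mathcal{Z}(X)$, observe for $(x,x^*)\in\Pi(X)$ that $x^*(V(x))=\bigl\langle\chi_\Omega\otimes x^*,\,U(\frac{\chi_{E_0}}{\mu(E_0)}\otimes x)\bigr\rangle$ and that the pair $(\frac{\chi_{E_0}}{\mu(E_0)}\otimes x,\ \chi_\Omega\otimes x^*)$ lies in $\Pi(L_1(\mu,X))$, whence $|x^*(V(x))|\le v(U)=0$. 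Finally, since $[T(\frac{\chi_{E_0}}{\mu(E_0)}\otimes x)](t)=\frac{\chi_{E_0}(t)}{\mu(E_0)}R(x)$ integrates to $R(x)$, one has $R(x)+V(x)=\int_\Omega(T+U)(\frac{\chi_{E_0}}{\mu(E_0)}\otimes x)(t)\,d\mu(t)$, so $\|R(x)+V(x)\|\le\|(T+U)(\frac{\chi_{E_0}}{\mu(E_0)}\otimes x)\|_1\le\|T+U\|$; taking the supremum over $x\in S_X$ and then the infimum over $U$ yields $\|R+\mathcal{Z}(X)\|\le\|T+\mathcal{Z}(L_1(\mu,X))\|$.

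The routine verifications (boundedness of $T$, membership of the displayed pairs in the respective sets $\Pi$, the bound $\|V\|\le\|U\|$) are immediate. The step deserving most care is the norming claim $v(T)\le v(R)$: one must confirm that the functionals $\sum_i\chi_{E_i}\otimes x_i^*$ genuinely norm the chosen simple functions and have norm one, so that $\Gamma\subset\Pi(L_1(\mu,X))$ and Lemma~\ref{lemma:old-projection-dense} can be invoked. As in Proposition~\ref{prop:n'-Linfty}, the argument also presupposes the availability of a set $E_0$ with $0<\mu(E_0)<\infty$.
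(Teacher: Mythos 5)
Your proof is correct, and its first half --- the operator $T$, the norming set $\Gamma$ of normalized simple functions paired with step functionals, and the estimate $v(T)\le v(R)$ via Lemma~\ref{lemma:old-projection-dense} --- coincides with the paper's argument almost verbatim. Where you genuinely diverge is in the quotient-norm inequality. The paper first reduces to the case of a \emph{finite} measure, by writing $L_1(\mu,X)$ as an $\ell_1$-sum of spaces $L_1(\mu_i,X)$ over suitable finite measures $\mu_i$ and invoking Proposition~\ref{prop:absolute-summand}; only then does it define $V(x)=\frac{1}{\mu(\Omega)}\int_\Omega [U(\chi_\Omega\otimes x)](t)\,d\mu(t)$, which requires $\chi_\Omega\otimes x\in L_1(\mu,X)$, i.e.\ $\mu(\Omega)<\infty$. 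You avoid this reduction altogether by feeding $U$ the normalized spike $\frac{\chi_{E_0}}{\mu(E_0)}\otimes x$ (any $E_0$ with $0<\mu(E_0)<\infty$ works, and such a set exists precisely when $L_1(\mu,X)\neq\{0\}$) and testing against the functional $\chi_\Omega\otimes x^*$, which is a norm-one element of $L_1(\mu,X)^*$ with no hypotheses on $\mu$; the pair $\bigl(\frac{\chi_{E_0}}{\mu(E_0)}\otimes x,\ \chi_\Omega\otimes x^*\bigr)$ does lie in $\Pi(L_1(\mu,X))$, so $V\in\mathcal{Z}(X)$, and the final estimate $\|Rx+Vx\|\le \bigl\|(T+U)\bigl(\frac{\chi_{E_0}}{\mu(E_0)}\otimes x\bigr)\bigr\|_1\le\|T+U\|$ goes through as you say. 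The trade-off is clear: the paper's route recycles the absolute-sum machinery and keeps the formulas parallel to its $L_\infty$ proof, at the cost of the (not entirely trivial) decomposition of $L_1$ of an arbitrary positive measure into an $\ell_1$-sum over finite measures; your route is self-contained, treats an arbitrary positive measure in one pass, and isolates the only non-degeneracy hypothesis actually needed.
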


\begin{proof}
Notice that $L_1(\mu, X)$ is isometrically isomorphic to the $\ell_1$-sum of $L_1(\mu_i, X)$ for suitable finite measures $\mu_i$. So by Proposition~\ref{prop:absolute-summand}, we may and do assume that $\mu$ is a finite measure.

Let $\Gamma$ be the set of all $(f, g)\in L_1(\mu)\times L_\infty(\mu)$ such that $f = \sum_{i=1}^n x_i \chi_{E_i}$, $g = \sum_{i=1}^n x_i^* \chi_{E_i}$ for some $n$, where $x_i$'s are in $X\setminus\{0\}$ and $x_i^*$'s are in $S_{X^*}$, $E_i$ are disjoint measurable subsets of positive finite measure, $\sum_{i=1}^n \|x_i\|\mu(E_i) = 1$ and  $x_i^*(x_i)=\|x_i\|$ for all $1\leq i \leq n$. Since the set of simple functions is dense in $L_1(\mu, X)$, the numerical radius $v(T)$ of $T\in \mathcal{L}(L_1(\mu, X))$ can be computed using only elements in $\Gamma$ by Lemma~\ref{lemma:old-projection-dense}.

Given $R$  in $\mathcal{L}(X)\setminus \mathcal{Z}(X)$, define $T\in \mathcal{L}(L_1(\mu, X))$ by $T(f)(t) = R(f(t))$. Then for $(f, g)\in \Gamma$ with $f = \sum_{i=1}^n x_i \chi_{E_i}$, $g = \sum_{i=1}^n x_i^* \chi_{E_i}$, we have
\[
|\langle g, Tf \rangle| \leq \sum_{i=1}^n |x_i^*R(x_i)| \mu(E_i) \leq \sum_{i=1}^n v(R)\|x_i\|\mu(E_i) = v(R).
\] So $v(T) \leq v(R)$.  Finally, for  $U\in \mathcal{Z}(L_1(\mu, X))$,  define $V\in \mathcal{L}(X)$ by  \[V(x) =\frac{1}{\mu(\Omega)}\int_{\Omega} [U(\chi_\Omega \otimes x)](t)  \, d\mu(t).\] We will check that $V\in \mathcal{Z}(X)$. Indeed, for each $(x, x^*)\in \Pi(X)$, we have
\[ |x^*V(x)| =\left|\frac{1}{\mu(\Omega)}\int_{\Omega} x^*\bigl(U(\chi_\Omega \otimes x)(t)\bigr)  \, d\mu(t) \right|=  \left|\Bigl\langle \frac{\chi_{\Omega}}{\mu(\Omega)}\otimes x^*, U(\chi_{\Omega}\otimes x)\Bigr\rangle \right|\leq v(U)=0.\]
Since $\bigl[T(\chi_\Omega\otimes x)\bigr](t)=R(x)$ on $\Omega$, we have
\begin{align*}
\|R+\mathcal{Z}(X)\|\leq \|R+V\|&=\sup_{x\in S_X}\|Rx+Vx\| \\
&= \sup_{x\in S_X}\left\|Rx +  \frac{1}{\mu(\Omega)}\int_{\Omega} [U(\chi_\Omega \otimes x)](t)  \, d\mu(t)\right\| \\
&= \sup_{x\in S_X}\left\|  \frac{1}{\mu(\Omega)}\int_{\Omega} \bigl([T( \chi_{\Omega}\otimes x)](t)+[U( \chi_{\Omega}\otimes x)](t)\bigr) \, d\mu(t)\right\|\\
&\leq \sup_{x\in S_X} \int_{\Omega} \left\|\left[T\bigl(\frac{\chi_{\Omega}}{\mu(\Omega)} \otimes x\bigr)\right](t) + \left[U\bigl(\frac{\chi_{\Omega}}{\mu(\Omega)} \otimes x\bigr)\right](t) \right\|\, d\mu(t)\\
&= \sup_{x\in S_X} \left\|T\bigl(\frac{\chi_{\Omega}}{\mu(\Omega)} \otimes x\bigr) + U\bigl(\frac{\chi_{\Omega}}{\mu(\Omega)} \otimes x\bigr) \right\|_1\\
&\leq \|T+U\|.
\end{align*}
Therefore we get $\|R+\mathcal{Z}(X)\|\leq \|T+\mathcal{Z}(L_1(\mu,X))\|$ and so $n'(L_1(\mu, X))\leq n'(X)$.
\end{proof}

Our last result in this section shows that there is no non-trivial compact space for which equality in Proposition~\ref{newindexC} always holds.

\begin{proposition}\label{prop:n'ofckell22<1}
Let $K$ be a compact Hausdorff topological space with cardinality greater than one. Then
\[
n'(C(K, \ell_2^2))\leq \frac{\sqrt{3}}{2}<1.
\]
\end{proposition}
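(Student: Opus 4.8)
The plan is to mimic the construction of Example~\ref{example:Hilbert-sum-infty-Hilbert} directly inside $C(K,\ell_2^2)$, exhibiting a single operator $T$ with $v(T)\le\tfrac32$ and $\|T+\mathcal{Z}(C(K,\ell_2^2))\|\ge\sqrt3$, so that $n'(C(K,\ell_2^2))\le v(T)/\|T+\mathcal{Z}(C(K,\ell_2^2))\|\le\tfrac{3}{2\sqrt3}=\tfrac{\sqrt3}{2}$. Write $H=\ell_2^2$ with orthonormal basis $e_1,e_2$. Since $K$ has more than one point, I would fix distinct $t_1,t_2\in K$ and, using that $K$ is compact Hausdorff (hence normal) together with Urysohn's lemma, fix $\varphi_1,\varphi_2\in C(K)$ with $0\le\varphi_i\le1$, $\varphi_i(t_i)=1$ and disjoint supports (so $\varphi_1\varphi_2\equiv0$). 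Then I would define $T\in\mathcal{L}(C(K,H))$ by
$$[Tf](t)=\varphi_1(t)\bigl[(f(t)\mid e_1)\,e_1+\sqrt2\,(f(t_2)\mid e_1)\,e_2\bigr]\qquad(f\in C(K,H),\ t\in K),$$
which is bounded and maps into $C(K,H)$. As in Example~\ref{example:Hilbert-sum-infty-Hilbert}, the first summand plays the role of the diagonal (``$H$'') term while the value $f(t_2)$ at the second point plays the role of the ``$W$'' coordinate feeding the $e_2$-direction.

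For the upper bound $v(T)\le\tfrac32$, I would compute the numerical radius using only evaluation pairs, exactly as in the proof of Proposition~\ref{newindexC} (via Lemma~\ref{lemma:old-projection-dense}). For a norming pair $(f,\delta_t\otimes x^*)$ the Hilbert structure forces $\|f(t)\|=1$ and $x^*=(\,\cdot\mid f(t))$, so writing $f(t)=\alpha e_1+\beta e_2$ with $\alpha^2+\beta^2=1$ and using $|(f(t_2)\mid e_1)|\le\|f\|_\infty=1$ together with $0\le\varphi_1\le1$, one gets
$$|x^*([Tf](t))|\le \alpha^2+\sqrt2\,|\beta|=1-\beta^2+\sqrt2\,|\beta|\le\max_{s\in[0,1]}\bigl(1-s^2+\sqrt2\,s\bigr)=\tfrac32.$$

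The lower bound $\|T+\mathcal{Z}(C(K,H))\|\ge\sqrt3$ is where the work lies, and the point is to control $[Uf](t_1)$ for an arbitrary $U\in\mathcal{Z}(C(K,H))$ using only $v(U)=0$, without any structural description of the Lie algebra. The key observation is that whenever $g\in S_{C(K,H)}$ attains its norm at $t_1$ (i.e.\ $\|g(t_1)\|=1$), the pair $(g,\delta_{t_1}\otimes(\,\cdot\mid g(t_1)))$ lies in $\Pi(C(K,H))$, so $v(U)=0$ forces $([Ug](t_1)\mid g(t_1))=0$. Applying this to the norm-one families $g_\lambda=(\varphi_1+\lambda\varphi_2)e_1$ and $g_\lambda'=\varphi_1e_2+\lambda\varphi_2e_1$ for $\lambda\in[-1,1]$ (here disjoint supports guarantee $\|g_\lambda\|_\infty=\|g_\lambda'\|_\infty=1$, with $g_\lambda(t_1)=e_1$ and $g_\lambda'(t_1)=e_2$), linearity in $\lambda$ makes each identity an affine function of $\lambda$ vanishing on an interval, so its coefficients vanish; this yields $([U(\varphi_1e_1)](t_1)\mid e_1)=0$ and $([U(\varphi_2e_1)](t_1)\mid e_1)=([U(\varphi_2e_1)](t_1)\mid e_2)=0$. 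Consequently, for the test functions $g_\theta=(\varphi_1+\theta\varphi_2)e_1$ with $\theta\in\{-1,1\}$ one has $[Ug_\theta](t_1)=a\,e_2$ with $a=([U(\varphi_1e_1)](t_1)\mid e_2)$ independent of $\theta$, while $[Tg_\theta](t_1)=e_1+\sqrt2\,\theta\,e_2$. Choosing $\theta$ so that $(\sqrt2\,\theta-a)^2=(\sqrt2+|a|)^2\ge2$ gives
$$\|T-U\|\ge\bigl\|[(T-U)g_\theta](t_1)\bigr\|=\sqrt{1+(\sqrt2\,\theta-a)^2}\ge\sqrt3,$$
and taking the infimum over $U\in\mathcal{Z}(C(K,H))$ finishes the lower bound.

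The main obstacle I anticipate is precisely this last step: in the sum $H\oplus_\infty W$ one has Payá's Lemma~\ref{lemma:lemma5Paya82} to put every skew-hermitian operator in diagonal form, but no comparably clean description of $\mathcal{Z}(C(K,H))$ is available here. The idea above circumvents this by extracting the needed ``locality'' of $U$ at $t_1$ (namely that $U$ kills, in the two relevant directions, the functions $\varphi_2e_1$ vanishing at $t_1$) directly from $v(U)=0$ through the affine-in-$\lambda$ trick; arranging the families $g_\lambda,g_\lambda'$ so that their values at $t_1$ isolate the $e_1$- and $e_2$-components cleanly is the delicate part. Everything else is a routine adaptation of Example~\ref{example:Hilbert-sum-infty-Hilbert}.
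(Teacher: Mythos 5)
Your proposal is correct, and it reaches the bound by a genuinely different route than the paper on the one point that matters. Both arguments use essentially the same operator (your cutoff $\varphi_1$ is a harmless modification of the paper's $T(f,g)=(f,\sqrt{2}f(t_2))$ under the identification $C(K,\ell_2^2)\cong C(K)\oplus C(K)$) and the same estimate $v(T)\leq \tfrac{3}{2}$ via evaluation pairs and Lemma~\ref{lemma:old-projection-dense} (note that compactness of $K$ makes every $f\in S_{C(K,\ell_2^2)}$ norm-attaining, so that lemma applies directly). The difference is in the lower bound $\|T+\mathcal{Z}(C(K,\ell_2^2))\|\geq\sqrt{3}$. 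The paper first proves a complete structural description of the Lie algebra (Proposition~\ref{Lie2}): every skew-hermitian operator on $C(K,\ell_2^2)$ is a pointwise rotation $(f,g)\mapsto(\lambda g,-\lambda f)$ with $\lambda\in C(K)$, a fact obtained through the Riesz representation theorem and the measure-theoretic Lemma~\ref{basic1}; the lower bound is then read off from this explicit form. You bypass the structure theorem entirely: applying $v(U)=0$ to the norm-attaining pairs coming from the disjointly supported families $g_\lambda$ and $g_\lambda'$, and using affinity in $\lambda$, you extract exactly the constraints needed at $t_1$, namely $[U(\varphi_2 e_1)](t_1)=0$ and $\bigl([U(\varphi_1 e_1)](t_1)\mid e_1\bigr)=0$, and these suffice for the same final computation. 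I checked the details: disjoint supports do give $\|g_\lambda\|_\infty=\|g_\lambda'\|_\infty=1$ with the norm attained at $t_1$ and values $e_1$, $e_2$ there, the affine vanishing argument is sound, and the choice of $\theta$ with $\theta a\leq 0$ gives $(\sqrt{2}\theta-a)^2\geq 2$, hence $\|T-U\|\geq\sqrt{3}$. As for what each approach buys: the paper's route produces Proposition~\ref{Lie2}, a description of $\mathcal{Z}(C(K,\ell_2^2))$ of independent interest; your route is shorter and more elementary (no measures, no Riesz representation), and, being purely local at $t_1$, it should adapt to settings where no structural description of the Lie algebra is available, such as $C_0(L,\ell_2^2)$ for a locally compact $L$ with at least two points.
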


We need to establish some preliminary results. Note that $C(K, \ell_2^2)$ can be identified with $C(K)\oplus C(K)$ with the norm
\[
\|(f,g)\|=\sup_{t\in K} \sqrt{f(t)^2+g(t)^2} \qquad \bigl((f,g)\in C(K)\oplus C(K)\bigr).
\]
Given $T\in \mathcal{L}(X)$, we write
\[
\bigl[T(f, g)\bigr](t) = \bigl(\bigl[T_1(f,g)\bigr](t), \bigl[T_2(f,g)\bigr](t)\bigr) \qquad \bigl(t\in K,\ (f,g)\in C(K)\oplus C(K)\bigr)
\]
for convenient bounded linear operators $T_1,T_2:C(K)\oplus C(K) \longrightarrow C(K)$. For $t\in K$ fixed, each $T_i(\cdot, \cdot)(t)$ is a continuous linear functional on $C(K)\oplus C(K)$, and $\bigl[C(K)\oplus C(K)\bigr]^*=C(K)^*\oplus C(K)^*$. So, by the Riesz representation theorem, we have the following representation result:
\[
\bigl[T_i(f,g)\bigr](t) = \left(\int_K f\, d\mu_{i1}^t, \int_K g \, d\mu_{i2}^t\right) \qquad \bigl((f,g)\in C(K)\oplus C(K)\bigr),
\]
where $\mu_{ij}^t$ are Borel regular measures on $K$.

\begin{lemma}\label{basic1}
Let $\mu$ be a Borel regular measure on a compact Hausdorff space $K$ and $0\leq a\leq 1$ be a real number. Suppose that for $t\in K$ we have $\displaystyle\int_K f\, d\mu=0$ for every $f\in S_{C(K)}$ with $f(t) =a$. If $0<a\leq 1$, then $\mu=0$. If $a=0$, then $\mu = c \delta_t$ for some real number $c$.
\end{lemma}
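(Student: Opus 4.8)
The plan is to reduce everything to the single statement that $\int_K g\,d\mu=0$ for every $g\in C(K)$ with $g(t)=0$. Once this is established, since the functionals on $C(K)$ that vanish on the hyperplane $\ker\delta_t=\{g:g(t)=0\}$ form exactly the line $\R\,\delta_t$ (recall $C(K)^*$ is the space of regular Borel measures by Riesz representation), we immediately obtain $\mu=c\,\delta_t$ for some $c\in\R$. This is already the desired conclusion when $a=0$. When $0<a\le1$ I would finish by evaluating the hypothesis on a single $f$ in the admissible family $\mathcal{F}_a=\{f\in S_{C(K)}:f(t)=a\}$: such an $f$ exists (for $a=1$ take the constant function $\chi_K$, and for $0<a<1$ take $f=a+(1-a)\phi$ where $\phi$ is a Urysohn function with $0\le\phi\le1$, $\phi(t)=0$ and $\phi(s_0)=1$ for some $s_0\neq t$), and then $0=\int_K f\,d\mu=c\,f(t)=c\,a$ forces $c=0$, i.e. $\mu=0$.

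So the whole problem is the displayed claim for $g$ vanishing at $t$, which I would treat according to the value of $a$. For $a=0$ it is immediate: given $g$ with $g(t)=0$, $g\neq0$, the normalization $g/\|g\|_\infty$ lies in $S_{C(K)}$ and vanishes at $t$, so the hypothesis yields $\int_K g\,d\mu=0$. For $a=1$ I would first note that $\chi_K\in\mathcal{F}_1$, so $\mu(K)=0$; then for any $\psi\in C(K)$ with $0\le\psi\le2$ and $\psi(t)=0$ the function $\chi_K-\psi$ again lies in $S_{C(K)}$ and takes the value $1$ at $t$, whence $\int_K\psi\,d\mu=\mu(K)=0$. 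Scaling and splitting an arbitrary $g$ with $g(t)=0$ as $g=g^+-g^-$ (both nonnegative, both vanishing at $t$) then gives $\int_K g\,d\mu=0$.

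The genuinely delicate case is $0<a<1$, where no constant is admissible. Here I would use the base functions $f=a+(1-a)\phi$ with $\phi\in C(K)$, $0\le\phi\le1$, $\phi(t)=0$ and $\max_K\phi=1$: each such $f$ lies in $\mathcal{F}_a$, so the hypothesis gives $a\,\mu(K)+(1-a)\int_K\phi\,d\mu=0$, i.e. $\int_K\phi\,d\mu$ equals one and the same value $c_0:=-\tfrac{a}{1-a}\mu(K)$ for every admissible $\phi$. After normalizing, this says exactly that $\int_K g\,d\mu=c_0\,\max_K g$ for every nonnegative $g\in C(K)$ vanishing at $t$. The left-hand side is additive in $g$ whereas $g\mapsto\max_K g$ is not, so choosing two nonnegative functions $g_1,g_2$ vanishing at $t$ whose peaks sit at distinct points of $K\setminus\{t\}$ (so that $\max_K(g_1+g_2)<\max_K g_1+\max_K g_2$) forces $c_0=0$. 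Hence $\int_K g\,d\mu=0$ for every nonnegative $g$ vanishing at $t$, and the decomposition $g=g^+-g^-$ extends this to all $g$ with $g(t)=0$, completing the reduction.

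The main obstacle is precisely this $0<a<1$ case: since $a$ is strictly between $0$ and $1$, one cannot simply write an arbitrary $g$ vanishing at $t$ as a difference of two norm-one admissible functions, because perturbing an $f\in\mathcal{F}_a$ by $g$ disturbs the points where $|f|=1$; the non-additivity-of-$\max$ trick is what circumvents this difficulty. One should also dispose of the degenerate configurations separately: the peak-separation step needs at least two points of $K$ other than $t$, but when $K$ has only two points $C(K)$ is two-dimensional and the conclusion follows by a direct computation using $\mathcal{F}_a=\{(a,1),(a,-1)\}$, which is the relevant small case in the intended application where $K$ has cardinality greater than one.
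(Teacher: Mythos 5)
Your proof is correct, and it takes a genuinely different route from the paper's. The paper handles the whole range $0<a\le 1$ at once by contradiction with measure-theoretic machinery: it takes the Hahn decomposition $K=\Omega_1\cup\Omega_2$ of $\mu$, uses inner regularity to find compact sets $K_i\subset\Omega_i$ carrying most of $|\mu|$, separates them by disjoint open sets, and builds a single Urysohn-type test function $f=af_1-f_2\in S_{C(K)}$ with $f(t)=a$ for which $\int_K f\,d\mu>0$ whenever both parts of $\mu$ are nontrivial (the one-signed case being dispatched directly). You instead reduce everything to the soft statement that $\mu$ annihilates $\ker\delta_t$, which gives $\mu=c\,\delta_t$ by linear algebra plus the Riesz identification (exactly the step the paper itself uses in its $a=0$ case), and then kill $c$ by a single evaluation on one admissible function. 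Your key idea for the delicate range $0<a<1$ --- that $\int_K\phi\,d\mu$ takes one common value $c_0$ over all admissible $\phi$ because of the affine parametrization $f=a+(1-a)\phi$, and that $c_0=0$ because integration is additive while $g\mapsto\max_K g$ is not (two bump functions with disjoint peaks) --- completely replaces the Hahn decomposition and the inner-regularity approximation; the only measure theory left is Riesz duality. The trade-off: the paper's argument is uniform in $a$ and requires no case analysis, whereas yours splits into $a=0$, $a=1$, $0<a<1$ and small $K$; in exchange, yours is more elementary, and your explicit treatment of the two-point case surfaces a degeneracy the paper glosses over: for a one-point $K$ and $0<a<1$ there is no admissible $f$ at all, the hypothesis is vacuous, and the statement as written actually fails (take $\mu=\delta_t$), so any proof must tacitly assume admissible functions exist, which your constructions make visible.
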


\begin{proof}
Suppose first that $a>0$ and we will show that $\mu=0$. If $\mu$ takes only nonnegative values or takes only nonpositive values, then it is clear. So assume that the positive and negative parts are both nontrivial. Take the Hahn decomposition of $\mu$, $K=\Omega_1\cup \Omega_2$ and assume that the point $t$ is in $\Omega_1$. Then there are compact subsets $K_i$ of $\Omega_i$ such that $|\mu|(\Omega_i\setminus K_i) \leq \beta |\mu|(\Omega_i)$, where $|\mu|(A)$ is the total variation of $\mu$ on the subset $A$ and $\beta = \frac{\sqrt{1+2a}-1}{2}\in ]0,1[$. We may assume that $t\in K_1$. Then  choose two  disjoint open subsets $U_1$ and $U_2$ such that $K_i \subset U_i$ and use Urysohn lemma to choose two continuous functions $f_i:K\longrightarrow [0,1]$ for $i=1,2$, such that
\[ \chi_{K_i} \leq f_i\  \ \ \text{and}\ \  \supp(f_i)\subset  {U_i}\ \ \ (i=1,2).\] Set $f=af_1 - f_2$. Then  $f(t)=a$ and $f\in S_{C(K)}$. However, we get
\begin{align*}
 \int_K f\, d\mu &= \int_{K_1\cup K_2} f\, d\mu +\int_{\Omega_1\setminus K_1} f\, d\mu +\int_{\Omega_2\setminus K_2} f\, d\mu \\
 &\ge a|\mu|(K_1\cup K_2) - |\mu|(\Omega_1\setminus K_1)-|\mu|(\Omega_2\setminus K_2)\\
 &\ge \frac{a}{\beta+1}|\mu|(\Omega_1)+ \frac{a}{\beta+1}|\mu|(\Omega_2)-  \beta|\mu|(\Omega_1)-  \beta|\mu|(\Omega_2)\\
& = \frac{a-\beta(\beta+1)}{\beta+1}|\mu|(K)>0.
\end{align*}
This is a contradiction. When $t$ is in $\Omega_2$, we can show the contradiction similarly.

Suppose now that $a=0$ and, by homogeneity, we have
\[
 \int_{K} f \, d\mu=0
 \]
for every $f\in C(K)$ with $f(t)=0$. This means that the kernel of $\delta_t\in C(K)^*$ is contained in the kernel of $\mu\in C(K)^*$, but this implies $\mu=c\delta_t$ for some real number $c$ as desired.
\end{proof}

The next preliminary result gives us the form of the skew-hermitian operators on $C(K, \ell_2^2)$.

\begin{proposition}\label{Lie2}
Let $X=C(K, \ell_2^2)$ and $T\in \mathcal{Z}(X)$. Then, there is $\lambda\in C(K)$ such that
\[ \bigl[T(f,g)\bigr](t) = (\lambda(t) g(t), - \lambda(t) f(t)).\]
\end{proposition}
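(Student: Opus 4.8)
The plan is to extract everything from the defining property of $T\in\mathcal{Z}(X)$, namely $v(T)=0$, which means $x^*(Tx)=0$ for \emph{every} $(x,x^*)\in\Pi(X)$, by feeding in a rich family of norming functionals of $X=C(K,\ell_2^2)$. For each $t\in K$ and each $(a,b)\in S_{\ell_2^2}$ consider the functional $\delta_t\otimes(a,b)$ acting by $[\delta_t\otimes(a,b)](h_1,h_2)=a\,h_1(t)+b\,h_2(t)$; since $\|(h_1,h_2)\|\le 1$ forces $h_1(t)^2+h_2(t)^2\le 1$, this functional has norm $\sqrt{a^2+b^2}=1$. Thus whenever $(f,g)\in S_X$ attains its norm at $t$ with $(f(t),g(t))=(a,b)$, one has $(f,g,\delta_t\otimes(a,b))\in\Pi(X)$, and so
$$
a\,[T_1(f,g)](t)+b\,[T_2(f,g)](t)=0 .
$$
Using the representation $[T_i(f,g)](t)=\int_K f\,d\mu_{i1}^t+\int_K g\,d\mu_{i2}^t$, the whole proof reduces to identifying the four families of measures $\mu_{ij}^t$, and the strategy is to probe this identity with carefully chosen test pairs $(f,g)$ so as to land precisely in the hypotheses of Lemma~\ref{basic1}.

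First I would kill the diagonal measures. Taking $g\equiv 0$ and letting $f$ range over all $f\in S_{C(K)}$ with $f(t)=1$ (so $(a,b)=(1,0)$), the displayed identity becomes $\int_K f\,d\mu_{11}^t=0$ for all such $f$; by Lemma~\ref{basic1} in the case $a=1$ this forces $\mu_{11}^t=0$. Symmetrically, taking $f\equiv 0$ and $g\in S_{C(K)}$ with $g(t)=1$ yields $\mu_{22}^t=0$. Next I would show the off-diagonal measures are point masses at $t$. Given any $f\in S_{C(K)}$ with $f(t)=0$, set $g=\sqrt{1-f^2}\in C(K)$; then $(f,g)\in S_X$, it attains its norm at every point, and $(f(t),g(t))=(0,1)$. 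Since $\mu_{22}^t=0$ is already known, the identity reduces to $\int_K f\,d\mu_{21}^t=0$, valid for \emph{all} $f\in S_{C(K)}$ with $f(t)=0$, so Lemma~\ref{basic1} in the case $a=0$ gives $\mu_{21}^t=c(t)\delta_t$ for some $c(t)\in\R$. Interchanging the roles of $f$ and $g$ (that is, taking $g\in S_{C(K)}$ with $g(t)=0$ and $f=\sqrt{1-g^2}$) gives $\mu_{12}^t=d(t)\delta_t$. At this stage
$$
[T(f,g)](t)=\bigl(d(t)\,g(t),\ c(t)\,f(t)\bigr).
$$

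To finish, I would relate $c$ and $d$ and check regularity. Substituting the constant functions $f\equiv a$, $g\equiv b$ (which lie in $S_X$ whenever $a^2+b^2=1$ and attain their norm at every point, so $(f(t),g(t))=(a,b)$) into the master identity gives $ab\,\bigl(d(t)+c(t)\bigr)=0$; choosing $a=b=1/\sqrt{2}$ forces $c(t)=-d(t)$. Setting $\lambda:=d$ we obtain $[T(f,g)](t)=\bigl(\lambda(t)g(t),-\lambda(t)f(t)\bigr)$, and evaluating $T$ on the constant pair $(0,\mathbf 1)$ gives $T(0,\mathbf 1)=(\lambda,0)\in C(K,\ell_2^2)$, so $\lambda\in C(K)$, as required.

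The only genuinely delicate point is the reduction to the hypotheses of Lemma~\ref{basic1}: one must produce, for an arbitrary prescribed scalar value of a coordinate at the point $t$, a \emph{globally} norm-one continuous pair $(f,g)$ realizing it while keeping the identity usable. This is exactly what the substitution $g=\sqrt{1-f^2}$ (for the off-diagonal point-mass step) and the constant functions (for the sign relation) accomplish; once those test pairs are in hand, every remaining step is routine bookkeeping with the pairing identity.
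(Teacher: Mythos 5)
Your proof is correct, and it follows the paper's overall strategy: represent each component of $T$ at every $t\in K$ by regular Borel measures $\mu_{ij}^t$, exploit $v(T)=0$ on pairs $\bigl((f,g),\delta_t\otimes(f(t),g(t))\bigr)\in\Pi(X)$, kill the diagonal measures with Lemma~\ref{basic1} (case $a=1$), and identify the off-diagonal ones. Where you diverge is the treatment of the off-diagonal measures. The paper first proves $\mu_{12}^t=-\mu_{21}^t$ at the level of measures, by plugging in the pair $(f,f)$ with $f\in S_{C(K)}$, $f(t)=1/\sqrt{2}$, and applying Lemma~\ref{basic1} with $a=1/\sqrt{2}$ to $\mu_{12}^t+\mu_{21}^t$; it then needs only one application of the case $a=0$ (to $\mu_{21}^t$, via $g=\sqrt{1-f^2}$). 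You instead apply the case $a=0$ twice, symmetrically, to conclude that both $\mu_{21}^t=c(t)\delta_t$ and $\mu_{12}^t=d(t)\delta_t$ are point masses, and only afterwards recover the antisymmetry $c(t)=-d(t)$ by testing with the constant pair $\bigl(\tfrac{1}{\sqrt{2}}\mathbf{1},\tfrac{1}{\sqrt{2}}\mathbf{1}\bigr)$. Your variant is marginally longer but has a real advantage: every test pair you use lies in $S_X$ and attains its norm at $t$, so each application of the master identity is literally an instance of $v(T)=0$ on $\Pi(X)$. In the paper's corresponding step, the pair $(f,f)$ with $\|f\|_\infty=1$ has norm $\sqrt{2}$ in $C(K,\ell_2^2)$, hence does not produce an element of $\Pi(X)$ as written; that step needs a rescaling (take $h$ with $\|h\|_\infty=h(t)=1/\sqrt{2}$, apply the identity to $(h,h)$, then apply Lemma~\ref{basic1} with $a=1$ to $\sqrt{2}\,h$) to be fully rigorous. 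Your reorganization sidesteps this normalization issue, and your final continuity argument (evaluating $T$ at $(0,\mathbf{1})$) is exactly the paper's ``by using constant functions'' remark made explicit.
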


\begin{proof}
Fix $t\in K$ and suppose that $f(t)^2+g(t)^2=1$ and $(f, g)\in X$. Then
\[
\bigl\langle T(f,g), \delta_t\otimes (f(t), g(t))\bigr\rangle =0.
\]
That is,
\begin{equation}\label{eq:ckl22}
 f(t)\left(\int_K f\, d\mu_{11}^t+ \int_K g \, d\mu_{12}^t\right) + g(t)\left(\int_K f\, d\mu_{21}^t+ \int_K g \, d\mu_{22}^t\right)=0.
\end{equation}
If we consider $g=0$, for each $f\in S_{C(K)}$ with $f(t)=1$ we have \[
\int_K f\, d\mu_{11}^t =0,
\]
and then $\mu_{11}^t=0$ by Lemma~\ref{basic1}. Anogously, we have $\mu_{22}^t=0$. Now suppose that $f$ in $S_{C(K)}$ satisfies $f(t)=1/\sqrt{2}$ and consider the pair $(f,f)$. Then we have
\[
0= \int_K f\, d\mu_{12}^t + \int_K f \, d\mu_{21}^t= \int_K f\, d\bigl(\mu_{12}^t + \mu_{21}^t\bigr),
 \]
and Lemma~\ref{basic1} gives us that $\mu_{12}^t = -\mu_{21}^t$.

Now, for any function $f\in S_{C(K)}$ with $f(t)=0$, consider $g=\sqrt{1-f^2}\in B_{C(K)}$ which satisfies $g(t)=1$ and $f(t)^2+g(t)^2=1$, and apply \eqref{eq:ckl22} to get that
\[
\int_{K}f\, d\mu_{21}^t =0.
\]
Another application of Lemma~\ref{basic1} gives us that $\mu_{21}^t = \lambda(t) \delta_t$. By using constant functions, it is easy to see that $\lambda\in C(K)$. This completes the proof.
\end{proof}

\begin{proof}[Proof of Proposition~\ref{prop:n'ofckell22<1}]
Suppose that $t_1, t_2$ are two different elements of $K$ and $X=C(K, \ell_2^2)$. Define the operator $T\in \mathcal{L}(X)$ by
\[ T(f, g) = (f, \sqrt{2}f(t_2)).\]
Set
\[
\Gamma = \Bigl\{ \bigl((f, g), \delta_t\otimes (f(t), g(t))\bigr)\in \Pi(X)\,:\, (f, g)\in S_X,\,  f(t)^2 + g(t)^2 =1\Bigr\}.
\]
Then $v(T)$ can be computed using only elements of $\Gamma$ by Lemma~\ref{lemma:old-projection-dense}. Given $\bigl((f, g), \delta_t\otimes (f(t), g(t))\bigr)$ in $\Gamma$, we have
\begin{align*}
\bigl|\bigl\langle \delta_t\otimes (f(t), g(t)), T(f,g)  \bigr\rangle\bigr| &= |f(t)^2 + \sqrt{2}f(t_2)g(t)|\\
&\leq f(t)^2 + \sqrt{2}|g(t)|\leq 1-g(t)^2 +\sqrt{2}|g(t)|\leq 3/2.
\end{align*}
Therefore, $v(T)\leq 3/2$. Finally, we claim that $\|T+\mathcal{Z}(X)\|\ge \sqrt{3}$. Indeed, fix $S\in \mathcal{Z}(X)$ and use Proposition~\ref{Lie2} to see that there is $\lambda \in C(K)$ such that $S(f,g) = (\lambda g, -\lambda f)$ for every $(f,g)\in X$. Now, take a function $f_1\in S_{C(K)}$ such that $f_1(t_1)\lambda(t_1)\leq 0$, $|f_1(t_1)|=1$, $f_1(t_2)=1$, and consider the pair $(f_1, 0)\in S_X$. Then
\begin{align*}
 \|T+S\|^2&\ge \bigl\|\bigl([T+S](f_1,0)\bigr)(t_1)\bigr\|^2\\
 &=f_1(t_1)^2 + \bigl(\sqrt{2}f_1(t_2) - \lambda(t_1)f_1(t_1)\bigr)^2\\
&=1+\bigl(\sqrt{2}-\lambda(t_1)f_1(t_1)\bigr)^2 \geq 3.
\end{align*}
Since $S$ is arbitrary in $\mathcal{Z}(X)$, $\|T+\mathcal{Z}(X)\|\ge \sqrt 3$. This completes the proof.
\end{proof}

\section{Absolute sums of Banach spaces}\label{sec:absolute-sums}
In this section we provide all the results about absolute sums of Banach spaces which we will need later on. We start by recalling the needed notation and we obtain estimations of the numerical index and the second numerical index of absolute sums of Banach spaces.

Let us first introduce the needed notation. Let $\Lambda$ be a nonempty set and let $E$ be a linear subspace
of $\R^\Lambda$. An \emph{absolute norm} on $E$ is a complete norm
$\|\cdot\|_E$ satisfying
\begin{itemize}
\item[(a)] Given $a, b\in \R^\Lambda$ with
    $|a(\lambda)| = |b(\lambda)|$ for every $\lambda\in
    \Lambda$, if $a\in E$, then $b\in E$
    with $\|a\|_E = \|b\|_E$.
\item[(b)] For every $\lambda\in \Lambda$,
    $e_\lambda\in E$ with
    $\|e_\lambda\|_E=1$, where
    $e_\lambda$ is the characteristic function of
    the singleton $\{\lambda\}$.
\end{itemize}
In such a case, we will say that $E$ has \emph{absolute structure}. The following results can be deduced from the definition above:
\begin{itemize}
\item[(c)] Given $a, b\in \R^\Lambda$ with
    $|a(\lambda)| \leq |b(\lambda)|$ for every $\lambda\in
    \Lambda$, if $b\in E$, then $a\in E$
    with $\|a\|_E \leq \|b\|_E$.
\item[(d)] $\ell_1(\Lambda)\subseteq E \subseteq
    \ell_\infty(\Lambda)$ with contractive inclusions.
\end{itemize}
For $a\in E$ we will write $|a|\in E$ to denote the element given by $|a|(\lambda)=|a(\lambda)|$ for every $\lambda\in \Lambda$.

Observe that $E$ is a Banach lattice in the pointwise order
(actually, $E$ can be viewed as a K\"{o}the space on the measure
space $(\Lambda,\mathcal{P}(\Lambda),\nu)$ where $\nu$ is the
counting measure on $\Lambda$, which is non-necessarily
$\sigma$-finite). Thus, we say that $a\in E$ is \emph{positive} if $a(\lambda)\geq 0$ for every $\lambda\in \Lambda$. An operator $U\in \mathcal{L}(E)$ is \emph{positive} if $U(a)$ is positive for every positive $a\in E$. In such a case, it is clear that
$$
\|U\| =\sup \{\|U(a)\| \ : \ a\in E,\, a \text{ positive}\}.
$$
Besides, one has that $\big|U(a)\big|\leq U(|a|)$ for every $a\in E$.

The \emph{K\"{o}the
dual} $E'$ of $E$ is the linear subspace of $\R^\Lambda$
defined by
$$
E'=\left\{b\in \R^\Lambda\ : \ \|b\|_{E'}:=
\sup_{a\in B_E} \sum_{\lambda\in\Lambda}|b(\lambda)||a(\lambda)| <\infty \right\}.
$$
The norm $\|\cdot\|_{E'}$ on $E'$ is an absolute norm. Every
element $b\in E'$ defines naturally a continuous
linear functional on $E$ by the formula
$$
\langle b, a \rangle= \sum_{\lambda\in\Lambda}b(\lambda) a(\lambda) \qquad \bigl(a\in E\bigr),
$$
so we have $E'\subseteq E^*$ and this inclusion is isometric.

Given an arbitrary family $\{X_\lambda\, : \, \lambda\in\Lambda\}$
of Banach spaces, for $x\in \prod_{\lambda\in\Lambda}X_\lambda$ we will use the notation $x=(x(\lambda))_{\lambda\in\Lambda}$. For a linear subspace $E$ of $\R^\Lambda$ with absolute norm,
the \emph{$E$-sum} of the family $\{X_\lambda\, : \, \lambda\in\Lambda\}$ is the space
\begin{align*}
\Bigl[\bigoplus_{\lambda\in \Lambda} X_\lambda\Bigr]_{E} &:=
\Bigl\{x\in \prod_{\lambda\in\Lambda}X_\lambda\,:\, (\|x(\lambda)\|)_{\lambda\in\Lambda}\in E \Bigr\}
\end{align*}
endowed with the complete norm
$\|x\|=\|(\|x(\lambda)\|)_{\lambda\in\Lambda}\|_E$. We will use the name
\emph{absolute sum} when the space $E$ is clear from the context. Natural examples of absolute sums are $c_0$-sums and $\ell_p$-sums for $1\leq p\leq \infty$, i.e.\ given a nonempty set $\Lambda$, we consider $E=c_0(\Lambda)$ or $E=\ell_p(\Lambda)$. More examples
are the absolute sums produced using a Banach space $E$ with a
one-unconditional basis, finite (i.e.\ $E$ is $\R^m$ endowed with
an absolute norm) or infinite (i.e.\ $E$ is a Banach space with an
one-unconditional basis viewed as a linear subspace of $\R^\N$ via
the basis).

Let $\Lambda$ be a non-empty set and let $E\subset \R^\Lambda$ be a Banach space with absolute norm. Let $\{X_\lambda\, : \,
\lambda\in\Lambda\}$ be an arbitrary family of Banach spaces and write $X=\Bigl[\bigoplus_{\lambda\in \Lambda} X_\lambda\Bigr]_{E}$.
For every $\kappa\in\Lambda$, we consider
the natural inclusion $I_\kappa:X_\kappa\longrightarrow X$ given
by $I_\kappa(x)=x\,\chi_{\{\kappa\}}$ for every $x\in X_\kappa$.
which is an isometric embedding, and  the image $I_\kappa(x)$ is denoted by $x\otimes e(\kappa)$. The natural projection
$P_\kappa:X \longrightarrow X_\kappa$ is given by
$P_\kappa(x)=x(\kappa)$ for every $x\in X$, which is contractive. Clearly, $P_\kappa I_\kappa=\Id_{X_\kappa}$.
We write
$$
X'=\Bigl[\bigoplus_{\lambda\in \Lambda} X^*_\lambda\Bigr]_{E'}
$$
and we observe that every element $x^*=(x^*(\lambda))_{\lambda\in\Lambda}\in X'$ defines
naturally a continuous linear functional on $X$ by the formula
$$
x\longmapsto \sum_{\lambda\in\Lambda}\langle x^*(\lambda) , x(\lambda)\rangle  \qquad \bigl(x\in X\bigr),
$$
so we have $X'\subseteq X^*$ and this inclusion is isometric as we see in the following.

\begin{proposition}\label{prop:E'=E*=>X'=X*}
Let $Z$ be the space consisting of all elements
$(x^*(\lambda))_{\lambda\in \Lambda}$ in $\prod_{\lambda\in \Lambda} X^*_\lambda$ such that
 $\sum_{\lambda\in\Lambda} |\langle x^*(\lambda) , x(\lambda)\rangle |$ converges for all $(x(\lambda))_{\lambda\in \Lambda}$ in $X$.
Then the following hold:
\begin{enumerate}
\item[(a)]  $Z$ is a Banach space equipped with the norm
\[ \tri (x^*(\lambda))_{\lambda\in\Lambda} \tri = \sup\left\{\Bigl|\sum_{\lambda\in\Lambda} \langle x^*(\lambda) , x(\lambda)\rangle\Bigr|:\, (x(\lambda))_{\lambda\in\Lambda} \in B_X\right\} \]

\item[(b)] $Z$ is isometric to $X'$.
\item[(c)] $Z$ is isometrically embedded in $X^*$: each $(x^*(\lambda))_{\lambda\in \Lambda}\in Z$ defines a continuous linear functional on $X$ by
$$
(x(\lambda))_{\lambda\in\Lambda} \longmapsto \sum_{\lambda\in\Lambda}\langle x^*(\lambda) , x(\lambda)\rangle  \qquad \bigl((x(\lambda))_{\lambda\in\Lambda} \in X\bigr).$$
\end{enumerate}
 Moreover, if $E'=E^*$, then $X'=X^*$.
\end{proposition}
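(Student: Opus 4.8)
The plan is to identify the space $Z$ with the Köthe-type sum $X'$, read off parts (a), (b), (c) from that identification, and then derive the \emph{Moreover} part from a density argument which is precisely where the hypothesis $E'=E^*$ is used. Throughout I would exploit the facts, already recorded in the excerpt, that $X'\subseteq X^*$ isometrically, that the $E'$-norm obeys the Hölder-type inequality defining the Köthe dual, and the structural properties (a)--(d) of absolute norms.

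First I would settle the boundedness needed for (c). Fix $x^*=(x^*(\lambda))_\lambda\in Z$ and, for each finite $F\subset\Lambda$, consider $\phi_F(x)=\sum_{\lambda\in F}\langle x^*(\lambda),x(\lambda)\rangle$. Each $\phi_F$ lies in $X^*$ since $\|x(\lambda)\|\le\|x\|$ for every $\lambda$ (a consequence of the contractive inclusion $E\subseteq\ell_\infty(\Lambda)$, property (d)). By the definition of $Z$ the sum $\sum_\lambda\langle x^*(\lambda),x(\lambda)\rangle$ converges absolutely for each $x$, so $\sup_F|\phi_F(x)|<\infty$; the uniform boundedness principle then gives $\sup_F\|\phi_F\|<\infty$, whence the pointwise limit $\phi(x)=\sum_\lambda\langle x^*(\lambda),x(\lambda)\rangle$ is a bounded functional with $\|\phi\|=\tri x^*\tri$. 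The assignment $x^*\mapsto\phi$ is linear and, testing against $I_\lambda(X_\lambda)$, injective, giving the isometric embedding of $(Z,\tri\cdot\tri)$ into $X^*$ asserted in (c).

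Next I would prove (b), that $Z=X'$ with $\tri\cdot\tri=\|\cdot\|_{X'}$. The inclusion $X'\subseteq Z$ with $\tri\cdot\tri\le\|\cdot\|_{X'}$ is immediate from $\sum_\lambda\|x^*(\lambda)\|\,\|x(\lambda)\|\le\|(\|x^*(\lambda)\|)\|_{E'}\,\|(\|x(\lambda)\|)\|_E$. For the reverse, take $x^*\in Z$ and a positive $a\in B_E$ (replacing $a$ by $|a|$ is harmless by the absolute structure). Given a finite $F\subset\Lambda$ and $\eps>0$, choose $y_\lambda\in B_{X_\lambda}$ with $\langle x^*(\lambda),y_\lambda\rangle>\|x^*(\lambda)\|-\eps$ for $\lambda\in F$ and set $x=\sum_{\lambda\in F}a(\lambda)\,y_\lambda\otimes e(\lambda)$; since $(\|x(\lambda)\|)_\lambda\le a$ pointwise, property (c) gives $x\in B_X$. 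Evaluating $\phi$ on $x$ and letting $\eps\to0$ yields $\tri x^*\tri\ge\sum_{\lambda\in F}a(\lambda)\|x^*(\lambda)\|$, and taking suprema over $F$ and over positive $a\in B_E$ gives $(\|x^*(\lambda)\|)\in E'$ with $\|(\|x^*(\lambda)\|)\|_{E'}\le\tri x^*\tri$. Thus $Z=X'$ isometrically, and (a) follows at once, as $X'$ is the $E'$-sum of the $X_\lambda^*$ and $\|\cdot\|_{E'}$ is an absolute (hence complete) norm.

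Finally, for the \emph{Moreover} statement I would start from an arbitrary $f\in X^*$ and set $x^*(\lambda)=f\circ I_\lambda\in X_\lambda^*$. Running the norming construction above with $f$ in place of $\phi$ shows $(\|x^*(\lambda)\|)\in E'$, so $x^*\in X'=Z$ induces $\widetilde f\in X^*$; by construction $f$ and $\widetilde f$ agree on each $I_\lambda(X_\lambda)$, hence on the algebraic direct sum $\mathrm{span}\{x\otimes e(\lambda)\}$. The crux, and the only place the hypothesis enters, is to upgrade this to $f=\widetilde f$ on all of $X$, which needs the finitely supported elements to be dense in $X$. Here $E'=E^*$ is decisive: any $\psi\in E^*$ vanishing on every $e_\lambda$ is represented by some $b\in E'$ with $b(\lambda)=\psi(e_\lambda)=0$, so $\psi=0$, and Hahn--Banach makes $\mathrm{span}\{e_\lambda\}$ dense in $E$; then the truncations satisfy $a\chi_F\to a$ for every $a\in E$ by monotonicity of the norm (property (c)), whence $x\chi_F\to x$ in $X$ for every $x$. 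Continuity forces $f=\widetilde f\in X'$, giving $X^*\subseteq X'$ and, with the always-valid reverse inclusion, $X'=X^*$. I expect this density step to be the main obstacle, since it is exactly what fails for sums such as $E=\ell_\infty$.
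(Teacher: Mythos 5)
Your proof is correct, and its overall skeleton matches the paper's: embed $Z$ isometrically into $X^*$, identify $Z$ with $X'$ by a norming argument, deduce completeness of $Z$ from that of $X'$, and reduce the \emph{moreover} part to (i) showing that every $f\in X^*$ restricts to an element of $X'$ and (ii) density of the finitely supported elements of $X$. The differences lie in how you justify the two analytic steps. For the boundedness of the functional induced by $x^*\in Z$, the paper applies the closed graph theorem to the operator $T\colon X\to\ell_1(\Lambda)$, $Tx=(\langle x^*(\lambda),x(\lambda)\rangle)_{\lambda}$, whereas you apply Banach--Steinhaus to the partial-sum functionals $\phi_F$; both are routine and equivalent in spirit. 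The genuine divergence is the density step: the paper quotes, without proof, the K\"othe-space fact that $E'=E^*$ if and only if $E$ is order continuous, and then uses order continuity to get $\lim_F\bigl\|a\chi_{\Lambda\setminus F}\bigr\|_E=0$. You instead extract density directly from the hypothesis: any $\psi\in E^*$ vanishing on every $e_\lambda$ is represented by some $b\in E'$ whose coordinates $b(\lambda)=\psi(e_\lambda)$ all vanish, so $\psi=0$, and Hahn--Banach makes the linear span of the $e_\lambda$ dense. One small point you left implicit and should spell out is the passage from this density to $a\chi_F\to a$: approximate $a$ by a finitely supported $c$ with $\|a-c\|_E<\eps$; then for every finite $F\supseteq\supp(c)$ one has $|a\chi_{\Lambda\setminus F}|=|(a-c)\chi_{\Lambda\setminus F}|\leq|a-c|$ pointwise, so $\|a-a\chi_F\|_E\leq\|a-c\|_E<\eps$ by monotonicity of absolute norms. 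With that line added, your argument is complete, and it buys something real: it avoids Banach-lattice theory entirely and makes transparent exactly where and why the hypothesis $E'=E^*$ is needed, at the cost of being slightly longer than the paper's appeal to the order-continuity characterization.
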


\begin{proof}
Given an element $(x^*(\lambda))_{\lambda\in \Lambda}$ in $Z$, consider the operator $T: X \longrightarrow \ell_1(\Lambda)$ defined by
$$
T\bigl((x(\lambda))_{\lambda\in \Lambda}\bigr) = \bigl(\langle x^*(\lambda) , x(\lambda)\rangle\bigr)_{\lambda\in \Lambda} \qquad \bigl((x(\lambda))_{\lambda\in\Lambda} \in X\bigr).
$$
Using the closed graph theorem, it is easy to see that $T$ is bounded and the linear functional
$$
\varphi\bigl((x(\lambda))_{\lambda\in\Lambda}\bigr) =\sum_{\lambda\in\Lambda}\langle x^*(\lambda) , x(\lambda)\rangle \qquad \bigl((x(\lambda))_{\lambda\in\Lambda} \in X\bigr)
$$
is bounded on $X$. Hence it is clear that (c) holds and $\tri\cdot \tri$ is a well-defined norm on $Z$. We will show that (b) holds and this implies in particular that $Z$ is a Banach space, which completes the proof of (a). Indeed, (b) is shown by the following direct computation:
\begin{align*}
\tri (x^*(\lambda))_{\lambda\in\Lambda} \tri &= \sup\left\{\left|\sum_{\lambda\in\Lambda} \eps(\lambda)\langle x^*(\lambda) , x(\lambda)\rangle\right|:\, (x(\lambda))_{\lambda\in\Lambda} \in B_X,\, \eps(\lambda) =\pm 1\, \forall \lambda \in \Lambda\right\} \\
&=\sup\left\{\sum_{\lambda\in\Lambda} |\langle x^*(\lambda) , x(\lambda)\rangle|:\, (x(\lambda))_{\lambda\in\Lambda} \in B_X \right\}\\
&=\sup\left\{\sum_{\lambda\in\Lambda} |\langle x^*(\lambda) , \|x(\lambda)\|a(\lambda)\rangle|:\, (\|x(\lambda)\|)_{\lambda\in\Lambda} \in B_E,\, a(\lambda)\in S_{X_\lambda}  \,\forall \lambda \in \Lambda\right\}\\
&=\sup\left\{\sum_{\lambda\in\Lambda} \|x(\lambda)\| |\langle x^*(\lambda) ,a(\lambda)\rangle|:\, (\|x(\lambda)\|)_{\lambda\in\Lambda} \in B_E,\, a(\lambda)\in S_{X_\lambda} \, \forall \lambda \in \Lambda\right\}\\
&=\sup\left\{\sum_{\lambda\in\Lambda} \|x(\lambda)\| | \|x^*(\lambda)\|:\, (\|x(\lambda)\|)_{\lambda\in\Lambda} \in B_E \right\}\\
&=\bigl\|(\|x^*(\lambda)\|)_{\lambda\in \Lambda}\bigr\|_{E'} = \bigl\|(x^*(\lambda))_{\lambda\in \Lambda}\bigr\|_{X'}.
\end{align*}
To prove the `moreover' part, note that $E'=E^*$ if and only if $E$ is order continuous. We first show that every element of $X$ is approximated by the finite sum of elements $x(\lambda)\otimes e(\lambda)$'s. Let  $(x(\lambda))_{\lambda\in \Lambda}$ be an element of $X$. Then,  $(\|x(\lambda)\|)_{\lambda\in \Lambda}$ is an element of $E$ and so
\begin{align*} 0&=\lim_{F\in  \mathcal{F}} \left\| \sum_{\lambda \in F} \|x(\lambda)\| \otimes e(\lambda) - (\|x(\lambda) \|)_{\lambda \in \Lambda}\right\|
=\lim_{F\in  \mathcal{F}}
 \left\| \sum_{\lambda \in F} x(\lambda) \otimes e(\lambda) - (x(\lambda) )_{\lambda \in \Lambda}\right\|,
\end{align*} where $\mathcal{F}$ is the family of finite subsets of $\Lambda$ ordered by inclusion. This means that the family $S$ of subsets of $X$ defined by
\[ S = \left\{ \sum_{\lambda \in F} x(\lambda) \otimes e(\lambda) : F\in \mathcal{F}, \Lambda, x(\lambda) \in X_\lambda\  \forall \lambda \in F\right\},\] is dense in $X$.
Now, we claim that if $\varphi\in X^*$, then there is $(x^*(\lambda))_{\lambda \in \Lambda}$ in $X'$ such that
\[ \varphi( (x(\lambda))_{\lambda\in \Lambda} ) =  \sum_{\lambda \in \Lambda} \langle x^*(\lambda), x(\lambda) \rangle.\]
Indeed, for each $\lambda$ in $\Lambda$, define $[x^*(\lambda)](x(\lambda)) = \varphi(x(\lambda) \otimes e(\lambda))$ for all $x(\lambda)\in X_\lambda$ and observe that $x^*(\lambda)\in X_\lambda^*$. For every $(x(\lambda))_{\lambda\in \Lambda}$ in $X$ and for every finite subset $F$ of $\Lambda$, we have
\[ \sum_{\lambda\in F} | \langle x^*(\lambda), x(\lambda) \rangle| = \left| \varphi\left(\sum_{\lambda \in F} \eps(\lambda) x(\lambda) \otimes e(\lambda)\right) \right| \le \|\varphi\|\left\| \sum_{\lambda \in F}  \eps(\lambda) x(\lambda) \otimes e(\lambda)\right\|\le \|\varphi\|\|(\|x(\lambda)\|)_{\lambda\in \Lambda} \|_E, \] where $\eps(\lambda)$'s are suitable scalars of modulus one. So $\sum_{\lambda\in \Lambda} | \langle x^*(\lambda), x(\lambda) \rangle|<\infty$ for all $(x(\lambda))_{\lambda\in \Lambda}$ in $X$ and so $(x^*(\lambda))_{\lambda \in \Lambda}$ is an element of $X'$. Since the linear functional defined by $(x^*(\lambda))_{\lambda \in \Lambda}$ is then equal to $\varphi$ on the dense subset $S$, they are equal on the whole space $X$, proving the claim. Therefore, $X'=X^*$, and this completes the proof.
\end{proof}

We say that an operator $S\in \mathcal{L}(X)$ is \emph{diagonal} if $P_\lambda S I_\mu=0$ whenever $\lambda\neq \mu$. Equivalently, there is a family $S_\lambda\in \mathcal{L}(X_\lambda)$ for each $\lambda\in \Lambda$ such that $S(x)=\big(S_\lambda (x(\lambda))\big)_{\lambda\in \Lambda}$ for every $x\in X$. Observe that if $S\in \mathcal{Z}(X)$ is diagonal, then every element $S_\lambda=P_\lambda S I_\lambda$ in the diagonal belongs to $\mathcal{Z}(X_\lambda)$ since, otherwise, the numerical radius of $S$ is not zero.

The following result follows from Lemma~\ref{lemma:old-Anorming-Ardalani}.

\begin{remark}\label{rem:dense-subset-numericalradius}
Let $E\subset \R^\Lambda$ be a Banach space having an absolute norm such that $E'$ is norming.
Let $\{X_\lambda\, : \,
\lambda\in\Lambda\}$ be an arbitrary  family of Banach spaces and $X=\Bigl[\bigoplus_{\lambda\in \Lambda} X_\lambda\Bigr]_{E}$.
Then, for every $T\in \mathcal{L}(X)$
$$
v(T)=\inf_{\delta>0}\sup\left\{\left|\sum_{\lambda\in \Lambda} \langle x^*(\lambda), [T(x)](\lambda) \rangle \right|\,:\ x\in S_X,\, x^*\in S_{X'},\, \sum_{\lambda\in \Lambda} \langle x^*(\lambda), x(\lambda)\rangle>1-\delta\right\}.
$$
If $T\in \mathcal{L}(E)$ is positive, then
$$
v(T)=\inf_{\delta>0}\sup\left\{\sum_{\lambda\in \Lambda} b(\lambda)[T(a)](\lambda)\,:\ a\in S_E,\, b\in S_{E'},\, \text{$a$, $b$ positive},\, \sum_{\lambda\in \Lambda} b(\lambda)a(\lambda)>1-\delta\right\}.
$$
\end{remark}

The next result tells us that the second numerical index of an absolute sum of Banach spaces is bounded above by the infimum of the numerical indices of the addends, provided that $E'$ is norming and that all skew-hermitian operators are diagonal. It is the natural analogue of \cite[Theorem~2.1]{MarMerPopRan} for the second numerical index.

\begin{prop}\label{prop:derived-index-summands}
Let $\Lambda$ be a non-empty set and let $E\subset \R^\Lambda$ be a Banach space with absolute norm such that $E'$ is norming.
Let $\{X_\lambda\, : \, \lambda\in\Lambda\}$ be an arbitrary  family of Banach spaces and $X=\Bigl[\bigoplus_{\lambda\in \Lambda} X_\lambda\Bigr]_{E}$. Suppose that every $S\in \mathcal{Z}(X)$ is diagonal. Then,
$$
n'\left(X\right)\leq \inf\bigl\{n'(X_\lambda)\,:\,\lambda\in \Lambda\bigr\}.
$$
\end{prop}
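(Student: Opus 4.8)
The plan is to imitate the two–summand argument behind Proposition~\ref{prop:absolute-summand}: fix $\kappa\in\Lambda$, lift an arbitrary $T\in\mathcal L(X_\kappa)$ to an operator on $X$ whose numerical radius is controlled by $v(T)$ and whose quotient norm dominates $\|T+\mathcal Z(X_\kappa)\|$, and then read off $n'(X)\le n'(X_\kappa)$ from the definition of the second numerical index. Concretely, given $T\in\mathcal L(X_\kappa)$ with $\|T+\mathcal Z(X_\kappa)\|\neq0$, I would set $\widetilde T=I_\kappa T P_\kappa$, so that $[\widetilde T(x)](\kappa)=T(x(\kappa))$ and $[\widetilde T(x)](\lambda)=0$ for $\lambda\neq\kappa$. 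It then suffices to prove the two inequalities
\begin{equation*}
v(\widetilde T)\le v(T)\qquad\text{and}\qquad \|\widetilde T+\mathcal Z(X)\|\ge\|T+\mathcal Z(X_\kappa)\|,
\end{equation*}
since together with $\widetilde T\notin\mathcal Z(X)$ they give $n'(X)\le v(\widetilde T)/\|\widetilde T+\mathcal Z(X)\|\le v(T)/\|T+\mathcal Z(X_\kappa)\|$; taking the infimum over such $T$ and then over $\kappa\in\Lambda$ finishes the proof.

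For the quotient-norm inequality I would use the standing hypothesis that every element of $\mathcal Z(X)$ is diagonal. Fixing $\widehat S\in\mathcal Z(X)$, write $\widehat S(x)=(S_\lambda(x(\lambda)))_{\lambda\in\Lambda}$; as observed just before Remark~\ref{rem:dense-subset-numericalradius}, each diagonal block $S_\kappa=P_\kappa\widehat S I_\kappa$ lies in $\mathcal Z(X_\kappa)$. Evaluating $\widetilde T-\widehat S$ on vectors of the form $a\otimes e(\kappa)$ with $a\in S_{X_\kappa}$, only the $\kappa$-coordinate survives and equals $(T-S_\kappa)(a)$, whence $\|\widetilde T-\widehat S\|\ge\|T-S_\kappa\|\ge\|T+\mathcal Z(X_\kappa)\|$. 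Taking the infimum over $\widehat S\in\mathcal Z(X)$ yields the claimed bound, which in particular shows $\widetilde T\notin\mathcal Z(X)$.

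The main obstacle is the bound $v(\widetilde T)\le v(T)$, since the numerical radius on $X$ a priori involves all of $X^*$ whereas $\widetilde T$ is naturally expressed through $X'$. To handle it I would invoke Remark~\ref{rem:dense-subset-numericalradius} (available because $E'$ is norming), which gives
\begin{equation*}
v(\widetilde T)=\inf_{\delta>0}\sup\Bigl\{|\langle x^*(\kappa),T(x(\kappa))\rangle|:\ x\in S_X,\ x^*\in S_{X'},\ \textstyle\sum_{\lambda}\langle x^*(\lambda),x(\lambda)\rangle>1-\delta\Bigr\}.
\end{equation*}
Fix such $x,x^*$ and write $u=x(\kappa)$, $u^*=x^*(\kappa)$, $\alpha=\|u\|$, $\beta=\|u^*\|$. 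The defining inequality of the K\"othe dual $E'$ yields $\sum_\lambda\|x^*(\lambda)\|\,\|x(\lambda)\|\le1$, so the constraint forces each nonnegative gap $\|x^*(\lambda)\|\,\|x(\lambda)\|-\langle x^*(\lambda),x(\lambda)\rangle$ to be less than $\delta$; in particular $\alpha\beta-\langle u^*,u\rangle<\delta$. I would then split into two cases. If $\alpha\beta\le\sqrt\delta$, then $|\langle u^*,Tu\rangle|\le\alpha\beta\,\|T\|\le\sqrt\delta\,\|T\|$. If $\alpha\beta>\sqrt\delta$, then the normalized pair $a=u/\alpha$, $a^*=u^*/\beta$ satisfies $\langle a^*,a\rangle>1-\delta/(\alpha\beta)>1-\sqrt\delta$, so Lemma~\ref{lemma:old-Anorming-Ardalani} applied with the $1$-norming set $S_{X_\kappa^*}$ gives $|\langle u^*,Tu\rangle|=\alpha\beta\,|\langle a^*,Ta\rangle|\le g(\sqrt\delta)$, where
\begin{equation*}
g(\eta)=\sup\bigl\{|b^*(Tb)|:\ b\in S_{X_\kappa},\ b^*\in S_{X_\kappa^*},\ b^*(b)>1-\eta\bigr\}.
\end{equation*}
Thus for each $\delta$ the inner supremum is at most $\max\{\sqrt\delta\,\|T\|,\,g(\sqrt\delta)\}$, and since $g(\eta)\downarrow v(T)$ as $\eta\to0^+$, taking the infimum over $\delta$ gives $v(\widetilde T)\le v(T)$.

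Finally, I note that the reverse inequality $v(\widetilde T)\ge v(T)$ (obtained by testing on the pairs $(a\otimes e(\kappa),\,a^*\otimes e(\kappa))$ with $(a,a^*)\in\Pi(X_\kappa)$, using the isometric embedding $X'\subseteq X^*$) is not needed for the conclusion, although it shows $v(\widetilde T)=v(T)$ and so parallels Lemma~\ref{lemma-abs-sum}. The delicate point of the whole argument is thus the passage, via the Ardalani-type formula and the $\delta$-relaxation of Remark~\ref{rem:dense-subset-numericalradius}, from arbitrary near-supporting pairs in $X$ to genuine numerical-radius data in $X_\kappa$; the role of the hypotheses is precisely to make this reduction legitimate ($E'$ norming) and to keep the skew-hermitian perturbations block-diagonal (all of $\mathcal Z(X)$ diagonal).
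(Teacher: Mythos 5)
Your proposal is correct and follows essentially the same route as the paper's proof: the same lift $\widetilde T=I_\kappa T P_\kappa$, the same use of diagonality of $\mathcal{Z}(X)$ to get $\|\widetilde T+\mathcal{Z}(X)\|\geq \|T+\mathcal{Z}(X_\kappa)\|$ (the paper phrases it as $\|U+S_\kappa\|=\|P_\kappa(T+S)I_\kappa\|\leq\|T+S\|$, which is your evaluation on $a\otimes e(\kappa)$), and the same reliance on Remark~\ref{rem:dense-subset-numericalradius}. The only difference is that where the paper dispatches $v(\widetilde T)\leq v(T)$ by citing the proof of \cite[Theorem~2.1]{MarMerPopRan} adapted to the weaker hypothesis, you carry out that estimate explicitly, and your $\sqrt{\delta}$ case-splitting argument for it is correct.
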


\begin{proof}
For a fixed $\kappa\in \Lambda$, we will show that $n'\left(X\right)\leq n'(X_\kappa)$. To this end, given
$U\in \mathcal{L}(X_\kappa)$, we define $T\in \mathcal{L}(X)$ by $T=I_{\kappa} U
P_\kappa$. In the proof of \cite[Theorem~2.1]{MarMerPopRan} it is shown that $v(T)\leq v(U)$ using a stronger hypothesis on $E'$ to be able to apply Lemma~\ref{lemma:old-projection-dense}. Using the same proof, replacing
Lemma~\ref{lemma:old-projection-dense} by Lemma~\ref{lemma:old-Anorming-Ardalani} (actually its consequence Remark~\ref{rem:dense-subset-numericalradius}), one also obtains $v(T)\leq v(U)$ under our hypotheses.

All that remains to be proved is $\|T+\mathcal{Z}(X)\|\geq \|U+\mathcal{Z}(X_\kappa)\|$. To do so, fixed $S\in \mathcal{Z}(X)$, observe that by hypothesis we have that $P_\kappa S I_\kappa=S_\kappa$ and $S_\kappa \in \mathcal{Z}(X_\kappa)$. Moreover, since $P_\kappa T I_\kappa=U$ we can write
$$
\|U+\mathcal{Z}(X_\kappa)\|\leq \|U+S_\kappa\|=\|P_\kappa(T+S)I_\kappa\|\leq \|T+S\|
$$
which gives $\|U+\mathcal{Z}(X_\kappa)\|\leq \|T+\mathcal{Z}(X)\|$ just taking infimum on $S\in \mathcal{Z}(X)$.
\end{proof}

The following result gives an inequality between the numerical
index of an $E$-sum of Banach spaces and the normalized numerical radius of positive operators on the space $E$, provided that $E'$ contains sufficiently many functionals.

\begin{prop}\label{prop:EMA-infinito}
Let $\Lambda$ be a non-empty set and let $E\subset \R^\Lambda$ be a Banach space with absolute norm such that $E'$ is norming.
Let $\{X_\lambda\, : \,
\lambda\in\Lambda\}$ be an arbitrary  family of Banach spaces and $X=\Bigl[\bigoplus_{\lambda\in \Lambda} X_\lambda\Bigr]_{E}$. Then, for every positive operator $U\in \mathcal{L}(E)$,
$$
n(X) \leq \frac{v(U)}{\|U\|}\,.
$$
\end{prop}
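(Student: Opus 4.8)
The plan is to produce, out of the given positive operator $U$, a single operator $T\in\mathcal{L}(X)$ with $\|T\|=\|U\|$ and $v(T)\le v(U)$. Since $n(X)\le v(T)/\|T\|$ by the very definition of the numerical index, this at once gives $n(X)\le v(U)/\|U\|$.

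To build $T$, I would fix for each $\lambda\in\Lambda$ a pair $u_\lambda\in S_{X_\lambda}$ and $u_\lambda^*\in S_{X_\lambda^*}$ with $u_\lambda^*(u_\lambda)=1$, and set
\[
T(x)=\bigl([U(a_x)](\lambda)\,u_\lambda\bigr)_{\lambda\in\Lambda},\qquad \text{where } a_x=\bigl(u_\mu^*(x(\mu))\bigr)_{\mu\in\Lambda}.
\]
Because $|a_x(\mu)|\le\|x(\mu)\|$ we have $a_x\in E$ with $\|a_x\|_E\le\|x\|$, and $(\|[T(x)](\lambda)\|)_\lambda=|U(a_x)|\in E$, so $T$ is a well-defined bounded operator with $\|T\|\le\|U\|$. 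For the reverse norm inequality I would use that, $U$ being positive, $\|U\|=\sup\{\|U(a)\|:a\in S_E,\ a\text{ positive}\}$; plugging $x=(a(\mu)u_\mu)_\mu$ for such a positive $a$ gives $a_x=a$ and $\|T(x)\|=\|U(a)\|$, whence $\|T\|=\|U\|$.

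The core of the argument is the estimate $v(T)\le v(U)$, and this is where positivity of $U$ is essential. I would compute $v(T)$ through Remark~\ref{rem:dense-subset-numericalradius} (available since $E'$ is norming): for an admissible pair $x\in S_X$, $x^*\in S_{X'}$ with $\sum_\lambda\langle x^*(\lambda),x(\lambda)\rangle>1-\delta$, one has
\[
\sum_{\lambda}\langle x^*(\lambda),[T(x)](\lambda)\rangle=\langle b,U(a_x)\rangle,\qquad b=\bigl(\langle x^*(\lambda),u_\lambda\rangle\bigr)_\lambda.
\]
Writing $\bar a=(\|x(\lambda)\|)_\lambda\in S_E$ and $\bar b=(\|x^*(\lambda)\|)_\lambda\in S_{E'}$, one has $|a_x|\le\bar a$ and $|b|\le\bar b$ pointwise, so the pointwise inequalities $|U(a_x)|\le U(|a_x|)\le U(\bar a)$ together with positivity of $\bar b$ give $|\langle b,U(a_x)\rangle|\le\langle\bar b,U(\bar a)\rangle$. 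Moreover the admissibility condition forces $\langle\bar b,\bar a\rangle\ge\sum_\lambda\langle x^*(\lambda),x(\lambda)\rangle>1-\delta$, so $(\bar a,\bar b)$ is exactly an admissible pair of positive elements for the second (positive-operator) formula of Remark~\ref{rem:dense-subset-numericalradius} applied to $U$. Taking the supremum over admissible pairs and then the infimum over $\delta$ yields $v(T)\le v(U)$.

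The main obstacle to anticipate is precisely the passage from the numerical-range pairing on $X$ to one on $E$: the linear ``reading-off'' map $x\mapsto a_x$ does not preserve the supporting relation, so the signed quantities $a_x$ and $b$ need not be aligned in the $E$--$E'$ duality, and a naive estimate would only recover the useless bound $v(T)\le\|U\|$. Positivity of $U$ is what rescues the argument, since it lets the signed profiles be dominated by the genuinely aligned norm-profiles $\bar a,\bar b$, whose alignment is guaranteed by the admissibility constraint on $X$. All remaining points (linearity and well-definedness of $T$, the two norm identities, and the elementary lattice inequalities invoked above) are routine.
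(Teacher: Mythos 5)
Your proposal is correct and takes essentially the same route as the paper's own proof: the same operator $T(x)=\bigl([U(a_x)](\lambda)\,u_\lambda\bigr)_{\lambda\in\Lambda}$ built from a fixed family of pairs in $\Pi(X_\lambda)$, the same verification of $\|T\|=\|U\|$ via positive elements of $S_E$, and the same domination $|\langle b,U(a_x)\rangle|\le\langle\bar b,U(\bar a)\rangle$ using positivity of $U$ together with Remark~\ref{rem:dense-subset-numericalradius} to conclude $v(T)\le v(U)$. The only (immaterial) difference is bookkeeping: you compare the two suprema at each fixed $\delta$, while the paper runs an auxiliary $\eps$ through the infimum.
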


\begin{proof}
For each $\lambda\in \Lambda$ we fix $(y_\lambda,y^*_\lambda)\in \Pi(X_\lambda)$ and for each $x\in X$ we consider the element $a_x \in E$ defined by $a_x(\lambda)=y^*_\lambda(x(\lambda))$ for $\lambda\in \Lambda$. Now, fixed a positive operator $U\in \mathcal{L}(E)$, we define $T\in \mathcal{L}(X)$ by
$$
\big[T(x)\big](\lambda)=\big[U(a_x)\big](\lambda) y_\lambda \qquad \big(x\in X,\lambda\in \Lambda\big).
$$
We start checking that $\|T\|=\|U\|$. Indeed, fixed $x\in S_X$, we can write
\begin{align*}
\|T(x)\|&=\left\|\left(\big\|\big[U(a_x)\big](\lambda) y_\lambda\big\|\right)_{\lambda\in \Lambda}\right\|_E=\left\|\left(\big|\big[U(a_x)\big](\lambda)\big|\right)_{\lambda\in \Lambda}\right\|_E=\big\||U(a_x)|\big\|\\
&\leq  \|U\| \|a_x\| \leq \|U\|\|x\|=\|U\|
\end{align*}
which gives $\|T\|\leq \|U\|$. To prove the reversed inequality, fixed $a\in S_E$ positive, we define $x\in S_X$ by $x(\lambda)=a(\lambda) y_\lambda$ for every $\lambda\in\Lambda$, and we observe that $a_x=a$. Therefore, using that $U$ is positive, we have that
$$
\|T\|\geq \|T(x)\|=\left\|\left(\big\|\big[U(a_x)\big](\lambda) y_\lambda\big\|\right)_{\lambda\in \Lambda}\right\|_E=\left\|\left(\big|\big[U(a)\big](\lambda)\big|\right)_{\lambda\in \Lambda}\right\|_E=\big\||U(a)|\big\|=\|U(a)\|
$$
which gives $\|T\|\geq \|U\|$ by taking supremum on $a$. Now we turn to prove that $v(T)\leq v(U)$. By Remark~\ref{rem:dense-subset-numericalradius}, fixed $\eps>0$, there is $\delta>0$
such that
$$
\sup\left\{\sum_{\lambda\in \Lambda} b(\lambda)[U(a)](\lambda)\,:\ a\in S_E,\, b\in S_{E'},\, \text{$a$, $b$ positive},\, \sum_{\lambda\in \Lambda} b(\lambda)a(\lambda)>1-\delta\right\}\leq v(U)+\eps.
$$
Fixed $x\in S_X$ and $x^*\in S_{X'}$ satisfying $\sum_{\lambda\in \Lambda} \langle x^*(\lambda) ,x(\lambda)\rangle>1-\delta$ we consider $a=(||x(\lambda)||)_{\lambda\in\Lambda}\in S_E$ and $b=(||x^*(\lambda)||)_{\lambda\in\Lambda}\in S_{E'}$ which satisfy $b(a)>1-\delta$. Since $|a_x|\leq a$ and $U$ is positive, we obtain
$$
\big|U(a_x)\big|\leq U\big(|a_x|\big) \leq U(a).
$$
So we can estimate as follows
\begin{align*}
|x^*(Tx)| &=\left|\sum_{\lambda\in \Lambda}\big[U(a_x)\big](\lambda) \langle x^*(\lambda), y_\lambda \rangle\right|\leq \sum_{\lambda\in \Lambda}\left|\big[U(a_x)\big](\lambda)\right| \left|\langle x^*(\lambda), y_\lambda \rangle\right|\\
&\leq \sum_{\lambda\in \Lambda}\|x^*(\lambda)\|\left|\big[U(a_x)\big](\lambda)\right|\leq \sum_{\lambda\in \Lambda}\|x^*(\lambda)\|\big[U(|a_x|)\big](\lambda) \\
&\leq \sum_{\lambda\in \Lambda}\|x^*(\lambda)\|\big[U(a)\big](\lambda) =\langle b, U(a)\rangle\leq v(U)+\eps.
\end{align*}
Therefore, using Remark~\ref{rem:dense-subset-numericalradius} we can write
$$
v(T)\leq \sup\left\{\left|\sum_{\lambda\in \Lambda} \langle x^*(\lambda), [T(x)](\lambda) \rangle \right|\,:\ x\in S_X,\, x^*\in S_{X'},\, \sum_{\lambda\in \Lambda} \langle x^*(\lambda),x(\lambda)\rangle>1-\delta\right\}\leq v(U)+\eps
$$
and the arbitrariness of $\eps$ gives $v(T)\leq v(U)$. Finally, we observe that
$$
v(U)\geq v(T)\geq n(X)\|T\|\geq n(X)\|U\|,
$$
which finishes the proof.
\end{proof}

The next result gives an analogue of Proposition~\ref{prop:EMA-infinito} for the second numerical index provided that we restrict a bit more the class of operators considered on $E$.

\begin{proposition}\label{prop:derived-index-positive-operators}
Let $\Lambda$ be a non-empty set and let $E\subset \R^\Lambda$ be a Banach space with absolute norm so that $E'$ is norming.
Let $\{X_\lambda\, : \,
\lambda\in\Lambda\}$ be an arbitrary  family of Banach spaces and $X=\Bigl[\bigoplus_{\lambda\in \Lambda} X_\lambda\Bigr]_{E}$. Suppose that every $S\in \mathcal{Z}(X)$ is diagonal. Let $U\in \mathcal{L}(E)$ be a positive operator and suppose that there is a sequence $\{a_n\}$ in $E$ such that $\lim \|Ua_n\|=\|U\|$ and $\supp (Ua_n)\cap \supp(a_n)=\emptyset$ for every $n\in \N$.
Then,
$$
n'(X)\leq \frac{v(U)}{\|U\|}\,.
$$
\end{proposition}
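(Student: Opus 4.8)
The plan is to reuse verbatim the construction from the proof of Proposition~\ref{prop:EMA-infinito}. Namely, I would fix $(y_\lambda,y_\lambda^*)\in \Pi(X_\lambda)$ for each $\lambda\in\Lambda$, associate to each $x\in X$ the element $a_x\in E$ given by $a_x(\lambda)=y_\lambda^*(x(\lambda))$, and define $T\in\mathcal{L}(X)$ by $[T(x)](\lambda)=[U(a_x)](\lambda)\,y_\lambda$. The computations already carried out in that proposition give, under the present (weaker) hypotheses on $E'$, both $\|T\|=\|U\|$ and $v(T)\leq v(U)$ at no extra cost, so these two facts can simply be quoted. Since the definition of $n'(X)$ as an infimum yields $n'(X)\leq v(T)/\|T+\mathcal{Z}(X)\|$ whenever $T\notin\mathcal{Z}(X)$, the whole proposition reduces to establishing the single new estimate $\|T+\mathcal{Z}(X)\|\geq\|U\|$; combined with $v(T)\leq v(U)$ this gives $n'(X)\leq v(T)/\|T+\mathcal{Z}(X)\|\leq v(U)/\|U\|$.

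The heart of the argument, and the only place where the two extra hypotheses (diagonality of $\mathcal{Z}(X)$ and the disjoint-support sequence $\{a_n\}$) are used, is the lower bound for the quotient norm. Assuming, as we may, that each $a_n$ is a unit vector, I would test $T-S$, for an arbitrary $S\in\mathcal{Z}(X)$, against the unit vectors $x_n\in X$ defined by $x_n(\lambda)=a_n(\lambda)\,y_\lambda$. A direct computation shows $a_{x_n}=a_n$, hence $[T(x_n)](\lambda)=[U(a_n)](\lambda)\,y_\lambda$. Because $S$ is diagonal by hypothesis, $S(x)=(S_\lambda(x(\lambda)))_\lambda$ with each $S_\lambda\in\mathcal{Z}(X_\lambda)$, so
\[
[(T-S)(x_n)](\lambda)=[U(a_n)](\lambda)\,y_\lambda-a_n(\lambda)\,S_\lambda(y_\lambda).
\]
Now the support condition $\supp(Ua_n)\cap\supp(a_n)=\emptyset$ makes the perturbation term disappear exactly where $Ua_n$ lives: on $\supp(Ua_n)$ one has $a_n(\lambda)=0$, while off $\supp(Ua_n)$ one has $[U(a_n)](\lambda)=0$. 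Consequently $\|[(T-S)(x_n)](\lambda)\|\geq |[U(a_n)](\lambda)|$ for every $\lambda$, and the monotonicity of the absolute norm (if $|c|\leq|d|$ pointwise, then $\|c\|_E\leq\|d\|_E$) upgrades this pointwise domination to $\|(T-S)(x_n)\|\geq\|Ua_n\|$.

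Taking the supremum over unit vectors gives $\|T-S\|\geq\|Ua_n\|$ for every $S\in\mathcal{Z}(X)$; since every element of $\mathcal{Z}(X)$ is diagonal, passing to the infimum over $S$ yields $\|T+\mathcal{Z}(X)\|\geq\|Ua_n\|$ for every $n$, and hence $\|T+\mathcal{Z}(X)\|\geq\lim_n\|Ua_n\|=\|U\|$. Together with the trivial bound $\|T+\mathcal{Z}(X)\|\leq\|T\|=\|U\|$ this forces $\|T+\mathcal{Z}(X)\|=\|U\|$, and the proof concludes as indicated above. The main obstacle is precisely this lower estimate: a generic skew-hermitian perturbation $S$ could in principle cancel part of $T(x_n)$, and it is only the disjointness of the supports of $a_n$ and $Ua_n$ -- combined with the diagonal form of $S$ forced by the hypothesis on $\mathcal{Z}(X)$ -- that prevents such cancellation on the band where $Ua_n$ is concentrated.
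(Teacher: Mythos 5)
Your proposal is correct and follows essentially the same route as the paper's own proof: the same operator $T$ built from $U$ and the pairs $(y_\lambda,y_\lambda^*)$, quoting $v(T)\leq v(U)$ from Proposition~\ref{prop:EMA-infinito}, and then testing $T-S$ (the paper uses $T+S$, which is equivalent) against the vectors $x_n(\lambda)=a_n(\lambda)y_\lambda$, where diagonality of $S$ and the disjointness of $\supp(a_n)$ and $\supp(Ua_n)$ combine with the monotonicity of the absolute norm to give $\|T+\mathcal{Z}(X)\|\geq\|U\|$. Your explicit spelling out of the pointwise domination $\|[(T-S)(x_n)](\lambda)\|\geq|[U(a_n)](\lambda)|$, and your (harmless) normalization of the $a_n$, are exactly what the paper leaves implicit.
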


\begin{proof}
For each $\lambda\in \Lambda$ we fix $(y_\lambda,y^*_\lambda)\in \Pi(X_\lambda)$ and for each $x\in X$ we consider the element $a_x \in E$ defined by $a_x(\lambda)=y^*_\lambda(x(\lambda))$ for $\lambda\in \Lambda$. We define $T\in \mathcal{L}(X)$ by
$$
\big[T(x)\big](\lambda)=\big[U(a_x)\big](\lambda) y_\lambda \qquad \big(x\in X,\lambda\in \Lambda\big).
$$
It is shown in Proposition~\ref{prop:EMA-infinito} that $v(T)\leq v(U)$ so it is enough to prove that $\|T +\mathcal{Z}(X)\|\geq \|U\|$. Fixed $S\in \mathcal{Z}(X)$, for every $\lambda\in \Lambda$ there is $S_\lambda\in \mathcal{Z}(X_\lambda)$  such that $S(x)=\big(S_\lambda (x(\lambda))\big)_{\lambda\in \Lambda}$ for every $x\in X$. For each $n\in \N$ we consider $x_n\in X$ given by $x_n(\lambda)=a_n(\lambda)y_\lambda$ for $\lambda\in\Lambda$ which satisfies $a_{x_n}=a_n$. Now we can write
\begin{align*}
\|T+S\|&\geq \|[T+S](x_n)\|=\left\|\Big(\big[U(a_{x_n})\big](\lambda) y_\lambda\Big)_{\lambda\in \Lambda}+\Big(S_\lambda (x_n(\lambda))\Big)_{\lambda\in \Lambda}\right\|\\
&=\left\|\Big(\big[U(a_{x_n})\big](\lambda) y_\lambda\Big)_{\lambda\in \Lambda}+\Big(a_n(\lambda)S_\lambda (y_\lambda)\Big)_{\lambda\in \Lambda}\right\|\\
&\geq \left\|\Big(\big[U(a_{x_n})\big](\lambda) y_\lambda\Big)_{\lambda\in \Lambda}\right\|=\left\|U(a_{x_n})\right\|_E=\left\|U(a_n)\right\|_E
\end{align*}
where we used in the last inequality that  $\supp (Ua_n)\cap \supp(a_n)=\emptyset$ and so
$$
\supp\Big(\big[U(a_{x_n})\big](\lambda) y_\lambda\Big)_{\lambda\in \Lambda}\bigcap\supp\Big(a_n(\lambda)S_\lambda (y_\lambda)\Big)_{\lambda\in \Lambda}=\emptyset.
$$
Taking limit in the above inequality, we obtain $\|T+S\|\geq \|U\|$ and taking infimum on $S\in \mathcal{Z}(X)$ we get $\|T+\mathcal{Z}(X)\|\geq \|U\|$ as desired.
\end{proof}

Observe that when $E$ has dimension two, the above result only applies to the multiples of the two shift operators. However this is enough to obtain an obstructive result about the absolute norm in $E$ when $n'(X)=1$. We gather the remaining work needed to achieve that result in the following lemma which may have independent interest.

\begin{lemma}\label{lem-shift-operators}
Let $E=(\R^2,|\cdot|)$ where $|\cdot|$ is an absolute norm, let $\{e_1, e_2\}$ be the standard unit basis in $E$, let $(e_1^*, e_2^*)$ be its dual basis, and let $U_1=e_2^*\otimes e_1$ and $U_2=e_1^*\otimes e_2$ be the shift operators. Suppose that there is $0<k\leq1$ such that $\min\{v(U_1),v(U_2)\}\geq k$. Then,
$$
\max\left\{|e_1+e_2|,|e_1^*+e_2^*|\right\}\geq 1-\sqrt{1-k^2}+k.
$$
Consequently, if $k=1$ it follows that either $|\cdot|=\|\cdot\|_1$ or $|\cdot|=\|\cdot\|_\infty$.
\end{lemma}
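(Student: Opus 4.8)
The plan is to pass from the two shift operators to the geometry of the two-dimensional absolute norm. Writing $M=|e_1+e_2|$ and $M'=|e_1^*+e_2^*|_{E'}$ (recall that in finite dimensions $E'=E^*$), I would first reduce the numerical radii to suprema over positive norming pairs. Since $U_1(s,t)=(t,0)$ and $U_2(s,t)=(0,s)$ in coordinates, the absoluteness of the norm together with Lemma~\ref{lemma:old-Anorming-Ardalani} (equivalently Remark~\ref{rem:dense-subset-numericalradius}) gives
\[
v(U_1)=\sup\{tu\},\qquad v(U_2)=\sup\{sv\},
\]
the suprema running over $(s,t)\in S_E$ and $(u,v)\in S_{E'}$ with $s,t,u,v\ge 0$ and $su+tv=1$. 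For every such pair, Hölder's inequality for the K\"othe duality yields the basic bounds $M\ge u+v$ and $M'\ge s+t$, while pairing $(1,1)$ with itself gives the unconditional estimate $MM'\ge\langle(1,1),(1,1)\rangle=2$, so that $\max\{M,M'\}\ge\sqrt2$ always. This already settles the statement in the range of $k$ for which $1+k-\sqrt{1-k^2}\le\sqrt2$.

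For the remaining (large $k$) range I would fix $\eps>0$ and extract a pair $P_1=((s_1,t_1),(u_1,v_1))$ with $t_1u_1>k-\eps$ and a pair $P_2=((s_2,t_2),(u_2,v_2))$ with $s_2v_2>k-\eps$. A single pair does \emph{not} suffice (for $k=1$ one pair only forces $\max\{M,M'\}\ge 3/2$), so the two pairs must be linked, and the link is convexity: the supporting line of $B_E$ at $(s_i,t_i)$ is $\{u_ix+v_iy=1\}$ and $B_E$ lies on the side where $u_ix+v_iy\le1$, so evaluating at the \emph{other} vector gives the cross inequalities $u_1s_2+v_1t_2\le1$ and $u_2s_1+v_2t_1\le1$. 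Together with the two norming equalities $s_iu_i+t_iv_i=1$, the two radius conditions, and the bounds $M\ge u_i+v_i$, $M'\ge s_i+t_i$, this turns the statement into a finite extremal problem in the eight nonnegative coordinates.

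The heart of the proof — and the step I expect to be the main obstacle — is to show that these constraints force $m:=\max\{M,M'\}$ to satisfy the quadratic inequality
\[
m^2-2(1+k)m+2k(1+k)\le0,
\]
whose smaller root is exactly $1+k-\sqrt{1-k^2}$; letting $\eps\to0$ then gives the claim. Concretely, I would normalise so that the relevant coordinate sums attain the maximum, use the two cross inequalities to control the off-diagonal products $s_iv_i$ and $t_iu_i$ (this is where the quantity $1-k$, and after completing the square the radical $\sqrt{1-k^2}$, enters), and optimise. The delicate point is handling the two supporting-line constraints \emph{simultaneously} and verifying that the balancing of $M$ against $M'$ genuinely produces the stated radical and not merely a weaker bound such as $\sqrt{1+k}$ or $\tfrac{1+2k}{1+k}$.

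Finally, for the ``consequently'' clause I set $k=1$, so $m\ge2$; since $|\cdot|\le\|\cdot\|_1$ and $|\cdot|_{E'}\le\|\cdot\|_1$ give $M\le2$ and $M'\le2$, necessarily $m=2$. If $M=2$, then $|e_1+e_2|=2$, so $\tfrac12(e_1+e_2)\in S_E$; as the norm restricted to the segment $[e_1,e_2]$ is convex, bounded by $1$, and equal to $1$ at both endpoints and at the midpoint, it is identically $1$ there, whence $B_E$ is the $\ell_1$-ball and $|\cdot|=\|\cdot\|_1$. If instead $M'=2$, the same argument carried out in $E'$ gives $|\cdot|_{E'}=\|\cdot\|_1$, and dualising via $E=E''$ yields $|\cdot|=\|\cdot\|_\infty$.
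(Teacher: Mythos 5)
Your setup is correct and, up to that point, coincides with the paper's own strategy: you pass to positive norming pairs for the two shifts, you identify the cross (duality) inequalities $u_1s_2+v_1t_2\le 1$ and $u_2s_1+v_2t_1\le 1$ as the link between the two pairs, and the bounds $M\ge u+v$, $M'\ge s+t$ are exactly the mechanism by which coordinate information becomes the conclusion; your treatment of the $k=1$ consequence ($M=2$ forces $\ell_1$, $M'=2$ forces $\ell_\infty$ by convexity on the segment $[e_1,e_2]$) is also fine. But there is a genuine gap exactly where you announce ``the main obstacle'': the passage from the eight-variable constraint system to $m^2-2(1+k)m+2k(1+k)\le 0$ (equivalently $m\ge 1+k-\sqrt{1-k^2}$) is never carried out. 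You only describe what you \emph{would} do --- normalise, control off-diagonal products, optimise --- and you explicitly leave open whether this yields the stated radical ``and not merely a weaker bound''. That step is the entire content of the lemma; what you have actually proved is only $\max\{M,M'\}\ge\sqrt{2}$, which covers the small-$k$ range but not the range near $k=1$, and it is precisely $k$ close to or equal to $1$ that is needed in the applications (Corollary~\ref{cor:n'-1-sum-2-subspaces} and Proposition~\ref{prop:Obstructive-dim-three}).

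The missing step admits a short solution, and it is the paper's; notably, only \emph{one} of your two cross inequalities is needed, and no extremal problem arises. Since each coordinate lies in $[0,1]$, the radius conditions $t_1u_1\ge k$ and $s_2v_2\ge k$ give all four bounds $t_1,u_1,s_2,v_2\ge k$ (in finite dimension the numerical radius is attained, so you may take $\eps=0$). Then the single cross inequality $u_1s_2+v_1t_2\le 1$ together with $u_1s_2\ge k^2$ forces $v_1t_2\le 1-k^2$, hence $\min\{v_1,t_2\}\le\sqrt{1-k^2}$. If $t_2\le\sqrt{1-k^2}$, the norming equality $u_2s_2+v_2t_2=1$ yields $1\le u_2+t_2$, so $u_2\ge 1-\sqrt{1-k^2}$, and therefore $M\ge u_2+v_2\ge 1-\sqrt{1-k^2}+k$. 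Symmetrically, if $v_1\le\sqrt{1-k^2}$, then $1=s_1u_1+t_1v_1\le s_1+v_1$ gives $s_1\ge 1-\sqrt{1-k^2}$, whence $M'\ge s_1+t_1\ge 1-\sqrt{1-k^2}+k$. Either case gives the claim, so the quadratic inequality you were aiming at (which, as stated, also encodes a superfluous upper bound on $m$) can be bypassed entirely.
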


\begin{proof}
Since $v(U_1)\geq k$ and $U_1$ is a positive operator, there are $a,b,\alpha,\beta\in [0,1]$ such that the elements $x_1=ae_1+be_2\in S_{E}$ and $x_1^*=\alpha e_1^*+\beta e_2^*\in S_{E^*}$ satisfy $x_1^*(x_1)=1$ and
$$
k\leq x_1^*(U_1x_1)=x_1^*(e_1)e_2^*(x_1)=\alpha b
$$
so we obtain $\alpha\geq k$ and $b\geq k$. An analogous argument for $U_2$ gives us the existence of $c,d,\lambda,\mu\in[0,1]$ such that $x_2=ce_1+de_2\in S_E$ and $x_2^*=\lambda e_1^*+\mu e_2^*\in S_{E^*}$ satisfy $x_2^*(x_2)=1$ and
$$
k\leq x_2^*(U_2x_2)=x_2^*(e_2)e_1^*(x_2)=\mu c
$$
so we also obtain $\mu\geq k$ and $c\geq k$. Now observe that
\begin{align*}
1&\geq x_1^*(x_2)=\alpha c+\beta d\geq k^2+\beta d
\end{align*}
which implies $\beta d\leq 1-k^2$. Therefore, we get that $\min\{\beta,d\}\leq\sqrt{1-k^2}$. If $d\leq \sqrt{1-k^2}$ then
$$
1=x_2^*(x_2)=\lambda c+\mu d\leq \lambda+d\leq \lambda +\sqrt{1-k^2}
$$
and we get that $\lambda\geq 1-\sqrt{1-k^2}$. Therefore, we can write
$$
1-\sqrt{1-k^2}+k\leq\lambda+\mu=x_2^*(e_1+e_2)\leq |x_2^*||e_1+e_2|=|e_1+e_2|
$$
which finishes the proof in this case. Analogously, if $\beta\leq \sqrt{1-k^2}$ then
$$
1=x_1^*(x_1)=\alpha a+\beta b\leq a+\beta\leq a+ \sqrt{1-k^2}
$$
and we obtain $a\geq 1-\sqrt{1-k^2}$. Hence,
$$
1-\sqrt{1-k^2}+k\leq a+b=[e^*_1+e^*_2](x_1)\leq |e^*_1+e^*_2||x_1|=|e^*_1+e^*_2|
$$
and the proof is complete.
\end{proof}

\begin{corollary}\label{cor:n'-1-sum-2-subspaces}
Let $X$ be a Banach space and $Y$, $W$ non-trivial closed subspaces of $X$ such that $X=Y \oplus_a W$, where $\oplus_a$ is an absolute sum. If $n'(X)=1$, then $\oplus_a=\oplus_\infty$ or $\oplus_a=\oplus_1$ or $\oplus_a=\oplus_2$.
\end{corollary}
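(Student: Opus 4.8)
The plan is to dispose of the case $\oplus_a=\oplus_2$ trivially and then show that any \emph{other} absolute sum forcing $n'(X)=1$ must be $\oplus_1$ or $\oplus_\infty$, by reading off the structure of the two-dimensional absolute norm through the shift operators on $E=(\R^2,|\cdot|_a)$. So suppose from now on that $\oplus_a\neq\oplus_2$. The first step is to observe that Lemma~\ref{lemma:lemma5Paya82} then guarantees that every $S\in\mathcal{Z}(X)$ is diagonal, which is precisely the hypothesis needed to invoke Proposition~\ref{prop:derived-index-positive-operators}. Here $X=\bigl[\bigoplus X_\lambda\bigr]_E$ with $\Lambda=\{1,2\}$, $X_1=Y$, $X_2=W$, and $E=(\R^2,|\cdot|_a)$; since $E$ is finite-dimensional we have $E'=E^*$, which is automatically norming, so all the standing assumptions of Proposition~\ref{prop:derived-index-positive-operators} are met.

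Next I would apply Proposition~\ref{prop:derived-index-positive-operators} to each of the two shift operators $U_1=e_2^*\otimes e_1$ and $U_2=e_1^*\otimes e_2$, which are positive operators on $E$. The key computation is that $\|U_1\|=\|U_2\|=1$: indeed $|U_1(x_1,x_2)|_a=|x_2|\,|e_1|_a=|x_2|\le|(x_1,x_2)|_a$ by monotonicity of the absolute norm, with equality at $x=e_2$, and symmetrically for $U_2$. The support condition is then verified with the \emph{constant} sequences $a_n=e_2$ (for $U_1$) and $a_n=e_1$ (for $U_2$): one has $U_1 e_2=e_1$, so $\|U_1 a_n\|=1=\|U_1\|$ while $\supp(U_1a_n)=\{1\}$ and $\supp(a_n)=\{2\}$ are disjoint, and likewise for $U_2$. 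Proposition~\ref{prop:derived-index-positive-operators} therefore yields
$$
1=n'(X)\le \frac{v(U_i)}{\|U_i\|}=v(U_i)\qquad(i=1,2),
$$
and since $v(U_i)\le\|U_i\|=1$ we conclude $v(U_1)=v(U_2)=1$.

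Finally, I would feed this into Lemma~\ref{lem-shift-operators} with $k=1$: the hypothesis $\min\{v(U_1),v(U_2)\}\ge 1$ is exactly what we have established, so the ``consequently'' clause of that lemma forces $|\cdot|_a=\|\cdot\|_1$ or $|\cdot|_a=\|\cdot\|_\infty$, i.e.\ $\oplus_a=\oplus_1$ or $\oplus_a=\oplus_\infty$. Combined with the excluded case $\oplus_a=\oplus_2$, this gives the three alternatives of the statement. I do not expect a genuine obstacle here, as the heavy lifting is done by Proposition~\ref{prop:derived-index-positive-operators} and Lemma~\ref{lem-shift-operators}; the only point requiring care is the verification that the shift operators are positive of norm one and satisfy the disjoint-support condition, which is what allows Proposition~\ref{prop:derived-index-positive-operators} to apply to a two-dimensional $E$ where the only admissible operators it controls are (multiples of) these two shifts.
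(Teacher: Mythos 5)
Your proposal is correct and follows essentially the same route as the paper's own proof: exclude $\oplus_2$, use Lemma~\ref{lemma:lemma5Paya82} to get diagonality of skew-hermitian operators, apply Proposition~\ref{prop:derived-index-positive-operators} to the two shift operators on $(\R^2,|\cdot|_a)$, and conclude with Lemma~\ref{lem-shift-operators} at $k=1$. The only difference is that you spell out the verifications (positivity, $\|U_i\|=1$, and the disjoint-support sequences $a_n=e_2$, resp.\ $a_n=e_1$) that the paper leaves implicit in the phrase ``normalized shift operators,'' which is a welcome addition rather than a deviation.
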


\begin{proof}
Let $U_1$ and $U_2$ be the two normalized shift operators on $E=(\R^2,|\cdot|_a)$ as in Lemma~\ref{lem-shift-operators}. If $\oplus_a\neq \oplus_2$, Lemma~\ref{lemma:lemma5Paya82} tells us that every element in $\mathcal{Z}(X)$ is diagonal and, therefore, Proposition~\ref{prop:derived-index-positive-operators} can be applied. It gives that
$$
1=n'(X)\leq \min\left\{\frac{v(U_1)}{\|U_1\|}\,,\frac{v(U_2)}{\|U_2\|}\right\}.
$$
Finally, Lemma~\ref{lem-shift-operators} finishes the proof.
\end{proof}
We finish the section showing that for three-dimensional spaces with numerical index zero, the set of values of the second numerical index is not an interval. To prove this result we need the following easy lemma.

\begin{lemma}\label{lem:R2-close-to-ell1}
Let $E=(\R^2,|\cdot|)$ where $|\cdot|$ is an absolute norm and let $\{e_1, e_2\}$ be the standard unit basis in $E$. If $|e_1+e_2|=\xi$ for $1\leq\xi\leq 2$, then
$$
|ae_1+be_2|\leq \|ae_1+be_2\|_1\leq (3-\xi)|ae_1+be_2| \qquad \forall a,b \in \R.
$$
\end{lemma}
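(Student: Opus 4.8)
The plan is to handle the two inequalities separately, the left one being essentially free and the right one carrying all the content of the lemma. For the left inequality I would simply use that $|\cdot|$ is a norm with $|e_1|=|e_2|=1$: the triangle inequality gives $|ae_1+be_2|\le |a|\,|e_1|+|b|\,|e_2|=|a|+|b|=\|ae_1+be_2\|_1$. (This is nothing more than the contractive inclusion $\ell_1\subseteq E$ recorded among the basic properties of absolute norms.)

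For the right inequality I would first reduce to the positive quadrant: since $|\cdot|$ is absolute, $|ae_1+be_2|=\bigl|\,|a|e_1+|b|e_2\,\bigr|$ and $\|ae_1+be_2\|_1=|a|+|b|$, so it suffices to prove $a+b\le(3-\xi)|ae_1+be_2|$ for $a,b\ge 0$. By homogeneity I may assume $a+b=1$ and set $g(s)=|se_1+(1-s)e_2|$ for $s\in[0,1]$, turning the goal into $g(s)\ge \tfrac{1}{3-\xi}$. The function $g$ is convex (a norm restricted to a segment) and satisfies $g(0)=g(1)=1$ together with $g(\tfrac12)=\tfrac12|e_1+e_2|=\xi/2$.

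The key is to exploit two different lower bounds for $g$ and to notice that together they suffice although neither does alone. Monotonicity of the absolute norm (decreasing a coordinate cannot increase the norm) gives $g(s)\ge\max\{s,1-s\}$. Convexity applied to the ordered triple $s<\tfrac12<1$, using the value $g(\tfrac12)=\xi/2$, yields after a short rearrangement the linear lower bound $g(s)\ge(\xi-1)+(2-\xi)s$ on $[0,\tfrac12]$; the range $[\tfrac12,1]$ is symmetric with $s$ replaced by $1-s$. On $[0,\tfrac12]$ the monotonicity bound $1-s$ is decreasing while the convexity bound is increasing (since $\xi\le 2$), so the minimum of their maximum is attained at the crossing point, which one computes to be $s^\ast=\tfrac{2-\xi}{3-\xi}\in[0,\tfrac12]$, where both bounds equal $\tfrac{1}{3-\xi}$. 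Hence $g(s)\ge\tfrac{1}{3-\xi}$ throughout, and undoing the normalization gives $a+b\le(3-\xi)|ae_1+be_2|$.

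The main obstacle — and the only genuine idea — is precisely this combination. Convexity fed with the endpoint data $g(0)=g(1)=1$ alone gives merely $g\le 1$, which is useless; the lower bound materializes only after feeding in the third value $g(\tfrac12)=\xi/2$, which is exactly where the hypothesis $|e_1+e_2|=\xi$ enters. Since that convexity bound is increasing while the monotonicity bound is decreasing, neither controls $g$ near $s^\ast$, so one is forced to take their maximum. It is also worth noting that the three unit-sphere points $(1,0)$, $(0,1)$ and $(1/\xi,1/\xi)$, which carry all the available information, all lie on lines $a+b=\mathrm{const}$; this is why normalizing by $a+b=1$ (rather than fixing a single coordinate) is the natural choice and is exactly what makes the value $g(\tfrac12)=\xi/2$ usable.
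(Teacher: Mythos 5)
Your proof is correct and, despite the different packaging, is essentially the paper's argument in dual form: the paper normalizes to the unit ball of $E$ and proves (for $a\le b$) the claim $b(\xi-1)\le 1-a$ via an explicit convex combination with $e_1$ landing on the diagonal, which is exactly your convexity bound $g(s)\ge(\xi-1)+(2-\xi)s$ read on the $\ell_1$-sphere, while its remaining ingredient $b\le 1$ is exactly your monotonicity bound $g(s)\ge 1-s$. The only difference is bookkeeping in the final combination: the paper telescopes $a+b=a+b(\xi-1)+b(2-\xi)\le 3-\xi$, whereas you locate the crossing point $s^*=\frac{2-\xi}{3-\xi}$ of the two linear lower bounds.
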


\begin{proof} Observe that the first inequality always holds for absolute norms. To prove the second one, using homogeneity and that $|\cdot|$ is an absolute norm, it is enough to show that $\|ae_1+be_2\|_1\leq 3-\xi$ for $ae_1+be_2\in B_E$ with $a,b\in[0,1]$. So let $ae_1+be_2\in B_E$ with $a,b\in[0,1]$ be fixed. We claim that
$$
\text{if} \quad a\leq b \quad \text{then} \quad b(\xi-1)\leq 1-a \qquad \text{and}\qquad \text{if} \quad b\leq a \quad \text{then} \quad a(\xi-1)\leq 1-b.
$$
Indeed, suppose first that $a\leq b$, call $\alpha=\frac{1}{1+b-a}\in [0,1]$, and observe that
$$
\frac{b}{1+b-a}(e_1+e_2)=\alpha(ae_1+be_2)+(1-\alpha)e_1 \in B_E.
$$
This, together with the fact that $|e_1+e_2|=\xi$, tells us that $\frac{b\xi}{1+b-a}\leq 1$ and, therefore, $b(\xi-1)\leq 1-a$. The case $b\leq a$ is proved analogously using this time $\alpha=\frac{1}{1+a-b}$ and the element $\alpha(ae_1+be_2)+(1-\alpha)e_2 \in B_E$.

To finish the proof, suppose that $a\leq b$ and use the claim to write
\begin{align*}
\|ae_1+be_2\|_1=a+b=a+b(\xi-1)+b(2-\xi)\leq a+1-a+2-\xi=3-\xi.
\end{align*}
If $b\leq a$ a similar argument gives the result.
\end{proof}

\begin{prop}\label{prop:Obstructive-dim-three}
Let $\mathcal{A}=\{X \text{ Banach space} \, : \, \dim(X)=3,\, n(X)=0\}$. Then,
$$
\sup\{n'(X) \, : \, X\in\mathcal{A},\, X\neq\ell_2^3 \}<1.
$$
Therefore, $\{n'(X) \, : \, X\in\mathcal{A}\}\neq (0,1]$ and so, it is not an interval.
\end{prop}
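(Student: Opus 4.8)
The plan is to reduce the whole statement to a single parametrized family of spaces and then bound the second numerical index uniformly on it. First I would establish the structural fact that \emph{every} $X\in\mathcal{A}$ is isometrically an absolute sum $\ell_2^2\oplus_a\R$ for some absolute norm $|\cdot|_a$ on $\R^2$, with $X=\ell_2^3$ precisely when $\oplus_a=\oplus_2$. In dimension three $n(X)=0$ forces a non-zero skew-hermitian operator (the infimum defining $n(X)$ is attained by compactness), so $\mathcal{Z}(X)\neq\{0\}$; pick $0\neq S\in\mathcal{Z}(X)$. Since $\{\exp(tS)\}_{t\in\R}$ is a bounded one-parameter group of isometries, $S$ is semisimple with purely imaginary spectrum, which in dimension three must be $\{0,i\omega,-i\omega\}$ with $\omega\neq 0$; after rescaling $S$ take $\omega=1$. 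Writing $\R^3=V\oplus\R e_3$ with $V$ the $S$-invariant plane on which $S$ has eigenvalues $\pm i$ and $e_3$ spanning $\ker S$, the group acts as a rotation on $V$ and trivially on $\R e_3$; invariance of the sphere of $V$ under all these rotations forces the restricted norm on $V$ to be Euclidean, so $V=\ell_2^2$. The norm then depends only on the Euclidean length $|v|$ and on $s$, and since $\|(v,s)\|=\|(-v,-s)\|$ it is even in $s$; hence $\|(v,s)\|=\big|(|v|,|s|)\big|_a$ for an absolute norm $|\cdot|_a$, i.e. $X=\ell_2^2\oplus_a\R$, which is Euclidean iff $|\cdot|_a$ is, i.e. iff $\oplus_a=\oplus_2$.

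With this reduction it remains to prove $\sup\{n'(\ell_2^2\oplus_a\R):\oplus_a\neq\oplus_2\}<1$. For every $\oplus_a\neq\oplus_2$, Lemma~\ref{lemma:lemma5Paya82} shows $\mathcal{Z}(\ell_2^2\oplus_a\R)$ is diagonal, and since $\mathcal{Z}(\ell_2^2)$ is the line of rotations and $\mathcal{Z}(\R)=\{0\}$, it equals a \emph{fixed} one-dimensional space $\R S_0\subset\mathcal{L}(\R^3)$ independent of $\oplus_a$. Thus the whole family lives in $\mathcal{E}'(\R^3)$ with common Lie algebra, so Proposition~\ref{prop:continuity-B-M-distance-same-Lie-algebra} makes $\oplus_a\mapsto n'(\ell_2^2\oplus_a\R)$ continuous for $\oplus_a\neq\oplus_2$. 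Pointwise the value is $<1$: by Corollary~\ref{cor:n'-1-sum-2-subspaces}, $n'=1$ would force $\oplus_a\in\{\oplus_1,\oplus_2,\oplus_\infty\}$, and for $\oplus_1,\oplus_\infty$ Example~\ref{example:Hilbert-sum-infty-Hilbert} gives $n'\le\frac{\sqrt3}{2}<1$, a contradiction; hence $n'<1$ whenever $\oplus_a\neq\oplus_2$.

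The delicate point is uniformity, since two distinct degenerations must be controlled. I would split the compact set of absolute norms at distance $\geq\epsilon$ from $\oplus_2$ (small fixed $\epsilon$) from a punctured neighborhood of $\oplus_2$. On the former, $n'$ is continuous and everywhere $<1$, so by compactness its supremum is some $c_1<1$; the endpoints $\oplus_1,\oplus_\infty$ cause no trouble because the Lie algebra is still $\R S_0$ there, and if one prefers an explicit near-endpoint estimate to relying on continuity, Lemma~\ref{lem:R2-close-to-ell1} quantifies the Banach--Mazur proximity of $E=(\R^2,|\cdot|_a)$ (or of $E'$) to $\ell_1^2$, letting the operator from Example~\ref{example:Hilbert-sum-infty-Hilbert} be transplanted with controlled loss. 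On the punctured neighborhood of $\oplus_2$ continuity is unavailable (the Lie algebra jumps there), so instead I would use the two normalized shift operators $U_1,U_2$ on $E$: they satisfy the hypotheses of Proposition~\ref{prop:derived-index-positive-operators}, giving $n'(\ell_2^2\oplus_a\R)\le\min\{v(U_1),v(U_2)\}$, while Lemma~\ref{lem-shift-operators} turns this into $n'\le g^{-1}(M)$ with $M=\max\{|e_1+e_2|,|e_1^*+e_2^*|\}$ and $g(k)=1-\sqrt{1-k^2}+k$. Since $M$ is continuous and equals $\sqrt2<2$ at $\oplus_2$, shrinking $\epsilon$ forces $M\le 2-\eta$ on this neighborhood, whence $n'\le g^{-1}(2-\eta)=:c_2<1$ uniformly. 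Therefore the supremum over all $\oplus_a\neq\oplus_2$ is $\max\{c_1,c_2\}<1$.

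The main obstacle is exactly this uniformity: the shift-operator estimate degenerates at $\oplus_1,\oplus_\infty$ (where $M=2$, so $g^{-1}(M)=1$), whereas the Lie algebra—and with it $n'$—degenerates at $\oplus_2$ (where $n'$ jumps to $1$ by Theorem~\ref{thrm-Hilbert}). The argument succeeds because these two bad sets are disjoint: the shift bound stays strictly below $1$ throughout a neighborhood of $\oplus_2$, while continuity (made quantitative near the $\ell_1$/$\ell_\infty$ ends through Lemma~\ref{lem:R2-close-to-ell1}) controls the complementary compact region. Finally, for the concluding assertion, $n'(\ell_2^3)=1$ by Theorem~\ref{thrm-Hilbert}, so $\{n'(X):X\in\mathcal{A}\}$ contains $1$ but omits the interval $(c,1)$ with $c=\max\{c_1,c_2\}<1$; in particular it differs from $(0,1]$ and is not an interval.
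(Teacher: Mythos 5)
Your proposal is correct, and it rests on the same pillars as the paper's proof: the identification of every $X\in\mathcal{A}$ with some $\ell_2^2\oplus_a\R$ (which the paper simply cites from \cite[Corollary~2.5]{MaMeRo}, rather than re-deriving it via the one-parameter group of isometries as you do), Lemma~\ref{lemma:lemma5Paya82} to get diagonality of $\mathcal{Z}(X)$ when $\oplus_a\neq\oplus_2$, the shift-operator estimate coming from Proposition~\ref{prop:derived-index-positive-operators} and Lemma~\ref{lem-shift-operators}, the continuity statement Proposition~\ref{prop:continuity-B-M-distance-same-Lie-algebra} on the family with common Lie algebra, and the bound $\sqrt{3}/2$ from Example~\ref{example:Hilbert-sum-infty-Hilbert}. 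Where you genuinely differ is in how uniformity is extracted. The paper argues by contradiction with a sequence $\{X_j\}$ satisfying $n'(X_j)\to 1$: Lemma~\ref{lem-shift-operators} forces $\max\{|e_1+e_2|_{a_j},|e_1^*+e_2^*|_{a_j}\}\to 2$, then Lemma~\ref{lem:R2-close-to-ell1} upgrades this to Banach--Mazur convergence of $X_j$ to $\ell_2^2\oplus_1\R$ or $\ell_2^2\oplus_\infty\R$, and continuity on the fixed Lie algebra yields $\lim_j n'(X_j)\leq \sqrt{3}/2$, a contradiction. You instead give a direct two-region argument: near $\oplus_2$ the shift bound is uniformly below $1$ because $\max\{|e_1+e_2|_a,|e_1^*+e_2^*|_a\}$ stays near $\sqrt{2}$, while on the complementary region you use compactness of the set of absolute norms on $\R^2$ together with continuity and pointwise strictness (Corollary~\ref{cor:n'-1-sum-2-subspaces} plus Example~\ref{example:Hilbert-sum-infty-Hilbert}). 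Both assemblies are valid. Your version makes Lemma~\ref{lem:R2-close-to-ell1} inessential and yields, at least in principle, a split of the bound into two explicit constants; its price is the compactness of the family of absolute norms in the metric of Proposition~\ref{prop:continuity-B-M-distance-same-Lie-algebra}, which you assert but do not prove (it is standard, via equi-Lipschitzness and the bounds $\|\cdot\|_\infty\leq|\cdot|_a\leq\|\cdot\|_1$, so this is a routine verification rather than a gap). That compactness is precisely what the paper's Lemma~\ref{lem:R2-close-to-ell1} replaces: it pins down the only possible cluster points of a maximizing sequence without appealing to compactness of the whole parameter space.
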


\begin{proof}
Suppose to the contrary that there is a sequence $\{X_j\}\subset\mathcal{A}$ such that $\lim_{j\to \infty} n'(X_j)=1$ and $X_j\neq \ell_2^3$ for each $j\in \N$. Since $\dim(X_j)=3$ and $n(X_j)=0$ for every $j\in \N$, we can use \cite[Corollary~2.5]{MaMeRo} for each $j\in \N$ to get the existence of an absolute norm $|\cdot|_{a_j}$ on $\R^2$ such that $X_j=\ell_2^2\oplus_{a_j} \R$. Moreover, since $X_j\neq\ell_2^3$, we have that $\oplus_{a_j}\neq\oplus_2$ so we can apply Lemma~\ref{lemma:lemma5Paya82} to get that every element in $\mathcal{Z}(X_j)$ is diagonal. Thus Proposition~\ref{prop:derived-index-positive-operators} can be applied. It gives, taking into account that $U_1$ and $U_2$ in Lemma~\ref{lem-shift-operators}  have norm one for every $|\cdot|_{a_j}$, that
$$
n'(X_j)\leq \min\left\{v(U_1),v(U_2)\right\}.
$$
Now we can use Lemma~\ref{lem-shift-operators} for each $j\in \N$ to get
$$
\max\left\{|e_1+e_2|_{a_j},|e_1^*+e_2^*|_{a_j}\right\}\geq 1-\sqrt{1-\big(n'(X_j)\big)^2}+n'(X_j)
$$
and, therefore,
$$
\lim_{j\to\infty}\max\left\{|e_1+e_2|_{a_j},|e_1^*+e_2^*|_{a_j}\right\}=2.
$$
Passing to a convenient subsequence we may and do assume that either $\lim_{j\to\infty}|e_1+e_2|_{a_j}=2$ or  $\lim_{j\to\infty}|e_1^*+e_2^*|_{a_j}=2$.
This, together with Lemma~\ref{lem:R2-close-to-ell1}, tells us that either $\{(\R^2,|\cdot|_{a_j})\}$ or  $\{(\R^2,|\cdot|_{a_j})^*\}$ converges to $\ell_1^2$ in the Banach-Mazur distance. So $\{X_j\}$ converges to $\ell_2^2\oplus_1\R$ or to $\ell_2^2\oplus_\infty\R$ in the Banach-Mazur distance. Now observe that, since $\oplus_{a_j}\neq\oplus_2$, all the spaces $X_j's$ and the spaces  $\ell_2^2\oplus_1\R$ and $\ell_2^2\oplus_\infty\R$ share the same Lie algebra:
$$
\left\{\begin{pmatrix}0&\lambda & 0\\ -\lambda & 0 & 0\\ 0& 0 & 0
\end{pmatrix}\ : \ \lambda\in \R\right\}.
$$
Hence, we can use Proposition~\ref{prop:continuity-B-M-distance-same-Lie-algebra} and Example~\ref{example:Hilbert-sum-infty-Hilbert} to get that
$$
\lim_{j\to\infty }n'(X_j)=n'(\ell_2^2\oplus_1\R)\leq \frac{\sqrt{3}}{2} \qquad \text{or} \qquad \lim_{j\to\infty} n'(X_j)=n'(\ell_2^2\oplus_\infty\R)\leq \frac{\sqrt{3}}{2}
$$
which gives the desired contradiction.
\end{proof}

\section{Spaces with one-unconditional basis and second numerical index one}\label{sect-numindex-one}
We devote this section to characterize Banach spaces with one-unconditional basis and second numerical index one. We start recalling the concept of
(long) one-unconditional basis. Let $\Gamma$ be a non-empty set and let $E$ be a Banach space. We say that $\{e_\gamma\}_{\gamma\in \Gamma}$ is a \emph{(long) one-unconditional basis} for $E$ if the following conditions hold:
\begin{itemize}
\item For every $x\in E$ there is a unique family of real numbers $\{a_\gamma\}_{\gamma\in \Gamma}$ such that $x=\sum a_\gamma e_\gamma$ in the sense that for every $\eps>0$ there is a finite set $F\subset \Gamma$ such that
$$
\Big\|x-\sum_{\gamma\in F'}a_\gamma e_\gamma\Big\|\leq\eps
$$
holds for every finite subset $F'$ of $\Gamma$ with $F\subset F'$. Observe that, for fixed $x\in E$, only countably many coordinates $a_\gamma$ are non-zero.
\item Whenever $x=\sum a_\gamma e_\gamma \in E$ and $\{b_\gamma\}$ is a family of scalars satisfying $|b_\gamma|\leq|a_\gamma|$ for every $\gamma\in\Gamma$ it follows that $y=\sum b_\gamma e_\gamma \in E$ and $\|y\|\leq \|x\|$.
\item $\|e_\gamma\|=1$ for every $\gamma\in\Gamma$.
\end{itemize}
Remark that when $\Gamma$ is finite or countable this leads to the usual concept of one-unconditional Schauder basis.

Observe that if $E$ has a one-unconditional basis, it can be seen (via the basis) as a subspace of $\R^\Gamma$ with absolute norm satisfying $E'=E^*$. Therefore, if $\{X_\gamma \, : \, \gamma\in \Gamma\}$ is a family of Banach spaces it makes sense to consider the $E$-absolute sum of the family $X=\Bigl[\bigoplus_{\gamma\in \Gamma} X_\gamma\Bigr]_{E}$ which satisfies $X'=X^*$ (see Proposition~\ref{prop:E'=E*=>X'=X*}).

\begin{theorem}\label{thm:n'-one-implies-Hilbert}
Let $X$ be a Banach space with one-unconditional basis. Suppose that $\mathcal{Z}(X)\neq \{0\}$ and $n'(X)=1$. Then, $X$ is a Hilbert space.
\end{theorem}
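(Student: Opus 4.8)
The plan is to first understand the skew-hermitian operators of $X$ relative to the basis, then to show that $X$ splits as an absolute sum of Hilbert spaces, and finally to use $n'(X)=1$ together with the shift estimate of Lemma~\ref{lem-shift-operators} to collapse that sum to a single Hilbert space.

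First I would record two elementary facts about $\mathcal{Z}(X)$. Writing $\{e_\gamma\}_{\gamma\in\Gamma}$ for the basis and $\{e_\gamma^*\}$ for the coordinate functionals, one-unconditionality gives $(e_\gamma,e_\gamma^*)\in\Pi(X)$; hence for $S\in\mathcal{Z}(X)$ we have $e_\gamma^*(Se_\gamma)=0$, so any skew-hermitian operator that is diagonal with respect to the basis must vanish. Consequently a nonzero $S\in\mathcal{Z}(X)$ has a nonzero off-diagonal entry, say $e_{\gamma_1}^*(Se_{\gamma_2})\neq0$. Next I would use that every coordinate projection $P$ has norm one and that the compression $PSP|_{PX}$ of a skew-hermitian operator by a norm-one projection is again skew-hermitian (extend $x^*\in S_{(PX)^*}$ to $x^*\circ P$ and note $v(S)=0$). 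Applying this to the projection onto $H_0=\operatorname{span}\{e_{\gamma_1},e_{\gamma_2}\}$ yields a nonzero element of $\mathcal{Z}(H_0)$; since $H_0$ is a two-dimensional absolute sum $\R\oplus_a\R$, Lemma~\ref{lemma:lemma5Paya82} forces $\oplus_a=\oplus_2$, i.e.\ the plane $H_0$ is Euclidean.

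The structural heart, and what I expect to be the main obstacle, is to upgrade this to a global statement: I would show that $X$ decomposes as $X=\bigl[\bigoplus_k H_k\bigr]_N$, where $N$ is an absolute norm, each $H_k$ is a Hilbert space (the $\ell_2$-span of a ``rotation block'' of coordinates), and every element of $\mathcal{Z}(X)$ is block-diagonal, acting as an infinitesimal rotation inside each $H_k$. The input is that, applying the previous paragraph to every pair of coordinates, each $S\in\mathcal{Z}(X)$ is a skew-symmetric matrix whose nonzero entries sit only on Euclidean coordinate planes; integrating the one-parameter isometry groups $\exp(tS)$ shows that the norm is invariant under rotations within maximal Euclidean blocks, so it depends on the coordinates of each block only through their Euclidean norm $r_k=\bigl(\sum_{\gamma\in\Gamma_k}a_\gamma^2\bigr)^{1/2}$. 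This is exactly the $N$-sum structure, and since $X$ has a one-unconditional basis we also have $N'=N^*$ (so $N'$ is norming, cf.\ Proposition~\ref{prop:E'=E*=>X'=X*}), which is what is needed to feed the sum into the machinery of Section~\ref{sec:absolute-sums}. Because $\mathcal{Z}(X)\neq\{0\}$, at least one block $H_{k_0}$ has dimension at least two; I must also check that no skew-hermitian can link two distinct blocks, which follows from the maximality of the blocks together with the previous paragraph.

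With this structure in hand the finish is forced. Since all skew-hermitians are block-diagonal, Proposition~\ref{prop:derived-index-positive-operators} applies to the two shift operators $U_1,U_2$ on any two-dimensional coordinate subconfiguration of $N$ (the disjoint-support hypothesis is automatic in dimension two and $\|U_i\|=1$), giving $1=n'(X)\le \min\{v(U_1),v(U_2)\}$. Then Lemma~\ref{lem-shift-operators} with $k=1$ forces any two blocks to combine through $\ell_1$ or $\ell_\infty$. If there were more than one block, I would pick the distinguished block $H_{k_0}$ (with $\dim\ge2$) and a second block $H_{k_1}$ and reproduce the operator of Example~\ref{example:Hilbert-sum-infty-Hilbert} on $X$, supported on these two blocks, using an orthonormal pair $y_1,y_2\in H_{k_0}$ and a norming pair in $H_{k_1}$; the lower bound $\|T+\mathcal{Z}(X)\|\ge\sqrt3$ localizes cleanly because skew-hermitians restrict to rotations on $H_{k_0}$ and the $k_0,k_1$ block-interaction is exactly $\ell_\infty$ (or $\ell_1$), while $v(T)\le \tfrac32$ comes from the $N$-sum analogue of the alignment identities used there. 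This yields $n'(X)\le\frac{\sqrt3}{2}<1$, a contradiction. Hence there is a single block, $N$ is trivial, and $X=\ell_2(\Gamma)$ is a Hilbert space; the delicate points are establishing the block decomposition and verifying the $v(T)\le\tfrac32$ estimate in the presence of the remaining blocks.
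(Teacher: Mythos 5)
Your opening reductions are correct and nicely elementary: coordinate functionals give states, so the diagonal entries of any skew-hermitian operator vanish; compressions by norm-one coordinate projections remain skew-hermitian; and Lemma~\ref{lemma:lemma5Paya82} then forces the relevant coordinate planes to be Euclidean. Your overall skeleton (write $X=\bigl[\bigoplus_k H_k\bigr]_N$ with Hilbert blocks and block-diagonal Lie algebra, then play $n'(X)=1$ against shift operators on the outer norm $N$) is also the paper's skeleton. However, your second paragraph is in essence a sketch of Rosenthal's structure theorem, which the paper does not reprove but imports from \cite{Ros} as Lemma~\ref{lem:Rosenthal-decomposition}, and the sketch has a real hole: an operator $S\in\mathcal{Z}(X)$ with $e_{\gamma_1}^*(Se_{\gamma_2})\neq 0$ is not an elementary rotation, so invariance of the norm under the one-parameter groups $\exp(tS)$ does not yield that the norm depends on the coordinates of a block only through their Euclidean norm. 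For that one must show that the isometries so generated act transitively on the spheres of each block (equivalently, that the elementary rotation generators themselves lie in $\mathcal{Z}(X)$), and this is the substantial content of \cite{Ros}. This first gap is repairable simply by citing \cite{Ros}, as the paper does.

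The second gap is fatal as written. Proposition~\ref{prop:derived-index-positive-operators}, applied to a shift $U=e_\mu^*\otimes e_\nu$, yields $1=n'(X)\le v(U)$, where $v(U)$ is the numerical radius of $U$ \emph{as an operator on the whole outer space} $N$; but Lemma~\ref{lem-shift-operators} is a statement about a two-dimensional space, and its hypothesis concerns the numerical radii of the shifts computed in $(\R^2,|\cdot|)=N_{\{\mu,\nu\}}$. These are different quantities as soon as there are at least three blocks, and the implication you need is false. Concretely, let $N$ be the absolute norm on $\R^4$ whose unit ball is the convex hull of the Euclidean disk $\bigl\{(x_1,x_2,0,0):x_1^2+x_2^2\le 1\bigr\}$ together with the points $(\pm1,0,\pm1,0)$, $(0,\pm1,0,\pm1)$, $\pm e_3$ and $\pm e_4$. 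A routine computation shows that the restriction of $N$ to the first two coordinates is exactly Euclidean and that $\|e_2^*+e_3^*\|_{N^*}=\|e_1^*+e_4^*\|_{N^*}=1$; consequently the pairs $\bigl((1,0,1,0),\,e_2^*+e_3^*\bigr)$ and $\bigl((0,1,0,1),\,e_1^*+e_4^*\bigr)$ belong to $\Pi(N)$ and witness that both shifts $e_1^*\otimes e_2$ and $e_2^*\otimes e_1$ have numerical radius one on $N$. Thus the fact that both shifts have numerical radius one on the big space does not force the pair $\{\mu,\nu\}$ to be $\ell_1$ or $\ell_\infty$, and your appeal to Lemma~\ref{lem-shift-operators} collapses, together with the final contradiction built on Example~\ref{example:Hilbert-sum-infty-Hilbert} (which would moreover require $X$ to split off the two chosen blocks as an absolute summand, something a general absolute norm does not permit). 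Your argument is fine when there are exactly two blocks, which amounts to Corollary~\ref{cor:n'-1-sum-2-subspaces} plus Example~\ref{example:Hilbert-sum-infty-Hilbert}, but the general case is the whole difficulty, and it is precisely what the paper's proof is organized around: Steps 1 and 2 there construct bespoke operators $T_1,T_2$ on $X$ and use $n'(X)=1$ to prove that every coordinate pair of the outer norm is Euclidean, and only then does Lemma~\ref{lem:space-absolute-norm-coord-sum-2} (whose hypothesis involves all pairs containing $\mu$ at once, and whose proof, via countable supports and dominated convergence, is designed exactly for the numerical radius on the big space) produce a shift with $v<1$, contradicting Proposition~\ref{prop:derived-index-positive-operators}. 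Your proposal replaces this analytic core by an invalid shortcut, so it does not prove the theorem.
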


Prior to give the proof of this theorem, we state some consequences.

\begin{corollary}\label{cor:n'-equal-1}
Let $X$ be a Banach space with one-unconditional basis. If $n'(X)=1$ then either $n(X)=1$ or $X$ is a Hilbert space.
\end{corollary}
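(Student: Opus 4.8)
The plan is to reduce the statement to a clean dichotomy governed by the Lie algebra $\mathcal{Z}(X)$, since the substantive work is already carried out in Theorem~\ref{thm:n'-one-implies-Hilbert}. First I would split the argument into the two cases $\mathcal{Z}(X)=\{0\}$ and $\mathcal{Z}(X)\neq\{0\}$, which together exhaust all possibilities and align precisely with the two alternatives in the conclusion.

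In the first case, when $\mathcal{Z}(X)=\{0\}$, I would invoke the elementary observation recorded in the introduction: with a trivial Lie algebra the quotient $\mathcal{L}(X)/\mathcal{Z}(X)$ is just $\mathcal{L}(X)$, and the quotient norm $\|\cdot+\mathcal{Z}(X)\|$ coincides with the operator norm. Comparing the definitions of $n(X)$ and $n'(X)$ then gives $n'(X)=n(X)$, so the hypothesis $n'(X)=1$ forces $n(X)=1$, which is the first alternative.

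In the second case, when $\mathcal{Z}(X)\neq\{0\}$, all the hypotheses of Theorem~\ref{thm:n'-one-implies-Hilbert} are satisfied, namely $X$ has a one-unconditional basis, its Lie algebra is non-trivial, and $n'(X)=1$. Applying that theorem directly yields that $X$ is a Hilbert space, which is the second alternative. Combining the two cases completes the proof.

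I do not expect any genuine obstacle in this corollary: the entire difficulty is concentrated in Theorem~\ref{thm:n'-one-implies-Hilbert}, and here one is only separating off the degenerate situation in which the second numerical index collapses to the classical numerical index. The single point worth verifying explicitly is the harmless identity $n'(X)=n(X)$ when $\mathcal{Z}(X)=\{0\}$, which is immediate from the definitions and is in fact already noted in the introductory discussion.
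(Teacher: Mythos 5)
Your proof is correct and follows essentially the same route as the paper: both arguments rest on the observation that $\mathcal{Z}(X)=\{0\}$ forces $n'(X)=n(X)$, and both invoke Theorem~\ref{thm:n'-one-implies-Hilbert} when $\mathcal{Z}(X)\neq\{0\}$. The only cosmetic difference is that the paper phrases the dichotomy as $n(X)=1$ versus $n(X)<1$ rather than splitting on the Lie algebra, but the logical content is identical.
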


\begin{proof}
If $n(X)<1=n'(X)$, then $\mathcal{Z}(X)\neq \{0\}$. Therefore, we can use Theorem~\ref{thm:n'-one-implies-Hilbert} to get that $X$ is a Hilbert space.
\end{proof}

\begin{corollary}\label{cor:n'-fin-dim}
Let $X$ be a real finite-dimensional space with one-unconditional basis. If $n'(X)=1$ then either $n(X)=1$ or $X$ is a Hilbert space.
\end{corollary}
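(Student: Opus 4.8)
The plan is to observe that Corollary~\ref{cor:n'-fin-dim} is nothing more than the restriction of the preceding Corollary~\ref{cor:n'-equal-1} to the finite-dimensional setting, so essentially no new work is required. Indeed, a real finite-dimensional space $X$ equipped with a one-unconditional basis $\{e_1,\dots,e_m\}$ is in particular a Banach space with a (long) one-unconditional basis in the sense introduced just before Theorem~\ref{thm:n'-one-implies-Hilbert}: one simply takes $\Gamma=\{1,\dots,m\}$, and the three defining conditions reduce to the classical notion of a finite one-unconditional basis. Consequently, the hypotheses of Corollary~\ref{cor:n'-equal-1} are met as soon as $n'(X)=1$, and its conclusion---that either $n(X)=1$ or $X$ is a Hilbert space---is precisely the assertion we wish to prove.

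First I would make explicit that the finiteness of the basis removes any convergence subtlety coming from the long-basis definition (every expansion is a genuine finite sum), so the invocation of Corollary~\ref{cor:n'-equal-1} is completely legitimate. Then I would simply apply that corollary verbatim. It is worth stressing, however, that finite-dimensionality is \emph{not} what drives the statement: the one-unconditional basis hypothesis is indispensable, since a generic norm on $\R^m$ admits no such basis, so the corollary cannot be strengthened by dropping that assumption. (One could additionally remark that Proposition~\ref{prop:finite-dimensional-n'>0} already yields $n'(X)>0$ in this setting, but this is not needed here.)

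Since the entire content of the corollary is inherited from Corollary~\ref{cor:n'-equal-1}, and ultimately from Theorem~\ref{thm:n'-one-implies-Hilbert}, there is no genuine obstacle to surmount at this step; all the difficulty resides in the proof of the theorem itself. The ``proof'' therefore amounts to a single line: as in Corollary~\ref{cor:n'-equal-1}, if $n(X)<1=n'(X)$ then $\mathcal{Z}(X)\neq\{0\}$ (otherwise $n'(X)=n(X)$), whence Theorem~\ref{thm:n'-one-implies-Hilbert} forces $X$ to be a Hilbert space, and the dichotomy follows.
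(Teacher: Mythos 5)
Your proposal is correct and matches the paper exactly: the paper states this corollary without a separate proof, treating it precisely as the specialization of Corollary~\ref{cor:n'-equal-1} (whose proof is the one-line argument you reproduce: if $n(X)<1=n'(X)$ then $\mathcal{Z}(X)\neq\{0\}$, so Theorem~\ref{thm:n'-one-implies-Hilbert} applies) to the finite-dimensional case, where a finite one-unconditional basis is trivially a one-unconditional basis in the long sense. No gaps; your additional remarks are accurate but, as you note yourself, unnecessary.
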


\begin{corollary}\label{cor:n'-1-not-ell-1}
Let $X$ be an infinite-dimensional real Banach space with one-unconditional basis and such that $n'(X)=1$. Then, either $X^*\supseteq \ell_1$ or $X$ is a Hilbert space.
\end{corollary}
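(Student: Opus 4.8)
The plan is to deduce this statement directly from the dichotomy already established in Corollary~\ref{cor:n'-equal-1} together with the known structural result recalled in Proposition~\ref{prop-old:dualcontains_l_1}. Since $X$ has a one-unconditional basis and $n'(X)=1$, Corollary~\ref{cor:n'-equal-1} applies and splits the situation into exactly two cases: either $n(X)=1$, or $X$ is a Hilbert space. The second alternative is precisely one of the two conclusions we are after, so nothing more needs to be done in that case.

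The only remaining work is to treat the case $n(X)=1$. Here I would invoke the hypothesis that $X$ is infinite-dimensional. Proposition~\ref{prop-old:dualcontains_l_1} asserts that any infinite-dimensional Banach space with classical numerical index one has a dual containing $\ell_1$; applying it to our $X$ yields $X^*\supseteq \ell_1$, which is the other conclusion sought. Thus in either branch one of the two alternatives holds, completing the argument.

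I do not expect any genuine obstacle in this proof: it is a short two-step deduction, and every ingredient is already available in the excerpt. The only point requiring care is to verify that the hypotheses of the two cited results are met, namely that the one-unconditional basis assumption licenses the use of Corollary~\ref{cor:n'-equal-1}, and that infinite-dimensionality (explicitly assumed) is what activates Proposition~\ref{prop-old:dualcontains_l_1}. Both are immediate from the statement, so the corollary follows at once.
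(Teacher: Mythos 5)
Your proof is correct and uses exactly the same two ingredients as the paper's own argument, namely Corollary~\ref{cor:n'-equal-1} and Proposition~\ref{prop-old:dualcontains_l_1}; the paper merely phrases it contrapositively (assuming $X^*\nsupseteq\ell_1$ to conclude $n(X)<1$ and hence that $X$ is Hilbert), whereas you run the same deduction as a direct case split. This is a purely cosmetic difference, so nothing further is needed.
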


\begin{proof}
Suppose $X^* \nsupseteq \ell_1$. By Proposition~\ref{prop-old:dualcontains_l_1} we have that $n(X)<1$, so Corollary~\ref{cor:n'-equal-1} tells us that $X$ is a Hilbert space.
\end{proof}

For the proof of Theorem~\ref{thm:n'-one-implies-Hilbert}, we need a number of auxiliary results. The
first one, which gathers a series of results appearing in \cite{Ros},
allows us to write a Banach space with one-unconditional basis and
non-trivial Lie algebra as the absolute sum of suitable Hilbert
spaces in such a way that the skew-hermitian operators are diagonal. In order to present the result (we include the deduction of its proof as a consequence of the results in \cite{Ros} for
commodity of the reader) we need some notation that we also borrow from \cite{Ros}.

A subspace $H$ of a Banach space $X$ is said to be
\emph{orthogonally-complemented} in $X$ if there is a subspace $Y$
of $X$ such that $X=H\oplus Y$ and for every $h\in H$, $y\in Y$,  $\|h + y\|=\|h-y\|$. In such a case, the space $Y$ is unique, it is called the \emph{orthogonal complement} of $H$, and denoted by
$Y=O(H)$. The subspace $H$ is said to be \emph{well-embedded} if
there is a subspace $Y$ of $X$ such that $X=H\oplus Y$ and for all $h\in
H$, $y\in Y$, and every onto isometry $U\in \mathcal{L}(H)$,
$\|h+y\|=\|Uh+y\|$. If, moreover, $H$ is euclidean, $H$ is called a
\emph{well-embedded Hilbert subspace}. $H$ is said to be a
\emph{Hilbert component} of $X$ if it is a maximal non-zero
well-embedded Hilbert subspace. Hilbert components exist as soon as
there is a well-embedded Hilbert subspace (see
\cite[Theorem~1.12]{Ros}). The \emph{Functional Hilbertian part} of
$X$, denoted by $FH(X)$, is the closed linear span of the union of
all Hilbert components of $X$ with $\dim\geq 2$ and the
\emph{orthogonal part} is $Or(X) = \{x \ : \ [x] \text{ is a Hilbert component of } X\} \cup\{0\}$ where $[x]$ denotes the (closed)
linear span of $x$. $X$ is said to be \emph{pure} if it has no
rank-two operators in $\mathcal{Z}(X)$.

\begin{lemma}\label{lem:Rosenthal-decomposition}
Let $\Gamma$ be a non-empty set and let $X$ be  a Banach space with a one-unconditional basis indexed on the set $\Gamma$. Suppose that $\mathcal{Z}(X)\neq \{0\}$. Then, there are a non-empty set
$\Lambda$, a subset $\Gamma_{Or}\subset \Gamma$, Hilbert spaces $\{H_\alpha \
: \ \alpha\in \Lambda\cup \Gamma_{Or}\}$, and a pure space $V$ with an one-unconditional basis indexed on the set $\Lambda\cup \Gamma_{Or}$ so that
$$
X=\Big[\bigoplus_{\alpha \in \Lambda\cup \Gamma_{Or}}
H_\alpha\Big]_V \,.
$$
Moreover, the following hold:
\begin{enumerate}
\item $\dim(H_\alpha)\geq 2$ for every $\alpha\in \Lambda$.
\item Every $S\in \mathcal{Z}(X)$ is diagonal, i.e.\ for
    every $\alpha \in \Lambda \cup \Gamma_{Or}$ there is $S_\alpha\in \mathcal{Z}(H_\alpha)$  such that
$$
[S(x)](\alpha)=S_\alpha (x(\alpha)) \qquad \big(x \in X\big).
$$
Besides, one has $S_\alpha=0$ for every $\alpha\in \Gamma_{Or}$.
\end{enumerate}
\end{lemma}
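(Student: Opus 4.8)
The plan is to deduce the decomposition from the structure theory developed by Rosenthal in \cite{Ros}, reassembling its conclusions in the absolute-sum language of this section.

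First I would guarantee the existence of Hilbert components and, in particular, of one of dimension at least two. Recall that $S\in\mathcal{L}(X)$ lies in $\mathcal{Z}(X)$ precisely when the one-parameter group $\{\exp(tS)\}_{t\in\R}$ consists of surjective isometries of $X$. Since $\mathcal{Z}(X)\neq\{0\}$, fixing a nonzero $S\in\mathcal{Z}(X)$ produces a nontrivial such group, whose infinitesimal rotational action forces a two-dimensional well-embedded Hilbert subspace of $X$; by \cite[Theorem~1.12]{Ros} Hilbert components then exist. Conversely, were $FH(X)=\{0\}$, then $X=Or(X)$ would be a sum of one-dimensional components and the same diagonality argument carried out below would give $\mathcal{Z}(X)=\{0\}$, a contradiction. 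Hence the set $\Lambda$ indexing the Hilbert components of dimension $\geq 2$ is non-empty, yielding Hilbert spaces $\{H_\alpha:\alpha\in\Lambda\}$ with $\dim(H_\alpha)\geq 2$. This is assertion (1).

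Next I would realize the splitting $X=FH(X)\oplus Or(X)$ as the required absolute sum. Viewing $X$, via its one-unconditional basis, as a subspace of $\R^\Gamma$ with absolute norm, the one-dimensional Hilbert components are exactly the spans $[e_\gamma]$ of those basis vectors that are maximal well-embedded Hilbert subspaces; let $\Gamma_{Or}\subset\Gamma$ index them and put $H_\alpha=[e_\alpha]$ for $\alpha\in\Gamma_{Or}$. Rosenthal's representation of $X$ as a functional Hilbertian sum then reads $X=\bigl[\bigoplus_{\alpha\in\Lambda\cup\Gamma_{Or}}H_\alpha\bigr]_V$, where the base space $V$ inherits from the grouping of the basis a one-unconditional basis indexed by $\Lambda\cup\Gamma_{Or}$, with one vector per component, and is pure, since all well-embedded Hilbert subspaces have been absorbed into the $H_\alpha$.

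Then I would establish the diagonality in (2). A surjective isometry of $X$ maps Hilbert components onto Hilbert components, so it permutes them; but for fixed $S\in\mathcal{Z}(X)$ the path $t\mapsto\exp(tS)$ is continuous, starts at the identity, and its action on a fixed component $H_\alpha$ must remain among the pairwise disjoint components, hence stays equal to $H_\alpha$ for all $t$. Differentiating, $S$ leaves each $H_\alpha$ invariant, so $P_\lambda S I_\mu=0$ for $\lambda\neq\mu$ and $S$ is diagonal with blocks $S_\alpha:=P_\alpha S I_\alpha$; these satisfy $S_\alpha\in\mathcal{Z}(H_\alpha)$ by the observation preceding the statement. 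Finally, for $\alpha\in\Gamma_{Or}$ the space $H_\alpha$ is one-dimensional, where $v(S_\alpha)=|S_\alpha|$, so $v(S_\alpha)=0$ forces $S_\alpha=0$. The hard part will be the faithful transfer between Rosenthal's abstract functional Hilbertian sum and the concrete $E$-sum framework used here: one must check that the base space $V$ genuinely carries a one-unconditional basis indexed by $\Lambda\cup\Gamma_{Or}$ and that, under the one-unconditional basis hypothesis, the one-dimensional Hilbert components coincide with spans of basis vectors, so that $\Gamma_{Or}$ is legitimately a subset of $\Gamma$. Justifying that the connected isometry group fixes each component individually, rather than merely permuting components of the same type, is the other delicate point, and it is precisely here that the detailed results of \cite{Ros} must be invoked.
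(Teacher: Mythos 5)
Your overall plan (assembling Rosenthal's structure theory) is the same as the paper's, but the two places where you replace the paper's citations by arguments of your own are exactly where the depth lies, and both substitutes have genuine gaps. The first is your opening claim that a nonzero $S\in \mathcal{Z}(X)$, via its ``infinitesimal rotational action,'' forces a two-dimensional well-embedded Hilbert subspace. Well-embeddedness demands a splitting $X=H\oplus Y$ such that \emph{every} isometry of $H$, extended by $\Id_Y$, is an isometry of $X$; a one-parameter isometry group $\{\exp(tS)\}$ produces no such splitting, and your argument never uses the one-unconditional basis at this point. Without the basis hypothesis the claim is simply false: there exist finite-dimensional spaces with $n(X)=0$ (hence $\mathcal{Z}(X)\neq\{0\}$) whose rotational structure is ``entangled'' (norms invariant only under simultaneous rotations with different speeds, as in the structure theory behind \cite{MaMeRo}) and which are pure, so they contain no well-embedded Hilbert subspace whatsoever. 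What fills this gap in the paper is precisely Rosenthal's Theorem~3.10 (a pure space with one-unconditional basis has trivial Lie algebra), whose contrapositive gives a rank-two skew-hermitian operator whose image is the desired well-embedded Hilbert subspace; this is the deepest ingredient of the lemma, and your sketch in effect assumes it.

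The second gap is the diagonality argument (on which your ``conversely'' fallback for $\Lambda\neq\emptyset$ also relies). The permutation-plus-connectedness argument needs two facts: that $\exp(tS)$ maps each component \emph{of the decomposition} to another component \emph{of the decomposition}, and that distinct such components are uniformly metrically separated — pairwise disjointness, which is what you invoke, is not enough. Both facts hold for $\alpha\in\Lambda$ (those $H_\alpha$ exhaust the Hilbert components of dimension $\geq 2$, and unit vectors supported in different coordinates of an absolute sum are at distance $\geq 1$), but they fail on the one-dimensional part: $\exp(tS)[e_\gamma]$ is only known to be \emph{some} one-dimensional Hilbert component, and such components need not be spans of basis vectors, need not be mutually orthogonal, and need not be separated. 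For instance, the plane with a regular $2n$-gon as unit ball is a space with one-unconditional basis possessing $2n$ distinct one-dimensional Hilbert components (all the symmetry axes), adjacent ones arbitrarily close as $n$ grows; so local constancy of $t\mapsto \exp(tS)[e_\gamma]$ cannot be deduced from continuity. The argument therefore collapses exactly on the $\Gamma_{Or}$-part, which is what the paper's citation of Rosenthal's Corollary~3.11 (and Theorem~3.8) handles; similarly, your middle step tacitly needs that $FH(X)$ is orthogonally complemented (the paper cites the remarks after Rosenthal's Theorem~3.1, again a consequence of the basis hypothesis) before Theorem~3.6 can be applied.
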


\begin{proof}
Since $\mathcal{Z}(X)\neq \{0\}$ and $X$ has a one-unconditional basis, we can use
Theorem~3.10 in \cite{Ros} to obtain that $X$ is not pure. So there is a rank-two
operator in $\mathcal{Z}(X)$. The image of such an operator is a well-embedded
Hilbert subspace of $X$ (see the first lines of page 435 of
\cite{Ros}) and so the Functional Hilbertian part of $X$ is
non-trivial. Since $X$ has a normalized one-unconditional basis, $FH(X)$ is
orthogonally complemented in $X$ (see Remarks~(1) right after
Theorem~3.1 in \cite{Ros}). We use now  \cite[Theorem~3.6]{Ros} to
get the existence of a non-empty set $\Lambda$, a subset $\Gamma_{Or}\subset
\Gamma$, Hilbert spaces  $\{H_\alpha \ : \ \alpha\in \Lambda\cup
\Gamma_{Or}\}$, and a pure space $V$ with a one-unconditional basis indexed on the set $\Lambda\cup
\Gamma_{Or}$ so that the following hold:
\begin{itemize}
\item For every $\alpha\in \Lambda$, $H_\alpha$ is a Hilbert component of $X$ with dimension greater than or equal to two.
\item $\displaystyle
X=\Big[\bigoplus_{\alpha \in \Lambda\cup \Gamma_{Or}}
H_\alpha\Big]_V\,.
$
\end{itemize}
Moreover, Corollary~3.11 (see also Theorem~3.8) in \cite{Ros} tells
us that every $S\in \mathcal{Z}(X)$ is diagonal and $S_\alpha=P_\alpha S I_\alpha=0$ for every $\alpha\in \Gamma_{Or}$.
\end{proof}

\begin{lemma}\label{lem:space-absolute-norm-coord-sum-2}
Let $E\subset \R^\Lambda$ be a Banach space with absolute norm so
that $E'$ is norming. Suppose that there is $\mu\in \Lambda$ such
that $\|\cdot\|_{E_{\{\mu,\lambda\}}}=\|\cdot\|_2$ for every
$\lambda\in \Lambda\setminus\{\mu\}$. If $\nu\in \Lambda\setminus\{\mu\}$, then the
positive operator $U\in \mathcal{L}(E)$ given by $U=e_\mu^*\otimes e_\nu$
satisfies
$$
\|U\|=1 \qquad \text{and}\qquad v(U)<1.
$$
\end{lemma}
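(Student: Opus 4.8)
The plan is to handle the norm identity by a one-line estimate and to spend the real effort on the numerical radius bound. For $\|U\|=1$: since $U(a)=a(\mu)\,e_\nu$ and $\|e_\nu\|=1$, the contractive inclusion $E\hookrightarrow\ell_\infty(\Lambda)$ gives $\|U(a)\|=|a(\mu)|\le\|a\|$ for all $a\in E$, while $U(e_\mu)=e_\nu$ attains the bound. So $\|U\|=1$, and the point is to prove the strict inequality $v(U)<1$.

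First I would put $v(U)$ in a workable form. As $E'$ is norming and $U$ is positive, I invoke Remark~\ref{rem:dense-subset-numericalradius}; since $[U(a)](\lambda)=a(\mu)$ for $\lambda=\nu$ and $0$ otherwise, its formula collapses to
$$
v(U)=\inf_{\delta>0}\ \sup\Bigl\{\,b(\nu)\,a(\mu)\ :\ a\in S_E,\ b\in S_{E'},\ a,b\ \text{positive},\ \textstyle\sum_{\lambda}b(\lambda)a(\lambda)>1-\delta\Bigr\}.
$$
Everything thus reduces to showing that the cross term $b(\nu)a(\mu)$ stays bounded away from $1$ as the pairing $\sum_\lambda b(\lambda)a(\lambda)$ tends to $1$.

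Next I would extract the rigidity from the Euclidean hypothesis. Restricting a positive $a\in S_E$ to the coordinates $\{\mu,\nu\}$ is contractive and the norm there is $\|\cdot\|_2$, so $a(\mu)^2+a(\nu)^2\le1$; the same in the K\"othe dual (whose $\{\mu,\nu\}$-section is the dual of $\ell_2^2$, hence Euclidean) gives $b(\mu)^2+b(\nu)^2\le1$. Writing $a(\mu)=\sin A$, $b(\nu)=\sin B$ and splitting
$$
\sum_{\lambda}b(\lambda)a(\lambda)=\bigl[a(\mu)b(\mu)+a(\nu)b(\nu)\bigr]+R,\qquad R:=\sum_{\lambda\neq\mu,\nu}b(\lambda)a(\lambda)\ge0,
$$
Cauchy--Schwarz on the two planar vectors bounds the bracket by $\sin(A+B)$, so the alignment constraint forces $R\ge 1-\delta-\sin(A+B)$, while the quantity to be controlled is $b(\nu)a(\mu)=\sin A\sin B$. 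In the idealized case where the tail $R$ is negligible one gets $\sin(A+B)\to1$, i.e.\ $A+B\to\pi/2$, and then $\sin A\sin B=\tfrac12\bigl(\cos(A-B)-\cos(A+B)\bigr)\to\tfrac12\cos(A-B)\le\tfrac12$. This is the mechanism producing the strict inequality (indeed $v(U)\le\tfrac12$ when the extremal configuration concentrates on $\{\mu,\nu\}$).

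The hard part will be controlling the tail $R$: a large off-$\{\mu,\nu\}$ pairing could a priori compensate in the alignment constraint and let $\sin A\sin B$ approach $1$. This is where the full strength of the hypothesis --- that $\{\mu,\lambda\}$ is Euclidean for \emph{every} $\lambda$, not just for $\lambda=\nu$ --- together with the norming property of $E'$ must be used. Concretely I would argue by contradiction: if $v(U)=1$, then along a suitable net $a(\mu)\to1$ and $b(\nu)\to1$, whence $a(\nu)\to0$, $b(\mu)\to0$, and the Euclidean sections $a(\mu)^2+a(\lambda)^2\le1$ force $\sup_{\lambda\neq\mu}a(\lambda)\to0$. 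The crux, which I expect to be the genuine obstacle of the whole proof, is to upgrade this coordinatewise smallness of $a$ off $\mu$ into smallness of the bilinear tail $R=\sum_{\lambda\neq\mu,\nu}b(\lambda)a(\lambda)$ in the (non-Hilbertian, possibly infinite-dimensional) K\"othe duality; establishing this contradicts $R\to1$ and closes the argument.
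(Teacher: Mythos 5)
Your setup is sound and, up to the point where you stop, it runs parallel to the paper's own argument: the computation of $\|U\|$, the use of Remark~\ref{rem:dense-subset-numericalradius} to express $v(U)$ through positive pairs $(a,b)\in S_E\times S_{E'}$, the section bounds $a(\mu)^2+a(\lambda)^2\le \|a\|_E^2$ and $b(\mu)^2+b(\nu)^2\le\|b\|_{E'}^2$ (the latter via contractivity of the restriction map $E'\to \bigl(E_{\{\mu,\nu\}}\bigr)^*=\ell_2^2$), and the consequent limits $a_n(\mu)\to1$, $b_n(\nu)\to1$, $b_n(\mu)\to0$, $\sup_{\lambda\ne\mu}a_n(\lambda)\to0$ are all correct. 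But the proposal is not a proof: the step you yourself label as ``the crux'' --- showing that the off-$\{\mu,\nu\}$ part $R_n=\sum_{\lambda\ne\mu,\nu}a_n(\lambda)b_n(\lambda)$ cannot carry the missing mass --- is exactly the content of the lemma, and you leave it unproved. Moreover, the route you sketch for it (``upgrade coordinatewise smallness of $a$ into smallness of the bilinear tail'') cannot work as stated: $\|b_n\|_{E'}\le1$ puts no bound on the $\ell_1$-mass of $b_n$ (the K\"{o}the dual need only sit inside $\ell_\infty(\Lambda)$), so $b_n$ may spread mass of order $1/\eps_n$ over a large set of coordinates on which $a_n\approx\eps_n$; abstractly there do exist nonnegative arrays with $\sup_k c_n(k)\to0$ and $\sum_k c_n(k)=1$ for all $n$ (mass escaping to infinity), so no argument using only coordinatewise or uniform smallness can close the gap.

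What the paper does at this point is different in kind, and it is the piece you are missing. Since each pairing $\sum_\lambda a_n(\lambda)b_n(\lambda)$ is a summable family, everything happens on a countable set of coordinates, which may be enumerated as $\N$ with $\mu=1$, $\nu=2$. Passing to subsequences gives coordinatewise limits $a_n(k)\to a(k)$, $b_n(k)\to b(k)$; the hypotheses force $a(1)=1$, $b(2)=1$, hence $a(k)=0$ for $k\ne1$ and $b(1)=0$, so the products $c_n(k)=a_n(k)b_n(k)$ tend to $0$ pointwise in $k$. The paper then extracts further subsequences so that, for each fixed $k$, the sequence $n\mapsto c_n(k)$ is monotone decreasing; this produces a summable majorant (the first term $k\mapsto c_1(k)$, whose sum is at most $\langle b_1,a_1\rangle\le1$), and the Lebesgue dominated convergence theorem then yields $\langle b_n,a_n\rangle=\sum_k c_n(k)\to0$, contradicting $\langle b_n,a_n\rangle\to1$. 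It is precisely this domination/summability device --- not a refinement of coordinatewise smallness --- that rules out the escape of mass. If you want to complete your proposal, you must supply this step (or an equivalent one), and note that even here the diagonal extraction requires care: what is needed is a single subsequence along which the products admit one summable dominating function, which is stronger than monotonicity of each coordinate sequence from some index onward.
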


\begin{proof}
It is obvious that $\|U\|=1$.
Assume that $v(U)=1$ and use
Remark~\ref{rem:dense-subset-numericalradius} to ensure the
existence of two sequences of positive elements $\{a_n\}_{n\in \N}$ in $S_E$
and $\{b_n\}_{n\in \N}$ in $S_{E'}$ such that $\lim_{n\to \infty}\langle b_n, a_n\rangle=1$
and $\lim_{n\to \infty} \langle b_n, Ua_n\rangle =1$.
Since the family $\sum_{\lambda\in \Lambda} a_n(\lambda) b_n(\lambda)=\langle b_n, a_n\rangle$ is summable
for every $n$,
there is a countable subset $\Lambda_1$ of $\Lambda$ such that
$a_n(\lambda) b_n(\lambda)=0$ for every $\lambda\in \Lambda\setminus
\Lambda_1$ and every $n\in \N$. We will assume that $\mu=1$, $\nu=2$
and $\Lambda_{1}\cup\{\mu, \nu\} = \mathbb{N}$. Since $\{a_n\}$ and
$\{b_n\}$ are coordinate-wise bounded, we may assume that $\lim_{n\to \infty}
a_n(k) = a(k)$ and $\lim_{n\to \infty} b_n(k) = b(k)$ for each $k\in \mathbb{N}$
by considering proper subsequences.
Since $\lim_{n\to \infty} \langle b_n, Ua_n\rangle =1$, we have
$\lim_{n\to \infty} a_n(1)b_n(2)=1$. That is, $a(1)=1$ and $b(2)=1$. Notice that
for $k$ in $\mathbb{N}\setminus\{1\}$, we have
\[
a(1)^2 + a(k)^2 = \lim_{n\to \infty} (a_n(1)^2 + a_n(k)^2) \leq \lim_{n\to \infty}
\|a_n\|^2=1.
\]
Similarly, we have
\[
b(1)^2 + b(2)^2 = \lim_{n\to \infty} (b_n(1)^2 + b_n(2)^2) = \lim_{n\to \infty} \|b_n|_{E_{\{\mu,\nu\}}}\|^2 \leq \lim_{n\to \infty}
\|b_n\|^2= 1.
\]
This shows that $a(k)=0$ for all $k\neq 1$ and $b(1)=0$. Hence
$\lim_{n\to \infty} a_n(k)b_n(k) =0$ for all $k\in \mathbb{N}$.
So we can extract further subsequences and assume that the
sequence $\{a_n(k)b_n(k)\}_{n\in \N}$ is monotone decreasing and convergent
to $0$ for each $k\geq 1$.

Finally, notice that
\begin{align*}
 1&=\lim_{n\to \infty}  \langle b_n, a_n \rangle=\lim_{n\to \infty}\sum_{k=1}^\infty
 a_n(k)b_n(k)=0
\end{align*}
where the last equality holds by the Lebesgue dominated convergence
theorem. This is a contradiction and the proof is complete.
\end{proof}

\begin{proof}[Proof of Theorem~\ref{thm:n'-one-implies-Hilbert}]
By Lemma~\ref{lem:Rosenthal-decomposition} there are a non-empty set
$\Lambda$, a subset $\Gamma_{Or}\subset \Gamma$, Hilbert spaces $\{H_\alpha \,
: \, \alpha\in \Lambda\cup \Gamma_{Or}\}$ such that $\dim(H_\alpha)\geq 2$ for every $\alpha\in \Lambda$, and a pure space $V$ with a one-unconditional basis indexed on the set $\Lambda\cup
\Gamma_{Or}$ so that
$$
X=\Big[\bigoplus_{\alpha \in \Lambda\cup \Gamma_{Or}}
H_\alpha\Big]_V \,.
$$
Moreover, every $S\in \mathcal{Z}(X)$ is diagonal and $X^*=\Big[\bigoplus_{\alpha \in \Lambda\cup \Gamma_{Or}}
H^*_\alpha\Big]_{V^*}$.

Our goal is to show that $\Lambda\cup \Gamma_{Or}$ is a singleton.
Since $\Lambda$ is non-void we may and do fix $\mu \in \Lambda$.
Suppose for contradiction that there is $\nu\in \Lambda\cup
\Gamma_{Or}$ with $\nu\neq \mu$. We write $V_{\{\mu,\nu\}}$ for the linear
span of $\{e_\mu, e_\nu\}$ and we consider the
constants of equivalence
$$
M=\max_{ae_\mu+be_\nu \in B_{V_{\{\mu,\nu\}}}}\sqrt{a^2+b^2} \qquad
\text{and} \qquad K=\max_{c^2+d^2\leq 1}
\|ce_\mu+de_\nu\|_{V_{\{\mu,\nu\}}}
$$
which obviously satisfy $M\geq1$ and $K\geq 1$.

\emph{Claim: $M\leq 1$ and $K\leq 1$. Therefore, we have that
$\|\cdot\|_{V_{\{\mu,\nu\}}}=\|\cdot\|_2$.}

Once the Claim is established the result follows easily.
Indeed, observe that the coordinate $\nu$ is arbitrary so the claim
also applies to every $\alpha\in \Lambda\cup\Gamma_{Or}$ such that
$\alpha\neq \mu$. Therefore, we can apply
Lemma~\ref{lem:space-absolute-norm-coord-sum-2} for $V$ to get that
the positive operator $U=e^*_\mu\otimes e_\nu$ satisfies
$\frac{v(U)}{\|U\|}<1$. Now
Proposition~\ref{prop:derived-index-positive-operators} tells us that
$n'(X)\leq \frac{v(U)}{\|U\|}<1$ which is a contradiction. So
$\Lambda\cup \Gamma_{Or}=\{\mu\}$ and $X=H_\mu$ is a Hilbert space.

\emph{Proof of the Claim:}
Since $\dim(H_\mu)\geq 2$ and $H_\nu$ is non-trivial we can fix orthogonal elements $u_1,u_2\in S_{H_\mu}$ and $u_3\in S_{H_\nu}$.

\emph{Step 1: $M\leq 1$.}\ Consider the operator
$T_1\in \mathcal{L}(X)$ given by
$$
T_1(x)=\frac{1}{M} I_\mu\big(( u_1 \mid x(\mu)) u_1+
( u_3 \mid x(\nu) ) u_2\big)\quad \big(x\in
X\big),
$$
and observe that, for $x\in S_X$, we have
$$
\|T_1(x)\|=\frac{1}{M}\sqrt{( u_1\mid x(\mu))^2+
( u_3\mid x(\nu) )^2}\leq\frac{1}{M}\sqrt{\|x(\mu)\|^2+
\|x(\nu)\|^2}\leq 1
$$
and, therefore, $\|T_1+\mathcal{Z}(X)\|\leq\|T_1\|\leq1$.
Let us show now that $\|T_1+\mathcal{Z}(X)\|\geq 1$. Indeed, fixed $S\in \mathcal{Z}(X)$,
there is $S_\alpha\in \mathcal{Z}(H_\alpha)$ for every $\alpha \in \Lambda
\cup \Gamma_{Or}$ so that $[S(x)](\alpha)=S_\alpha(x(\alpha))$ for
every $x \in X$. Besides, we fix $\theta\in \{-1,1\}$
satisfying $\theta( u_2\mid S_\mu(u_1)) \geq 0$ and we
observe that $( u_1\mid S_\mu(u_1))=0$ since $v(S_\mu)=0$.
Moreover, we take $a_0,b_0\in[0,1]$ satisfying $a_0e_\mu+b_0e_\nu\in
B_{V_{\{\mu,\nu\}}}$  and $M=\sqrt{a_0^2+b_0^2}$ which give
$$
\|I_\mu(a_0u_1)+I_\nu(\theta
b_0
u_3)\|=\|a_0e_\mu+b_0e_\nu\|_{V_{\{\mu,\nu\}}}\leq 1.
$$
Thus, we can write
\begin{align*}
\|T_1+S\|&\geq\left\|[T_1+S]\big(I_\mu(a_0u_1)+I_\nu(\theta
b_0
u_3)\big)\right\|\\&=\left\|\frac{1}{M}I_\mu(a_0u_1+\theta
b_0u_2)+a_0I_\mu(S_\mu(u_1))+\theta b_0I_\nu
(S_\nu(u_3))\right\|\geq \left\|\frac{1}{M}(a_0u_1+\theta
b_0u_2)+a_0S_\mu(u_1)\right\|\\
&=\sqrt{\Big( \frac{1}{M}(a_0u_1+\theta b_0u_2)+a_0S_\mu(u_1)\mid
\frac{1}{M}(a_0u_1+\theta b_0u_2)+a_0S_\mu(u_1)\Big)}\\
&=\sqrt{\frac{a_0^2+b_0^2}{M^2}+ a_0^2\|S_\mu(u_1)\|^2 +
\frac{2\theta a_0b_0}{M} ( u_2\mid S_\mu(u_1)) }\geq
\sqrt{\frac{a_0^2+b_0^2}{M^2}}= 1.
\end{align*}
Taking infimum on $S\in \mathcal{Z}(X)$ we get $\|T_1+\mathcal{Z}(X)\|\geq1$ and so
$\|T_1+\mathcal{Z}(X)\|=1$. Thus, using that $n'(X)=1$, we obtain $v(T_1)=1$.
Therefore, for each $n\in\N$ there are $x_n\in
S_X$ and $x^*_n\in S_{X^*}$ satisfying
\begin{equation}\label{eq:main-thm-radius-1-M}
\langle x^*_n, x_n \rangle =1 \qquad \text{and} \qquad \lim_{n\rightarrow\infty}\big|\langle x^*_n, T_1(x_n)\rangle\big|=v(T_1)=1.
\end{equation}
This gives, in particular, that
$\lim\limits_{n\rightarrow\infty}\big\|T_1(x_n)\big\|=1$ and so
$\lim\limits_{n\rightarrow\infty}\big\|( u_1\mid x_n(\mu)) u_1+
( u_3\mid x_n(\nu)) u_2\big\|=M$. By passing to a
convenient subsequence we may and do assume that there are
$a,b\in[-1,1]$ so that
\begin{equation} \label{eq:Thm-n'=1-exposing-u1-u3-operator-T1}
\lim_{n\rightarrow\infty}( u_1\mid x_n(\mu))=a \qquad
\text{and}\qquad \lim_{n\rightarrow\infty}(u_3\mid
x_n(\nu))=b.
\end{equation}
Now, we can write
\begin{align*}
M&\geq \sqrt{\|x_n(\mu)\|^2+\|x_n(\nu)\|^2}\\
&\geq \sqrt{|(u_1\mid
x_n(\mu))|^2+|(u_3\mid x_n(\nu))|^2}=\big\|(u_1\mid x_n(\mu))u_1+ ( u_3\mid
x_n(\nu)) u_2\big\|\longrightarrow M,
\end{align*}
which implies that
$$
\lim_{n\rightarrow\infty}\|x_n(\mu)\|=|a|\,, \qquad
\lim_{n\rightarrow\infty}\|x_n(\nu)\|=|b|,\qquad \text{and}\qquad
\sqrt{a^2+b^2}=M.
$$
Observe that if $a=0$ or $b=0$ we obtain $M\leq1$ so we may and do
assume that $ab\neq0$. Using this and the fact that $u_1$ and $u_3$
strongly expose themselves, we deduce from \eqref{eq:Thm-n'=1-exposing-u1-u3-operator-T1}
that
\begin{equation}\label{eq:thm-n'=1-convergence-x_n(mu)-x_n(nu)}
\lim_{n\rightarrow\infty}x_n(\mu)=a u_1 \qquad \text{and} \qquad
\lim_{n\rightarrow\infty}x_n(\nu)=b u_3.
\end{equation}
For each $n\in\N$ consider the element $\xi_n\in X$ given
by
$$
\xi_n(\alpha)=\begin{cases}
au_1 & \text{ if } \alpha= \mu,\\
bu_3 & \text{ if } \alpha= \nu,\\
x_n(\alpha)& \text{ if } \alpha\notin\{ \mu,\nu\}.
\end{cases}
$$
It is clear that $\lim_{n\rightarrow\infty}\|\xi_n-x_n\|=0$ by \eqref{eq:thm-n'=1-convergence-x_n(mu)-x_n(nu)},
so using \eqref{eq:main-thm-radius-1-M} we obtain
$$
\lim_{n\rightarrow\infty}\big|\langle x^*_n, T_1(\xi_n)\rangle\big|=1.
$$
This, together with $T_1(\xi_n)=\frac{1}{M} I_\mu(au_1+bu_2)$, allows us to write
$$
\lim_{n\rightarrow
\infty}\big|\langle x^*_n(\mu), \frac{1}{M}(a u_1+ bu_2) \rangle \big|=1.
$$
Therefore, using that $\frac{1}{M}(a u_1+ bu_2)\in S_{H_\mu}$ strongly exposes
itself and passing to a subsequence, we can find  $\theta_1 \in
\{-1,1\}$ such that
\begin{equation}\label{eq:thm-n'=1-convergence-x^*_n(mu)}
\lim_{n\rightarrow \infty}x^*_n(\mu)=\frac{\theta_1}{M}(a u_1+
bu_2).
\end{equation}
Finally, for each $n\in \N$ we consider the element
$y_n\in X$ given by
$$
y_n(\alpha)=\begin{cases} \frac{\theta_1}{M}(au_1+b u_2)\|x_n(\mu)\|
& \text{ if } \alpha=\mu,\\
x_n(\alpha) & \text{ if } \alpha\neq \mu,
\end{cases}
$$
which obviously satisfies $\|y_n\|=\|x_n\|=1$.
Observe that
\begin{align*}
1- \langle x^*_n, x_n\rangle &\geq \langle x^*_n, y_n\rangle -\langle x^*_n, x_n\rangle =\langle x^*_n, y_n-x_n\rangle \\
& = \langle x^*_n(\mu), y_n(\mu)-x_n(\mu)\rangle =\langle x^*_n(\mu), y_n(\mu)\rangle-\langle x^*_n(\mu), x_n(\mu)\rangle.
\end{align*}
Taking limit in this inequality and using \eqref{eq:main-thm-radius-1-M}, \eqref{eq:thm-n'=1-convergence-x_n(mu)-x_n(nu)} and \eqref{eq:thm-n'=1-convergence-x^*_n(mu)}, we obtain
\begin{align*}
0&\geq \lim_{n\to \infty} \langle x^*_n(\mu), y_n(\mu)\rangle-\lim_{n\to \infty}\langle x^*_n(\mu), x_n(\mu)\rangle= \frac{\theta_1^2}{M^2}\|au_1+bu_2\|^2|a|-\frac{\theta_1a^2}{M}=|a|-\frac{\theta_1a^2}{M}\,.
\end{align*}
Therefore, it follows that
$$
|a|\leq \frac{\theta_1a^2}{M}\leq
\frac{a^2}{M}\leq\frac{|a|}{M}
$$
which forces $M\leq 1$ since $a\neq0$.

\emph{Step 2: $K\leq 1$.}\ Consider the operator $T_2\in
\mathcal{L}(X)$ given by
$$
T_2(x)=\frac{1}{K}\big(( u_1\mid x(\mu)) I_\mu(u_1)+
( u_2\mid x(\mu)) I_\nu(u_3)\big)\quad \big(x\in
X\big).
$$
For $x\in S_X$ we have that $\sqrt{|( u_1\mid
x(\mu))|^2+ |( u_2\mid x(\mu) )|^2}\leq\|x(\mu)\|\leq
1$, so using the definition of $K$ we obtain

$$
\|T_2(x)\|=\frac{1}{K}\big\||( u_1\mid x(\mu))|e_\mu+
|( u_2\mid x(\mu))| e_\nu\big\|_{V_{\{\mu,\nu\}}}\leq \frac{1}{K}\max_{c^2+d^2\leq 1}
\|ce_\mu+de_\nu\|_{V_{\{\mu,\nu\}}}= 1
$$
which gives $\|T_2+\mathcal{Z}(X)\|\leq \|T_2\|\leq 1$. We show now that
$\|T_2+\mathcal{Z}(X)\|\geq 1$: fixed $S\in \mathcal{Z}(X)$, there is $S_\alpha\in
\mathcal{Z}(H_\alpha)$ for every $\alpha \in \Lambda \cup \Gamma_{Or}$ so that
$[S(x)](\alpha)=(S_\alpha (x(\alpha))$ for every $x \in X$. We
take $\theta_2\in \{-1,1\}$ such that $\theta_2( u_1\mid
S_\mu(u_2)) \geq 0$ and $c_0,d_0\in[0,1]$ satisfying
$\sqrt{c_0^2+d_0^2}=1$ and $\|(c_0,d_0)\|_{V_{\{\mu,\nu\}}}=K$. Then
$\|c_0u_1+\theta_2 d_0u_2\|=1$ and so we can write
\begin{align*}
\|T_2+S\|&\geq\|[T_2+S](I_\mu(c_0u_1+\theta_2
d_0u_2))\|=\left\|\frac{c_0}{K}I_\mu (u_1)+\frac{\theta_2
d_0}{K}I_\nu(u_3)+I_\mu(S_\mu(c_0u_1+\theta_2 d_0u_2))\right\|.
\end{align*}
Observe that $( u_1\mid S_\mu(u_1) )=0$ since $S_\mu\in \mathcal{Z}(H_\mu)$. So
\begin{align*}
\Big\|\frac{c_0}{K}u_1+c_0S_\mu(u_1)+&\theta_2
d_0S_\mu(u_2)\Big\|^2\\&=\left\|\frac{c_0}{K}u_1\right\|^2+
\|c_0S_\mu(u_1)+\theta_2
d_0S_\mu(u_2)\|^2+2\Big(\frac{c_0}{K}u_1\mid c_0S_\mu(u_1)+\theta_2
d_0S_\mu(u_2)\Big)\\
&=\left\|\frac{c_0}{K}u_1\right\|^2+
\|c_0S_\mu(u_1)+\theta_2 d_0S_\mu(u_2)\|^2+2\frac{\theta_2
c_0d_0}{K}( u_1\mid S_\mu(u_2))\geq
\left\|\frac{c_0}{K}u_1\right\|^2
\end{align*}
and, therefore,
$$
\left\|\frac{c_0}{K}I_\mu (u_1)+I_\mu(S_\mu(c_0u_1+\theta_2 d_0u_2))\right\|\geq \left\|\frac{c_0}{K}I_\mu (u_1)\right\|.
$$
Using this we can continue the estimation above as follows:
\begin{align*}
\|T_2+S\|&\geq\left\|\frac{c_0}{K}I_\mu (u_1)+\frac{\theta_2
d_0}{K}I_\nu(u_3)+I_\mu(S_\mu(c_0u_1+\theta_2 d_0u_2))\right\|\\
&\geq\left\|\frac{c_0}{K}I_\mu (u_1)+\frac{\theta_2
d_0}{K}I_\nu(u_3)\right\|=\frac{1}{K}\|(c_0,d_0)\|_{V_{\{\mu,\nu\}}}=1.
\end{align*}
Taking infimum on $S\in \mathcal{Z}(X)$, we obtain $\|T_2+\mathcal{Z}(X)\|\geq 1$ and so
$\|T_2+\mathcal{Z}(X)\|=1$. Therefore, the fact that $n'(X)=1$ tells us that $v(T_2)=1$. Hence,
we can find
$x_n\in S_X$ and $x^*_n \in S_{X^*}$ satisfying
\begin{equation}\label{eq:main-thm-radius-1-K}
\langle x^*_n, x_n \rangle=1 \qquad \text{and} \qquad  \lim_{n\rightarrow\infty}\big|\langle x^*_n, T_2(x_n)\rangle\big|=v(T_2)=1.
\end{equation}
This gives in particular that
$\lim_{n\rightarrow\infty}\big\|T_2(x_n)\big\|=1$ and so
$$
\lim_{n\rightarrow\infty}\big\| |( u_1\mid x_n(\mu))| e_\mu+
|( u_2\mid x_n(\mu) )|e_\nu \big\|_{V_{\{\mu,\nu\}}}=K.
$$
By passing to a convenient subsequence we may and do assume that there are
$c,d\in[-1,1]$ so that
$$
\lim_{n\rightarrow\infty}( u_1\mid x_n(\mu))=c \qquad
\text{and}\qquad \lim_{n\rightarrow\infty}( u_2\mid
x_n(\mu))=d.
$$
Therefore, $\|ce_\mu+de_\nu\|_{V_{\{\mu,\nu\}}}=K$. Besides,
as $|( u_1\mid x_n(\mu))|^2+|( u_2\mid
x_n(\mu))|^2\leq \|x_n(\mu)\|^2\leq 1$ we also have that
$\sqrt{c^2+d^2}\leq 1$. In fact one obtains $\sqrt{c^2+d^2}=1$ since
$\|ce_\mu+de_\nu\|_{V_{\{\mu,\nu\}}}=K$.
Observe now that $\|x_n(\mu)\|\leq 1$ and
$$
\lim_{n\rightarrow\infty}( cu_1+du_2\mid
x_n(\mu))=c^2+d^2=1
$$
so, taking into account that $cu_1+du_2\in S_{H_\mu}$ strongly
exposes itself, we get that
$$
\lim_{n\rightarrow\infty}x_n(\mu) = cu_1+du_2.
$$
For each $n\in\N$ consider the element $w_n\in X$ given
by
$$
w_n(\alpha)=\begin{cases}
cu_1+du_2 & \text{ if } \alpha= \mu,\\
x_n(\alpha)& \text{ if } \alpha\neq \mu.
\end{cases}
$$
It is clear that $\lim_{n\rightarrow\infty}\|w_n-x_n\|=0$. So,
using \eqref{eq:main-thm-radius-1-K} we obtain
$$
\lim_{n\rightarrow\infty}\big|\langle x^*_n, T_2(w_n)\rangle\big|=1.
$$
This, together with
$T_2(w_n)=\frac{1}{K}(cI_\mu(u_1)+dI_\nu(u_3))$, allows us
to write
\begin{align}\label{eq:main-thm-desigualdad-A-B-K}
K\leftarrow|c\langle x^*_n(\mu), u_1\rangle +d \langle x^*_n(\nu), u_3 \rangle|&\leq
|c||\langle x^*_n(\mu), u_1\rangle|+|d||\langle x^*_n(\nu), u_3\rangle|\\
&\leq |c|\|x^*_n(\mu)\|+|d|\|x^*_n(\nu)\|\leq K.\notag
\end{align}
By passing to a suitable subsequence we can assume that there are
$a',b'\in[-1,1]$ such that
$$
\lim_{n\rightarrow\infty} \langle x^*_n(\mu), u_1\rangle=a' \qquad \text{and} \qquad
\lim_{n\rightarrow\infty} \langle x^*_n(\nu), u_3\rangle =b'.
$$
Using \eqref{eq:main-thm-desigualdad-A-B-K} we deduce that
$\lim_{n\rightarrow\infty} \|x^*_n(\mu)\|=|a'|$,
$\lim_{n\rightarrow\infty} \|x^*_n(\nu)\|=|b'|$, and
$|c||a'|+|d||b'|=K$. We observe that if $|a'|=0$ then $K\leq 1$ and we
are done, so we assume that $a'\neq0$ from now on.
Since $u_1\in S_{H_\mu}$ strongly exposes itself, $a'\neq0$,
$\lim_{n\rightarrow\infty} \langle x^*_n(\mu),u_1\rangle=a'$, and
$\lim_{n\rightarrow\infty} \|x^*_n(\mu)\|=|a'|$ we obtain that
$\lim_{n\rightarrow\infty} x^*_n(\mu)=a'u_1$.

Finally, for each $n\in \N$ we consider the element
$z_n\in X$ given by
$$
z_n(\alpha)=\begin{cases}
\frac{a'}{|a'|}u_1\|x_n(\mu)\| & \text{ if } \alpha=\mu,\\
x_n(\alpha) & \text{ if } \alpha\neq\mu
\end{cases}
$$
which clearly satisfies $\|z_n\|=\|x_n\|$. Observe that
\begin{align*}
1- \langle x^*_n, x_n\rangle &\geq \langle x^*_n, z_n\rangle -\langle x^*_n, x_n\rangle =\langle x^*_n, z_n-x_n\rangle \\
& = \langle x^*_n(\mu), z_n(\mu)-x_n(\mu)\rangle =\langle x^*_n(\mu), z_n(\mu)\rangle-\langle x^*_n(\mu), x_n(\mu)\rangle.
\end{align*}
Taking limit in this inequality, using \eqref{eq:main-thm-radius-1-K}, and
recalling that  $\lim_{n\rightarrow\infty}\|x_n(\mu)\|=1$,
$\lim_{n\rightarrow\infty}x^*_n(\mu)=a'u_1$, and
$\lim_{n\rightarrow\infty}x_n(\mu) = cu_1+du_2$, we can write
\begin{align*}
0\geq \lim_{n\to\infty}\langle x^*_n(\mu), z_n(\mu)\rangle-\lim_{n\to\infty}\langle x^*_n(\mu), x_n(\mu)\rangle= \Big( a'u_1\mid \frac{a'}{|a'|}u_1\Big)- ( a'u_1\mid cu_1+du_2)= |a'|-a'c.
\end{align*}
Therefore, it follows that
$|a'|\leq a'c$. This obviously implies $|c|=1$ which, together with
$\sqrt{c^2+d^2}=1$, gives $d=0$ and hence $K=1$. So the claim
is established.
\end{proof}

\section{Some open problems}\label{sect:open_problems}

We would like to finish the paper presenting some interesting open problems in the subject.

First, we do not know which are the possible values of the second numerical index when restricted to Banach spaces with numerical index zero.

\begin{problem}
Which is the set of values of $n'(X)$ for Banach spaces $X$ with $n(X)=0$?
\end{problem}

In particular, we have very few information for three-dimensional spaces.

Next, we do not know whether the inequalities for the second numerical index that we get for absolute sums can be extended to $\ell_2$-sums (that is, whether the fact that skew-hermitian operators are diagonal is essential in the proofs). In particular, the following is unknown.

\begin{problem}
Let $Y$, $W$ be Banach spaces. Is $n'(Y\oplus_2 W)\leq \min\{n'(Y),n'(W)\}$?
\end{problem}

It is also unknown whether the results about vector valued function spaces (section~\ref{sec:vector-valued}) can be extended to other spaces as $L_p(\mu,X)$ or, even more, to general K\"{o}the Bochner spaces.

\begin{problem}
Let $X$ be a Banach space and let $E$ be K\"{o}the spaces. Does the inequality $n'(E(X))\leq n'(X)$ hold? In particular, if $\mu$ is a positive measure and $1<p<\infty$, is it true that $n'(L_p(\mu,X))\leq n'(X)$?
\end{problem}

The relation between the second numerical index and the duality has not been completely determined. In particular,

\begin{problem}
Is $n'(X^*)\leq n'(X)$ for every Banach space $X$?
\end{problem}

We do not know whether the results of section~\ref{sect-numindex-one} can be extended to general Banach spaces.

\begin{problem}
Are Hilbert spaces the unique Banach spaces with numerical index zero and second numerical index one?
\end{problem}

Let $X$ be a \textbf{complex} Banach space and write $X_\R$ to denote the underlying real space (i.e.\ $X$ viewed as a real space). Then, $n(X_\R)=0$ and, moreover, $\mathcal{Z}(X_\R)$ is not empty as it contains the multiplication by the complex unit.

\begin{problem}
Which information gives $n'(X_\R)$ on a complex Banach space $X$?
\end{problem}

In particular, the following particular case is interesting.

\begin{problem}
Let $\oplus_a$ be an absolute sum and consider $X=\C \oplus_a \C$. What is the value of $n'(X_\R)$?
\end{problem}

For instance, if $\oplus_a=\oplus_2$, then $n'(X_\R)=1$, while if $\oplus_a=\oplus_\infty$ or $\oplus_a=\oplus_1$, then $1/2\leq n'(X_\R)\leq\frac{\sqrt{3}}{2}$ (see Example~\ref{example:HoplusinftyH-Hoplus1H}).

Finite-dimensional real spaces with numerical index zero were characterized in \cite{MaMeRo}, where a structure result is given. By Proposition~\ref{prop:finite-dimensional-n'>0}, the second numerical index of all of them is positive. It would be interesting to get results in this line.

\begin{problem}
Study the second numerical index of finite-dimensional real spaces with numerical index zero.
\end{problem}

\end{document}